\DeclareSymbolFont{cyrletters}{OT2}{wncyr}{m}{n}
\numberwithin{equation}{section} \numberwithin{figure}{section}
\DeclareMathOperator{\Pic}{Pic} 
\DeclareMathOperator{\Gal}{Gal}
\DeclareMathOperator{\Spec}{Spec}
 \DeclareMathOperator{\rank}{rank}
 \DeclareMathOperator{\re}{Re}
\DeclareMathOperator{\vol}{vol} 
 \DeclareMathOperator{\Val}{Val}
\DeclareMathOperator{\Br}{Br} 
\DeclareMathOperator{\inv}{inv} \DeclareMathOperator{\res}{\partial}
 \DeclareMathOperator{\Sub}{Br_{\mathrm{Sub}}}
 \DeclareMathOperator{\Ind}{Ind}
 \DeclareMathOperator{\Norm}{N}
\DeclareMathOperator{\HH}{H}
\let\L\relax
\DeclareMathOperator{\L}{L} 
\DeclareMathOperator{\Eff}{\Lambda_{eff} }
\DeclareMathOperator{\BrV}{Br_{vert}} 
\DeclareMathOperator*{\Osum}{\sum{}^*}
\DeclareSymbolFont{cyrletters}{OT2}{wncyr}{m}{n}
\DeclareMathSymbol{\Sha}{\mathalpha}{cyrletters}{"58}
\DeclareMathSymbol{\Be}{\mathalpha}{cyrletters}{"42}
\renewcommand{\O}{\mathcal{O}}
\newcommand{\OO}{\mathcal{O}}
\newcommand{\one}{\mathbbm{1}}
\newcommand{\x}{\mathbf{x}}
\newcommand{\y}{\mathbf{y}}
\newcommand\F{\mathbb{F}}
\renewcommand\P{\mathbb{P}}
\newcommand\Z{\mathbb{Z}}
\newcommand\N{\mathbb{N}}
\newcommand\Q{\mathbb{Q}}
\newcommand\R{\mathbb{R}}
\newcommand\C{\mathbb{C}}
\newcommand\G{\mathbb{G}}
\newcommand\Gm{\G_\mathrm{m}}
\newcommand{\Adele}{\mathbf{A}}
\newcommand{\br}{\mathscr{B}}
\newcommand{\A}{\mathbb{A}}
\newcommand\ZZ{\mathbb{Z}}
\newcommand\QQ{\mathbb{Q}}
\newcommand\RR{\mathbb{R}}
\newcommand\CC{\mathbb{C}}
\renewcommand{\mod}[1]{ \ \left(\textnormal{mod}\ #1\right)}
\newcommand{\md}[1]{  \left(\textnormal{mod}\ #1\right)}
\renewcommand{\l}{\left}
\renewcommand{\r}{\right}
\renewcommand{\b}{\mathbf} 
\renewcommand{\c}{\mathcal} 
\renewcommand{\gcd}{\textrm{gcd}} 
\renewcommand{\leq}{\leqslant}
\renewcommand{\geq}{\geqslant}
\renewcommand{\#}{\sharp}
\newcommand{\fp}{\mathfrak{p}}
\newtheorem{lemma}{Lemma}
\newtheorem{conjecture}[lemma]{Conjecture}
\newtheorem{theorem}[lemma]{Theorem}
\newtheorem{proposition}[lemma]{Proposition}
\newtheorem{corollary}[lemma]{Corollary}
\theoremstyle{definition}
\newtheorem{example}[lemma]{Example}
\newtheorem{definition}[lemma]{Definition}
\newtheorem{remark}[lemma]{Remark}
\newtheorem*{ack}{Acknowledgements}
\newcommand{\dan}[1]{{\color{blue} \sf $\clubsuit\clubsuit\clubsuit$ Dan: [#1]}}
\numberwithin{lemma}{section}
\newcommand{\beq}[2]
{
\begin{equation}
\label{#1}
{#2}
\end{equation}
}
\begin{document}

\title[Leading constant]{The leading constant for rational points in families}

\begin{abstract}
	We prove asymptotics for Serre's problem on the number of diagonal planar conics with a rational point and use this to put forward a new conjecture on counting the number of varieties in a family which are everywhere locally soluble.
  \end{abstract}

\author{Daniel Loughran} 
  \address{
  Department of Mathematical Sciences \\
University of Bath \\
Claverton Down \\
Bath \\
BA2 7AY \\
UK}
\urladdr{https://sites.google.com/site/danielloughran}

\author{Nick Rome} 
\address{Graz University of Technology\\ Institute of Analysis and Number Theory\\
Kopernikusgasse 24/II \\8010 Graz\\ Austria}
\email{rome@tugraz.at}

\author{Efthymios Sofos} 
\address{
Department of Mathematics\\
University of Glasgow  \\ G12~8QQ \\ UK}
\email{efthymios.sofos@glasgow.ac.uk}

\subjclass[2010]
{14G05 (primary), 
11E20 
(secondary).}



\maketitle

\thispagestyle{empty}

\tableofcontents

\section{Introduction}

The  fundamental problem in Diophantine Geometry is to determine solubility of Diophantine equations over the rationals.
Given the difficulty of this problem, one can instead ask this for ``random'' equations
as done by  
Bhargava for  random ternary cubic curves~\cite{Bha14}
or
Browning--Le Boudec--Sawin~\cite{boudec}
in their 
 resolution of the 
Poonen--Voloch conjecture \cite{PV04}.
A key step in  these questions 
 is to understand the probability 
of a random equation being everywhere locally soluble, i.e.~soluble over $\R$ and $\Q_p$ for all $p$. The naive guess would be to consider the product over all places 
of the probabilities that a random equation in the family is locally soluble; for example, as in the work of 
 Bhargava--Cremona--Fisher--Jones--Keating for quadrics~\cite{MR3544620}. However, this guess
can fail even for simple-looking families.
As an example,
define 
\beq{def:nalphax}{
 N(B)
= \#\bigg\{\b t \in (\Z\setminus\{0\} )^3: 
\begin{array}{ll}
& \gcd(t_0,t_1,t_2) = 1, \,\, \max_i|t_i| \leq B, \\
& t_0 x_0^2 + t_1x_1^2 + t_2x_2^2 = 0 \textrm{ has a }\Q\textrm{-point}
\end{array}
\bigg\}.
}
The problem of 
proving asymptotics for $N(B)$ was originally 
raised by Manin in a letter to Serre, who proved in~\cite[page 399]{Ser90} 
that there exists a positive constant $c_1$ such that for all $B\geq 2 $, one has 
 $ N(B) \leq c_1  B^3 (\log B)^{-3/2}$. Subsequently, Hooley~\cite{Hoo93} and Guo~\cite{Guo95}
independently proved that   there exists a positive constant $c_2$ such that for all $B\geq 2 $, one has 
 $ N(B) \geq c_2  B^3 (\log B)^{-3/2}$. Using different arguments, we prove an asymptotic:

\begin{theorem}
\label{thm:main}
 For all $B\geq 2 $,  we have 
$$
N(B) = \frac{2 \cdot c_\infty \prod_p c_p}{\pi^{3/2}   }  
 \frac{B^3}{(\log B)^{3/2}}+O\l(\frac{B^3}{(\log B)^{5/2}}  \r),$$
 where the product is over all primes $p$ and 
$$
c_\infty = 6, \quad c_2 = \frac{49/48}{(1-1/2)^{1/2}}, \quad
c_p = 
\l(1+\frac{1}{p}+ \frac{1}{p^2} \r)
 \frac{    ( p^2 + p/2 + 1 )}{    (p + 1)^2   (1-1/p )^{1/2} }, \quad p \neq 2.$$
 \end{theorem}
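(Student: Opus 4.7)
The plan is to prove Theorem~\ref{thm:main} by reducing local solubility to an explicit multiplicative constraint via Hasse-Minkowski, rewriting $N(B)$ as a weighted sum over the squarefree parts of the $t_i$, and then extracting the asymptotic by a Selberg-Delange type analytic argument.

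\medskip

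\emph{Reduction to local conditions.} By Hasse-Minkowski, the conic has a $\Q$-point if and only if it is soluble in every completion of $\Q$. At the archimedean place, solubility requires that the $t_i$ do not all share the same sign, which will be seen to supply the factor $c_\infty = 6$ in the leading constant. At a prime $p$ with $p \nmid 2 t_0 t_1 t_2$ local solubility is automatic, whereas at bad primes it is governed by a Hilbert-symbol condition determined by the parities of $v_p(t_i)$ and by the $p$-free parts of the $t_i$. Writing $t_i = \varepsilon_i s_i^2 u_i$ with $\varepsilon_i \in \{\pm 1\}$ and $u_i > 0$ squarefree, the conic depends only on $(\varepsilon_i, u_i)$ up to rational isomorphism, so the same is true of the local conditions.

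\medskip

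\emph{Setting up the main sum.} Detach the coprimality condition by M\"obius inversion, and for each $(\varepsilon_i, u_i)$ estimate the inner sum over $(s_i)$ with $s_i^2 u_i \leq B$ by a routine lattice-point count, producing a main term of size $B^{3/2}/\sqrt{u_0 u_1 u_2}$. This reduces the problem to analysing a weighted sum
\[
\sum_{\substack{u_0, u_1, u_2 \\ \text{squarefree},\, u_i \leq B \\ (\varepsilon, u)\text{ locally soluble}}} \frac{f(u_0, u_1, u_2)}{\sqrt{u_0 u_1 u_2}}
\]
for a function $f$ packaging the sign choices and M\"obius weights, which is exactly the kind of sum where Dirichlet-series techniques excel.

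\medskip

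\emph{Asymptotic.} Detect the Hilbert-symbol conditions by orthogonality of quadratic characters so that the indicator of local solubility becomes an average of products of Legendre symbols on the squarefree parts. The resulting multi-variable Dirichlet series factors into an Euler product whose local factor at an odd prime $p$ is computed by enumerating the vectors $(v_p(u_0), v_p(u_1), v_p(u_2)) \in \{0, 1\}^3$ together with the associated quadratic residue conditions. Near $s = 1$ this series has a singularity of type $(s - 1)^{-3/2}$, and a Perron / Selberg-Delange contour analysis produces the main term $CB^3/(\log B)^{3/2}$ with error $O(B^3/(\log B)^{5/2})$; the identification $C = 2 c_\infty \prod_p c_p / \pi^{3/2}$ reduces to a term-by-term Euler-factor computation.

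\medskip

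The principal obstacle is twofold. Analytically, attaining the sharp error term $O((\log B)^{-5/2})$ requires a careful, uniform Selberg-Delange expansion and tight control of the contour near $s = 1$, and the M\"obius truncations must be handled in a way compatible with the Euler-product factorisation. Structurally, the prime $p = 2$ needs a separate analysis since the $2$-adic Hilbert symbol depends on residues modulo $8$ rather than modulo $2$; this is the source of the anomalous $c_2 = (49/48)/\sqrt{1-1/2}$, and in particular of the $49/48$ correction that distinguishes the $2$-adic factor from its odd counterparts.
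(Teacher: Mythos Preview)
Your setup through the character-detection step is sound and matches the paper's opening moves: Hasse--Minkowski, expansion of the local indicator via Hilbert symbols, and a separate treatment of $p=2$. The gap is in the sentence ``The resulting multi-variable Dirichlet series factors into an Euler product.'' After orthogonality you are left not only with the diagonal (principal-character) piece but with genuinely bilinear cross terms of the shape
\[
\sum_{\substack{d_i,\,\widetilde d_i\\ d_i\widetilde d_i\text{ squarefree}}}
\Bigl(\tfrac{\widetilde d_2\widetilde d_3}{d_1}\Bigr)
\Bigl(\tfrac{\widetilde d_1\widetilde d_3}{d_2}\Bigr)
\Bigl(\tfrac{\widetilde d_1\widetilde d_2}{d_3}\Bigr)\cdot(\text{weights}),
\]
and these do \emph{not} factor as Euler products: the symbol $(\widetilde d_j/d_i)$ couples the prime factorisations of two different variables, so there is no local-at-$p$ description of the summand. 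A naive Selberg--Delange/Perron argument therefore has nothing to bite on for these terms, and this is precisely where the analytic content of the problem lies. In the paper these off-diagonal sums are shown to be negligible by a two-regime argument: when some pair $(d_i,\widetilde d_j)$ with $i\neq j$ is large one invokes the Friedlander--Iwaniec bilinear bound
\[
\sum_{m\le M,\,n\le N} a_m b_n\,\mu(2mn)^2\Bigl(\tfrac{m}{n}\Bigr)\ll (MN^{5/6}+M^{5/6}N)(\log MN)^2,
\]
and when the conductors are small one appeals to a Siegel--Walfisz-type estimate for $\sum_{n}\chi(n)\mu(n)^2/\tau(n)$. Only the diagonal piece is handled by the Selberg--Delange machinery you describe. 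Without a mechanism for the bilinear terms your outline does not close; earlier proofs (Hooley, Guo) used Burgess bounds in the same role, so some input of this strength is unavoidable.

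A secondary structural difference: you absorb the square parts $s_i$ first and pass to a weighted sum $\sum 1/\sqrt{u_0u_1u_2}$. The paper instead keeps the box conditions and introduces auxiliary variables $b_i$ (square parts) and $m_{ij}=\gcd$'s, reducing to a family of lopsided-box counts $\mathcal N_{\mathbf b,\mathbf m}(\mathbf X)$ handled uniformly by a single general theorem. Your route is closer in spirit to Hooley's and is workable, but it does not sidestep the bilinear issue above, and the uniformity needed to sum back over $(s_i)$ with the claimed error $O(B^3/(\log B)^{5/2})$ would still require the same large/small-conductor dichotomy.
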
 
 
The factors $c_\infty$ and $c_p$ admit interpretations as local densities (for example $c_\infty = 6$ as six choices of the signs of the $t_i$ give a conic with a real point). However, the extra prefactors $2$ and $\pi^{-3/2}$ make clear that merely taking the product of local densities fails to give the right constant even for simple equations.  Tschinkel asked  in \cite[\S 2.1]{Tsc09} whether the leading constant in Theorem~\ref{thm:main} has an algebro-geometric explanation, in the context of Guo's result \cite{Guo95}.  The answer is as follows: The factors $c_\infty$ and $c_p$ can be interpreted using Peyre's Tamagawa measure \cite{Pey95}, with the terms $(1-1/p)^{1/2}$ being convergence factors.
The factor $2$ is the order of a \textit{subordinate Brauer group}
and $\pi^{3/2} =\Gamma(1/2)^{3}$.

An asymptotic for a function similar to $N(B)$ without any coprimality conditions on the $\b t $ is given in Remark~\ref{rem:equiv}.
The proof of Theorem~\ref{thm:main} is given in \S\ref{sgenguoresthrme9}.
\begin{remark}
[New constants]
Products of Gamma function values have not appeared before in 
problems of this kind nor in the neighbouring area of the Batyrev--Manin conjecture.
Tauberian theorems (e.g.~Delange's Tauberian theorem \cite{Del54}) give only a single Gamma factor.  
Loosely speaking,
it will transpire that
there are  certain families of equations
where 
each singular equation gives its own Gamma factor in the product and it would be desirable to
have Tauberian theorems
for completely general  multivariate Dirichlet series; special cases have been dealt with by
la Bret\`eche \cite{MR1858338}
and Essouabri \cite{MR1450422}. 
An unexpected feature is that 
there are also families of Diophantine equations 
where 
one instead 
has  a single Gamma value despite having multiple singular equations.
This distinction  
is decided by the geometry of the family, see the definition of $\Gamma(X, f) $ 
in Conjecture \ref{conj:main}
for a precise statement. It would be useful to have more results for fibrations over other bases to get a better understanding of this distinction.
 \end{remark}
  \begin{remark}
[Previous methods]
Serre~\cite{Ser90} proved  
  $N(B) \ll  B^3/(\log B)^{3/2}$ by employing 
  the large sieve.
He also restricted to three  prime coefficients to prove the non-matching
 lower bound
$N(B) \gg  B^3/(\log B)^{3}$.
Subsequently, Hooley~\cite{Hoo93} 
used  Burgess's estimate for  character sums and the Brun sieve
to   prove a lower bound of the same order of magnitude as the upper bound proved by Serre.
  Hooley's method starts      from restricting attention to diagonal coefficients of special shape: odd, square-free and coprime in pairs. 
Guo~\cite{Guo95}, using  similar arguments, proved  an asymptotic for the modified version of   $N(B)$, where 
each $t_i$ is square-free and all pairs $t_i, t_j$ are coprime for all $i\neq j $.  
\end{remark}

\begin{remark}
[Comparison to Guo's work]
Absorbing square factors into the diagonal coefficients 
leads to  square-free coefficients, as in the setting of 
  Guo. However, this causes  
technical complications as the change of variables 
alter the height and one ends up with counting in lopsided boxes.
We thus chose to  follow a treatment that is different than the one used by Guo and Hooley, namely,
we  replaced
 the use of Burgess's character sum bounds
and all sieve theoretic arguments  by    bilinear  sum estimates for quadratic Kronecker symbols.
Our results improve the error term in 
 Guo's work, see Remark~\ref{rem:erterm}.
This requires the adaptation of established techniques, formerly confined to simple contexts and problems, to address a significantly more challenging problem.
\end{remark}

There is a  rapidly  developing industry 
in this area:
\begin{center}
    \begin{longtable}{| l | l | l | l |}
    \hline
   \textbf{Type of random varieties} &  \textbf{Authors}  \\ \hline
   Thue and Fermat equations &  Akhtari--Bhargava \cite{MR3928038},  Browning--Dietmann \cite {MR2537095}, 
\\ &  Dietmann--Marmon \cite{MR3312095} 
    \\ \hline

Conic bundles over elliptic curves & Bhakta--Loughran--Rydin Myerson--Nakahara \cite{arXiv:2109.03746}
      \\ \hline

Plane cubics & Bhargava \cite{Bha14}        \\ \hline 
   Quadrics & Bhargava--Cremona--Fisher--Jones--Keating \cite{MR3544620}       \\ \hline

Negative Pell equation &  Blomer \cite{MR3642347},  Fouvry--Kl\"{u}ners \cite {MR2726105}, 
\\ &  Koymans--Pagano \cite{arXiv:2201.1342} \\ \hline 

Del Pezzo and K3 surfaces & Bright--Browning--Loughran \cite{MR3530447},  Bright \cite{MR3858417}, \\
 & Browning \cite{MR3731306}, Gvirtz--Loughran--Nakahara   \cite{MR3534973}, \\
 & Mitankin--Salgado \cite{MR4454468},  Santens \cite{arXiv:2201.04573}   \\ \hline 
{M}arkoff surfaces  &  Colliot-Th\'{e}l\`ene--Wei--Xu \cite{MR4288633}, Dao \cite{arXiv:2202.07142},
\\ &  Ghosh--Sarnak   \cite{MR4448994}, Loughran--Mitankin \cite{MR4320804}
    \\ \hline  
   {C}h\^{a}telet surfaces &  de la Bret\`eche--Browning \cite{MR3198755}, Rome \cite{MR3976470}    \\ \hline 

Coflasque tori &  de la Bret\`eche--Browning \cite{MR3232765}  \\ \hline 
Fibrations over quadrics & Browning--Heath-Brown \cite{BHB21}, Browning--Loughran   \cite{BL19}, 
\\
&  Browning--Lyczak--Sarapin \cite{arXiv:2203.06881},
Wilson \cite{wilson1}
\\ \hline 
   Fano hypersurfaces &  Browning--Le Boudec--Sawin \cite {boudec},  
\\ &  Poonen--Voloch \cite{PV04}, Le Boudec \cite {MR4261096} 
\\ \hline 

Norm equations & Browning--Newton \cite{BN15}        \\ \hline

Diagonal Fano hypersurfaces & Br\"{u}dern--Dietmann \cite{MR3177289}, Chow~\cite{MR3253291},      
\\ & Hirakawa--Kanamura \cite{MR4322838}  \\ \hline 

Bihomogenous hypersurfaces  &
Fisher-Ho-Park \cite{FHP21} \\ \hline
Special plane conics  & Friedlander--Iwaniec   \cite{MR2675875}     \\ \hline 
 Diagonal plane conics & Hooley \cite{Hoo93}, Guo \cite{Guo95}     \\ \hline
   General plane conics & Hooley \cite{Hoo07}    \\ \hline 
  Fibrations over algebraic groups & Huang \cite{Hua21}, Loughran \cite{Lou18}, \\
  &  Loughran--Takloo-Bighash--Tanimoto  \cite{LTBT20} \\ \hline
  Fibrations over $\P^n$ &  Loughran--Matthiesen \cite {LM19}, Loughran--Smeets \cite{LS16}    \\ \hline 
Affine quadric surfaces &  Mitankin \cite{MR3664529}, Santens \cite{MR4450612}    \\ \hline 
Hyperelliptic curves &  Poonen--Stoll \cite{MR3245014}    \\ \hline 
Conic bundles over $\P^n$ &  Serre \cite{Ser90}, Sofos \cite{Sof16}    \\ \hline 
   Generalised {C}h\^{a}telet varieties &  Balestrieri--Rome \cite{BRbs}, Browning--Sofos--Ter\"av\"ainen \cite{bst},
\\ &
Skorobogatov--Sofos \cite {MR4542704}    \\ \hline 
 Conic bundles over hypersurfaces &  Sofos--Visse-Martindale \cite{MR4353917}    \\ \hline 
\end{longtable}
\end{center} 
\vspace{-20pt}

There is, however, no conjecture regarding the leading constant in the literature. 
In this paper we put forward a general conjecture  on the leading constant for problems of this type (Conjecture \ref{conj:main}). Our conjecture is based on that of Peyre~\cite{Pey95} on the leading constant in the Batyrev--Manin conjecture,
however, some key features are different, for example, multiple Gamma factors and modified Fujita invariants.  
The setting
will not require explicit equations nor the Hasse principle to hold for the equations in the family; 
it will apply to  fairly general 
   fibrations over any projective space $\P^n$.

We now explain our new conjecture by working in a more geometric framework.

\subsection{A new conjecture}
Let $f:Y \to X$ be a dominant morphism of smooth projective varieties over a number field $k$ with geometrically integral generic fibre.
In this paper we are interested in the function
$$N_{\textrm{loc}}(f,B) =  \#\{ x \in X(k) : H(k) \leq B, x \in f(Y(\Adele_k))\}$$
which counts the number of everywhere locally soluble fibres of the morphism $f$ and $H$ is a height function on $X$.
Under suitable assumptions on $f: Y \to X$ and $H$, and possibly after removing a thin set of rational points, we conjecture the asymptotic formula
\begin{equation} \label{eqn:conj_intro}
	c_{f,H} B (\log B)^{\rho(X) - \Delta(f) - 1}.
\end{equation}
Here $\rho(X) = \rank \Pic X$. The invariant $\Delta(f)$ was originally introduced in \cite{LS16}; we recall its definition in \S \ref{sec:set-up}. The main new part of the conjecture is the constant $c_{f,H}$, which is a product of the following (see Conjecture \ref{conj:main} for precise statements):

\begin{enumerate}
	\item Modified definition of Peyre's effective cone constant.
	\item Products of values of the Gamma function.
	\item Modified Fujita invariants.
	\item Order of a subordinate Brauer group.
	\item Adelic Tamagawa volume.
\end{enumerate}

Factors (2) and (3) are completely new and the main challenge was to  identity these factors. For (1), Peyre's original definition \cite[Def.~2.4]{Pey95} of the effective cone constant $\alpha(X)$ contains an extraneous factor $(\rho(X) - 1)!$, which we view as a value of the Gamma function and treat separately from the effective cone. The subordinate Brauer group and Tamagawa measures in (4) and (5) appeared in some form in \cite{Lou18}; our new contribution is a definition of the subordinate Brauer group for more general $f$, and to construct virtual Artin $L$-function associated to the  Picard group and $f$ which give convergence factors for the product of local Tamagawa measures. 
Our conjecture is greatly inspired by the conjectures of Manin \cite{FMT89}, Batyrev--Manin \cite{BM90}, Peyre \cite{Pey95}, and Loughran--Smeets \cite{LS16}, as well as the works of Batyrev--Tschinkel \cite{BT95}, Salberger \cite{Sal98}, and Loughran \cite{Lou18}. 

We verify that the conjecture holds in the following cases, for suitable $f$:

\begin{enumerate}
	\item[(i)] Families with $\Delta(f) = 0$.
	\item[(ii)] Equivariant compactifications of anisotropic tori \cite{Lou18}.
	\item[(iii)] 
	Wonderful compactifications of adjoint semi-simple algebraic groups \cite{LTBT20}.
	\item[(iv)] Compatibility with the circle method.
	\item[(v)] The family of all plane diagonal conics (Theorem~\ref{thm:main}).
\end{enumerate}

Cases (i)--(iii) involve showing that asymptotics in the literature agree with our conjecture. For (iv) we explain how the conjecture agrees with the prediction from the circle method for representing a polynomial as a sum of two squares.

Case (v) is the new 
case obtained in our paper. It was 
 worked out 
by the second and third-named authors.
The  first named-author was working independently on formulating  a  conjecture for the  leading constant for general fibrations
and this case 
  was the key missing example required for the   formulation.
The known  cases in the literature do not have the following features, whereas the fibration in Theorem~\ref{thm:main}
does:
the smooth locus admits non-constant invertible functions and the family corresponds to a transcendental Brauer group element.
 This made clear that it was still Peyre's effective cone constant which should appear, gave the formula for the relevant special values of the Gamma function, and led to the modified Fujita invariants through change-of-height considerations.

\subsection{Open cases of the conjecture and other examples}
We  conjecture the asymptotic \eqref{eqn:conj_intro} under  
specific
 geometric assumptions of $f$. Firstly we assume that the fibre over each codimension $1$ point of $X$ admits an irreducible component of multiplicity $1$; this is necessary in general, as otherwise the set of everywhere locally soluble fibres can be finite \cite[Thm.~1.4]{LM19}. We also want $X$ to have lots of rational points, so we assume that $X$ is \emph{weak Fano}. More restrictively, let $U \subseteq X$ denote the open subset given by the complement of those divisors lying below the non-split fibres. Then we assume that
\begin{equation} \label{eqn:assump_geometric}
	\text{either} \quad \rho(X) = 1 \quad \text{or} \quad k[U]^\times = k^\times,
\end{equation}
where $k[U]^\times$ denotes the group of invertible regular functions on $U$. If either of these conditions are not satisfied, then in fact the stated asymptotic \eqref{eqn:conj_intro} need \emph{not hold}.

\begin{example} \label{ex:crazy}
	One can show without difficulty (see \S\ref{sec:elementary})  that
	$$\#\{ t \in \Q : H(t) \leq B, t \text{ is a sum of two squares} \} = c B/(\log B) + O(B / (\log B)^2) $$
	where the leading constant $c$ agrees with the conjecture and 
	$H(t)$ denotes the anticanonical height on $\P^1$, which is the square of the usual naive height.	
	(Here the relevant family of varieties is given by a smooth proper model of the conic bundle
	$t = x_1^2 + x_2^2$ over $\P^1$.)
	Using this and Dirichlet's hyperbola method, one shows that 
	$$\#\left\{ t_1,t_2 \in \Q^2 : 
	\begin{array}{l}
		H(t_1)H(t_2) \leq B,  \\
		\text{ each } t_i \text{ is a sum of two squares} 
	\end{array} 
	\right\} \sim 2c^2 B (\log \log B)/(\log B).$$
	This counting problem takes place on $\P^1 \times \P^1$ with $\rho(\P^1\times \P^1) = 2$
	and $\Delta(f) = 2$; this gives the correct factor of $\log B$ in
	\eqref{eqn:conj_intro}, but the factor $\log \log B$ is completely 
	unexpected! 	This example defines a split toric variety, hence shows that the
	asymptotic proved in \cite{Lou18} does not hold for non-anisotropic toric varieties in general.
	Note that the assumption \eqref{eqn:assump_geometric} does not hold here
	since $\rho(\P^1\times \P^1) = 2 > 1$ and $k[\Gm^2]^\times/k^\times = \Z^2 \neq 0$.

	More exotic asymptotic formulae can be obtained by taking other suitable products, for example
	one can show that
	$$\#\left\{ t_0,t_1,t_2 \in \Q^3 : 
	\begin{array}{l}
	H(t_0)H(t_1)H(t_2) \leq B, \\
	\text{both } t_1,t_2 \text{ are a sum of two squares}
	\end{array}
	\right \} \asymp B (\log \log B)^2.$$
\end{example}

Given these outlandish asymptotic formulae with currently little geometric explanation, we  focus on examples where we can give precise predictions, with the key geometric assumption being \eqref{eqn:assump_geometric}. This assumption holds when $X = \P^n$, which is our primary base of interest  and the case studied by Serre \cite{Ser90}. Given the lack of understanding of the Hasse principle in general, we also focus our attention on counting only everywhere locally soluble varieties. In important special cases, for example conic bundles or more generally families of products of Brauer--Severi varieties, the Hasse principle holds and the conjecture indeed concerns the number of varieties in the family with a rational point.

\begin{example}
	A natural next case to consider after Theorem \ref{thm:main} would be Fermat curves,
	i.e.~for $d \geq 3$ the counting problem
	$$\#\{ t \in \P^2(\Q): H(t) \leq B, t_0x_0^d + t_1x_1^d + t_2x_2^d = 0
	\text{ is everywhere locally soluble}\}.$$
	Conjecture \ref{conj:main} predicts a precise asymptotic formula for this
	counting problem of the shape
	$c_d B^3 (\log B)^{-\Delta(d)}.$ 
	The value of $\Delta(d)$ can be found in \cite[\S 5.4]{LS16}.
	(For $p$ prime we have $\Delta(p) = 3/p$.) An explicit value for $c_d$
	can be obtained using a similar method to the one used in \S \ref{sec:diag_constant}.
	Proving this for any $d \geq 3$ seems very challenging. A more difficult problem
	still is to count the number which have a rational point. A naive guess would be that
	for $d \geq 4$ only $0\%$ of the everywhere locally soluble members have a rational
	point, whereas for $d = 3$ a positive proportion of the everywhere locally soluble
	members has a rational point. (The case $d=3$ is a famous family studied by 
	Selmer \cite{Sel51}).
\end{example}

\begin{example}
	Another natural case to consider for the conjecture is \emph{conic bundles}.
	The correct lower bound for the conjecture was proven in \cite{Sof16} in the 
	case of conic bundles over the projective line with at most three singular fibres
	over the algebraic closure. An explicit family of such surfaces is given by
	$$y_0 Q_0(x) + y_1Q_1(x) = 0 \quad \subset \P^1 \times \P^2$$
	where each $Q_i$ is a ternary quadratic form. Providing it is smooth,
	such an equation defines a del Pezzo surface of degree five.
\end{example}

\begin{remark}
The asymptotic in Example \ref{ex:crazy} gives a counter-example to the statement of \cite[Prop. 2]{FMT89}; this claims that if two counting functions grow like $B(\log B)^r$ and $B(\log B)^s$ then the counting function of the product grows like $B(\log B)^{r + s + 1}$. An inspection of the proof of \emph{loc.~cit.} reveals that the authors are using the integral representative of the Beta function which is only valid for $r> -1$ and $s> -1$, therefore the result is in fact only proved under this additional assumption.
\end{remark}

\subsection{Structure of the paper}
In \S \ref{sec:Sub_Brauer} we develop the theory of the subordinate Brauer group, which is a key tool in our work. In \S \ref{sec:conj}, we set-up and state our new conjecture (Conjecture~\ref{conj:main}). We also put forward an alternative version of the conjecture (Conjecture \ref{conj:Br}) in the case of zero-loci of Brauer groups which is often easier to work with in practice. In \S \ref{sec:verify}, we verify that our conjecture agrees with results in the literature and the circle method, and also consider some simple new cases. We also verify that the statement of Theorem \ref{thm:main} agrees with Conjecture \ref{conj:main}.
The final \S \ref{sgenguoresthrme9}  is dedicated to our main new result (Theorem \ref{thm:main}).

\begin{ack}
The work behind Theorem~\ref{thm:main} started while NR was visiting ES in the Max Planck Institute in Bonn, the hospitality of which is greatly acknowledged. 
Large part of the work related to Theorem~\ref{thm:main}    was undertaken  during the trimester ``Reinventing Rational Points'' in Institut Henri Poincar\'e in Paris. 
We would like to thank  David Harari, Emmanuel Peyre and Alexei Skorobogatov for organising the trimester. 
The work related to the conjecture was completed
at the ICMS in Edinburgh during the conference ``Rational points on higher-dimensional varieties''; we are grateful to the ICMS for their support and ideal working conditions. We thank Julian Lyczak, Emmanuel Peyre and Alexei Skorobogatov for useful discussions, 
Tim Browning for comments on an earlier draft,
John Cremona for providing computational data
and Sho Tanimoto and Florian Wilsch for discussions on Fujita invariants. DL was supported by UKRI Future Leaders Fellowship \texttt{MR/V021362/1} and ES was 
supported by  
EPSRC
 New Horizons  
grant \texttt{EP/V048236/1}.

\end{ack}

\section{Subordinate Brauer groups} \label{sec:Sub_Brauer}
In this section we define the subordinate Brauer group of a morphism of varieties and study its basic properties. This Brauer group plays a key role in our conjecture. 

\subsection{Brauer groups recap}

The canonical reference for Brauer groups is now the book \cite{Brauer}. For a scheme $X$ we denote its Brauer group by $\Br X :=\HH^2(X,\Gm)$. Let $X$ be an integral scheme over a field $k$ of characteristic $0$. We denote by $\Br_1 X := \ker( \Br X \to \Br X_{\bar{k}})$ its algebraic Brauer group. If $X$ is regular then Grothendieck's purity theorem \cite[Thm.~3.7.2]{Brauer} yields an exact sequence
\begin{equation} \label{seq:purity}
	0 \to \Br X \to \Br k(X) \to \bigoplus_{D \in X^{(1)}} \HH^1(k(D), \Q/\Z)
\end{equation}
where $\res_D: \Br k(X) \to \HH^1(k(D), \Q/\Z)$ is the residue map at $D$. If $\res_D(b) = 0$ then we say that $b$ is \emph{unramified} along $D$. 

If $k$ is a number field, there is the fundamental exact sequence from class field theory
\begin{equation} \label{seq:CFT}
	0 \to \Br k \to \bigoplus_v \Br k_v \to \Q/\Z \to 0
\end{equation}
where the last map is the sum of all local invariants $\inv_v$. We call the pairing
\begin{equation} \label{def:BM_pairing}
X(\Adele_k) \times \Br X \to \Q/\Z, \quad ( (x_v)_{v \in \Val(k)}, b) \mapsto 
\sum_v \inv_v b(x_v),
\end{equation}
the \emph{Brauer--Manin} pairing. For a subset $\br \subset \Br X$ we denote by $X(\Adele_k)^{\br}$ the left kernel of this for $\br$. We use throughout that $\Br \P^n_k = \Br \A^n_k = \Br k$ \cite[\S 6.1]{Brauer}.

\subsection{Definition}
The terminology ``subordinate Brauer group'' was first used by Serre in the appendix of \cite[Ch.~II]{Ser97} in the case were $X = \P^1$ and working with an element of $\Br k(\P^1)$. In \cite[\S 2.6]{Lou18} this was generalised to other $X$ and a finite collection of Brauer group elements. We put this into an even more general setting of the subordinate Brauer group of a morphism of varieties. Let $k$ be a field of characteristic $0$.

\begin{definition} \label{def:Sub}
	Let $f: Y \to X$ be a proper morphism of regular integral noetherian schemes over $k$
	with geometrically integral generic fibre. We define
	$$\Sub(X,f) = \bigcap_{D \in X^{(1)}} \left\{ \alpha \in \Br k(X) : 
	\begin{array}{l}
	\res_E f^* \alpha = 0 \text{ for all  irreducible components }\\
	E \subseteq f^{-1}(D) \text{ of multiplicitiy } 1
	\end{array}\right\}.$$
\end{definition}
The multiplicity $1$ irreducible components of $f^{-1}(D)$ are exactly those which are generically reduced (but not necessarily geometrically irreducible). We have
\begin{equation} \label{lem:Vert}
	\BrV(Y/X) \subseteq 
	f^*\Sub(X,f),
\end{equation}
where  $\BrV(Y/X) = f^* \Br k(X) \cap \Br Y$ is the vertical Brauer group of $Y/X$ \cite[\S 11.1]{Brauer}.

\begin{remark}
	The notation we use here $\Sub(X,f)$ is different to that of \cite[\S 2.6]{Lou18}
	which instead used $\mathrm{Sub}(X,\br)$ (in the context of zero-loci
	of Brauer groups). We decided to change to the former
	as it mirrors existing notation for the vertical Brauer
	group. Warning however: $\Sub(X,f) \not \subseteq \Br X$ in general!
	
	An additional kind of subordinate group was introduced in \cite[\S 2.6]{Lou18}
	and denoted by $\mathrm{Sub}(k(X),\br)$; this is a birational invariant
	of the generic fibre of $f$ (cf.~Lemma \ref{lem:Sub_birational})
	and pulls back to exactly the vertical Brauer
	group of the corresponding family of products of Brauer--Severi varieties 
	\cite[Thm.~2.11]{Lou18}. Despite this having better birational properties,
	in the literature relevant to Conjecture \ref{conj:main}
	it is the group $\Sub(X,f)$ which appears.
\end{remark}

\subsection{Properties}

\begin{lemma} \label{lem:flat}
	Assume that  $f: Y \to X$ is flat and 
	that for all $D \in X^{(1)}$ every irreducible component of $f^{-1}(D)$
	has multiplicity $1$. Then $\BrV(Y/X) = f^*\Sub(X,f)$.
\end{lemma}
\begin{proof}
	By purity \eqref{seq:purity}, it suffices to show that any element 
	$\alpha \in f^*\Sub(X,f)$ is unramified at all
	codimension $1$ points of $Y$. Let $D \in Y^{(1)}$. If $f(D)$ equals the generic
	point of $X$, then certainly $\alpha$ is unramified at $D$. Otherwise, since $f$ is flat,
	we find that $f(D)$ is a codimension one point of $X$. Then by Definition \ref{def:Sub},
	we see that $\alpha$ is also unramified at $D$, as required.
\end{proof}


\begin{lemma} \label{lem:Sub_formula}
$$\Sub(X,f)=\left\{ b \in \Br k(X) :
\begin{array}{l}
\res_D(b) \in \ker( \mathrm{res} : \HH^1(k(D),\Q/\Z) \to \HH^1(\kappa_E, \Q/\Z)) \\
 \text{for all } D \in X^{(1)} \text{ and all }  E \subseteq f^{-1}(D) \text{ of multiplicitiy } 1
\end{array}\right\}$$
where $\mathrm{res}$ denotes the usual restriction map on Galois cohomology
and $\kappa_E$ denotes the algebraic closure of $k(D)$ inside the function field of $E$.
\end{lemma}
\begin{proof}
	This follows by simply writing down and comparing the residue exact
	sequences for $X$ and $Y$ (cf.~\cite[Prop.~11.1.5]{Brauer}).
\end{proof}

$\Sub(X,f)$ is a birational invariant, only depending on $X$ and the generic fibre of $f$.

\begin{lemma} \label{lem:Sub_birational}
	Let $f:Y \to X$ and $g:Z \to X$ be proper morphisms of regular integral schemes over $k$
	with geometrically integral generic fibre and let 
	$h:Y \dashrightarrow Z$ be a birational map over $X$. Then pull-back via $h$ induces an
	isomorphism
	$$\Sub(X,f) \cong \Sub(X,g).$$
\end{lemma}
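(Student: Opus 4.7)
The plan is to fix $D \in X^{(1)}$ and prove, by symmetry, that $f^*\alpha \in \Br f^{-1}(\O_{X,D})$ implies $g^*\alpha \in \Br g^{-1}(\O_{X,D})$ for every $\alpha \in \Br k(X)$; intersecting over all $D$ then gives the lemma. Throughout I write $Y_D = f^{-1}(\O_{X,D})$ and $Z_D = g^{-1}(\O_{X,D})$, both open subschemes of $Y$ and $Z$ respectively, hence regular. The birational map $Y \dashrightarrow Z$ over $X$ identifies the function fields $k(Y) = k(Z) =: K$ compatibly with the maps to $k(X)$, so $f^*\alpha$ and $g^*\alpha$ coincide in $\Br K$; call this common element $\beta$. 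The task reduces to showing $\beta \in \Br Y_D$ implies $\beta \in \Br Z_D$ as subgroups of $\Br K$.

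Since $Z_D$ is regular, by purity \eqref{seq:purity} it suffices to verify that $\beta$ is unramified at every codimension one point $E \in Z_D^{(1)}$. Let $R := \O_{Z,E}$, a DVR of $K$ containing $\O_{X,D}$, and consider the base change $f_D \colon Y_D \to \Spec \O_{X,D}$, which is proper because $f$ is. The generic point of $\Spec R$ maps to the generic point of $Y_D$ (both equal $\Spec K$), and the composition $\Spec R \to Z_D \to \Spec \O_{X,D}$ supplies a map to the base; the valuative criterion of properness therefore yields a unique morphism $\pi \colon \Spec R \to Y_D$ extending these data. Let $y \in Y_D$ be the image of the closed point of $\Spec R$ under $\pi$; then $\pi$ factors through the natural morphism $\Spec \O_{Y,y} \to Y_D$, inducing a local ring homomorphism $\O_{Y,y} \to R$.

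Now $\beta \in \Br Y_D$ restricts canonically along $\Spec \O_{Y,y} \to Y_D$ to an element of $\Br \O_{Y,y}$, and pulling back once more along $\Spec R \to \Spec \O_{Y,y}$ produces an element of $\Br R$ whose image in $\Br K$ is $\beta$. Consequently $\beta$ lies in the image of $\Br R \to \Br K$, which is the statement that $\beta$ is unramified at $E$.

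The main subtlety is that the image point $y$ need not have codimension one in $Y_D$; one cannot simply identify the DVRs $\O_{Y,y}$ and $R$ and compare residues. The argument sidesteps this by exploiting the regularity of $Y_D$, which lets $\beta$ extend across the entire local scheme $\Spec \O_{Y,y}$ irrespective of $\codim(y)$, after which naturality transports the extension to $R$. A pleasant consequence is that the proof avoids any appeal to common resolutions of $Y \dashrightarrow Z$ and thus to Hironaka-type theorems, relying only on purity, the valuative criterion, and functoriality of the Brauer group.
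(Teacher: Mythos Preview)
Your proof is correct and follows essentially the same strategy as the paper: reduce to the case of a DVR base $\Spec \O_{X,D}$ and show that $\Br Y_D = \Br Z_D$ inside $\Br K$. The paper dispatches this last step by citing \cite[Prop.~3.7.9]{Brauer}, whereas you unpack that citation, giving the standard valuative-criterion argument directly; the underlying mathematics is the same.
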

\begin{proof}
	To prove the result we may assume that $X= \Spec R$ is the spectrum of a discrete
	valuation ring. The result then follows immediately from Lemma \ref{lem:Sub_formula}
	and 	\cite[Cor.~10.1.13]{Brauer}.
\end{proof}

\begin{lemma} \label{lem:codim_1_finite}
	If the fibre over every codimension $1$ point of $X$ contains
	an irreducible component of multiplicity $1$, then 
	$\Sub(X,f)/\Br X$ is finite.
\end{lemma}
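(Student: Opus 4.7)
The plan is to combine Lemma \ref{lem:Sub_formula} with the purity exact sequence \eqref{seq:purity} for $\Br X$. The latter identifies $\Br X$ as the subgroup of $\Br k(X)$ on which every residue $\res_D$ vanishes, so the total residue map induces an injection
\[
\Sub(X,f)/\Br X \hookrightarrow \bigoplus_{D \in X^{(1)}} \HH^1(k(D), \Q/\Z),
\]
and we need only check that the image lies in a finite subgroup.

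For this I would first argue that only finitely many $D \in X^{(1)}$ can contribute. Since the generic fibre of $f$ is geometrically integral, there is a dense open $U \subseteq X$ such that $f^{-1}(u)$ is geometrically integral for every $u \in U$ (this is a standard constructibility/spreading-out statement, e.g.\ \cite[EGA~IV$_3$, 9.7.7]{} applied to the generic fibre). Geometric integrality entails geometric reducedness, so for any $D \in U^{(1)}$ the fibre over $D$ is a single reduced, geometrically integral component $E$, forcing $m_E = 1$ and $\kappa_E = k(D)$. The condition in Lemma \ref{lem:Sub_formula} at such $D$ becomes $\res_D(b) \in \ker(\mathrm{id}: \HH^1(k(D),\Q/\Z) \to \HH^1(k(D),\Q/\Z))$, i.e.\ $\res_D(b) = 0$. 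The complement $X \setminus U$ is closed, so its divisorial components form a finite set; thus the image of $\Sub(X,f)/\Br X$ is supported on the finite set $(X \setminus U)^{(1)} \subset X^{(1)}$.

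For each such exceptional $D$, the hypothesis supplies an irreducible component $E_D$ of $f^{-1}(D)$ with $m_{E_D} = 1$. By Lemma \ref{lem:Sub_formula}, the $D$-component of the residue of any $b \in \Sub(X,f)$ lies in
\[
\ker\bigl(\mathrm{res}: \HH^1(k(D),\Q/\Z) \to \HH^1(\kappa_{E_D},\Q/\Z)\bigr) \cong \Hom(\Gal(\kappa_{E_D}/k(D)), \Q/\Z).
\]
The scheme $E_D$ is of finite type over $k(D)$, and in any finitely generated field extension the algebraic closure of the base is a finite extension; hence $\kappa_{E_D}/k(D)$ is finite and the displayed kernel is a finite group. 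A finite direct sum of finite groups is finite, which concludes the argument.

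The proof is largely bookkeeping given the formula in Lemma \ref{lem:Sub_formula}; the only nontrivial ingredients are the existence of the dense open of geometrically integral fibres and the finiteness of $\kappa_E/k(D)$, both of which are standard. The mild subtlety to keep in mind is that one must use the multiplicity-one component to kill the factor $m_E$ in the formula, so that the residue is controlled by an ordinary (rather than divisible) kernel of restriction.
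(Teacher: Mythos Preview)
Your argument is correct and follows essentially the same route as the paper: both reduce via the purity sequence and Lemma~\ref{lem:Sub_formula} to showing that residues are supported on finitely many $D$ and land in a finite kernel there. One small imprecision: the identification $\ker(\mathrm{res}) \cong \Hom(\Gal(\kappa_{E_D}/k(D)),\Q/\Z)$ is only literally correct when $\kappa_{E_D}/k(D)$ is Galois; in general the kernel embeds in $\Hom(G_{k(D)}/N,\Q/\Z)$ with $N$ the normal core of $G_{\kappa_{E_D}}$, which is still finite, so your finiteness conclusion stands.
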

\begin{proof}
	For any finite separable field extension $K \subset L$, the kernel
	of the restriction map $\HH^1(K,\Q/\Z) \to \HH^1(L, \Q/\Z)$ is finite.
	Thus by Lemma \ref{lem:Sub_formula}, 
	for any $b \in \Sub(X,f)$ there are only finitely possibilities
	for $\delta_D(b)$ as $D$ varies over the finitely many $D \in X^{(1)}$
	such that $f^{-1}(D)$ is non-split. Since the subgroup 
	$\Br X \subset \Sub(X,f)$ consists of exactly those elements with trivial
	residue at all $D$ by \eqref{seq:purity}, the result follows.
\end{proof}

\begin{remark}
	If there are codimension $1$ fibres with no irreducible component of multiplicity $1$,
	then Definition \ref{def:Sub} is not the correct object (problems of this
	type appear in \cite{BLS22}). One should 
	probably focus on the irreducible components of minimal multiplicity $m_E$
	and ask that $m_E \res_E f^*\alpha$ vanishes instead.
	Brauer groups of this type arise in the study of Campana points; see for example
	\cite[Def.~3.12]{MNS} and \cite[Def.~8.1]{wonderful}.
\end{remark}

\subsection{Families of Brauer--Severi varieties} \label{sec:family_BS}
The subordinate Brauer group can be difficult to calculate in general, as the formula in Lemma \ref{lem:Sub_formula} requires explicit knowledge of a smooth proper model. For families of Brauer--Severi varieties or their products, there is however an alternative formula which is easier to work with.

Let $X$ be a regular integral noetherian scheme over a field $k$ of characteristic $0$ which admits an ample line bundle. Let $U \subseteq X$ be an open subset and $\br \subset \Br U$ a finite subset. We denote by $\langle \br \rangle$ the finite subgroup generated by $\br$. We have the following version of the subordinate Brauer group in this setting.

\begin{definition} \label{def:br_Sub}
	We say that $b \in \Br k(X)$ is \emph{subordinate} to $\br$
	with respect to $X$, if for each $D \in X^{(1)}$ the residue $\res_D(b)$ lies in 
	$\res_D(\langle\br \rangle)$. We let
	$$\Sub(X,\br)=\{ b \in \Br k(X) : \res_D(b) \in \res_D(\langle\br\rangle) \text{ for all } D \in X^{(1)}\},$$
	denote the group of all such elements.
\end{definition}

As $X$ admits an ample line bundle, a theorem of Gabber \cite[Thm.~4.2.1]{Brauer} implies that every element $b \in \Br U$ is the Brauer class of some Brauer--Severi scheme $V_b$  over $U$. (Recall that a Brauer--Severi scheme over $U$ is a scheme over $U$ which is \'etale locally isomorphic to $\P^n_U$ for some $n$.) We take $f:Y \to X$ to be a smooth proper model of the fibre product $\times_{b \in \br} V_b \to U$ over $U$; this exists by Hironaka's theorem on resolution of singularities. Take $V= f^{-1}(U)$. In the statement $\delta_D(f)$ denotes the $\delta$-invariant from \cite{LS16}; we recall the definition in \S \ref{sec:set-up}.

\begin{proposition} \label{prop:calculate_br_Sub}
	In the above setting, the following holds.
	\begin{align}
	&1/|\res_D(\langle\br\rangle)| = \delta_D(f) \text{ for all } D \in X^{(1)}, \label{eqn:res=delta} \\
	&\Sub(X,\br) = \Sub(X,f) \label{eqn:Sub=Sub}.
	\end{align}
\end{proposition}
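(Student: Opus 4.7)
The plan is to work locally at each $D \in X^{(1)}$. The inclusion $\Sub(X,\br) \subseteq \Sub(X,f)$ in \eqref{eqn:Sub=Sub} is purely functorial and requires no model: given $b \in \Sub(X,\br)$, choose $b' \in \langle\br\rangle$ with $\res_D(b') = \res_D(b)$; then $b - b' \in \Br \O_{X,D}$ by purity, so $f^*(b-b') \in \Br f^{-1}(\O_{X,D})$, while $f^*b' = 0$ in $\Br k(Y)$ because each $b_i$ pulls back to zero on $V_{b_i}$ by the defining property of a Brauer--Severi scheme (and $V = f^{-1}(U)$ is just the fibre product over $U$). Hence $f^*b \in \Br f^{-1}(\O_{X,D})$ for every $D$.

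For \eqref{eqn:res=delta} and the reverse inclusion in \eqref{eqn:Sub=Sub}, I would appeal to an explicit smooth proper model, which is legitimate by Lemma \ref{lem:Sub_birational}. Each $V_{b_i}$ admits a smooth proper model over $\O_{X,D}$ whose special fibre is a cycle of $n_i := \mathrm{ord}(\chi_i)$ multiplicity-$1$ geometric components forming a single Galois orbit with stabiliser $\Gal(\overline{k(D)}/L_i)$, where $L_i/k(D)$ is the cyclic extension cut out by $\chi_i := \res_D(b_i)$. After resolving the (codimension $\geq 2$) singularities of the fibre product, the multiplicity-$1$ irreducible components of the special fibre of the resulting smooth model $f: Y \to X$ are strict transforms of products of such factor components, and each therefore has residue field $\kappa_E = M := L_1 \cdots L_r$, since the corresponding geometric pieces have $\Gamma_D$-stabiliser $\Gal(\overline{k(D)}/M)$. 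Exceptional divisors introduced by the resolution come with multiplicity $\geq 2$, with shrunken residue fields compensating in the formula of Lemma \ref{lem:Sub_formula} so as to contribute no further constraints.

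Equation \eqref{eqn:res=delta} then follows from the definition of $\delta_D(f)$ as the Haar density in $\Gamma_D := \Gal(\overline{k(D)}/k(D))$ of elements fixing some multiplicity-$1$ geometric component: by the above analysis this density equals $1/[M:k(D)] = 1/|\langle\chi_1,\ldots,\chi_r\rangle| = 1/|\res_D(\langle\br\rangle)|$. For the remaining inclusion $\Sub(X,f) \subseteq \Sub(X,\br)$, given $b \in \Sub(X,f)$, applying Lemma \ref{lem:Sub_formula} to any multiplicity-$1$ component $E$ forces $\res_D(b)$ to vanish upon restriction to $M$, placing $\res_D(b) \in \ker(\HH^1(k(D),\Q/\Z) \to \HH^1(M,\Q/\Z)) = \Hom(\Gal(M/k(D)),\Q/\Z) = \langle\chi_1,\ldots,\chi_r\rangle = \res_D(\langle\br\rangle)$. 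The main obstacle is the geometric input: producing a smooth projective model in which every multiplicity-$1$ component of the fibre over $D$ has residue field $M$, and checking that exceptional divisors of the resolution always carry enough multiplicity to absorb any smaller residue fields. A clean way to establish this is to first base change to $M$, where each $V_{b_i}$ splits and the whole fibre product extends trivially to a product of projective spaces over $\O_{X,D} \otimes_{k(D)} M$, and then descend equivariantly along the Galois action, tracking the orbits of components through the resolution.
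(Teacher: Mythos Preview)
Your argument for \eqref{eqn:Sub=Sub} is correct and matches the paper's: the inclusion $\Sub(X,\br) \subseteq \Sub(X,f)$ is functorial via Amitsur, and the reverse follows once you exhibit a multiplicity-$1$ component with $\kappa_E = M$, which the strict transforms of the product components provide. (The paper instead works on the regular locus of the unresolved fibre product $\mathcal V = \prod_{b} \mathcal V_b$, using Artin's regular models with integral special fibres, but the content is the same.)

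For \eqref{eqn:res=delta} there is a genuine gap. You need every multiplicity-$1$ component of the special fibre of $Y$ to satisfy $\kappa_E \supseteq M$, and you try to secure this by asserting that all exceptional divisors of the resolution have multiplicity $\ge 2$. You do not prove this, and your proposed route does not work: over the unramified extension $R'/\O_{X,D}$ with residue field $M$ each $b_i$ becomes unramified and extends to a smooth Brauer--Severi scheme over $R'$, but the Galois action on the generic fibre does \emph{not} extend to that model (if it did, descent would produce a smooth proper model of $V_{b_i}$ over $\O_{X,D}$, forcing $\delta_D = 1$). There is, however, a much simpler fix using what you already have. Since $f^*b_i = 0$ in $\Br k(Y)$, the residue formula underlying Lemma~\ref{lem:Sub_formula} gives $m_E \cdot \chi_i|_{\kappa_E} = 0$ for every component $E$; hence any multiplicity-$1$ component automatically has $\chi_i|_{\kappa_E}=0$ for all $i$, i.e.\ $\kappa_E \supseteq M$. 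Combined with the strict transforms (where $\kappa_E = M$ exactly), this forces $\delta_D(f) = 1/[M:k(D)]$ with no control on exceptional multiplicities needed.

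The paper avoids the resolution entirely: it observes that $\mathcal V$ is \emph{almost smooth} in the sense of \cite[Def.~2.1]{Lou18} and invokes \cite[Lem.~3.11]{LS16} to compute $\delta_D(f)$ directly on $\mathcal V$, where the special fibre is just the product of the Artin special fibres and the Galois count is immediate.
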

\begin{proof}
We have to be slightly careful, since the fibre product of smooth proper models of each $V_b$ will not necessarily give a smooth proper model of $\times_{b \in \br} V_b$, e.g.~the fibre product of two conic bundles over $\P^1$ which have a singular fibre over a common point is non-regular. This introduces some technical aspects into our proof.

To prove the result we may assume that $X = \Spec R$ for a discrete valuation ring $R$. We will use some of the arguments and constructions in \cite[\S 2.4, \S2.6]{Lou18}. We first  construct an explicit model $Y$ (this is permissible by Remark \ref{rem:birational_invariant}). Artin \cite[Thm.~1.4]{Art82} has constructed	regular flat proper integral schemes $\mathcal{V}_b \to \Spec R$	whose generic fibres are isomorphic to $V_b$ and whose special fibres are integral, for each $b \in \br$. Frossard \cite[Prop.~2.3]{Fro97} has shown that the algebraic closure of $k$ inside the function field of the special fibre of $\mathcal{V}_b$ is exactly the cyclic field extension $k_b$ of $k$ determined by the residue $\res_D(b)$. We let $K/k$ be the compositum of the $k_b$.  We take $\mathcal{V} = \prod_{b \in \br} \mathcal{V}_{b} \to X$ and take $Y$ to be a desingularisation of $\mathcal{V}$.

Now $\mathcal{V}$ need not be regular; however it is ``almost smooth'' in the terminology of \cite[Def.~2.1]{Lou18}. Explicitly, this means that for a flat discrete valuation ring $R \subseteq R'$ of ramification degree $1$, any $R'$-point of $\mathcal{V}$ lies in the smooth locus of $\mathcal{V}$. By \cite[Lem.~3.11]{LS16} we may calculate $\delta_D(f)$ on an almost smooth model. We thus use  $\mathcal{V}$ and find that
$$\delta_D(f) = \frac{\# \left\{ \gamma \in \Gamma : 
	\begin{array}{l}
		\gamma \text{ fixes an irreducible component} \\
		\text{of } \Spec(\prod_b k_b)
	\end{array}
	\right \}}
	{\# \Gamma } = \frac{1}{\# \Gamma} = \frac{1}{|\res_D(\langle\br\rangle)|}$$
where $\Gamma = \Gal(K/k)$, as required for \eqref{eqn:res=delta}. For \eqref{eqn:Sub=Sub}, it suffices to show that
\begin{equation} \label{eqn:Sub=Sub2}
	\Br Y = \{ f^*\alpha : \alpha \in \Br k(X), \res_D(\alpha) \in \res_D(\langle\br\rangle) \}.
\end{equation}
Our proof is based upon the proof of \cite[Thm.~2.11]{Lou18}. First $\Br Y \subset f^* \Br k(X)$ by a theorem of Amitsur \cite[Thm.~5.4.1]{GS06}. We show the right side of \eqref{eqn:Sub=Sub2} is included in the left. Let $\alpha \in \Br k(X)$ with $\res_D(\alpha) \in \res_D(\langle\br\rangle)$. Given this we may write $\res_D(\alpha) = \sum_{b \in \br} r_b \res_D(b)$ for some $r_b \in \Z$. Then the element $\alpha - \sum_{b \in \br} r_b b$ is unramified on $X$ hence unramified when pulled-back to $Y$. However $f^*b = 0 \in \Br Y$ for all $b \in \br$ by Amitsur \cite[Thm.~5.4.1]{GS06}, thus $f^*\alpha \in \Br Y$ as required. Now let $\alpha \in \Br k(X)$ with $\res_D(\alpha) \notin \res_D(\langle\br\rangle)$. We pull-back $\alpha$ to the regular locus of $\mathcal{V}$ and use functoriality of residues \cite[Prop.~1.4.7]{Brauer}, recalling the above  description of the irreducible components of the special fibre, which all have multiplicity one. This shows that $\alpha$ is ramified on the regular locus of $\mathcal{V}$, hence clearly ramified when pulled-back to $Y$. This shows \eqref{eqn:Sub=Sub2} and completes the proof.
\end{proof}

Formula \eqref{eqn:Sub=Sub} from Proposition \ref{prop:calculate_br_Sub} gives a more useful way than Lemma \ref{lem:Sub_formula} to calculate the subordinate Brauer group for families of Brauer--Severi varieties (e.g.~conic bundles) and their products, since it avoids the need to construct a regular proper model, which can be problematic  (see Lemma \ref{lem:counter-example}).

\subsection{An example}

We give an example where $\BrV(Y/X) \neq f^*\Sub(X,f)$. This concerns families of conics over a surface, and shows that the subordinate Brauer group is a useful geometric invariant for proving non-existence of flat models. It also shows that, even when studying the basic problem of counting the number of conics in a family with a rational point, one \textit{cannot} assume that the family is flat, i.e.~one has to allow fibres of dimension $>1$. This example first appeared in \cite[Ex.~2.9]{Lou18}.

\begin{lemma} \label{lem:counter-example}
	Let $S = \P^1 \times \P^1$ over $\Q$ and let $Y$ be a smooth projective variety
	over $\Q$ with a dominant morphism  $f:Y \to S$  whose generic fibre is isomorphic
	to the conic $t_1^2 + t_2^2 = u_1u_2t_0^2$ over $k(S) = k(u_1,u_2)$. Then
	\begin{enumerate}
		\item $\Sub(S,f)/ \Br \Q \cong (\Z/2\Z)^2$, generated by the quaternion algebras
		$(u_1,-1)$ and $(u_2,-1)$. Thus $f^*\Sub(S,f) / \Br \Q \cong \Z/2\Z$ generated by $(u_1,-1)$.
		\item $\BrV(Y/S) = \Br \Q$, hence $\BrV(Y/S) \neq f^*\Sub(S,f)$
		\item There is no smooth proper variety $Y$ over $\Q$ equipped with a flat morphism
		$\psi: Y \to S$ whose generic fibre is isomorphic to $t_1^2 + t_2^2 = u_1u_2t_0^2$.
	\end{enumerate}
\end{lemma}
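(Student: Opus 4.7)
The plan for (1) is to apply Proposition \ref{prop:calculate_br_Sub} with $\br = \{b\}$, where $b = (u_1u_2,-1) = (u_1,-1) + (u_2,-1) \in \Br k(S)$ is the quaternion class of the generic conic. A direct tame-symbol computation shows that $b$, and hence every element of $\langle b\rangle$, has residue $[\Q(i)/\Q]$ at each of the four divisors $u_i = 0$ and $u_i = \infty$, and trivial residue at all other codimension-one points. So $\Sub(S,f)$ consists of those $\alpha \in \Br k(S)$ whose residues at these four divisors lie in $\{0,[\Q(i)/\Q]\}$ and vanish elsewhere. To pin down the group I would view $\alpha$ as an element of $\Br \Q(u_2)(u_1)$ and apply Faddeev's reciprocity on $\P^1_{\Q(u_2)}$: only $u_1 = 0$ and $u_1 = \infty$ contribute, both residue fields equal $\Q(u_2)$, and the sum-of-corestrictions relation forces $\res_{u_1=0}(\alpha) = \res_{u_1=\infty}(\alpha)$ in $\Z/2$; symmetrically for $u_2$. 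Hence the residue map induces an injection $\Sub(S,f)/\Br S \hookrightarrow (\Z/2)^2$ whose image is realised by $(u_1,-1)$ and $(u_2,-1)$, and together with $\Br S = \Br\Q$ (as $S = \P^1\times\P^1$) this proves the first assertion. For the $f^*$ quotient, Amitsur's theorem identifies $\ker(f^*\colon \Br k(S) \to \Br k(Y))$ with $\langle b\rangle$, so the quotient collapses to $\Z/2$, generated by $f^*(u_1,-1) = f^*(u_2,-1)$; this is nonzero in $\Br k(Y)/\Br\Q$ because neither $(u_1,-1)$ nor $(u_2,-1)$ lies in $\Br\Q + \langle b\rangle$.

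For (2), by (1) and the inclusion \eqref{lem:Vert} it suffices to show that $f^*(u_1,-1) \notin \Br Y$. The plan is to build an explicit smooth projective model and compute a residue at a divisor of $Y$ lying above the codimension-two point $(u_1,u_2) = (0,0)$ of $S$. The naive hypersurface $V = \{t_1^2 + t_2^2 = u_1u_2 t_0^2\} \subset \P^2 \times S$ has a unique singularity at $p = ((1{:}0{:}0),(0,0))$, with local equation $t_1^2 + t_2^2 = u_1u_2$, a conifold (three-dimensional $A_1$) singularity. A single blow-up at $p$ produces a smooth projective model $Y \to V$ whose exceptional divisor is the smooth quadric surface $E = \{T_1^2 + T_2^2 = U_1U_2\} \subset \P^3_\Q$. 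The affine chart $U_2 \neq 0$ of $E$ is parametrised by $T_1, T_2$ (with $U_1 = T_1^2 + T_2^2$), so $k(E) \cong \Q(T_1,T_2)$ is purely transcendental over $\Q$ and $-1$ is not a square in $k(E)$. Since $v_E(u_1) = 1$, the tame symbol yields $\res_E(f^*(u_1,-1)) = [-1] \in H^1(k(E),\Z/2) \setminus \{0\}$, so $f^*(u_1,-1)$ is ramified at $E$ and not in $\Br Y$. Hence $\BrV(Y/S) = \Br\Q$; birational invariance of $\Br$ for smooth proper varieties makes this model-independent.

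For (3), suppose for contradiction that a smooth projective variety $Z$ with a flat morphism $\psi\colon Z \to S$ having the given generic fibre exists. Lemma \ref{lem:flat} gives $\BrV(Z/S) = \psi^*\Sub(S,\psi)$, Lemma \ref{lem:Sub_birational} gives $\Sub(S,\psi) = \Sub(S,f)$, and birational invariance of $\Br$ gives $\BrV(Z/S) = \BrV(Y/S) = \Br\Q$ by (2); combined with the computation $\psi^*\Sub(S,\psi)/\Br\Q \cong \Z/2$ from (1), this is a contradiction. The main technical obstacle is the resolution-and-residue calculation in (2): pinpointing the singular locus of the naive hypersurface, identifying the exceptional divisor of the blow-up as a specific $\Q$-rational quadric, and checking that $\sqrt{-1}$ is not in its function field. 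Everything else flows from formal properties of subordinate Brauer groups, the birational invariance of $\Br$, and Amitsur's theorem.
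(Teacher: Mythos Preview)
Your argument for (1) via Faddeev reciprocity on $\P^1_{\Q(u_2)}$ is correct and slightly different from the paper's: the paper instead exploits that various complements $S \setminus (L_i \cup L_j)$ are isomorphic to $\A^2$ (so have trivial Brauer group modulo constants) to rule out the unwanted residue patterns. Both approaches are short; yours generalises more cleanly to other bases, theirs avoids any appeal to reciprocity. Your argument for (3) is the same as the paper's.

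There is a genuine error in your proof of (2). The ``naive hypersurface'' $V \subset \P^2 \times S$ given by bihomogenising $t_1^2+t_2^2=u_1u_2t_0^2$ to $y_1y_2(t_1^2+t_2^2)=x_1x_2t_0^2$ does \emph{not} have a unique singular point. Over $(u_1,u_2)=(\infty,\infty)$ the fibre is the double line $t_0^2=0$, and $V$ is singular along that entire line; over $(0,\infty)$ and $(\infty,0)$ the equation degenerates to $0=0$ and $V$ acquires further singularities (at $(0{:}1{:}\pm i)$ over $\bar\Q$). So a single blow-up at $p=((1{:}0{:}0),(0,0))$ does not produce a smooth projective model. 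Your residue computation at the exceptional quadric $E$ is correct, and since the other singular loci lie over points of $S$ distinct from $(0,0)$ they are disjoint from $E$; thus the argument is salvaged by further resolving those singularities away from $E$ and invoking birational invariance of $\Br$. But as written the claim is false.

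The paper avoids this difficulty entirely by working on the base rather than the total space: it blows up $S$ at $L_1\cap L_2$, obtaining $\tilde S$ with exceptional divisor $E$ on which $v_E(u_1)=v_E(u_2)=1$. Then $\beta=(u_1u_2,-1)$ is unramified at $E$ while $\beta_1=(u_1,-1)$ is ramified, so $\beta_1\notin\Sub(\tilde S,\tilde f)$; since one may choose the smooth model $Y$ to factor through $\tilde S$ and $\BrV(Y/S)=\BrV(Y/\tilde S)\subseteq\tilde f^*\Sub(\tilde S,\tilde f)$, this forces $f^*\beta_1\notin\BrV(Y/S)$. This sidesteps any explicit resolution of a threefold.
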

\begin{proof}
(1) We will use the formula for the subordinate Brauer group given in Definition \ref{def:br_Sub} (see Proposition \ref{prop:calculate_br_Sub}), since this is much easier to use than Lemma \ref{lem:Sub_formula} which requires an explicit smooth proper model.
We use coordinates $(x_1,y_1) \times (x_2,y_2)$ on $S$, so that $u_i = x_i/y_i$.  Let $\beta = (u_1u_2,-1)$. Then $\beta$ is ramified along the $4$ lines 
$$L_1: x_1= 0, \quad L_2: x_2 = 0, \quad L_3: y_1 = 0, \quad L_4: y_2 = 0$$
each with residue $-1 \in \Q(L_i)^\times/\Q(L_i)^{\times 2}$. Let $\beta_1 = (u_1,-1)$, which is ramified along $L_1$ and $L_3$ with residue $-1$. Let $b \in \Sub(S,f)$. By Proposition \ref{prop:calculate_br_Sub}, we know that $b$ must be ramified along some subset of the $L_i$ with residue $-1$. If $b$ is unramified then $b \in \Br S = \Br \Q$ is constant. If $b$ is ramified along only one of the $L_i$ then $b \in \Br \A^2 = \Br \Q$ is constant; a contradiction. Similarly if $b$ is ramified along at least $3$ of the $L_i$, then $b - \beta$ is constant. So consider the case where $b$ is ramified along exactly $2$ of the $L_i$. Using $S \setminus \{x_1 = 0, x_2 = 0\} \cong S \setminus \{x_1 = 0, y_2 = 0\} \cong S \setminus \{x_2 = 0, y_1 = 0\} \cong S \setminus \{y_1 = 0, y_2 = 0\}\cong  \A^2$ we see that $b$ must be ramified along either $L_1 \cup L_3$ or $L_2 \cup L_4$; this shows that $b$ must differ from one of $\beta_1, \beta$, or $\beta_1 + \beta$ by a constant, as required. When pulling-back to $Y$, the kernel is generated by $\beta$.

(2) Blow-up $S$ in the  intersection of $L_1$ and $L_2$. Let $E$ be the resulting exceptional divisor. Then $u_1$ and $u_2$ both have valuation $1$ along $E$. It follows that $\beta$ is unramified along $E$, but $\beta_1$ is ramified along $E$, thus $f^*\beta_1 \notin \BrV(Y/S)$. The result now follows from (1) and \eqref{lem:Vert}.

(3) Combine (1), (2), and Lemma \ref{lem:flat}.
\end{proof}

\section{The conjecture} \label{sec:conj}

\subsection{Set-up} \label{sec:set-up}
We first provide the setting for the conjecture and introduce some notation.
\begin{definition}
	A smooth projective geometrically integral variety $X$ over a field $k$ is called \emph{weak Fano} if $-K_X$ is nef and big.
\end{definition}

If $k$ has characteristic $0$ any weak Fano variety is rationally connected \cite[Cor. 1.13]{HM07}; in particular $\HH^i(X, \OO_X) = 0$ for all $i> 0 $ and $\Pic X$ is finitely generated torsion free.

Let $X$ be a weak Fano variety over a number field $k$. We denote by $\rho(X) = \rank \Pic X$. Choose an adelic metric $(\| \cdot \|_v)_{v \in \Val(k)}$ on the anticanonical bundle of $X$ and denote the associated height function by  $H$. We let $Y$ be a smooth projective variety and $f: Y \to X$ a dominant morphism with geometrically integral  generic fibre.

\begin{definition}
	For each point
	$x \in X$, we choose some finite group $\Gamma_x$ through which 
	the absolute Galois group $\Gal(\overline{\kappa(x)}/ \kappa(x))$ 
	acts on the irreducible 	components of $f^{-1}(x)_{\overline{\kappa(x)}}:=f^{-1}(x) \otimes_{\kappa(x)} \overline{\kappa(x)}$.
	We define
	$$\delta_x(f) = \frac{\# \left\{ \gamma \in \Gamma_x : 
	\begin{array}{l}
		\gamma \text{ fixes an irreducible component} \\
		\text{of $f^{-1}(x)_{\overline{\kappa(x)}}$ of multiplicity } 1
	\end{array}
	\right \}}
	{\# \Gamma_x }.$$
	Let $X^{(1)}$ denote the set of codimension $1$ points of $X$. Then
	we let
	$$\Delta(f) = \sum_{D \in X^{(1)}} ( 1 - \delta_D(f)).$$
\end{definition}

\begin{definition}
A subset $\Omega \subseteq X(k)$ is called \emph{thin}  if it is
a finite union of subsets which are either contained in a proper closed subvariety of $X$, or contained 
in some $\pi(Y(k))$ where $\pi: Y \to X$ is a generically finite dominant morphism 
of degree exceeding $1$, with $Y$ an integral variety over $k$.
\end{definition}

Our conjecture concerns  the counting function
$$N_{\textrm{loc}}(f,B) = \#\{ x \in X(k) : H(x) \leq B, x \in f(Y(\Adele_k))\}.$$
(Note in the conjecture it may be necessary to remove some thin subset from $X(k)$ to obtain the correct asymptotic formula.)

\begin{remark}
	Previous versions of Manin's conjecture only assumed that $-K_X$ be big, rather than both nef and big.
	However various pathological examples with $-K_X$ big have been found \cite[\S 5.1]{LST22}. It is not
	currently clear what the correct geometric assumptions on $X$ should be to cover all cases of interest
	(for example toric varieties are log Fano, but not necessarily weak Fano).
\end{remark}

\subsection{Peyre's constant}
Our approach is greatly inspired by that of Peyre \cite{Pey95}, who formulated a conjectural expression for the leading constant
in the classical case of Manin's conjecture. Peyre's constant takes the shape
\begin{equation} \label{eqn:Peyre}
	\alpha(X)\beta(X)\tau(X(\Adele_k)^{\Br}).
\end{equation}
Here $\alpha(X)$ is a certain rational number defined in terms of the cone
of effective divisors of $X$, $\beta(X) = \HH^1(k, \Pic \bar{X})$, and $\tau$ is Peyre's Tamagawa measure (the factor $\beta(X)$ first appeared in \cite{BT95} and was given a geometric interpretation in \cite{Sal98}).

\subsection{Effective cone constant}

In \cite[Def.~2.4]{Pey95} Peyre introduced his effective cone constant, denoted by $\alpha(X)$. We require a renormalisation of Peyre's constant; this is because $\alpha(X)$ contains the factor $(\rho(X) - 1)!$ on the denominator, which we will interpret as a special value of the Gamma function and treat separately. This renormalisation already appeared in \cite{BT95}, and we take the definition from there, albeit with different notation.

Let $\Eff(X) \subset \Pic(X)_\R := \Pic(X) \otimes \R$ denote the closure of the cone of effective divisors of $X$. Denote by $\Eff(X)^\wedge$ the dual cone and by $\Pic(X)^\wedge \subset \Pic(X)_\R^\wedge$ the dual lattice. Equip $\Pic(X)_\R^\wedge$ with the Haar measure so that $\Pic(X)^\wedge$ has covolume $1$. We then define
\[
	\alpha^*(X) = \int_{ v \in \Eff(X)^\wedge} e^{-\langle -K_X, v\rangle} \mathrm{d}v.
\]
By \cite[Rem.~2.4.8]{BT95}, we have
\begin{equation} \label{eqn:factorial}
	\alpha^*(X) = \alpha(X)  / (\rho(X) - 1)! \,\, .
\end{equation}
One advantage of this definition is that it is compatible with products, i.e.~$\alpha^*(X_1 \times X_2) = \alpha^*(X_1) \alpha^*(X_2),$
as follows from \eqref{eqn:factorial} and \cite[Lem.~4.2]{Pey95}.

\subsection{A modified Fujita invariant}
Let $\Eff(X)$ denote the pseudo-effective cone of $X$, i.e.~the closure of the cone of effective divisors in $\Pic(X) \otimes \R$.
For a $\Q$-divisor $D$ on $X$, recall that the \textit{Fujita invarian}t of $D$ is defined to be
$$a(D) = \inf\{t > 0 : K_X + tD \in \Eff(X) \}.$$
This occurs in the Batyrev--Manin conjecture as the exponent of $B$ when counting with respect to height functions associated to $D$ (see \cite[Conj.~3.2]{BM90}).

We require a variant of this. For a $\Q$-divisor $D$ on $X$ we define
\begin{equation} \label{def:modified_Fujita}
	\eta(D) =  \sup\{t > 0 : -K_X - tD \in \Eff(X) \}.
\end{equation}
If $\rho(X) = 1$ then $a(D) = \eta(D)$. However, despite the two definitions being superficially similar, there does not seem to be a simple relationship between $a(D)$ and $\eta(D)$ in general. For example $\eta(D)> 0$ for any effective divisor, but $a(D) > 0$ only if $D$ is big. We require the following formula for $\eta(D)$.

\begin{lemma} \label{lem:simplicial}
	Let $X$ be a weak Fano variety. Assume that $\Eff(X)$
	is simplicial with generators $D_1,\dots,D_\rho$. Write 
	$-K_X= a_1 D_1 + \dots + a_\rho D_\rho$ with $a_i \in \R.$ Then $\eta(D_i) = a_i$.
\end{lemma}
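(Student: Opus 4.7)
The plan is to reduce the statement to a direct linear-algebra computation inside $\Pic(X)_\R$, using that simpliciality of $\Eff(X)$ gives a unique basis expansion.

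Since $\Eff(X)$ is full-dimensional in $\Pic(X)_\R$ (as $-K_X$ is big, so the interior is non-empty), the hypothesis that the cone is simplicial with generators $D_1,\dots,D_\rho$ means precisely that $D_1,\dots,D_\rho$ form an $\R$-basis of $\Pic(X)_\R$, and that a class $C$ lies in $\Eff(X)$ if and only if all of its coordinates in this basis are non-negative. In particular the expansion $-K_X = a_1 D_1 + \dots + a_\rho D_\rho$ is unique, and since $-K_X$ is big it lies in the interior of $\Eff(X)$, so $a_i > 0$ for every $i$.

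Next I would simply compute, for a fixed index $i$,
\[
-K_X - tD_i \;=\; \sum_{j\neq i} a_j D_j \;+\; (a_i - t)\, D_i.
\]
By the basis criterion above, this class lies in $\Eff(X)$ if and only if $a_j \geq 0$ for all $j \neq i$ and $a_i - t \geq 0$; the first condition is automatic, so the constraint collapses to $t \leq a_i$. Taking the supremum over $t > 0$ therefore gives $\eta(D_i) = a_i$, which (since $a_i > 0$) is actually attained at $t = a_i$.

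The only potentially subtle point is the identification ``class in $\Eff(X)$ iff all basis coefficients are non-negative'', which is not automatic for an arbitrary spanning set but is exactly what simpliciality of a full-dimensional rational polyhedral cone gives; this is where the hypothesis is used in an essential way. Beyond that observation the argument is purely formal, and there is no real obstacle.
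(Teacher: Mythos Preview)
Your proof is correct and follows essentially the same approach as the paper: both expand $-K_X - tD_i$ in the basis $D_1,\dots,D_\rho$ and use that simpliciality forces membership in $\Eff(X)$ to be equivalent to non-negativity of all coordinates, whence the supremum defining $\eta(D_i)$ is exactly $a_i$. The only minor difference is that you invoke bigness of $-K_X$ to get $a_i>0$ (interior of the cone), while the paper only records $a_i\geq 0$ from effectivity; your observation is slightly sharper but both versions suffice.
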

\begin{proof}
	First note that as $-K_X$ is effective and $\Eff(X)$ is simplicial we must have $a_i \geq 0$.
	Next, without loss of generality we have $i = 1$. Then 
	$$-K_X - tD = (a_1 - t)D_1 + a_2 D_2 + \dots +a_\rho D_\rho.$$
	However as $\Eff(X)$ is simplicial $ -\varepsilon D_1 + a_2 D_2 + \dots + a_\rho D_\rho$
	is not effective for any $\varepsilon > 0$. The result follows.
\end{proof}

For example, recalling that $\omega_{\P^n}^{-1} = \O_{\P^n}(n + 1)$, we have $\eta(\O_{\P^n}(d)) = (n+1)/d$.

\subsection{Virtual Artin $L$-functions} 
We  shall use the formalism of \textit{virtual Artin $L$-functions}.
A virtual Artin representation over $k$ is a formal finite sum
$V = \sum_{i=1}^n z_iV_i$
where $z_i \in \CC$ and the $V_i$ are Artin representations of $G_k = \Gal(\bar{k}/k)$.
We define $\dim V = \sum_{i=1}^n z_i \dim V_i$
and let $V^{G_k}=\sum_{i=1}^n z_i V_i^{G_k}$.
The $L$-function of $V$ is defined to be
$$L(V,s)=\prod_{i=1}^n L(V_i,s)^{z_i},$$
where $L(V_i,s)$ is the usual Artin $L$-function associated to $V_i$. 
Standard properties of Artin $L$-functions imply that $L(V,s)$ admits a holomorphic
continuation with no zeros to the region $\re s \geq 1$, apart from possibly at $s=1$.
We have
$$L(V,s) = \frac{c_V}{(s-1)^{r}} + O\left(\frac{1}{(s-1)^{r-1}}\right),$$
as $s \to 1$, where $r = \dim V^{G_F}$ and $c_V \neq 0$.
In this notation we shall write
$$L^*(V,1) = c_V.$$
Denote by $L_v(V,s)$ the corresponding local Euler factor at a non-archimedean place $v$.

\subsection{Tamagawa measures}
We now define our Tamagawa measure. The first step is the same as in Peyre's paper \cite[\S 2.2]{Pey95}, but the key difference is that we require different convergence factors. We choose Haar measures on $k_v$ such that $\O_v$ has measure $1$ for almost all $v$ and such that the induced adelic measure satisfies $\vol(\Adele_k/k) = 1$.
We let $\tau_v$ denote Peyre's local Tamagawa measure associated to our choice of adelic metric $\| \cdot \|_v$ and Haar measure on $k_v$. To define the convergence factors, consider the following virtual Artin representation
\begin{equation}\label{def:Pic_br}
	\Pic_f(\overline{X})_\CC=\Pic(\overline{X})_\CC
	- \sum_{D \in X^{(1)}}\left(1-\delta_D(f) \right)\Ind_{k_D}^k \CC.
\end{equation}
Here $\Pic(\overline{X})_\CC = \Pic(\overline{X}) \otimes_\ZZ \CC$, $\Ind$ denotes the induced
Galois representation, and $k_D$ denotes the algebraic closure of $k$ in $\kappa(D)$. 
Next let $\Pic_f(X)_\CC= \Pic_f(\overline{X})^{G_F}_\CC$.
The corresponding virtual Artin $L$-function is
\begin{equation} \label{eqn:L-function}
	L(\Pic_f(\overline{X})_\CC,s)= 
	\frac{L(\Pic(\overline{X})_{\CC},s)}{\prod_{D \in X^{(1)}} \zeta_{k_D}(s)^{1-\delta_D(f)}}
\end{equation}
For each place $v \in \Val(k)$ we define
$$ \lambda_v = \left \{
	\begin{array}{ll}
		L_v(\Pic_f(\overline{X})_\CC,1),& \quad v \text{ non-archimedean}, \\
		1,& \quad v \text{ archimedean}. \\
	\end{array}\right.$$	
Our Tamagawa measure is now defined to be
\begin{equation} \label{def:Tamagawa}
	\tau_f = L^*(\Pic_f(\overline{X})_\CC,1)\prod_v \lambda_v^{-1} \tau_v .
\end{equation}
We have not included a discriminant factor as in Peyre \cite[Def.~2.1]{Pey95}, since we have normalised our Haar measures so that $\vol(\Adele_k/k) = 1$. 
We now show that these $\lambda_v$ are indeed a family of convergence factors in our case.

\begin{theorem} \label{thm:Tamagawa_product}
	The infinite product measure $\prod_v \lambda_v^{-1} \tau_v $
	converges on $f(Y(\Adele_k)) \subseteq X(\Adele_k)$. 
\end{theorem}
\begin{proof}
	It suffices to show that the product
	$\prod_v \tau_v(f(Y(k_v)))\lambda_v^{-1}$
	converges. Recalling \eqref{eqn:L-function}, we can rewrite this as
	$$\prod_v \frac{\tau_v(X(k_v))}{L_v(\Pic(\overline{X})_{\CC},1)}
	 \prod_v \left(1 - \frac{\tau_v(X(k_v) \setminus f(Y(k_v))}{\tau_v(X(k_v))} \right)
	 \prod_{D \in X^{(1)}} \zeta_{k_D,v}(1)^{1-\delta_D(f)}.$$	
	The first Euler product is convergent by a result of Peyre \cite[Prop.~2.2.2]{Pey95}.
	So it suffices to consider the second Euler product.
	To do so, we choose a model $f: \mathcal{Y} \to \mathcal{X}$ 
	for the morphism $f$ over the ring of integers $\O_k$ of $k$. 
	We choose a sufficiently large set of primes $S$ of $k$ and let
	$\fp \notin S$. We first note that 
	\begin{equation} \label{eqn:non-split_1}
		X(k_\fp) \setminus f(Y(k_\fp)) \subseteq \{ x \in \mathcal{X}(k_\fp) : 
		f^{-1}(x) \bmod \fp \text{ is non-split} \}.
	\end{equation}
	Indeed if $f^{-1}(x) \bmod \fp$ were split then $f^{-1}(x) \bmod \fp$ would have 
	a smooth $\F_\fp$-point by the Lang--Weil estimates, providing $S$ is sufficiently
	large, which would give rise to a $k_\fp$-point by Hensel's lemma. We now show
	a partial reverse inclusion using \cite[Thm~2.8]{LS16}.
	This says the following: Let $T \subset X$ be a reduced divisor which contains
	the non-smooth locus of $f$ and $\mathcal{T}$ its closure in $\mathcal{X}$. Then
	there exists a closed subset $\mathcal{Z} \subset \mathcal{T}$ of codimension $2$ in 
	$\mathcal{X}$ which contains	the singular locus of $\mathcal{T}$ such that
	\begin{equation} \label{eqn:non-split_2}
		\left\{ x \in \mathcal{X}(k_\fp) : 
		\begin{array}{l}
			f^{-1}(x) \bmod \fp \text{ is non-split},  \\
			x \bmod \fp^2  \text{ meets } \mathcal{T} \text{ transversely 
			outside of } \mathcal{Z}
		\end{array} \right\} \subseteq X(k_\fp) \setminus f(Y(k_\fp)).
	\end{equation}
	Combining \eqref{eqn:non-split_1} and \eqref{eqn:non-split_2}
	with  formulae for Tamagawa measures \cite[Lem.~3.2]{BL19} yields
	\begin{align*}
	& \frac{1}{\Norm \fp^{2n}}\#\left\{ x \in \mathcal{X}(\O_k / \fp^2) : 
		\begin{array}{l}
			f^{-1}(x) \bmod \fp \text{ is non-split},  \\
			x \bmod \fp^2  \text{ meets } \mathcal{T} \text{ transversely 
			outside of } \mathcal{Z} 
		\end{array} \right\}  \\
	& \leq \tau_v(X(k_v) \setminus f(Y(k_v)))
	\leq \frac{\#\{ x \in \mathcal{X}(\F_\fp) : f^{-1}(x) \text{ is non-split} \}}{\Norm \fp^n},
	\end{align*}
	where $n = \dim X$. However by \cite[Prop.~2.3]{BL19} and the Lang-Weil
	estimates we have
	\begin{align*}
	& \#\left\{ x \in \mathcal{X}(\O_k / \fp^2) : 
		\begin{array}{l}
			f^{-1}(x) \bmod \fp \text{ is non-split},  \\
			x \bmod \fp^2  \text{ meets } \mathcal{T} \text{ transversely 
			outside of } \mathcal{Z} 
		\end{array} \right\} \\
	& = (\Norm \fp^n + O(\Norm \fp^{n-1})\#\{ x \in \mathcal{X}(\F_\fp) : f^{-1}(x) \text{ is non-split},
	x \in \mathcal{T} \setminus \mathcal{Z}\} \\
	& =  \Norm \fp^n \#\{ x \in \mathcal{X}(\F_\fp) : f^{-1}(x) \text{ is non-split} \}
	+ O(\Norm \fp^{2n - 2}).
	\end{align*}
	Altogether this shows that 
	$$\tau_v(X(k_v) \setminus f(Y(k_v)))
	=  \frac{\#\{ x \in \mathcal{X}(\F_\fp) : f^{-1}(x) \text{ is non-split} \}}{\Norm \fp^n}
	+ O\left(\frac{1}{\Norm \fp^{n-2}}\right).$$
	Using $\tau_\fp(X(k_\fp)) = \#\mathcal{X}(\F_\fp)/\Norm \fp^n = 1 + O(1/\Norm \fp)$, 
	we have reduced	to proving that
	\begin{equation} \label{eqn:Euler_split}
		\prod_{\fp \notin S} 
		\left(1 - \frac{\#\{ x \in \mathcal{X}(\F_\fp) : f^{-1}(x) \text{ is non-split} \}}{\Norm \fp^n} \right)
	 	\prod_{D \in X^{(1)}} \zeta_{k_D,\fp}(1)^{1-\delta_D(f)}
	\end{equation}
	converges. To do so we use that if $\sum_{k =1}^\infty a_k$ converges then
	$\prod_{k = 1}^\infty (1+a_k)$ converges. 
	For a finite field extension $k \subset K$,	write
	$$\zeta_{K,\fp}(1) = 1 + a_{K,\fp}/ (\Norm \fp) + O(1/ \Norm \fp^2).$$
	Then the prime ideal theorem implies that 
	$$\sum_{\Norm \fp \leq X} \frac{a_{K,\fp}}{\Norm \fp} = \log \log X + O(1)$$
	(Mertens for number fields).
	We conclude that 
	$$\sum_{D \in X^{(1)}} \sum_{\Norm \fp \leq X} (1 - \zeta_{k_D,\fp}(1)^{1-\delta_D(f)})
	= \sum_{D \in X^{(1)}} (1 - \delta_D(f))\log \log X + O(1) =
	\Delta(f) \log \log X + O(1).$$
	However it follows from \cite[Prop.~3.10]{LS16} that
	$$\sum_{\Norm \fp \leq X}
	\#\{ x \in \mathcal{X}(\F_\fp) : f^{-1}(x) \text{ is non-split} \}
	= \Delta(f) \sum_{\Norm \fp \leq X} \Norm \fp^{n-1} + O(X^n/ (\log X)^2).$$
	(The result \emph{loc.~cit.} is stated without an error term, but a minor modification
	of the argument gives the stated error term on using Serre's version of the 
	Chebotarev density theorem \cite[Thm.~9.11]{Ser12}).
	An easy application of partial summation and the prime ideal theorem then shows 
	that
	$$\sum_{\Norm \fp \leq X}
	\frac{\#\{ x \in \mathcal{X}(\F_\fp) : f^{-1}(x) \text{ is non-split} \}}{\Norm \fp^n}
	= \Delta(f) \log \log X + O(1).$$
	Thus the leading terms in the asymptotics cancel each other out, which shows
	convergence of \eqref{eqn:Euler_split}, as required.	
\end{proof}

\begin{remark}
	The product in Theorem \ref{thm:Tamagawa_product} may be only
	conditionally convergent: for an example, see Proposition \ref{prop:easy}. 
	This is in contrast to Peyre's Tamagawa measure in Manin's conjecture, 
	which is always absolutely convergent.	
\end{remark}

\subsection{The conjecture}
We take the set-up of  \S \ref{sec:set-up}. Let $U \subseteq X$ be the open subset of $X$ given by removing the closure of those codimension $1$ points of $X$ which lie below a non-split fibre. We take $V= f^{-1}(U)$.

\begin{conjecture} \label{conj:main}
	Assume that $\Br X = \Br_1 X$ and that $f$ admits a smooth fibre over a rational
	point which is everywhere locally soluble. 
	Assume that the fibre over every codimension $1$ point of $X$
	contains an irreducible component of multiplicity $1$. 
	Assume that either $\rho(X) = 1$ or $k[U]^\times =k^\times$.
	Then there exists a thin subset $\Omega \subset X(k)$ such that
	$$\#\{ x \in X(k) \cap f(Y(\Adele_k)) : H(x) \leq B, x \notin \Omega \} 
	\sim c_{f,H}B(\log B)^{\rho(X)  - \Delta(f) - 1}$$
	where
	\begin{align*}
		c_{f,H} &= \frac{\alpha^*(X) \cdot |\Sub(X,f)/\Br k| \cdot 
		\tau_f((\prod_v f(V(k_v)))^{\Sub(X,f)})}
	{\Gamma(X,f)} \cdot  
	\prod_{D \in X^{(1)}}\eta(D)^{1- \delta_D(f)}, \\
	\Gamma(X,f) &= 
	\begin{cases}
		\prod_{D \in X^{(1)}}\Gamma(\delta_D(f)) , & \text{if } \rho(X) = 1, \\
		\Gamma( \rho(X) - \Delta(f)), & \text{if }k[U]^\times = k^\times.
	\end{cases}
	\end{align*}
\end{conjecture}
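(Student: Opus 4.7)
The plan is to approach Conjecture \ref{conj:main} case by case rather than monolithically, since it is a broad unifying statement; the goal is to isolate the analytic mechanism producing each factor of the constant $c_{f,H}$. First I would establish the order of magnitude $B(\log B)^{\rho(X)-\Delta(f)-1}$ via a two-step sieve: apply an Ekedahl-type sieve as in \cite{LS16} at each non-archimedean place to handle the indicator $x \in f(Y(\Adele_k))$, which produces the factor $(\log B)^{-\Delta(f)}$ through a Chebotarev-type accounting of non-split fibres over codimension-one points; then couple this with Manin-type counting on $X$ as in Peyre \cite{Pey95} or Batyrev--Tschinkel \cite{BT95}, yielding the remaining $B(\log B)^{\rho(X)-1}$.

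Next I would identify the leading constant. Rewriting the counting function as an adelic integral using the Brauer--Manin pairing for $\Sub(X,f)$, one expects a formal identity
\[
N_{\mathrm{loc}}(f,B) \sim \int_{X(\Adele_k)^{\Sub(X,f)}} \mathbf{1}_{H(x)\leq B}\,\mathbf{1}_{x\in f(Y(\Adele_k))}\, d\tau_f,
\]
and a suitable Tauberian theorem applied to the associated height zeta function should extract the asymptotic. The Tamagawa factor $\tau_f(f(V(\Adele_k))^{\Sub(X,f)})$ and the order $|\Sub(X,f)/\Br k|$ emerge directly from this reformulation, and the factor $\theta(X)$ from integration over the dual effective cone. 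In the case $\rho(X)=1$, each non-split fibre over a divisor $D$ should contribute its own Gamma factor from an integral of the shape $\int_0^{\eta(D)} t^{\delta_D(f)-1}\, dt$, producing both $\prod_D \Gamma(\delta_D(f))$ and $\prod_D \eta(D)^{1-\delta_D(f)}$ simultaneously. In the case $k[U]^\times = k^\times$, a Delange-type Tauberian theorem applied to a single-variable Dirichlet series produces the unique Gamma factor $\Gamma(\rho(X)-\Delta(f))$, and the $\eta(D)$ factors arise from the change-of-height scaling against the extremal rays of $\Eff(X)$.

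The hard part is threefold. First, the convergence factors defining $\tau_f$ give only conditionally convergent Euler products (see the remark after Theorem \ref{thm:Tamagawa_product}), so one cannot rely on absolute estimates and must prove cancellation directly; this already requires the bilinear-form techniques developed for Theorem \ref{thm:main}. Second, for families with transcendental Brauer classes, computing $\Sub(X,f)$ and verifying that the Brauer--Manin obstruction captures all obstructions to local solubility requires ad-hoc geometric arguments with no uniform template. Third, as the authors emphasise, the predicted product of Gamma values is genuinely new and no multivariate Tauberian theorem of sufficient generality currently exists; constructing one for general Dirichlet series is a major undertaking in its own right. Consequently I would first verify the conjecture in the cases (i)--(v) already listed to stress-test the formulation, then attempt a genuinely new instance such as Fermat curves or conic bundles over $\P^n$ with $n \geq 2$, where existing sieve and Hardy--Littlewood techniques might plausibly be adapted to yield another multi-Gamma instance beyond Theorem \ref{thm:main}.
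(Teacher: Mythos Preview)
The statement you are addressing is a \emph{conjecture}, not a theorem; the paper does not provide a proof of it in general and does not claim to. What the paper does is formulate the conjecture and then, in \S\ref{sec:verify}, verify it in the five special cases (i)--(v) listed in the introduction, with the new case (v) being the content of Theorem~\ref{thm:main}. So there is no ``paper's own proof'' to compare against.

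Your proposal is therefore not a proof but a research programme, and you are evidently aware of this: your final paragraph explicitly falls back on verifying cases (i)--(v) and then attempting a new instance. That is precisely the paper's own strategy. The earlier paragraphs sketch heuristics for why each factor of $c_{f,H}$ should appear, and these heuristics are reasonable as motivation, but several steps are not currently available as theorems. In particular: the ``two-step sieve'' combining an Ekedahl-type sieve with Manin-type counting does not exist in a form that produces an asymptotic with the correct constant when $\Delta(f)>0$ (it gives only order-of-magnitude bounds in \cite{LS16}); the formal adelic identity you write down has no rigorous Tauberian justification in the required generality, as you yourself note; and the mechanism you propose for producing the product $\prod_D \Gamma(\delta_D(f))$ from integrals $\int_0^{\eta(D)} t^{\delta_D(f)-1}\,dt$ is suggestive but does not correspond to any existing multivariate Tauberian theorem. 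These are exactly the obstacles the paper identifies as open.

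In short: your proposal correctly diagnoses the structure and the difficulties, and your concluding plan coincides with what the paper actually does, but nothing here constitutes a proof of the conjecture, nor does the paper contain one.
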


Note that  $\Sub(X,f)/\Br k$ is finite by Lemma \ref{lem:codim_1_finite} and the finiteness of $\Br X/\Br k$. The key case in the paper is $X = \P^n$, which satisfies all the above hypotheses.

\begin{remark}
	When $f:X \to X$ is the identity, one recovers Peyre's constant \eqref{eqn:Peyre}
	on using \eqref{eqn:factorial} and 
	$|\Sub(X,f)/\Br k| = |\Br X/ \Br k| = \HH^1(k, \Pic \bar{X})$  since $\Br X = \Br_1 X$.
\end{remark}

\begin{remark}
	When there is a product of Gamma factors it seems highly doubtful that standard Tauberian techniques (for example, Delange's Tauberian theorem \cite{Del54}) can be used, since these only give a single Gamma factor. We suspect that some clever use of multiple Dirichlet series will be required if trying to use height zeta function methods.
\end{remark}

\begin{remark} \label{rem:birational_invariant}
	Conjecture \ref{conj:main} only depends on $X$ and the generic fibre of $f$,
	i.e.~is a birational invariant
	(providing $Y$ is smooth and projective). The birational
	invariance of $\Sub(X,f)$ is Lemma \ref{lem:Sub_birational}, and the birational
	invariance of $\delta_D(f)$ is \cite[Lem.~3.11]{LS16}.
\end{remark}

\begin{remark}
	One could interpret the factors $\eta(D)^{1- \delta_D(f)}$ measure-theoretically
	and  combined with the effective cone constant $\alpha^*(X)$, as the volume of a certain
	integral on the virtual Picard group $\Pic_f(X)_\R$ (see \eqref{def:Pic_br}).
\end{remark}


\subsubsection{The thin set $\Omega$} \label{sec:thin_set}
In the case where $X = \P^n$, which is the primary case of interest in the paper, no thin set $\Omega$ is required. Indeed here a theorem of Cohen \cite[Thm.~13.3]{Ser97M} implies that any thin set contains $O_\varepsilon(B^{1 - 1/2(n+1) + \varepsilon})$ points of anticanonical height at most $B$ for any $\varepsilon > 0$, which is negligible for the conjecture.

For more general $X$, a thin set is required for Manin's conjecture hence certainly for us (even if $\rho(X) = 1$ there can be Zariski dense accumulating thin sets in Manin's conjecture; for example lines in intersections of two quadrics \cite[Ex.~3.6]{BL17}). In a series of works \cite{LT17,LST22} tools from birational geometry were used to study the exceptional set in Manin's conjecture and obtain an explicit description. It would be interesting to see whether such tools can be used to predict what shape $\Omega$ should take in our case. (This would require, for example, a version of Conjecture \ref{conj:main} for arbitrary big and nef line bundles, rather than just the anticanonical bundle.)

Let us emphasise that there is indeed new behaviour which needs to be controlled. In a surprising work, Browning--Lyczak--Sarapin \cite{arXiv:2203.06881} considered a fibration over a quadric surface $X$. Despite no thin set being required here for Manin's conjecture, there is a thin set which dominates $N_{\textrm{loc}}(f,B)$. This demonstrates that the thin set $\Omega$  depends on both $f$ and $X$ in general, rather than just $X$. This fibration over a quadric is not 
 covered by our conjecture as it fails the assumption \eqref{eqn:assump_geometric}; in a tour-de-force
 Wilson~\cite{wilson1} recently proved       an asymptotic of the shape $c B(\log \log B)/(\log B)$  for some $c>0$ 
when this thin set is removed.

\subsubsection{Volume of the Brauer--Manin set} \label{sec:volume_BM}
We discuss in detail the Tamagawa measure $\tau_f((\prod_v f(V(k_v)))^{\Sub(X,f)})$ which appears, as the expression is somewhat subtle (similar phenomena occur in the case of Campana points, see \cite[\S8]{wonderful}). We have $\Sub(X,F) \subset \Br U$. The Brauer--Manin pairing for $\Br U$ is well-defined on $U(\Adele_k)$. However it is not difficult to see that $\tau_f(f(V(\Adele_k))) = 0$; this is because the convergence factors force the infinite product measure to diverge to $0$. The Brauer--Manin pairing is not well-defined on $\prod_v U(k_v)$ in general, since the local invariant can be non-constant at infinitely many places. Thus the notation $(\prod_v f(V(k_v)))^{\Sub(X,f)}$ is not well-defined.

We interpret this measure as follows. For any finite set of places $S$ of $k$ we consider the pairing
$$\prod_{v \in S} U(k_v) \times \Br U \to \Q/\Z, \quad ((u_v),b) \mapsto \sum_{v \in S} \inv_vb(u_v).$$
For a subset $W \subset \prod_{v \in S} U(k_v)$ and subset $\br \subset \Br U$ we denote by $W^{\br}$ the orthogonal complement to $\br$ on $W$. Then the term which appears in the leading constant is defined via the following abuse of notation, which serves as a convenient shorthand:
\begin{equation} \label{def:tau_f_Sub}
\tau_f((\prod_v f(V(k_v)))^{\Sub(X,f)}):=L^*(\Pic_f(\overline{X})_\CC,1)\lim_S \prod_{v \in S} \lambda_v^{-1} \tau_v
		((\prod_{v \in S}f(V(k_v)))^{\Sub(X,f)})
\end{equation}
where the limit is over all finite sets of places $S$ of $k$.
This is quite complicated to calculate directly. However, we give a simpler formula in terms of a finite sum of Euler products, which moreover shows that the limit over $S$ exists. We use the map $\Q/\Z \to \C^\times, x \mapsto e^{2 \pi i x}$.

\begin{lemma} \label{lem:sum_Euler_products}
	For each $b \in \Sub(X,f)$ the Euler product
	\begin{align*}
	\hat{\tau}_f(b) &:= \L^*(\Pic_f(\overline{X})_\CC,1)
	\prod_v \lambda_v^{-1}\int_{f(V(k_v))} e^{2 \pi i \inv_v b(x_v) } d\tau_{v}(x_v)
	\end{align*}
	exists. 
	Let $\br$ be a finite group of representatives	of $\Sub(X,f)/ \Br k$.
	Then
	$$
		|\Sub(X,f)/\Br k| \cdot \tau_f((\prod_v f(V(k_v)))^{\Sub(X,f)}) = 
		\sum_{b \in \br} \hat{\tau}_f(b).
	$$
	In particular $\tau_f((\prod_v f(V(k_v)))^{\Sub(X,f)})$ exists.
\end{lemma}
\begin{proof}
	The existence of $\hat{\tau}_f(b)$ follows from the bound $|e^{2 \pi i \inv_v b(x_v) }| \leq 1$
	and Theorem \ref{thm:Tamagawa_product}.
	For the second part, the map
	$$\br \to \C^\times, \quad b \mapsto 		e^{2 \pi i \left(\sum_{v \in S}\inv_v b(x_v)\right) }$$
	is a character. Hence character orthogonality implies that
	$$|\br|\prod_{v\in S}\tau_v
		((\prod_{v \in S}f(V(k_v)))^{\Sub(X,f)})
		=\int_{ \prod_{v \in S} f(V(k_v))}
		\sum_{b \in \br}	e^{2 \pi i \left(\sum_{v \in S}\inv_v b(x_v)\right) } \prod_{v \in S} d\tau_{v}(x_v).$$
	Changing the order of summation, applying Fubini's theorem, and taking the limit over $S$
	therefore gives the result.
\end{proof}

The Euler factors of $\hat{\tau}_f(b)$ can be different from those of $\hat{\tau}_f(0)$ at infinitely many places. However in the following special case they differ at only finitely many places.

\begin{lemma} \label{lem:calculate_Tamagawa}
	Let $b \in \Sub(X,f)$ and assume that $f^*b \in \Br Y$.
	Let $S$ be a finite set of places such that $f^*b$ evaluates trivially on $Y(k_v)$
	for $v \notin S$.
	Then for all $v \notin S$ we have
	$$\int_{f(V(k_v))} e^{2 \pi i \inv_v b(x_v) } d\tau_{v}(x_v) = \tau_v(f(Y(k_v))).$$
\end{lemma}
\begin{proof}
	The existence of $S$ follows from $f^*b \in \Br Y$.
	The result then follows from the fact that the boundary $f(Y(k_v)) \setminus f(V(k_v))$ 
	has measure zero since it is supported
	on a proper closed subscheme.
\end{proof}

Being defined by a complicated limit, and also a sum of finitely many complex-valued Euler products, it is not immediate that our measure is non-zero. We verify this now.

\begin{proposition}
	$\tau_f((\prod_v f(V(k_v)))^{\Sub(X,f)}) \neq 0$ in the setting of Conjecture \ref{conj:main}.
\end{proposition}
\begin{proof}
	We show that the measure is non-zero by constructing an explicit subset 
	upon which the Tamagawa measure is given by a convergent non-zero product.
	Let $u \in U(k)$ be a rational point below a smooth everywhere locally
	soluble fibre. Let $\br$ be a group of representatives
	for the elements of $\Sub(X,f)/\Br k$ which evaluate trivially at $u$. 
	Let $S_0$ be a sufficiently large finite set of places of $k$. By continuity
	of the local Brauer--Manin pairing and the implicit function theorem,
	there is an open subset
	$u \in W_v \subset f(V(k_v))$ such that every $b \in \br$ evaluates trivially
	on $W_v$ for all $v \in S_0$. 
	
	For $v \notin S_0$ we use 
	some of the techniques from the proof of Theorem \ref{thm:Tamagawa_product}.	
	Choose a model $f: \mathcal{Y} \to \mathcal{X}$ 
	for the morphism $f$ over the ring of integers $\O_k$ of $k$. 
	Let $T \subset X$ be a reduced divisor which contains
	the non-smooth locus of $f$ and $\mathcal{T}$ its closure in $\mathcal{X}$.
	Let $Z \subset T$ be a sufficiently large codimension $2$ subset of $X$ which 
	contains the non-flat
	locus of $f$ and the non-smooth locus of $\mathcal{T}$.
	Denote by $\mathcal{Z}$ its closure in $\mathcal{T}$.
	For $\fp \notin S_0$ we set
	$$
		W_\fp = \{ x \in U(k_\fp) : 
			f^{-1}(x) \bmod \fp \text{ is split and if }\x \bmod \fp \in 
			\mathcal{T} \text{ then } 
			x \bmod \fp \notin \mathcal{Z}\}.
	$$
	We first claim that	
	\begin{equation} \label{eqn:W_v}
		\prod_{v \in S} W_v \subset 	(\prod_{v \in S}f(V(k_v)))^{\br}.
	\end{equation}
	For $v \in S_0$ this is by construction. For $\fp \notin S_0$,
	let $x_\fp \in W_\fp$. Providing $S_0$ is sufficiently
	large, the Lang--Weil estimates and
	Hensel's Lemma imply that
	there is $y_\fp \in V(k_\fp)$ such that $f(y_\fp) = x_\fp$
	and such that if $y_\fp \bmod \fp \in f^{-1}(\mathcal{T})$,
	then $y_\fp \bmod \fp$ lies in a smooth point of
	an irreducible component of $f^{-1}(\mathcal{T}\setminus \mathcal{Z})$
	of multiplicity $1$. We have $b(x_\fp) = (f^*b)(y_\fp)$ for and all $b \in \br$. However 
	let $E \subset f^{-1}(T \setminus Z)$ be the union of the smooth
	loci of the irreducible components of multiplicity $1$ with corresponding
	$\mathcal{E}$. Then by Definition \ref{def:Sub}
	we have $f^*b \in \Br V \cup E$. It thus follows that $f^*b$  evaluates
	trivially on $(\mathcal{V} \cup \mathcal{E})(\O_\fp)$ for all but finitely many
	$\fp$ and all $b \in \br$.
	However by construction we have $y_\fp \in (\mathcal{V} \cup \mathcal{E})(\O_\fp)$,
	so the claim \eqref{eqn:W_v} follows after enlarging $S_0$.

	We next claim that for all $\fp \notin S_0$ we have
	\begin{equation} \label{eqn:W_v_complement}
		\tau_\fp(W_\fp) = \tau_\fp(f(Y(k_\fp)) + O(1/\Norm \fp^2).
	\end{equation}
	To see this, we consider the measure of the complement $f(V(k_\fp)) \setminus W_\fp$,
	which by the arguments
	in the proof of Theorem \ref{thm:Tamagawa_product} is contained in
	$$
		\{ x \in  f(V(k_\fp)) : 
			x \bmod \fp \in \mathcal{Z} \text{ or } x \bmod \fp^2 \text{ meets
			} \mathcal{T} \text{ non-transversely}\}.
	$$
	(If $x \in f(V(k_\fp))$ and $f^{-1}(x) \bmod \fp$ is non-split, then 
	it must meet $\mathcal{T}$ non-transversely providing $x \bmod \fp \notin \mathcal{Z}$).
	However by \cite[Lem.~3.2]{BL19} and \cite[Prop.~2.3]{BL19}, this set has measure
 	$$\ll \frac{\#\mathcal{Z}(\F_\fp)}{\Norm \fp^{\dim X}} + 	
	\frac{\#\mathcal{T}(\F_\fp)}{\Norm \fp^{\dim X+1}} \ll \frac{1}{\Norm \fp^2}$$
	where the latter bound follows from the Lang--Weil estimates. This proves
	\eqref{eqn:W_v_complement}.
	
	As each $W_v$ clearly has positive measure, it now follows from 
	Theorem \ref{thm:Tamagawa_product} and \eqref{eqn:W_v_complement}
	that $\tau_f(\prod_v W_v) > 0$. In the light of \eqref{eqn:W_v} this completes the proof.
\end{proof}

\subsection{Specialisation of Brauer group elements}
Serre's original paper \cite{Ser90}, as well as the papers \cite{Lou18,LTBT20}, are written in terms of the language of specialisation of Brauer group elements. Working with Brauer group elements instead of a family of varieties can be advantageous as one does not have to worry about constructing models, which can be problematic (see Lemma \ref{lem:counter-example}). We explain  how to apply Conjecture \ref{conj:main} to this problem via the resulting family of products of Brauer--Severi varieties.

Let $X$ be as in \S \ref{sec:set-up}. Let $U \subseteq X$ be an open subset and $\br \subset \Br U$ a finite subset. We are interested in counting rational points in  the zero-locus 
$$X(k)_\br := \{x \in X(k) : b(x) = 0 \text{ for all } b \in \br \}.$$
(Here we take the convention that $b(x) \neq 0$ if $x$ lies in the ramification locus of $b$).
There is an analogous way to define a Tamagawa measure in this setting: In place of \eqref{def:Pic_br}, one takes the virtual Artin representation
$$
	\Pic_{\br}(\overline{X})_\CC=\Pic(\overline{X})_\CC
	- \sum_{D \in X^{(1)}}\left(1-1/|\res_D(\langle\br\rangle)| \right)\Ind_{k_D}^k \CC.
$$
The local factors $\lambda_v$ of the corresponding virtual $L$-function are then taken as the convergence factors for a Tamagawa measure $\tau_{\br} = L^*(\Pic_\br(\overline{X})_\CC,1)\prod_v \tau_v /\lambda_v$. This is well-defined on the adelic zero-locus
$$X(\Adele_k)_\br := \{(x_v) \in X(\Adele_k) : b(x_v) = 0 \text{ for all } b \in \br
\text{ and all } v  \}.$$
One can show this using an analogous argument to Theorem \ref{thm:Tamagawa_product}, or alternatively it also follows from Lemma \ref{lem:Lang-Nishimura} below.
Our conjecture is now as follows.

\begin{conjecture} \label{conj:Br}
	Assume that $\Br X = \Br_1 X$, that $U(k)_\br \neq \emptyset$,
	and that either $\rho(X) = 1$ or $k[U]^\times =k^\times$.
	Then there exists a thin subset $\Omega \subset X(k)$ such that
	$$\#\{ x \in X(k)_\br : H(x) \leq B, x \notin \Omega \} 
	\sim c_{\br,H} B(\log B)^{\rho(X)  - \Delta(\br) - 1}$$
	where
	\begin{align*}
	\Delta(\br) & = \sum_{D \in X^{(1)}} ( 1 - 1/|\res_D(\langle\br\rangle)|), \\
		c_{\br,H} &= \frac{\alpha^*(X) \cdot |\Sub(X,\br)/\Br k| \cdot  \tau_\br(X(\Adele_k)_\br^{\Sub(X,\br)})}
	{\Gamma(X,\br)} \cdot  
	\prod_{D \in X^{(1)}}\eta(D)^{1- 1/|\res_D(\langle\br\rangle)|}, \\
	\Gamma(X,\br) &= 
	\begin{cases}
		\prod_{D \in X^{(1)}}\Gamma(1/|\res_D(\langle\br\rangle)|) , & \text{if } \rho(X) = 1, \\
		\Gamma( \rho(X) - \Delta(\br)), & \text{if }k[U]^\times = k^\times.
	\end{cases}
	\end{align*}
\end{conjecture}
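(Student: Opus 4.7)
My plan is to prove Conjecture~\ref{conj:Br} by reducing it to Conjecture~\ref{conj:main} applied to a carefully chosen family of products of Brauer--Severi varieties. The reduction leans entirely on the machinery of \S\ref{sec:family_BS}, so no new geometric input is required beyond an identification of the two counting problems and a side-by-side comparison of leading constants.

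First, I would invoke Gabber's theorem to represent each $b \in \br$ by a Brauer--Severi scheme $V_b \to U$, and take $f : Y \to X$ to be a smooth projective model of the fibre product $\times_{b \in \br} V_b \to U$. Proposition~\ref{prop:calculate_br_Sub} then supplies the two key identifications
\[
\delta_D(f) = \frac{1}{|\res_D(\langle\br\rangle)|}, \qquad \Sub(X,f) = \Sub(X,\br),
\]
from which $\Delta(f) = \Delta(\br)$, the equality of $\Gamma$-factors, the equality of $\eta$-exponents, and the agreement of the virtual Artin representations in \eqref{def:Pic_br} all follow immediately. Consequently $\tau_f = \tau_\br$ as measures on $X(\Adele_k)$. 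The next ingredient is an equality of counting problems: by Amitsur's theorem a Brauer--Severi variety $V_b$ admits a $k_v$-point above $x_v \in U(k_v)$ if and only if $b(x_v) = 0 \in \Br k_v$. Applied to the product $V = \times_b V_b \to U$ and combined with the exactness of class field theory \eqref{seq:CFT}, this yields
\[
f(V(\Adele_k)) = U(\Adele_k)_\br, \qquad U(k) \cap f(Y(\Adele_k)) = U(k)_\br,
\]
the second equality holding up to $k$-points lying in the lower-dimensional set $X \setminus U$, which can be absorbed into the thin set $\Omega$ (automatically by Cohen's theorem when $X = \P^n$, and otherwise at the cost of enlarging $\Omega$). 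Applying the Brauer--Manin operator against the common subordinate group to both sides of the adelic equality and invoking $\tau_f = \tau_\br$ then shows that the Tamagawa volumes appearing in Conjectures~\ref{conj:main} and \ref{conj:Br} coincide.

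Once these matches are in place, the leading constants $c_{f,H}$ and $c_{\br,H}$ agree factor by factor and the exponents of $B$ and $\log B$ coincide, so Conjecture~\ref{conj:main} applied to our chosen $f$ is literally Conjecture~\ref{conj:Br}. The unavoidable main obstacle is that Conjecture~\ref{conj:main} is itself open. A subtler technical issue is the one already flagged in the proof of Proposition~\ref{prop:calculate_br_Sub}: the fibre product $\times_b V_b$ need not be regular, so the smooth resolution $Y$ may introduce exceptional divisors that must be shown irrelevant both to the Brauer--Severi local solubility dictionary and to the computation of $\delta_D(f)$; this is handled by working with the almost smooth model of \cite[Def.~2.1]{Lou18} and the birational invariance criterion of \cite[Lem.~3.11]{LS16}. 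Finally, one should check that the thin set $\Omega$ supplied by Conjecture~\ref{conj:main} still controls the count on the $\br$-side, which is automatic since $X \setminus U$ is a finite union of proper subvarieties and therefore itself thin.
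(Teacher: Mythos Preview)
The statement is a conjecture, so the paper does not prove it; what the paper does (Lemma~\ref{lem:Lang-Nishimura} and Corollary~\ref{cor:sub}) is exactly the reduction you outline, namely build the family $f:Y\to X$ of products of Brauer--Severi varieties, invoke Proposition~\ref{prop:calculate_br_Sub} to match $\delta_D$, $\Delta$, $\Sub$, and hence every factor of the leading constant, and identify the two counting problems so that Conjecture~\ref{conj:Br} becomes a restatement of Conjecture~\ref{conj:main}. Your proposal is correct as a conditional reduction and follows the same route as the paper; the only cosmetic difference is that the paper packages the identification of $U(k)_\br$ and $U(\Adele_k)_\br$ with $f(V(\Adele_k))$ via Lang--Nishimura rather than Amitsur, but the content is identical.
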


Here $\tau_\br(X(\Adele_k)_\br^{\Sub(X,\br)})$ is interpreted via a similar convention to \eqref{def:tau_f_Sub}. We relate this to Conjecture \ref{conj:main} using the work in \S \ref{sec:family_BS}.
We take $f:Y \to X$ to be a smooth proper model of the fibre product $\times_{b \in \br} V_b \to U$ over $U$ where $V_b$ denotes the Brauer--Severi scheme associated to $b$.

\begin{lemma} \label{lem:Lang-Nishimura}
	We have
	$$U(k)_\br = f(V(\Adele_k)), \quad U(\Adele_k)_\br = f(V(\Adele_k))$$
\end{lemma}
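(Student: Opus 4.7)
I note that the first displayed equality appears to contain a typographical error: it should read $U(k)_\br = f(V(k))$, matching the Lang--Nishimura flavour of the name. My plan proves both this rational version and the adelic version.

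The proof will rest on two observations. First, I want to verify that on $U$, the open subvariety $V = f^{-1}(U)$ of the smooth projective model $Y$ may be taken to coincide canonically with the fibre product $V_\br := \times_{b \in \br} V_b$ over $U$. This is because each $V_b \to U$ is a Brauer--Severi scheme, hence smooth and projective over $U$, so $V_\br \to U$ is also smooth and projective. When constructing $Y$ from any compactification of $V_\br$ by resolution of singularities, the resolution can be taken to be an isomorphism on the smooth locus, which contains $V_\br$; so no modification of $V_\br$ occurs, and $V \cong V_\br$ as $U$-schemes.

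Second, I invoke the defining property of the Brauer--Severi scheme $V_b \to U$ associated to $b \in \Br U$: for any field $K$ over $k$ and any $K$-point $x \in U(K)$, the fibre $V_{b,x}$ is a Brauer--Severi variety over $K$ representing the class $b(x) \in \Br K$, and $V_{b,x}(K) \neq \emptyset$ if and only if $b(x) = 0 \in \Br K$. Since taking $K$-points commutes with fibre products,
\[
V_x(K) \neq \emptyset \iff V_{b,x}(K) \neq \emptyset \text{ for every } b \in \br \iff b(x) = 0 \text{ in } \Br K \text{ for every } b \in \br.
\]

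Applying this with $K = k$ immediately gives $f(V(k)) = \{x \in U(k) : b(x) = 0 \; \forall b \in \br\} = U(k)_\br$. For the adelic statement, I apply the same argument with $K = k_v$ at each place $v$ to obtain $f(V(k_v)) = U(k_v)_\br$, and then take the product over $v$. Since $Y$ is proper we have $Y(\Adele_k) = \prod_v Y(k_v)$, and an adelic point of $X$ lies in $f(V(\Adele_k))$ precisely when each local component lies in $f(V(k_v))$; this yields $f(V(\Adele_k)) = U(\Adele_k)_\br$. There is no real obstacle in this argument — it is a definitional unwinding once the universal property of Brauer--Severi schemes and the identification $V \cong V_\br$ over $U$ are in hand.
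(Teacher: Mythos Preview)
Your argument is correct and complete for the two statements you prove, namely $U(k)_\br = f(V(k))$ and $U(\Adele_k)_\br = f(V(\Adele_k))$. The approach differs from the paper's in two respects worth noting.

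First, you bypass Lang--Nishimura by arranging that $V = f^{-1}(U)$ literally equals $V_\br = \times_{b\in\br} V_b$, which is legitimate since $V_\br$ is smooth and proper over $U$ and one may resolve a relative compactification without touching it. The paper instead invokes Lang--Nishimura so that the conclusion holds for \emph{any} smooth projective model $Y$ birational to $V_\br$, in line with the birational invariance emphasised in Remark~\ref{rem:birational_invariant}. Your argument is cleaner for a specific model; the paper's is model-independent.

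Second, the paper's first equality is probably not a typo: it is meant to be read as $U(k)_\br = U(k)\cap f(V(\Adele_k))$ via the diagonal embedding, and this is precisely what is needed to match the counting functions in Conjectures~\ref{conj:main} and~\ref{conj:Br} for Corollary~\ref{cor:sub}. That version does not follow from your $K=k$ argument alone; it requires the injectivity of $\Br k \to \bigoplus_v \Br k_v$ from \eqref{seq:CFT}, which is why the paper's proof explicitly cites class field theory. Of course your two statements together with \eqref{seq:CFT} immediately yield this, so nothing is really missing --- you just need to add one sentence invoking \eqref{seq:CFT} to recover the paper's intended formulation.
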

\begin{proof}
Combine Lang--Nishimura with the following: a Brauer--Severi variety
over a field has a rational point if and only if the associated Brauer group element is trivial, and an element of $\Br k$ is trivial if and only if it is trivial everywhere locally \eqref{seq:CFT}.
\end{proof}

By comparing conjectures and matching up relevant factors, we deduce the following from Proposition \ref{prop:calculate_br_Sub} and Lemma \ref{lem:Lang-Nishimura}.

\begin{corollary} \label{cor:sub}
	$Y \to X$ satisfies Conjecture \ref{conj:main}  if and only if $\br$
	satisfies Conjecture~\ref{conj:Br}.
\end{corollary}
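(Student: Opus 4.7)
The plan is to prove the corollary by term-by-term matching of the two conjectural asymptotic formulae, using Proposition \ref{prop:calculate_br_Sub} (the dictionary between $\br$-language and $f$-language) together with Lemma \ref{lem:Lang-Nishimura} (which identifies the two counting problems). This is essentially a bookkeeping exercise rather than a new substantive argument.

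First I would verify that the two counting functions coincide: by Lemma \ref{lem:Lang-Nishimura}, the set $U(k) \cap f(Y(\Adele_k))$ equals $U(k)_\br$, so the left-hand side of Conjecture \ref{conj:main} (intersected with $U(k)$, which discards only a lower-order contribution concentrated on the finitely many codimension-$1$ non-split divisors) matches that of Conjecture \ref{conj:Br} for the same thin set $\Omega$. Next, I would use identity \eqref{eqn:res=delta} of Proposition \ref{prop:calculate_br_Sub}, which yields $\delta_D(f) = 1/|\res_D(\langle\br\rangle)|$ for every $D \in X^{(1)}$, to conclude immediately that $\Delta(f) = \Delta(\br)$. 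Hence the exponents $\rho(X) - \Delta(f) - 1$ and $\rho(X) - \Delta(\br) - 1$ of $\log B$ agree, and the Gamma factors $\Gamma(X,f)$ and $\Gamma(X,\br)$ coincide in both the $\rho(X)=1$ and the $k[U]^\times = k^\times$ cases. The same identity shows that $\eta(D)^{1-\delta_D(f)} = \eta(D)^{1-1/|\res_D(\langle\br\rangle)|}$, so these products over $D \in X^{(1)}$ are identical.

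For the remaining factors, identity \eqref{eqn:Sub=Sub} of Proposition \ref{prop:calculate_br_Sub} gives $\Sub(X,\br) = \Sub(X,f)$, so $|\Sub(X,f)/\Br k| = |\Sub(X,\br)/\Br k|$. The virtual Artin representations $\Pic_f(\overline{X})_\CC$ and $\Pic_\br(\overline{X})_\CC$ agree (again because $\delta_D(f) = 1/|\res_D(\langle\br\rangle)|$), hence the associated virtual $L$-functions, convergence factors $\lambda_v$, and Tamagawa measures satisfy $\tau_f = \tau_\br$. Finally, the second half of Lemma \ref{lem:Lang-Nishimura} identifies $U(\Adele_k)_\br = f(V(\Adele_k))$, and since the subordinate Brauer groups coincide we obtain
\[
\tau_f(f(V(\Adele_k))^{\Sub(X,f)}) = \tau_\br(U(\Adele_k)_\br^{\Sub(X,\br)}).
\]
The factor $\theta(X)$ is intrinsic to $X$ and appears identically on both sides.

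Combining these identifications, the leading constants $c_{f,H}$ and $c_{\br,H}$ coincide and the powers of $\log B$ match, so the two asymptotic formulae are the same statement. I do not anticipate any genuine obstacle here; the only mild subtlety is that Conjecture \ref{conj:main} counts points of $X(k)$ rather than $U(k)$, but the difference is supported on a proper closed subset of $X$ and can be absorbed into the thin set $\Omega$, so both directions of the ``if and only if'' go through simultaneously.
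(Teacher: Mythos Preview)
Your proposal is correct and follows exactly the paper's approach: the paper's own proof simply states that the corollary follows ``by comparing conjectures and matching up relevant factors'' using Proposition~\ref{prop:calculate_br_Sub} and Lemma~\ref{lem:Lang-Nishimura}. You have spelled out the bookkeeping in more detail than the paper does, but the argument is the same.
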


\section{Verifying the conjecture} \label{sec:verify}

In this section we gather various known results from the literature and some other new results and show that they are compatible with Conjecture \ref{conj:main}. Our main new result (Theorem \ref{thm:main}) will be proved in later sections.

\subsection{Local densities for projective space}
To assist with later calculations, we give some formulae for calculating local densities when the base variety $X = \P^n_\Q$. For a place $v$ of $\Q$ we let $\tau_v$ denote Peyre's Tamagawa measure on $\P^n(\Q_v)$. For a subset $\Omega_v \subset \P^n(\Q_v)$ we denote by $\widehat{\Omega}_v \subset \Q_v^{n+1}$ its affine cone. We let $\mu_v$ be the usual Haar measure on $\Q_v^{n+1}$.

\begin{proposition}  \label{prop:local_densities}
	Let $v$ be a place of $\Q$ and let $\Omega_v \subset \P^n(\Q_v)$ be measurable.
	\begin{enumerate}
		\item $\tau_\infty(\Omega_\infty) = ((n+1)/2)\cdot \mu_\infty(\widehat{\Omega}_v \cap [-1,1]^{n+1}).$
		\item $\tau_p(\Omega_p) = (1 + 1/p + \dots + 1/p^n) \cdot \mu_p(\widehat{\Omega}_p \cap \Z_p^{n+1})$.	
		\item $\tau_p(\Omega_p) = (1-1/p)^{-1} \cdot \mu_p\{ \x \in \widehat{\Omega}_p \cap \Z_p^{n+1} : p \nmid \x\}.$
	\end{enumerate}
\end{proposition}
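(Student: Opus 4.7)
The plan is to reduce all three identities to direct measure-theoretic computations on the affine cone. First I would unpack Peyre's local measure in homogeneous coordinates: with the standard adelic metric on $\omega_{\P^n}^{-1}$, on the affine chart $U_i = \{x_i \neq 0\}$ with coordinates $y_j = x_j/x_i$ for $j \neq i$, one has
\[
\tau_v|_{U_i} = \frac{\prod_{j \neq i} dy_j}{\max_{j \neq i}(1,|y_j|_v)^{n+1}}.
\]
This is the usual description of Peyre's measure on projective space and will be the starting point for each of the three formulas.

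For (3), the key geometric input is that the map $\pi : \Z_p^{n+1} \setminus p\Z_p^{n+1} \to \P^n(\Q_p)$ sending a primitive integer vector to its projective class is a $\Z_p^\times$-torsor for the diagonal scaling action. I would cover $\P^n(\Q_p)$ by the measurable sets $V_i = \{[\x] : |x_i|_p = \max_j|x_j|_p\}$, whose pairwise overlaps have $p$-adic measure zero. On $V_i$ the section $[\x] \mapsto \x/x_i$ lands in the slice $\{x_i = 1\}$, where $\max_{j \neq i}(1,|x_j/x_i|_p) = 1$, so $\tau_p|_{V_i}$ pulls back to $\prod_{j \neq i} dy_j$ along the section. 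A Fubini argument on the torsor $\pi^{-1}(V_i) \to V_i$, integrating out the $\Z_p^\times$-fibre of Haar measure $1 - 1/p$, then identifies $\mu_p$ on $\pi^{-1}(V_i)$ with $(1-1/p)\,\tau_p$ on $V_i$; summing over $i$ yields (3). For (2), cone-invariance of $\widehat{\Omega}_p$ provides the disjoint decomposition
\[
\widehat{\Omega}_p \cap \Z_p^{n+1} = \bigsqcup_{k \geq 0} p^k \cdot \bigl(\widehat{\Omega}_p \cap (\Z_p^{n+1} \setminus p\Z_p^{n+1})\bigr),
\]
and $\mu_p(p^k A) = p^{-k(n+1)}\mu_p(A)$ together with $\sum_{k \geq 0} p^{-k(n+1)} = (1-p^{-(n+1)})^{-1}$ reduces (2) to (3) via the identity $(1-p^{-(n+1)})/(1-p^{-1}) = 1 + 1/p + \cdots + 1/p^n$.

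Formula (1) follows from the same polar-decomposition idea applied to the real scaling action. Writing each nonzero $\x \in \R^{n+1}$ uniquely as $\x = t\x_0$ with $t = \max_j|x_j|_\infty > 0$ and $\x_0 \in S := \{\x : \max_j|x_j|_\infty = 1\}$, the change of variables produces a measure $t^n\,dt\,d\sigma$ on $\R^{n+1} \setminus \{0\}$ for a natural measure $\sigma$ on $S$. Integrating over $t \in (0,1]$ gives
\[
\mu_\infty(\widehat{\Omega}_\infty \cap [-1,1]^{n+1}) = \sigma(\widehat{\Omega}_\infty \cap S)/(n+1).
\]
Meanwhile $S \to \P^n(\R)$ is the quotient by the $\{\pm 1\}$-subgroup of $\R^\times$, hence a $2$-to-$1$ cover, and the same polar computation of $\tau_\infty$ in $\x_0$-coordinates (on which the $\max(1,|y_j|_\infty)^{n+1}$-factor equals $1$ in the appropriate chart) shows $\tau_\infty(\Omega_\infty) = \sigma(\widehat{\Omega}_\infty \cap S)/2$. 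Combining the two identities yields (1). The main bookkeeping step throughout is checking chart-independence of these expressions, i.e.\ that the $V_i$'s for (3) and the analogous cube-face decomposition for (1) glue correctly, but the glueing takes place on sets of codimension at least one, hence of measure zero, so presents no real obstacle.
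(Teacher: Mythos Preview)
Your proof is correct and takes a genuinely different route from the paper. The paper proves (2) and (3) by computing both sides on the generators of the Borel $\sigma$-algebra, namely the residue discs $D_\y = \{\x \in \P^n(\Z_p) : \x \equiv \y \bmod p^r\}$: it invokes Salberger's formula $\tau_p(D_\y) = p^{-nr}$ for the left side and computes $\mu_p(\widehat{D}_\y \cap \Z_p^{n+1})$ directly by summing over the valuation of a fixed coordinate. For (1) the paper works in a single affine chart and applies the explicit change of variables $(u_0,u_1,\dots,u_n) = (x_0^{-(n+1)}, x_1/x_0,\dots,x_n/x_0)$, whose Jacobian produces the factor $(n+1)/2$ in one stroke. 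Your argument instead exploits the torsor structure of the primitive locus and the scaling action to get (3), then deduces (2) from (3) by a geometric series over $p^k$-shells, and handles (1) by the sup-norm polar decomposition together with the $2$-to-$1$ cover $S \to \P^n(\R)$. Both approaches are valid; the paper's is more self-contained (it needs no chart-gluing, since residue discs generate the $\sigma$-algebra and the real case lives entirely in one chart), while yours is more conceptual and makes transparent why the constants $(1-1/p)^{-1}$, $1+1/p+\cdots+1/p^n$, and $(n+1)/2$ arise as fibre volumes of the respective quotient maps.
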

\begin{proof}
	For (1), let $\one: \R^{n+1} \to \{0,1\}$ denote the indicator function of $\widehat{\Omega}_\infty$. Then by definition we have
	\begin{align*}
	\tau_\infty(\Omega_\infty) & = \int_{\substack{\mathbf{u} \in \R^n  
	 }} \frac{\one(1,u_1,\dots,u_n) }{\max\{1, |u_1|, \dots, |u_n|\}^{n+1}} \mathrm{d} \mathbf{u} \\
	& = \frac{-1}{2}\int_{\substack{\mathbf{u} \in \R^{n+1} \\ \max\{1,|u_1|,\dots,|u_n|\}^{n+1} \leq |u_0| 
	 }} \frac{\one(1,u_1,\dots,u_n) }{u_0^2} \mathrm{d} \mathbf{u} \\
	& = \frac{n+1}{2}\int_{\substack{\x \in \R^{n+1} \\ \max\{|x_0|,\dots,|x_n|\} \leq 1 \\
	  }}  \one(\x) \mathrm{d} \x
	\end{align*}
	where we make the change of variables $(u_0,u_1,\dots,u_n) = (1/x_0^{n+1}, x_1/x_0,\dots,x_n/x_0)$,
	which has jacobian determinant $-(n+1)x_0^{-(2n + 2)}$. This shows (1).
	
	Now let $p$ be a prime.  
	The $\sigma$-algebra on $\P^n(\Q_p)$ is generated by the residue discs
	$$D_{\mathbf{y}} : = \{ \x \in \P^n(\Z_p) : \x \equiv \y \bmod p^r\}, \quad \y \in \P^n(\Z/p^r\Z).$$
	By \cite[Thm.~2.13]{Sal98}, these have volume
	$$\tau_p(D_\y) = \frac{\#\P^n(\F_p)}{p^n\#\P^n(\Z/p^r\Z)}
	= \frac{1}{p^{nr}}.$$
	It suffices to prove the result for the residue discs. Writing $\y = (y_0:\dots:y_n)$,
	without loss of generality each $y_i \in \Z_p$ and $y_0 = 1$. Then we have
	$$\widehat{D}_{\mathbf{y}} \cap \Z_p^{n+1} = 
	\left\{ \mathbf{x} \in \ZZ_p^{n+1} : \left| \frac{x_i}{p^{v_p(x_0)}} - \frac{y_ix_0}{p^{v_p(x_0)}} \right|_p \leq p^{-r}, i = 1,\dots,n\right\}.$$
	For $(2)$ we have
	\begin{align*}
		\mu_p(\widehat{D}_{\mathbf{y}}\cap \ZZ_p^{n+1}) & = \sum_{m = 0}^\infty 
		\{ \mathbf{x} \in \ZZ_p^{n+1} : v_p(x_0) = m, |x_i - y_i x_0|_p \leq p^{-r-m}, i = 1,\dots,n\}	\\
		& = \sum_{m = 0}^\infty \frac{1}{p^m} \left(1 - \frac{1}{p}\right) \left(\frac{1}{p^{r+m}}\right)^n	 
		 = \frac{1}{p^{nr}} \cdot \frac{1-1/p}{1-1/p^{n+1}}
	\end{align*}
	as required. 
	For $(3)$ we have
	\begin{align*}
		\mu_p\{ \x \in \widehat{D}_{\mathbf{y}} \cap \Z_p^{n+1} : p \nmid \x\} & = 
		\{ \mathbf{x} \in \ZZ_p^{n+1} : v_p(x_0) = 0, |x_i - y_i x_0|_p \leq p^{-r}, i = 1,\dots,n\}	\\
		& = \left(1 - \frac{1}{p}\right) \frac{1}{p^{nr}}	 
	\end{align*}
	as required.
\end{proof}

\subsection{The case $\Delta(f) = 0$}
Here Conjecture \ref{conj:main} is known in numerous cases by work of Loughran--Smeets \cite{LS16}, Huang \cite{Hua21}, and Browning--Heath-Brown \cite{BHB21}. These are all proved via suitable applications of the geometric sieve (also called the sieve of Ekedahl). Poonen and Voloch's  \cite{PV04} paper was one of the earliest applications of this sieve.

\begin{proposition}
	Conjecture \ref{conj:main} holds when $\Delta(f) = 0$, $k= \Q$, and
	$X$ is either $\P^n$, a split toric variety, 	or a smooth quadric hypersurface of dimension
	at least $3$.
\end{proposition}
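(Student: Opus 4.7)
The plan is to quote the three asymptotic results from the literature---\cite{LS16} for $X = \P^n$, \cite{Hua21} for split toric $X$, and \cite{BHB21} for smooth quadric hypersurfaces of dimension at least $3$---and verify that the leading constants obtained there agree with the specialisation of Conjecture~\ref{conj:main} to $\Delta(f) = 0$.

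The first step will be to record the simplifications of Conjecture~\ref{conj:main} under $\Delta(f) = 0$. The exponent $\rho(X) - \Delta(f) - 1 = \rho(X) - 1$ is the classical Manin exponent; the product $\prod_{D} \eta(D)^{1 - \delta_D(f)}$ is identically $1$; the virtual Artin representation in \eqref{def:Pic_br} reduces to $\Pic(\overline X)_\CC$, so $\tau_f$ coincides with Peyre's Tamagawa measure for $X$; and $\Gamma(X,f) = (\rho(X)-1)!$ in both sub-cases of the conjecture (using $\Gamma(1) = 1$ in the $\rho(X) = 1$ case). Lemma~\ref{lem:Sub_formula}, combined with the fact that in each of the three settings every non-split codimension-one fibre $f^{-1}(D)$ contains a geometrically integral multiplicity-one component, forces $\Sub(X,f) = \Br X$, so $|\Sub(X,f)/\Br k| = \beta(X)$ under the hypothesis $\Br X = \Br_1 X$. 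Using \eqref{eqn:factorial}, the conjectured leading constant collapses to $\alpha(X)\beta(X)\tau\bigl(f(V(\Adele_k))^{\Br X}\bigr)$, which is Peyre's constant for $X$ with the Brauer--Manin set intersected by the everywhere-locally-soluble locus.

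It then remains to match this expression with the constants explicitly computed in the three references, all of which proceed via the geometric (Ekedahl) sieve and present the constant as an Euler product of local densities with appropriate convergence factors. For $\P^n$ we have $\beta(\P^n) = 1$ and $\Br \P^n = \Br k$, and the constant in \cite{LS16} is an Euler product $\prod_v \tau_v(f(Y(k_v)))$ multiplied by $\alpha(\P^n) = 1/(n+1)$, which matches immediately. For split toric $X$ the Brauer group and $\HH^1(k, \Pic \bar X)$ are both trivial, and the matching is the standard Batyrev--Tschinkel identification \cite{BT95} together with \eqref{eqn:factorial}. For smooth quadrics of dimension at least $3$ we have $\rho(X) = 1$ and the constant in \cite{BHB21} is a direct Euler product matching our formula. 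The main obstacle is the precise matching of convergence factors and Euler products in the toric case, where the effective-cone integral $\theta(X)$ must be extracted from a Batyrev--Tschinkel-style leading term; a subsidiary technical point is that the identity $\Sub(X,f) = \Br X$ genuinely requires a geometrically integral multiplicity-one component in every codimension-one fibre, slightly stronger than $\delta_D(f) = 1$, and one must confirm that this holds (or can be arranged birationally) in each setting.
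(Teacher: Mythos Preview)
Your overall strategy matches the paper's: cite the three geometric-sieve results, observe that $\Delta(f)=0$ collapses every new factor in Conjecture~\ref{conj:main} (the $\eta(D)$'s, the virtual $L$-function, and $\Gamma(X,f)$) back to Peyre's constant for $X$ with the adelic set restricted to $f(Y(\Adele_k))$, and then reduce everything to the identity $\Sub(X,f)=\Br X$.

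The gap is precisely in that last identity. You claim, and later flag as unconfirmed, that every codimension-one fibre with $\delta_D(f)=1$ contains a \emph{geometrically integral} multiplicity-one component. This is strictly stronger than $\delta_D(f)=1$ and is false in general: pseudo-splitness only says that every Galois element fixes \emph{some} geometric multiplicity-one component, not that a single one is fixed by all of them. A concrete obstruction: take $\Gamma_D\cong S_3$ acting on the disjoint union $S_3/\langle(12)\rangle \sqcup S_3/\langle(123)\rangle$ (five geometric components); every element has a fixed point but no component is globally fixed, so no component is geometrically integral over $\kappa(D)$. Since the proposition concerns an \emph{arbitrary} $f$ with $\Delta(f)=0$ over the given bases, the ``three settings'' constrain only $X$, not the fibre structure of $f$, and your hypothesis cannot be confirmed. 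Nor can it be ``arranged birationally'': $\Sub(X,f)$ depends only on the generic fibre (Lemma~\ref{lem:Sub_birational}), so changing the model does not help.

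The paper closes this gap with a one-line group-theoretic argument that works directly from pseudo-splitness. If $\chi\in\HH^1(\kappa(D),\Q/\Z)$ lies in $\bigcap_E\ker\bigl(\mathrm{res}:\HH^1(\kappa(D),\Q/\Z)\to\HH^1(\kappa_E,\Q/\Z)\bigr)$ over all multiplicity-one components $E$, then $\chi$ vanishes on every stabiliser $G_{\kappa_E}\subset G_{\kappa(D)}$ of a geometric multiplicity-one component; pseudo-splitness says these stabilisers cover $G_{\kappa(D)}$, so $\chi=0$. Feeding this into Lemma~\ref{lem:Sub_formula} and purity~\eqref{seq:purity} gives $\Sub(X,f)=\Br X$. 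This replaces your unproved geometric-integrality claim and completes the argument.
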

\begin{proof}
	The results \cite[Thm.~1.3]{LS16}, \cite[Thm.~1.5]{Hua21}, and \cite[Cor.~1.6]{BHB21}
	show that the limit
	$$\lim_{B \to \infty}  \frac{\#\{ x \in X(\Q) : H(x) \leq B, x \in f(Y(\Adele_\Q)), x \notin \Omega\}}
	{\#\{x \in X(\Q) : H(x) \leq B, x \notin \Omega\}}$$
	exists and equals the correct product of local densities.
	Here $\Omega$ is the set of points in the complement of the open torus 
	when $X$ is a toric variety, and empty otherwise. Since $\Delta(f) = 0$
	we have $\delta_D(f) = 1$ for all $D$. It therefore suffices to show that 
	\begin{equation} \label{eqn:Sub_Delta=0}
		\Sub(X,f) = \Br X.
	\end{equation}
	(Note that in all these cases $\Br X = \Br \Q$ since $X$ is rational).
	We prove this via Lemma~\ref{lem:Sub_formula}.
	Let $D \in X^{(1)}$. The fibre over $D$ is a so-called pseudo-split scheme,
	which means that every element of the absolute Galois group of $k(D)$
	fixes some irreducible component of $f^{-1}(D)$ of multiplicity $1$.
	It easily follows that
	$$\bigcap_E \ker \left(\mathrm{res} : \HH^1(k(D),\Q/\Z) \to \HH^1(\kappa_E,\Q/\Z)\right)$$
	is trivial, where the intersection is over all irreducible components
	$E$ of $f^{-1}(D)$ of multiplicity $1$. The claim \eqref{eqn:Sub_Delta=0}
	now follows from Lemma \ref{lem:Sub_formula} and \eqref{seq:purity}.
 \end{proof}

\subsection{Rational numbers as sum of two squares} \label{sec:elementary}
Our first example with $\Delta(f) > 0$ is the counting problem
\[N(B):=\#\{t\in \Q:H(t) \leq B, t=x^2+y^2  \textrm{ for some } x,y\in \Q \} .\]
This is an elementary warm up  for some of the more difficult examples which will be treated in the paper. It is included for completeness as it appears in Example \ref{ex:crazy}. It also highlights an important subtle point which will occur later on; namely, it is essential that one uses the anticanonical height in Conjecture~\ref{conj:main}! Changing the height function will change the leading constant due to the appearance of $\log B$ factors; the modified Fujita invariants were introduced to balance out these additional factors coming from changing the height function. We begin with the following asymptotic.

\begin{proposition} \label{prop:easy}
	We have
	\[N(B)
	=
 	 \frac{3}{2\pi} \frac{B^2}{\log B}
   \prod_{p=3}^\infty  
    \left( 1+ \frac{\chi(p)}{p} + \frac{1 - \chi(p)}{p(p+1)} \right)
  	+
	O\left( \frac{B^2}{(\log B)^{1.4}} \right) 
,	\]  
	where $\chi$ denotes the non-principal Dirichlet character modulo $4$.
\end{proposition}
\begin{proof}
Choosing coprime  integers $a,b$ to represent $t$, we can write 
 \[N(B)=O(1)+ \frac{1}{2} \sum_{\substack{0< \vert a\vert ,\vert b \vert \leq B \\ \gcd(a,b)=1  }} \mathds 1(aX^2=bY^2+bZ^2 \textrm { soluble in } \Q).\] 
To ensure real solubility, both $a$ and $b$ must have the same sign (we choose them both to be positive).
By the Hasse principle and Hilbert reciprocity, solubility is equivalent to $v_p(ab) \equiv 0 \bmod{2}$ for all $p\equiv 3 \bmod{4}$. 
 Hence,  \[N(B)=O(1)+ \sum_{\substack{0<a,b \leq B \\ \gcd(a,b)=1  
 }}  \prod_{\substack{ p\mid ab   \\   p\equiv 3 \bmod{4} 
 }  }   \mathds 1(v_p(ab) \equiv 0 \bmod{2}  ).\]
Write $ a=2^\alpha s  , b=2^\beta t $ for odd integers $s,t $. As $\gcd(a,b) = 1$, we have
\[
N(B)=O(1)+ \sum_{\substack{ 0\leq \alpha, \beta \leq 2 \log B \\  \alpha \beta =0  }} N(B 2^{-\alpha}, B 2^{-\beta}),
\]
where
$$
N(S,T) =  \sum_{\substack{0<s \leq  S    \\ 2\nmid s   }}  f(s)   \sum_{\substack{ 0< t \leq T, 2\nmid t    \\   \gcd(t,s)=1   }}     f(t) \quad{}\text{ and } \quad{}  f( k ) :=
 \prod_{\substack{ p\mid k   \\   p\equiv 3 \bmod{4}    }  }   \mathds 1(v_p(k ) \equiv 0 \bmod{2}  ) .  $$
Note that $f$ is multiplicative. 
The bound $N(S,T) \leq ST$ shows that the contribution of   $(\alpha, \beta)$ for which $\min\{B 2^{-\alpha}, B 2^{-\beta} \}\leq B^{1/2}$ is 
$$
\ll   \sum_{\substack{ \beta \geq 0  }} B^{1/2} (B 2^{-\beta})
+ \sum_{\substack{  \alpha \geq 0  }} (B 2^{-\alpha} )B^{1/2}
\ll B^{3/2}
,$$ which is negligible. Thus  we only have to asymptotically 
estimate $N(S,T)$ in the range 
  $B^{1/2}\leq S,T \leq B$.
 By the Selberg--Delange method  \cite[Ch.~II.5, Thm.~5.2]{Ten95} 
 (or a modification of the case $q=1,d=s$ of Friedlander--Iwaniec~\cite[Lemma 1]{MR2675875}), 
 we   get     
\begin{equation}\label{eq:twosquaresfirst}
\sum_{\substack{ 0< t \leq T, 2 \nmid t   \\   \gcd(t,s)=1   }}     f(t) 
=c(s) \frac{T}{\sqrt{\log T} } 
 \l\{1+O\l(\frac{(\log \log 3s)^{2}}{\log T}\r)\r\} 
+O\l( \frac{\tau(s)T}{(\log T)^{10}}\r), 
\end{equation}
where $\tau$ is the divisor function, the implied constant is absolute,  and
\[ c(s):= 
\frac{1}{\pi^{1/2}  }    
\prod_{p   } \l(1+\mathds 1 (p\nmid 2s)
\sum_{k\geq 1 } \frac{f(p^k ) }{p^k}\r) \l(1-\frac{1}{p } \r)^{1/2} = c_0 G(s)  ,\] 
    where  
$$ 
c_0 =\prod_{p=3}^\infty 
\begin{cases}
 \l(1-\frac{1}{p } \r)^{-1/2},  & p\equiv 1 \bmod 4 \\
\l(1-\frac{1}{p } \r)^{1/2}
\left( 1-\frac{1}{p^2 } \r)^{-1} , 
& p\equiv 3 \bmod 4
\end{cases}
 ,
G(s) = \prod_{\substack{p \mid s\\ p \equiv 1 \bmod 4}} \frac{p-1}{p} \prod_{\substack{p \mid s \\ p \equiv 3 \bmod 4}} \frac{p^2-1}{p^2}.
$$ 
Let us point out   the few required  modifications  
of the arguments in~\cite[pg.109-111]{MR2675875}. The Dirichlet series of $f(t) \mathds 1(\gcd(t,2s)=1)$
is 
$$  \prod_{\substack{ p\equiv 1 \bmod 4 \\ p\nmid s } } \frac{1}{1-p^{-z}} \prod_{\substack{ p\equiv 3 \bmod 4  \\ p\nmid s} } \frac{1}{1-p^{-2z}}
=\widetilde{L_s}(z) \sqrt{\zeta(z)} \widetilde{R}(z) $$ for all complex numbers $z$ with $\Re(z)>1$, where we let 
$$\widetilde{L_s}(z)=
 \prod_{\substack{ p\equiv 1 \bmod 4 \\ p\mid s } } \l(1-p^{-z} \r) \prod_{\substack{ p\equiv 3 \bmod 4  \\ p\mid s} } \l(1-p^{-2z}\r)
$$ and 
$$ \widetilde{R}(z)=
\prod_{\substack{ p\equiv 3 \bmod 4  } } \l(1-p^{-2z}\r)^{-1}
\prod_{\substack{ p\equiv 1 \bmod 4  } } \l(1-p^{-z}\r)^{-1/2}
\prod_{\substack{ p\equiv 2,3 \bmod 4  } } \l(1-p^{-z}\r)^{1/2}
.$$ This is in accordance to the first equation in~\cite[pg.110]{MR2675875} and the rest of the argument can be continued     similarly.
There is only one  difference, namely that the function $a(h)$ in~\cite[pg.110]{MR2675875} has to be replaced by   $\widetilde{a}(h)$
defined as follows: it is supported on odd numbers of the form $h=h_1 h_3^2$, where $h_i$ is a square-free integer  divisible only
by primes of the form $i \bmod 4 $ and for such $h$   we have $\widetilde{a}(h)=\mu(h_1) \mu(h_2)$. This leads to the bound $|\widetilde{a}(h)| \leq 1$
which is not as good as   $|a(h)|\leq 1/\tau(h)$ that is used in the middle of~\cite[pg.110]{MR2675875}. The analogous argument here gives 
$$\sum_{h\mid s^\infty} \frac{|\widetilde{a}(h)| }{h} \log h
\leq 
\sum_{h\mid s^\infty} \frac{ \log h}{h}=
\l(\prod_{p\mid s} \l(1-p^{-z}\r)^{-1} \r)'_{z=1},$$ which is $$
\ll \prod_{p\mid s} \l(1-1/p\r)^{-1} \sum_{p\mid s} \frac{\log p}{p} \ll (\log \log 3s )^2.$$  
We proceed by applying \eqref{eq:twosquaresfirst}.
Summing its error terms      yields an error term of size  
$$\ll \sum_{0< s\leq S} f(s)  c(s) \frac{T}{\sqrt{\log T} } 
 \frac{(\log \log 3s)^{2}}{\log T} 
+
 \sum_{0< s\leq S} f(s)  \tau(s)\frac{T}{(\log T)^{10}}.$$ By the   bounds $|f(s)| \leq 1, |c(s)| \ll |G(s)| \ll \log \log 3s$
this becomes 
$$ \ll  \frac{ST}{(\log T)^{3/2} }(\log \log 3S)^{3} +S (\log S) \frac{T}{(\log T)^{10}}.$$
Using the bounds $B^{1/2}\leq S,T \leq B$ makes this $O(ST (\log B)^{-1.4} )$, which is admissible.

Summing the  main term in   \eqref{eq:twosquaresfirst} leads to the sum  
$
\sum_{\substack{0<s \leq S \\ s \text{ odd}}} f(s) G(s),
$
which another application of Selberg--Delange shows is equal to 
$$
\frac{1}{\sqrt{2\pi}} \frac{S}{\sqrt{\log S}} \prod_{p \neq 2} \left(1 - \frac{1}{p}\right)^{1/2} \left( 1 + \sum_{k \geq 1} \frac{f(p^k)G(p^k)}{p^k}\right)
+ O\left( \frac{S}{(\log S)^{\frac{3}{2}}} \right).
$$     
This Euler product   simplifies to 
\[ \prod_{p=3}^\infty 
\left( 1- \frac{1}{p}\right)^{1/2} 
\left( 1 + \frac{1}{p^{1+\alpha_p}}\right) 
,\] where $\alpha_p=0$ or $1$ respectively when $p$
is $1$ or $3\bmod 4$.
Multiplying by $c_0$   gives 
\[
\kappa :=  \prod_{p=3}^\infty  \left(  1+ \frac{\chi(p)}{p} + \frac{1 - \chi(p)}{p(p+1)} \right),
\] after a straightforward  calculation.
Therefore $N(B)$ is asymptotic to
\[
\frac{ \kappa}{(\sqrt{2 \pi})^2 }
\sum_{\alpha \beta =0}  \frac{B^2}{2^{\alpha + \beta}\log B} =  
\frac{B^2}{\log B} \frac{3}{2\pi} \prod_{p=3}^\infty 
   \left(1+ \frac{\chi(p)}{p} + \frac{1 - \chi(p)}{p(p+1)}\right) 
.  \qedhere
\] 
 \end{proof} 

We now explain how Proposition \ref{prop:easy} is compatible with Conjecture \ref{conj:main}. The  base variety is $\P^1$. The open set is $U = \Gm$ and the boundary divisors are $0$ and $\infty$. We have 
$$\alpha^*(\P^1) = 1/2, \,\, |\Sub(X,f)/\Br k| = 2, \,\, a(0) =a(\infty) = 2,\,\, \Gamma(X,f) = \Gamma(1/2)^2 = \pi.$$
This calculation of the subordinate Brauer group is relatively easy using Faddeev reciprocity \cite[Thm.~1.5.2]{Brauer}, and the non-trivial representative is given simply by the quaternion algebra $(t,-1)$.
For the local densities, we use Lemma \ref{lem:calculate_Tamagawa} with $S = \emptyset$.

\begin{lemma}  
We have
	$$
	\tau_p(f(Y(\Q_p)))= 
	\begin{cases}
		2, & p = \infty,\\
		3/4, & p = 2, \\
		1 + \frac{1}{p}, &p \equiv 1 \md 4, \\
		1 - \frac{p-1}{p(p+1)}, & p \equiv 3 \md 4.
	\end{cases}$$
\end{lemma}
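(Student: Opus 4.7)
My plan is to compute each local density by explicitly identifying $f(Y(\Q_p)) \subset \P^1(\Q_p)$ via the Hasse--Minkowski criterion for a diagonal conic, and then applying the appropriate case of Proposition~\ref{prop:local_densities} with $n=1$.

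For $p = \infty$, a rational number $t$ is a sum of two real squares iff $t \geq 0$, so in projective coordinates $(t_0:t_1)$ the affine cone of $f(Y(\R))$ is $\{(t_0,t_1) \in \R^2 : t_0 t_1 \geq 0\}$, whose intersection with $[-1,1]^2$ has Lebesgue measure $2$. Proposition~\ref{prop:local_densities}(1) gives $\tau_\infty = 2$.

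For odd $p$ the local solvability condition is standard: $t \in \Q_p^\times$ is a sum of two squares iff either $-1$ is a square in $\Q_p$ (equivalently $p \equiv 1 \bmod 4$) or $v_p(t)$ is even. In the case $p \equiv 1 \bmod 4$ every $t$ qualifies, so $f(Y(\Q_p)) = \P^1(\Q_p)$ and Proposition~\ref{prop:local_densities}(2) yields $\tau_p = 1 + 1/p$. In the case $p \equiv 3 \bmod 4$ I use Proposition~\ref{prop:local_densities}(3) and compute the measure of $(t_0,t_1) \in \Z_p^2$ with $p \nmid (t_0,t_1)$ and $v_p(t_1) - v_p(t_0)$ even. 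Splitting on whether $t_0 \in \Z_p^\times$ (then $v_p(t_1)$ must be even, geometric series gives $p/(p+1)$) or $t_0 \in p\Z_p$ (then $t_1 \in \Z_p^\times$ and $v_p(t_0)$ must be even, geometric series gives $1/(p(p+1))$), summing and multiplying by $(1-1/p)^{-1}$ produces $(p^2+1)/(p(p+1)) = 1 - (p-1)/(p(p+1))$.

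The main obstacle is $p = 2$, where $\Q_2(i)/\Q_2$ is ramified. I will first establish, by a $\bmod\ 4$ parity argument on $x^2 + y^2$ (or equivalently via the Hilbert symbol $(t,-1)_2$), that $t = 2^a u$ with $u \in \Z_2^\times$ is a sum of two squares iff $u \equiv 1 \bmod 4$. Writing $\chi$ for the resulting quadratic character on $\Q_2^\times$, trivial on $2$ and on $u \equiv 1 \bmod 4$, the condition becomes $\chi(t_1) = \chi(t_0)$. Since $\chi$ is equidistributed on each coset $2^k \Z_2^\times$, for any $t_0 \in \Z_2 \setminus \{0\}$ the set $\{t_1 \in \Z_2 : \chi(t_1) = \chi(t_0)\}$ has measure $1/2$; combining this with the split $t_0 \in \Z_2^\times$ versus $t_0 \in 2\Z_2$, $t_1 \in \Z_2^\times$ (the only contributions respecting $2 \nmid (t_0,t_1)$), a short bookkeeping calculation gives the measure $3/8$, and Proposition~\ref{prop:local_densities}(3) then multiplies by $(1-1/2)^{-1} = 2$ to give $\tau_2 = 3/4$.
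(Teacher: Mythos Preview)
Your proposal is correct and follows essentially the same approach as the paper: both identify $f(Y(\Q_p))$ via the Hilbert symbol $(t,-1)_p$ and compute the resulting measure using Proposition~\ref{prop:local_densities}. The only cosmetic difference is at $p=2$, where the paper applies part~(2) of Proposition~\ref{prop:local_densities} (observing directly that the condition $u_0u_1\equiv 1\bmod 4$ cuts $\Z_2^2$ exactly in half, giving $(1+1/2)\cdot 1/2=3/4$), whereas you use part~(3) and split into the two primitive cases to reach $2\cdot 3/8=3/4$; the underlying equidistribution observation is the same.
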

\begin{proof}
	We use Proposition \ref{prop:local_densities}. This gives 
	$$\tau_\infty(f(Y(\R))) = \mu_\infty\{ (t_0,t_1) \in [-1,1]^2 : t_0/t_1 > 0 \} = 2.$$
	For $p \equiv 1 \md 4$ every conic in the family has a $\Q_p$-point. So
	let $p \equiv 3 \md 4$. The relevant measure is 
\begin{align*}
&(1-1/p)^{-1}\mu_p\{ (t_0,t_1) \in \Z_p^2 : p \nmid \mathbf{t},  v_p(t_0t_1) \in 2 \Z\} \\
 = & (1-1/p)^{-1}(  2(1-1/p)/(1+1/p) - (1-1/p)^2) \\
 =  &2/(1+1/p) - (1-1/p)  
 =  1 - \frac{p-1}{p(p+1)} 
\end{align*}
as expected. (Here we used   $\mu_p\{ x \in \Z_p : v_p(x) \in 2\Z\} = (1+1/p)^{-1}$ and 
inclusion--exclusion.)
For $p=2$ write $t_i = 2^{a_i}u_i$ where $u_i$ is a $2$-adic unit. Then we have 
$$(t_0t_1,-1)_2 = (-1)^{{(u_1u_2 - 1)}/2}, $$ thus the relevant volume is
\begin{equation*}
(1+1/2)\mu_2\{ (t_0,t_1) \in \Z_2^2 : (t_0t_1)/2^{v_2(t_0t_1)} \equiv 1 \bmod 4\}
= (1 + 1/2) \cdot 1/2 = 3/4. \qedhere
\end{equation*}
\end{proof}

The virtual Picard group here is $\Z - (1/2)\Z - (1/2)\Z$, so no convergence factors are required for the Tamagawa measure. Combining all the above with Proposition \ref{prop:easy}, one sees that we are off by a factor of $2$ in Conjecture \ref{conj:main}. This is because we used the naive height in Proposition \ref{prop:easy}, and Conjecture \ref{conj:main} uses the anticanonical height! Therefore it is the counting function $N(B^{1/2})$ which actually gives the correct constant, and this missing factor of $2$ comes from the product of modified Fujita constants $\eta(0)^{1/2}\eta(\infty)^{1/2} = 2$ (here we use Lemma \ref{lem:simplicial}).

\subsection{Polynomial represented by a binary quadratic form -- compatibility with the circle method}
We now consider a more difficult example. Let $g \in \Q[x_0,\dots,x_n]$ be an irreducible homogeneous
polynomial of even degree $d$ and $a \in \Z$ an element which is not a square in the function field
of the divisor $g(x) = 0$.

\begin{lemma} \label{lem:circle_method}
	Conjecture \ref{conj:main} predicts that
	\begin{align*}
		&\#\left\{ x \in \P^n(\Q) : 
		\begin{array}{l}
		H_{-K_{\P^n}}(x) \leq B,  \\
		g(x) = t_0^2 - at_1^2 \text{ for some } t_0,t_1 \in \Q 
		\end{array}
		\right\} 
		\sim \frac{2 \cdot  \prod_v \omega_v }{\sqrt{\pi d(n+1)}} \frac{B}{(\log B)^{1/2}},
	\end{align*}
	where
	\begin{align*}
		\omega_\infty &= 
		\begin{cases}
			(n + 1)2^n	, & a > 0, \\
			((n+1)/2) \cdot \vol\{ \x \in [-1,1]^{n+1} : g(\x) > 0 \}, & a < 0,
		\end{cases} \\
		\omega_p &= 
			(1-1/p)^{1/2}(1+1/p + \dots + 1/p^n)
			\vol \{ \x \in \Z_p^{n+1} : (g(\x),a)_p = 1\}
	\end{align*}
	and $(\cdot,\cdot)_p$ denotes the Hilbert symbol.
\end{lemma}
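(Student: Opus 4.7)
The plan is to apply Conjecture \ref{conj:Br} (equivalently Conjecture \ref{conj:main} via Corollary \ref{cor:sub}) with base $X = \P^n_\Q$, anticanonical height, open set $U = \P^n \setminus \{g = 0\}$, and the singleton $\br = \{\alpha\}$ where $\alpha = (g, a) \in \Br U$. The associated Brauer--Severi variety is a conic bundle whose fibre over $x \in U(\Q)$ has a rational point precisely when $g(x) = t_0^2 - at_1^2$ has a solution, and conics over $\Q$ satisfy the Hasse principle, so the counting problem in the lemma coincides with $\#\{x \in U(\Q)_\br : H_{-K_{\P^n}}(x) \le B\}$; no thin set correction is needed for $X = \P^n$ (see \S \ref{sec:thin_set}).

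First I would compute the geometric invariants. The only codimension-one locus carrying a non-trivial residue is $D = \{g = 0\}$: since $a$ is not a square in $k(D)$, the residue $\res_D(\alpha)$ corresponds to the class of $a$ in $k(D)^\times / k(D)^{\times 2}$ and has order $2$, so $|\res_D(\langle\br\rangle)| = 2$, $\Delta(\br) = 1/2$, and $\rho(\P^n) - \Delta(\br) - 1 = -1/2$ matches the predicted exponent. Since $D$ has class $d \cdot \O_{\P^n}(1)$ and $-K_{\P^n} = (n+1) \cdot \O_{\P^n}(1)$, Lemma \ref{lem:simplicial} gives $\eta(D) = (n+1)/d$. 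With $\rho(\P^n) = 1$ the first clause of $\Gamma(X, \br)$ applies, so $\Gamma(X, \br) = \Gamma(1/2) = \sqrt{\pi}$, and $\theta(\P^n) = \int_0^\infty e^{-(n+1)v}\,dv = 1/(n+1)$. Finally, $\alpha$ itself lies in $\Sub(X, \br)$ while $\Br \P^n_\Q / \Br \Q = 0$, hence $|\Sub(X, \br)/\Br \Q| = 2$.

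Next I would identify the Tamagawa volume as the product of the $\omega_v$. Assuming $g$ is absolutely irreducible so that $k_D = \Q$, the virtual Artin $L$-function of \eqref{eqn:L-function} simplifies to $\zeta_\Q(s)\cdot \zeta_\Q(s)^{-1/2} = \zeta_\Q(s)^{1/2}$, giving $L^*(\Pic_\br, 1) = 1$ and convergence factors $\lambda_p = (1-1/p)^{-1/2}$. Because $d$ is even, $g(\lambda \x) = \lambda^d g(\x)$ differs from $g(\x)$ by a square, so the Hilbert symbol $(g(\x), a)_v$ descends to $\P^n(\Q_v)$ and defines the measurable set $f(Y(\Q_v))$. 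Proposition \ref{prop:local_densities}(1) at $v = \infty$ and (2) at finite $v$ applied to this set yield exactly the stated $\omega_v$, the archimedean formula splitting according to whether $a > 0$ (in which case $(g(\x), a)_\infty \equiv 1$ and the integral over $[-1,1]^{n+1}$ equals $2^{n+1}$) or $a < 0$ (in which case the condition reduces to $g(\x) > 0$).

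Assembling the factors,
\begin{equation*}
c_{\br, H} \;=\; \frac{\theta(\P^n)\cdot |\Sub(X,\br)/\Br\Q|\cdot \tau_\br \cdot \eta(D)^{1/2}}{\Gamma(X,\br)} \;=\; \frac{(1/(n+1)) \cdot 2 \cdot \prod_v \omega_v \cdot ((n+1)/d)^{1/2}}{\sqrt\pi} \;=\; \frac{2 \prod_v \omega_v}{\sqrt{\pi d(n+1)}},
\end{equation*}
exactly the claimed constant. The main obstacle is less conceptual than bookkeeping: one must carefully verify that the convergence factor $(1-1/p)^{1/2}$ is precisely what \eqref{def:Tamagawa} prescribes when $k_D = \Q$ (a non-trivial extension $k_D/\Q$ would replace $(1-1/p)^{1/2}$ by $\prod_{\fp\mid p}(1-N\fp^{-1})^{1/2}$), and that the Hilbert symbol condition on the affine cone truly descends to $\P^n(\Q_v)$, both of which rely crucially on $d$ being even.
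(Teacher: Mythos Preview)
Your proposal is correct and follows essentially the same route as the paper: pass to Conjecture~\ref{conj:Br} via Corollary~\ref{cor:sub}, compute $\theta(\P^n)$, $\eta(D)$, $\Gamma(1/2)$, $|\Sub/\Br\Q|$, then identify the Tamagawa volume with $\prod_v\omega_v$ using Proposition~\ref{prop:local_densities} and the convergence factors from $\zeta(s)^{1/2}$. Two small remarks: your justification of $|\Sub(X,\br)/\Br\Q|=2$ only gives the lower bound; the upper bound (which the paper supplies) comes from noting that any subordinate $\beta$ has $\res_D(\beta)\in\{0,\res_D(\alpha)\}$, so $\beta-\alpha$ or $\beta$ lies in $\Br\P^n=\Br\Q$. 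On the other hand, you are more careful than the paper in flagging that the convergence factor $(1-1/p)^{1/2}$ requires $k_D=\Q$ and in noting that $d$ even is what makes $(g(\x),a)_v$ descend to $\P^n(\Q_v)$.
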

Note that no non-singularity hypothesis is required on $g$ ($g$ may even be geometrically reducible). In the above volume, we are implicitly ignoring the subset $g(\x) = 0$, which has measure zero.  Note also that $(g(\x),a)_p = 1$ whenever $a \in \Q_p^{\times 2}$.

\begin{proof}
	Firstly no thin set $\Omega$ is required (see \S \ref{sec:thin_set}).
	By Corollary \ref{cor:sub}, we can use Conjecture \ref{conj:Br} instead.
	As $g$ has even degree and is irreducible,
	the quaternion algebra $\alpha = (g,a)$
	is ramified exactly along the divisor $D = \{ g(x) = 0\}$ with residue
	$a \in \kappa(D)$, which is non-trivial by assumption. We have
	$$\alpha^*(\P^n) = 1/(n+1), \quad \eta(D) = (n+1)/d, \quad \Gamma(1/2) = \sqrt{\pi}.$$
	This gives the factors on the denominator. We next claim that
	$$\Sub(\P^n,\alpha) /\Br \Q \cong \Z/2\Z$$
	generated by the image of $\alpha$ (this gives the factor $2$ on the numerator). To see this, 
	let $\beta \in \Sub(\P^n,\alpha)$ be non-constant. Then $\alpha - \beta$ is everywhere
	unramified, hence is constant as $\Br \P^n = \Br \Q$. This proves the claim.
	
	For the local densities, we use Lemma \ref{lem:calculate_Tamagawa} with $S = \emptyset$.
	To calculate these we use Proposition~\ref{prop:local_densities}. 
	For the real density, when
	$a > 0$ there is always a real point so we recover the usual real density of $(n+1)2^n$.
	When $a < 0$ there is a real solution if and only if $g(x)$ is positive. 
	For the $p$-adic densities, the condition that there is a $\Q_p$-point
	is exactly that $(g(\x),a)_p = 1$.
	To finish it suffices to note that the convergence factors are given by the local Euler
	factors of $\zeta(s)^{1/2}$, since the virtual representation here is just 
	$\Z - (1/2)\Z$, and that $\lim_{s \to 1}(s-1)^{1/2} \zeta(s)^{1/2} = 1$.
\end{proof}

We now prove that the formula in Lemma \ref{lem:circle_method} is compatible with predictions from the Hardy--Littlewood circle method, at least when $a = -1$.   Letting   $H$ be the naive Weil height in $\P^{n}$
we shall study
\[N(B):=\#\left\{x\in \P^{n}(\Q): H(x) \leq B, t_0^2+t_1^2=g( x ) \text{ has a } \Q\text{-point}\right\}.\]
By Lemma \ref{lem:circle_method} our result verifies Conjecture \ref{conj:main} after considering the anticanonical counting function $N(B^{1/(n+1)})$.
\begin{theorem}
 \label{thm:birchcircle} 
Let $g \in \Q[x_0,\dots,x_n]$ be an irreducible homogeneous polynomial of even degree $d$.
Assuming that  $g$ is non-singular and that $n+1>(d-1) 2^d$
we have 
$$ \lim_{B\to\infty}
\frac{N(B)B^{-n-1} }{(\log B)^{-1/2} }
=
 \frac{ \vol( \x \in [-1,1]^{n+1}  :g(\x )> 0)}{(\pi d)^{1/2}} 
\prod_{p }  \l(1-\frac{1}{p}\r)^{1/2} 
\hspace{-0,1cm}
\l(1+\frac{1}{p}+\cdots+\frac{1}{p^n} \r) \ell_p
,$$
where for any prime $p$ we let $$\ell_p :=\mathrm{vol}\l(\b x  \in \Z_p^{n+1}:  t_0^2+t_1^2=g( \b x ) \text{ has a } \Q_p\text{-point}\r).$$ 
\end{theorem}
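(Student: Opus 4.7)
My strategy is to express the soluble-sum-of-two-squares condition as a Dirichlet convolution and apply a uniform-in-modulus version of Birch's circle method. I would first pass from projective to affine via M\"obius inversion on the gcd: since $g$ has even degree $d$, the scaling $\mathbf{x}\mapsto \ell \mathbf{x}$ multiplies $g$ by a perfect square and so preserves solubility, yielding
\[
N(B) = \tfrac{1}{2}\sum_{\ell\geq 1}\mu(\ell)\,S(B/\ell), \qquad S(T) = \#\{\mathbf{y}\in\Z^{n+1}:\|\mathbf{y}\|_\infty\leq T,\,g(\mathbf{y})\text{ is a sum of two rational squares}\},
\]
which reduces matters to an asymptotic for $S(T)$. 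By Hasse--Minkowski the solubility condition is equivalent to $g(\mathbf{y})>0$ together with $v_p(g(\mathbf{y}))\in 2\Z$ for every prime $p\equiv 3 \md{4}$, and the resulting multiplicative indicator $\rho$ factorises as $\rho = \one\ast\lambda$ where $\lambda$ is supported on integers whose prime factors are all $\equiv 3\md{4}$ with $\lambda(p^k) = (-1)^k$ on such prime powers.

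Swapping sums gives $S(T) = \sum_{q\geq 1}\lambda(q)\,S_q(T)$ with $S_q(T)=\#\{\mathbf{y}\in\Z^{n+1}:\|\mathbf{y}\|_\infty\leq T,\,g(\mathbf{y})>0,\,q\mid g(\mathbf{y})\}$. I would split this sum at $q=T^\theta$ for small $\theta>0$. For $q\leq T^\theta$, Birch's circle method (whose hypothesis $n+1>(d-1)2^d$ is in force) applied uniformly over congruence classes modulo $q$ produces
\[
S_q(T) = \vol\{\mathbf{x}\in [-1,1]^{n+1}:g(\mathbf{x})>0\}\cdot T^{n+1}\cdot \sigma^*(q)/q + O(T^{n+1-\delta}q^{A}),
\]
with $\sigma^*(q)$ a multiplicative local density. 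For $q>T^\theta$, the trivial bound $S_q(T)\ll T^{n+1}/q+T^n$ combined with cancellation in $\lambda$, arising from the order-$1/2$ zero of $\sum_q\lambda(q)q^{-s}$ at $s=1$, controls the tail.

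Assembling the Birch estimates gives $S(T) \sim \vol(g>0)\,T^{n+1}\sum_{q\leq T^\theta}\lambda(q)\sigma^*(q)/q$. The Dirichlet series $\sum_q\lambda(q)\sigma^*(q)/q^s$ is an Euler product whose factors at $p\equiv 3\md{4}$ contribute essentially $(1+p^{-s})^{-1}$; the classical factorisation $L(s,\chi_{-4}) = \prod_{p\equiv 1\md{4}}(1-p^{-s})^{-1}\prod_{p\equiv 3\md{4}}(1+p^{-s})^{-1}$ together with $L(1,\chi_{-4})=\pi/4\neq 0$ shows an order-$1/2$ zero at $s=1$. A Selberg--Delange argument then produces $\sum_{q\leq Q}\lambda(q)\sigma^*(q)/q\sim C(\log Q)^{-1/2}$; substituting $Q=T^\theta$, performing the M\"obius sum in $\ell$ via $\sum_\ell\mu(\ell)/\ell^{n+1}=\zeta(n+1)^{-1}$, and matching local factors against Proposition~\ref{prop:local_densities} recovers both the $(\log B)^{-1/2}$ factor and the claimed Euler product $\prod_p(1-1/p)^{1/2}(1+\cdots+1/p^n)\ell_p$.

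The chief technical obstacle is the uniform-in-$q$ Birch estimate, since Birch's original theorem treats only $q=1$: one must track congruence-modulus dependence through both major and minor arc analysis. This can be done by revisiting Birch's minor-arc bounds with explicit $q$-dependence, or via Heath-Brown's delta-method in which the modulus dependence is built in. A secondary difficulty is controlling the tail $q>T^\theta$ so that cancellation in $\lambda$ beats the average size of $S_q(T)$; handling this likely requires a short-interval Selberg--Delange estimate for $\lambda$ or bilinear-form techniques.
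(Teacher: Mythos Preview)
Your approach is genuinely different from the paper's, and the difference matters. The paper does \emph{not} expand the sum-of-two-squares indicator as a divisor convolution. Instead it writes
\[
\Theta(P)=\int_0^1 S(\alpha)\,\overline{E_\Q(\alpha)}\,\mathrm d\alpha,
\]
where $S(\alpha)$ is Birch's exponential sum for $g$ and $E_\Q(\alpha)=\sum_{m}\mathrm e(\alpha m)$ runs over positive $m\ll P^d$ that are sums of two squares. Birch's minor-arc bound is applied once to $S(\alpha)$, with only the trivial estimate $E_\Q\ll P^d$ on the minor arcs; on the major arcs one feeds in the known Selberg--Delange asymptotic for $E_\Q(a/q+\beta)$ (this is the input from \cite{MR4353917}). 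No uniformity of Birch in an auxiliary modulus is required, and there is no tail sum in $q$ to control.

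Your proposal, by contrast, swaps in the convolution $\rho=\one\ast\lambda$ first and only then applies Birch/lattice counting to each $S_q(T)$. The gap is in the tail. You plan to truncate at $q=T^\theta$ and compute $\sum_{q\leq T^\theta}\lambda(q)\sigma^*(q)/q\sim C(\log T^\theta)^{-1/2}=C\theta^{-1/2}(\log T)^{-1/2}$. But the full sum runs up to $q\leq cT^d$ (since $|g(\mathbf y)|\ll T^d$), and the same Selberg--Delange heuristic gives $\sum_{T^\theta<q\leq cT^d}\lambda(q)\sigma^*(q)/q\asymp(\log T)^{-1/2}$ as well: the tail is of the \emph{same order} as your main term, not an error. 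Equivalently, your answer as written depends on $\theta$, which cannot be right. The difficulty you label ``secondary'' is in fact the crux: to make the convolution approach work you would need the approximation $S_q(T)\approx T^{n+1}\sigma^*(q)/q$ with a controllable error \emph{uniformly} over the full range $q\leq cT^d$, and for $q$ beyond $T$ no elementary lattice count delivers this. Getting such uniformity is essentially equivalent in difficulty to the circle-method argument the paper carries out, and the bilinear packaging via $E_\Q(\alpha)$ is precisely what makes that range tractable.

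A smaller point: for $q\leq T^\theta$ you do not need Birch at all---$S_q(T)$ is a straightforward lattice-point count in congruence classes, and invoking the circle method here is unnecessary. The genuine circle-method content lies entirely in the range you are trying to discard.
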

\begin{proof}
Our method of proof is based upon \cite{MR4353917}, as well as Birch's seminal work \cite{Bir62}.

\textbf{ \texttt{Step 1 (using the circle method.)}}
By M\"obius inversion we may use arguments akin
to~\cite[Lemma 2.1]{MR4353917} to write 
 \beq{eq:vislem2.1}{N(B)=\frac{1}{2} \sum_{l\in \N \cap[1, \log(2B) ] } \mu(l) \Theta(B/l)+O(B^{n+1}(\log 2B)^{-1} ),}where 
 $ \Theta(P):=\#\left\{\b x\in \Z^{n+1} : |\b x |\leq P,g(\b x)\neq 0 , t_0^2+t_1^2=g( \b x ) \text{ has a } \Q\text{-point}  \right\}$
and $|\cdot |$ denotes the supremum norm in $\R^{n+1}$.
Letting $\mathrm e(z):= \mathrm e^{2 \pi i z }$ and using the standard circle method integral 
$\int_0^1 \mathrm e(\alpha (g(\b x ) - m )) \mathrm d \alpha $ to detect whether $g(\b x )$   equals 
 a given integer $m $ we can show as in~\cite[Equation (2.2)]{MR4353917} 
that   $$ \Theta(P)= \int_0^1 S(\alpha) \overline{E_\Q(\alpha)}\mathrm d \alpha ,$$ where 
$ S(\alpha):=\sum_{|\b x|\leq P }\mathrm e (\alpha g(\b x )) $ is the standard exponential sum associated to $g$
and $$E_\Q(\alpha)=\sum_{\substack{ 0< m \leq  \max\{g([-1,1]^{n+1} )\} P^d  \\ 
 t_0^2+t_1^2=m \text{ has a } \Q\text{-point}    } } \mathrm e(\alpha m ).$$

\textbf{\texttt{Step 2 (bounding away the minor arcs.)}}
We next show that the `minor arcs' contribute to the error term.
The trivial bound $E_\Q(\alpha)=O(P^d )$ shows that for any set $\mathcal A \subset [0,1)$ one has 
\beq{eq:bwv76}{ \left| \int_{\mathcal A } S(\alpha) \overline{E_\Q(\alpha)}\mathrm d \alpha \right|
\ll P^d  \int_{\mathcal A } \left|S(\alpha) \right|\mathrm d \alpha.} Birch~\cite[Equations (10)-(11), Lemma 4.3]{Bir62} showed 
that there exist $\delta, \theta_0>0 $ that depend
only on $g $ such that when $\c A$ is defined as the complement of 
$$ \bigcup_{1\leq q \leq P^{(d-1)\theta_0}} 
 \bigcup_{a\in (\Z/q\Z)^\times  } \left[\frac{a}{q}-\frac{ P^{-d+(d-1)\theta_0}}{2q} ,\frac{a}{q}+\frac{ P^{-d+(d-1)\theta_0}}{2q} \right]
$$ one has $  \int_{\mathcal A } |S(\alpha) |\mathrm d \alpha=O(P^{n+1-d-\delta})$.
As shown in~\cite[Lemma 4.1]{Bir62}
these intervals are disjoint, hence, by~\eqref{eq:bwv76} we obtain 
\beq{eq:birhtheta_1}{ \Theta(P)= \sum_{\substack{ 1\leq  q \leq P^{(d-1)\theta_0}\\
 a\in (\Z/q\Z)^\times   }} 
  \int_{|\beta|\leq P^{-d+(d-1)\theta_0}} S\left(\frac{a}{q}+\beta\right) \overline{E_\Q}\left(\frac{a}{q}+\beta\right)
\mathrm d \beta 
+O(P^{n+1-\delta}) .} Note that we have replaced the condition 
$|\beta| \leq  \frac{ P^{-d+(d-1)\theta_0}}{2q}$ by 
$ |\beta | \leq  P^{-d+(d-1)\theta_0}$; this can be done owing to the remarks in~\cite[page 253]{Bir62}.

\textbf{\texttt{Step 3 (first approximation in the major arcs.)}}
Now that we have obtained a representation of $\Theta(P)$ as an integral over the `major arcs'
we proceed to approximate     $S(a/q+\beta)$ by  a product of an exponential sum and an exponential integral in each arc. 
Namely, letting 
$$S_{a,q}=\sum_{\b t \in (\Z/q\Z)^{n+1}  } \mathrm e(a f( \b t )/q) 
\ \textrm{ and } \ 
I(\gamma)=\int_{\boldsymbol \zeta \in \R^{n+1} \cap [-1,1]^{n+1} } \mathrm e (\gamma g(\boldsymbol \zeta ))\mathrm d \boldsymbol \zeta,$$
Birch showed in~\cite[Lemma 5.1]{Bir62} that there exists  $\eta>0$ that depends at most on $g $ such that 
$$S\left(\frac{a}{q}+\beta\right)
=q^{-n-1} P^{n+1} S_{a,q}I(P^d \beta)+O(P^{n+2\eta} )
$$ whenever $ |\beta | \leq  P^{-d+(d-1)\theta_0}$.
Injecting this into~\eqref{eq:birhtheta_1}
yields   constants $\theta_i>0$ that depend only on $ g$
such that 
\beq{eq:birhtheta_234}
{
 \Theta(P)= P^{n+1}
 \sum_{\substack{ 1\leq  q \leq P^{ \theta_1}\\
 a\in (\Z/q\Z)^\times   }} \frac{S_{a,q}}{q^{n+1} } 
  \int_{|\gamma|\leq P^{ \theta_2}} 
I(\gamma)
\overline{E_\Q}\left(\frac{a}{q}+\frac{ \gamma}{P^d}\right)
\frac{\mathrm d \gamma}{P^d}
+O(P^{n+1-\theta_3}) 
,} where we used the change of variables $\gamma= P^d \beta$.

\textbf{\texttt{Step 4 (second approximation in the major arcs.)}}
To estimate the term $E_\Q$ inside the integral we use~\cite[Lemma 3.6]{MR4353917}. This  result 
is essentially related to  the   density of integers $m$ in an arithmetic progression that are a sum of two integer squares.
It   will therefore only be useful when $q$ and $\gamma$ have smaller size than in~\eqref{eq:birhtheta_234}
and we thus continue by   restricting the summation and integration range in~\eqref{eq:birhtheta_234}.
To do so we recall  that 
Birch~\cite[Lemmas 5.2 \& 5.4]{Bir62}
showed that there exist constants $\epsilon_i>0$ 
depending only on $g $ such that \beq{eq:absoluteboundsbirch}{
|I(\gamma)| \ll \min\{1,|\gamma|^{-1-\epsilon_1}\}
\ \textrm{ and } \ 
\frac{|S_{a,q}| }{q^{n+1}}\ll q^{-1-\epsilon_2} .}
Combining them with 
the standard result $\max \left\{ |E_\Q(\alpha)|: \alpha \in \R \right\} \ll P^d (\log P)^{-1/2}$
we can use a similar argument as in~\cite[Lemma 4.2]{MR4353917}
to conclude that for each  
constants $A_1,A_2>0$
there exists
a positive constant $A_3$ depending on 
$A_1,A_2$ and $g $ such that $$
\frac{ \Theta(P)}{ P^{n+1}}=
 \sum_{\substack{ 1\leq  q \leq (\log P)^{ A_1}\\
 a\in (\Z/q\Z)^\times   }} \frac{S_{a,q}}{q^{n+1} } 
  \int_{|\gamma|\leq (\log P)^{ A_2}} 
I(\gamma)
\overline{E_\Q}
\left(\frac{a}{q}+\frac{ \gamma}{P^d}\right)
\frac{\mathrm d \gamma}{P^d}
+O\l( (\log P)^{-1/2-A_3}\r).$$
Lemma 3.6 in~\cite{MR4353917}
states   that for all constants $A>0$ and $q\leq (\log P)^A$ one has
$$
E_\Q\left(\frac{a}{q}+\frac{ \gamma}{P^d}\right)
=2^{1/2} \c C_0 \mathfrak F(a,q)
\int_{2}^{\max\{g([-1,1]^{n+1} )\} P^d} \frac{\mathrm e(\gamma P^{-d} t ) }{\sqrt{\log t } }\mathrm dt 
+O\left(\frac{q^3(1+|\gamma|)P^d}{(\log P)^{1/2+1/7}}\right)
,$$ where  $  \mathfrak F(a,q)  $ is defined in~\cite[Equation (3.7)]{MR4353917} and
\beq{def:standc0}{ \c C_0:= \prod_{p\equiv 3 \md 4 }(1-p^{-2})^{1/2}.}
Taking $A_1$ and $A_2$ appropriately small and using~\eqref{eq:absoluteboundsbirch}
shows that there exist $A_i>0$ depending only on $g$
such that 
\beq{eq:bb01}{
\frac{ \Theta(P)}{ P^{n+1}}=
 2^{1/2} \c C_0
\hspace{-0,2cm}
 \sum_{\substack{    q \leq (\log P)^{ A_1}\\
 a\in (\Z/q\Z)^\times   }} 
\hspace{-0,4cm}
\overline{ \mathfrak F(a,q)}
\frac{S_{a,q}}{q^{n+1} } 
\l(\int_{|\gamma|\leq (\log P)^{ A_2}} 
I(\gamma) 
\c H_P(\gamma) 
\frac{\mathrm d \gamma}{P^d}
\r)
+o\l( (\log P)^{-1/2}\r),}
where   $$
\c H_P(\gamma) 
:=  \int_{2}^{\max\{g([-1,1]^{n+1} )\} P^d} \frac{\mathrm e(\gamma P^{-d} t ) }{\sqrt{\log t } }  
\mathrm dt 
 .$$  
By~\cite[Equation (3.11)]{MR4353917} the function $\mathfrak F(a,q)$ is bounded independently of $a$ and $q $,
hence, by~\eqref{eq:absoluteboundsbirch}
we get 
\beq{eq:bb02}{
 \sum_{\substack{ 1\leq  q \leq (\log P)^{ A_1}\\
 a\in (\Z/q\Z)^\times   }} 
\frac{S_{a,q}}{q^{n+1} } 
\overline{ \mathfrak F(a,q)}=
 \sum_{\substack{ q\in \N \\
 a\in (\Z/q\Z)^\times   }} 
\frac{S_{a,q}}{q^{n+1} } 
\overline{ \mathfrak F(a,q)}
+o(1).} Similarly, the trivial bound $\c H_P(\gamma)\ll P^d (\log P)^{-1/2}$
combined with~\eqref{eq:absoluteboundsbirch} shows  that     \beq{eq:bb03}{
\int_{|\gamma|\leq (\log P)^{ A_2}} 
I(\gamma) 
\c H_P(\gamma) 
\frac{\mathrm d \gamma}{P^d}
=\int_{-\infty}^{+\infty}
I(\gamma) 
\c H_P(\gamma) 
\frac{\mathrm d \gamma}{P^d}
+o((\log P)^{-1/2})
.}
Injecting~\eqref{eq:bb02}-\eqref{eq:bb03} into~\eqref{eq:bb01} then yields
\beq{eq:birchconcludsion}{
\frac{ \Theta(P)}{ P^{n+1}}= 
 \mathfrak S
\int_{\R}  I(\gamma)  \frac{\c H_P(\gamma) }{P^d}\mathrm d \gamma
+o\l( (\log P)^{-1/2 }\r),}
where 
 is the analogue of the classical \textit{singular series} in our setting
and given by 
$$ \mathfrak S:=  \sum_{q\in \N} \sum_{ a\in (\Z/q\Z)^\times  } 
\frac{S_{a,q}}{q^{n+1} } \l( 2^{1/2} \c C_0
\overline{ \mathfrak F(a,q)} \r)
.$$ One can simplify the integral over $\gamma$ by using the Fourier analysis approach in~\cite[Lemmas 4.3-4.4]{MR4353917}.
This will result in 
\beq{eq:birchinterpr}{
\int_{\R}  I(\gamma)  \frac{\c H_P(\gamma) }{P^d}\mathrm d \gamma
=\frac{\textrm{vol}(\b t \in [-1,1]^{n+1} :g(\b t )> 0)}{\sqrt{\log(P^d) }}+O((\log P)^{-3/2}) ,} which, when combined with~\eqref{eq:birchconcludsion},
results in
\beq{eq:birchconclreqemion23}{
\lim_{B\to \infty}
\frac{ \Theta(P)}{ P^{n+1}(\log (P^d))^{-1/2}}= 
 \textrm{vol}(\b t \in [-1,1]^{n+1} :g(\b t )> 0)
 \mathfrak S
.}
 \textbf{\texttt{Step 5 (The singular series.)}} One may now use the multiplicative properties of $S_{a,q}$
to express $ \mathfrak S$ as an Euler product; this is a standard procedure albeit somewhat awkward in the presence of the terms
$ \mathfrak F(a,q)$. One can deal with these terms
similarly as in~\cite[Lemmas 5.1]{MR4353917} using nothing more than Ramanujan exponential sums.
Successively, the individual terms of the Euler product can be written as a limit of densities as in~\cite[\S 3.5.2–3.5.3]{fishthesis}; this will result in
the decomposition
$$  \sum_{q\in \N} \sum_{ a\in (\Z/q\Z)^\times  } 
\frac{S_{a,q}}{q^{n+1} }  
\overline{ \mathfrak F(a,q)}  = \ell^*_2 \prod_{p\equiv 3 \md 4} (1-1/p)^{-1} \ell^*_p,$$
where  $$\ell^*_p:=
\lim_{N\to \infty }\frac{\#\{\b x \in (\Z\cap[0,p^N))^{n+1} : t_0^2+t_1^2=g(\b  x ) \text{ has a } \Q_p\text{-point}\}}{p^{N (n+1) }} 
 .$$A straightforward argument involving Haar measures shows that  $\ell^*_p=\ell_p$.
In light of~\eqref{eq:birchconclreqemion23}
we conclude that 
\beq{eqozet584h}{
\lim_{B\to \infty}
\frac{ \Theta(P)(\log P)^{1/2}}{ P^{n+1}}= 
\frac{ \textrm{vol}(|\b t|\leq 1  :g(\b t )> 0)}{d^{1/2}}
2^{1/2} \c C_0  \ell_2 \prod_{p\equiv 3 \md 4}\frac{  \ell_p}{1-1/p}
.} \textbf{\texttt{Step 6 (Conclusion of the proof.)}} Fix any $\epsilon>0$. Denoting the right-hand side of~\eqref{eqozet584h} by $c$
and using~\eqref{eq:vislem2.1} and~\eqref{eqozet584h} 
we infer that for all sufficiently large $B$ one has 
\beq{beatamontev}{
\l|\frac{N(B)}{B^{n+1} }-\frac{c}{2} \sum_{l\leq \log(2B)  } \frac{\mu(l)  l^{-n-1}}{ (\log( B/l ))^{1/2}}\r|\leq \epsilon \sum_{l \leq \log(2B) } 
\frac{ l^{-n-1} }{(\log (B/l))^{1/2}} +O\l(\frac{1}{\log B}\r)
.} For $l \leq \log(2B)$ one has \beq{eq:taylorrr}{(\log (B/l))^{-1/2} = (\log B)^{-1/2}\l (1+O\l(\frac{\log \log B}{\log B}\r)\r),}hence, 
the right-hand side of~\eqref{beatamontev} becomes 
$$ \epsilon \sum_{l \leq \log(2B) } 
\frac{ l^{-n-1} }{(\log (B/l))^{1/2}} +O\l(\frac{1 }{\log B}\r)
\ll 
\frac{1 }{(\log B)^{1/2}} 
  \sum_{l \in \N } 
\frac{\epsilon}{l^{n+1} } + \frac{1 }{\log B} \ll \frac{\epsilon  }{(\log B)^{1/2}} . $$
Furthermore, by~\eqref{eq:taylorrr} the sum in the left-hand side of~\eqref{beatamontev}
is  
$$  \sum_{l\leq \log(2B)  } \frac{\mu(l)  l^{-n-1}}{ (\log( B/l ))^{1/2}}=
\frac{1+o(1) }{(\log B)^{1/2}}
 \sum_{l\leq \log(2B)  } \frac{\mu(l) }{ l^{n+1}}
=\frac{1/\zeta(n+1) +o(1)}{(\log B)^{1/2} }. $$ Putting these estimates together shows that 
$$
\lim_{B\to\infty}\frac{N(B)(\log B)^{1/2}}{B^{n+1} } =
 \frac{ \textrm{vol}(|\b t|\leq 1  :g(\b t )> 0)}{d^{1/2}}
\frac{2^{1/2} \c C_0  \ell_2}{2\zeta(n+1) } 
 \prod_{p\equiv 3 \md 4}\frac{  \ell_p}{1-1/p}
.$$
Alluding to~\eqref{def:standc0} and 
  the standard identity 
 $$\frac{1}{\zeta(n+1) }
= \prod_p \l(1-\frac{1}{p} \r) \l(1+\frac{1}{p}+\cdots+\frac{1}{p^n} \r) 
$$ shows  that  
  \begin{align*}
\frac{ \c C_0   }{ \zeta(n+1) } 
& \prod_{p\equiv 3 \md 4}\frac{  \ell_p}{1-1/p}
=\frac{ 1  }{ \zeta(n+1) } 
 \prod_{p\equiv 3 \md 4} \l(1-\frac{1}{p}\r)^{-1/2}  \l(1+\frac{1}{p}\r)^{1/2}  \ell_p 
\\=
\l(1-\frac{1}{2^{n+1} }\r)
&\prod_{p\equiv 1 \md 4}  \l(1-\frac{1}{p}\r)^{1/2} \l(1+\frac{1}{p}+\cdots+\frac{1}{p^n} \r) \l(1-\frac{1}{p} \r)^{1/2}
\\\times
&\prod_{p\equiv 3 \md 4} 
 \l(1-\frac{1}{p}\r)^{1/2}  \l(1+\frac{1}{p}+\cdots+\frac{1}{p^n} \r) 
 \l(1+\frac{1}{p}\r)^{1/2}  \ell_p 
.
 \end{align*} Upon using the standard fact 
$$ \frac{2}{\pi^{1/2} } =  \prod_{p\equiv 1 \md 4 } \l(1-\frac{1}{p} \r)^{1/2}\prod_{p\equiv 3 \md 4 } \l(1+\frac{1}{p} \r) ^{1/2}
$$the previous expression becomes   \begin{align*}
\frac{2}{\pi^{1/2} } \l(1-\frac{1}{2^{n+1} }\r)
&\prod_{p\equiv 1 \md 4}  \l(1-\frac{1}{p}\r)^{1/2} \l(1+\frac{1}{p}+\cdots+\frac{1}{p^n} \r) 
\\
\times 
&\prod_{p\equiv 3 \md 4} 
 \l(1-\frac{1}{p}\r)^{1/2}  \l(1+\frac{1}{p}+\cdots+\frac{1}{p^n} \r) 
  \ell_p,
 \end{align*} 
which, owing to $\ell_p$ being $1$ for $p\equiv 1 \md 4 $, simplifies to $$ \frac{2}{\pi^{1/2} }
 \l(1-\frac{1}{2^{n+1} }\r)
\prod_{p\neq 2 }  \l(1-\frac{1}{p}\r)^{1/2} \l(1+\frac{1}{p}+\cdots+\frac{1}{p^n} \r) \ell_p .$$ 
This shows that as $B\to\infty$, the function $ N(B)B^{-n-1} (\log B)^{1/2}$ has limit 
$$
  \frac{ \textrm{vol}(|\b t|\leq 1  :g(\b t )> 0)}{(\pi d)^{1/2}}
 2^{1/2}   \ell_2 
 \l(1-\frac{1}{2^{n+1} }\r)
\prod_{p\neq 2 }  \l(1-\frac{1}{p}\r)^{1/2} \l(1+\frac{1}{p}+\cdots+\frac{1}{p^n} \r) \ell_p
,$$ which equals
$$
 \frac{ \textrm{vol}(|\b t|\leq 1  :g(\b t )> 0)}{(\pi d)^{1/2}} 
\prod_{p }  \l(1-\frac{1}{p}\r)^{1/2} \l(1+\frac{1}{p}+\cdots+\frac{1}{p^n} \r) \ell_p,$$ thereby concluding the proof.
 \end{proof}

 \begin{remark}
\label{rem:interpreta}
In  Theorem~\ref{thm:birchcircle}, the  Fujita invariant is  $(n+1)/d$.  Using the anticanonical height 
allows us to see that the term $ d^{1/2}$ in the constant in Theorem~\ref{thm:birchcircle}  is essentially coming from this invariant.
In the proof of  Theorem~\ref{thm:birchcircle}, this constant comes from the singular integral as can be seen from~\eqref{eq:birchinterpr}.
The archimedean density $ \textrm{vol}(\b t \in [-1,1]^{n+1}  :g(\b t )> 0)$ 
also emanates from the singular integral. Finally, the Gamma factor  $\Gamma(1/2)=\pi^{1/2}$ and the product of non-archimedean densities 
come out of the singular series. \end{remark}

\subsection{Anisotropic tori} \label{sec:tori}

Let $T \subset X$ be a smooth equivariant compactification of an anisotropic torus $T$.
Let $H$ be the associated Batyrev-Tschinkel height and $\br \subset \Br_1 T$ a finite
subgroup. Here as $T$ is anisotropic we have $k[T]^\times = k^\times$, so we are in the second
case of Conjecture \ref{conj:Br}.

\begin{lemma} \label{lem:anisotropic}
	Conjecture \ref{conj:Br} holds for $(X,H,\br)$.
\end{lemma}
\begin{proof}
	We use \cite[Thm.~5.15]{Lou18}. However this has some mistakes
	related to the correct way to interpret the Tamagawa measure \eqref{def:tau_f_Sub}
	(specifically  the factor $\tau_\br(T(\Adele_F)_\br^{\Sub(X,\br)})$ appears,
	but as we have explained in \S\ref{sec:volume_BM} this diverges to $0$
	by the choice of convergence factors).
	We take the opportunity to correct these mistakes here, which thankfully is relatively
	simple using Lemma \ref{lem:sum_Euler_products}.
	
	We choose a finite collection $\mathscr{S}$ of representatives for the group
	$\Sub(X,\br)/\Br k$ that evaluate trivially at the identity of $T$.
	As in \cite[\S5.6]{Lou18} and the proof of \cite[Thm.~5.15]{Lou18}, Poisson summation
	shows that the leading constant $c_{X,\br,H}$ equals
    $$\sum_{b \in \mathscr{S}/\Be(T)} \frac{1}{\Gamma(\rho(X) - \Delta(\br))\vol(T(\Adele_F)/T(F))}\lim_{s \to 1} (s-1)^{ \rho(X) - \Delta_X(\br)}
    \prod_v\int_{T(k_v)_\br} \frac{e^{2 \pi i \inv_vb(t_v)}}{ H_v(t_v)^s}\mathrm{d}\mu_v$$
	where the latter integral is the Fourier transform with respect to $b$,
	where $b$ is viewed as an automorphic character via the Brauer--Manin pairing
	\cite[Thm.~4.5]{Lou18}. Using the relation \cite[(5.13)]{Lou18}, we find that	
	\[
    \prod_v\int_{T(k_v)_\br} \frac{e^{2 \pi i \inv_vb(t_v)}}{ H_v(t_v)^s}\mathrm{d}\mu_v
    = \L(X^*(\bar{T}),1) 
    \prod_v\int_{T(k_v)_\br} \frac{e^{2 \pi i \inv_vb(t_v)}}{ H_v(t_v)^{s-1}}\mathrm{d}\tau_v
    \]
    where $\L(X^*(\bar{T}),s)$ is the Artin $L$-function associated to the character
    lattice $X^*(\bar{T})$ of $T$ (this is holomorphic at $s = 1$ as $T$ is anisotropic).
    As in the proof of \cite[Thm.~4.5]{Lou18} we now introduce the factor 
    $\L(\Pic_{\br}(\overline{X})_\CC,s)$, which shows that the leading constant equals
    $$\sum_{b \in \mathscr{S}/\Be(T)} \frac{\L(X^*(\bar{T}),1) }{\Gamma(\rho(X) - \Delta(\br))\vol(T(\Adele_F)/T(F))} \hat{\tau}_f(b)$$
    using the notation of Lemma \ref{lem:sum_Euler_products}. 
    But $\vol(T(\Adele_F)/T(F)) = \L(X^*(\bar{T}),1) |\Pic T|/|\Sha(T)|$ 
    \cite[(4.9)]{Lou18}, $|\Sha(T)| = |\Be(T)|$ \cite[(4.5)]{Lou18} and $\alpha^*(X) = 1/|\Pic T|$.
    This gives the leading constant
        $$\sum_{b \in \mathscr{S}/\Be(T)} \frac{\alpha^*(X)}{\Gamma(\rho(X) - \Delta(\br))|\Be(T)|} \hat{\tau}_f(b).$$    
     Extending the sum over all $b \in \mathscr{S}$ now cancels with the factor $\Be(T)$ 
     on the denominator. Then Lemma \ref{lem:sum_Euler_products} shows that we obtain
     the correct leading constant in Conjecture \ref{conj:Br}, except for possibly
     the modified Fujita invariants. However $\eta(D) = 1$ for any
    divisor $D$ on the boundary. This follows from Lemma \ref{lem:simplicial}
    since $-K_X$ is the sum of the boundary divisors 
    and the boundary divisors freely generate the cone of effective divisors
    \cite[Prop.~1.2.12, Prop.~1.3.11]{BT95}.
\end{proof}

\begin{remark}
	As a warning, we remind the reader that \eqref{eqn:conj_intro}
	need not hold for smooth equivariant compactifications of split tori
	(see Example \ref{ex:crazy}.) It would however be very  interesting to prove Conjecture
	\ref{conj:Br} for an anisotropic torus $T$ and an arbitrary finite subgroup $\br \subset
	\Br T$, i.e.~possibly consisting of transcendental Brauer group elements.
\end{remark}

\subsection{Wonderful compactifications of adjoint semisimple algebraic groups} \label{sec:wonderful}

Let $G \subset X$ be the wonderful compactification of an adjoint semi-simple algebraic group
over a number field $k$.  Let $H$ be an anticanonical height function associated to a smooth adelic metric and $\br \subset \Br_1 G$ a finite subgroup.
Here as $G$ is semi-simple we have $k[G]^\times = k^\times$, so we are in the second
case  of Conjecture \ref{conj:Br}.

\begin{lemma}
	Conjecture \ref{conj:Br} holds for $(X,H,\br)$.
\end{lemma}
\begin{proof}
	We apply \cite[Thm.~4.20]{LTBT20}, with the same caveat in the proof 
	of Lemma \ref{lem:anisotropic} regarding fixing the interpretation of the volume
	of the Brauer--Manin set. A minor variation of proof of Lemma \ref{lem:anisotropic},
	using the spectral decomposition and Lemma \ref{lem:sum_Euler_products},
	shows that
	$$c_{X,\br,H}=\frac{|\Sub(X,\br)/\Br k| \cdot \tau_{\br}((\prod_v G(k_v)_{\br})^{\Sub(X,\br)})}
	{|\Pic G| \cdot \Gamma(\rho(X) - \Delta_X(\br)) \cdot
	\prod_{\alpha \in \mathcal{A}} (1+\kappa_\alpha)^{1/|\res_{D_\alpha}(\br)|}}$$
	where $\mathcal{A}$ denotes the set of boundary components of $X \setminus G$.
	Here $1 + \kappa_\alpha$ is  the coefficient of $D_\alpha$ in $K_{-X}$ 
	\cite[Prop.~3.3]{LTBT20}, thus $\eta(\alpha) = 1+\kappa_\alpha$ by Lemma \ref{lem:simplicial}
	since the boundary divisors freely generate the cone of effective divisors.
	Moreover by \cite[Thm.~4.21]{LTBT20} we have
	$\alpha^*(X) = (|\Pic G| \cdot \prod_{\alpha \in \mathcal{A}} (1+\kappa_\alpha))^{-1}.$
	This agrees with the conjecture.
\end{proof}

\subsection{Diagonal plane conics} \label{sec:diag_constant}
In this final section, we explain how the formula given in Theorem \ref{thm:main} agrees with Conjecture \ref{conj:Br} (this is equivalent to Conjecture \ref{conj:main} here by Corollary \ref{cor:sub}). The proof of Theorem \ref{thm:main} will occur later in the paper.

Consider the variety $V \subset \P^2 \times \P^2$ given by the family of diagonal
plane conics. We have the projection $f: V \to \P^2$ which is a conic bundle over $\P^2$. We are interested in the counting function
$$N(f,B) =  \#\{ x \in \P^2(k) : H(k) \leq B, x \in f(V(\Q))\}$$
where $H$ is the naive height on $\P^2$.
We view $\Gm^2 \subset \P^2$ as the complement of the coordinate axis and use coordinates
$x,y$ on $\Gm^2$. Then the conic bundle corresponds to the quaternion algebra 
$\alpha = (x,y) \in \Br \Gm^2$. This is ramified along the three coordinate axes $D_i: x_i = 0$
with residue of order $2$. We have $\rho(\P^2) = 1$ and $-K_{\P^2} = 3D_i$ in the Picard group.
We take $\br = \langle \alpha \rangle$. 
In the notation of Conjecture \ref{conj:Br} this altogether gives
$$\Delta(\br) = 3/2, \quad \alpha^*(\P^2) = 1/3, \quad \Gamma(\P^2,\br) = \Gamma(1/2)^3 = \pi^{3/2},
 \quad \eta(D_i) = 3$$
The relevant virtual Artin $L$-function is $\zeta(s)^{-1/2}$, which gives the  
convergence factors $(1-1/p)^{1/2}$ and  $\lim_{s \to 1}(s-1)^{-1/2} \zeta(s)^{-1/2} = 1$.

\begin{lemma}
	$\Sub(\P^2,\alpha)/\Br \Q \cong \Z/2\Z$, with generator given by the image of $\alpha$.
\end{lemma}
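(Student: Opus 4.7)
The plan is to compute $\Sub(\P^2,\alpha)$ directly from Definition \ref{def:br_Sub} via purity \eqref{seq:purity}, in the spirit of part (1) of Lemma \ref{lem:counter-example}. The quaternion algebra $\alpha=(x,y)\in\Br k(\P^2)$ is ramified exactly along the three coordinate lines $D_0,D_1,D_2$, with residue of order precisely $2$ in each $\HH^1(k(D_i),\Q/\Z)$, and is unramified at every other prime divisor of $\P^2$. Therefore, for any $b\in\Sub(\P^2,\alpha)$, Definition~\ref{def:br_Sub} forces $\res_D(b)=0$ for all $D\notin\{D_0,D_1,D_2\}$, while $\res_{D_i}(b)\in\{0,\res_{D_i}(\alpha)\}$ for each $i$.

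To any such $b$ I would associate the subset
\[
S(b) := \{\, i\in\{0,1,2\} : \res_{D_i}(b)=\res_{D_i}(\alpha)\,\},
\]
noting that $S(\alpha)=\{0,1,2\}$, that $S$ is invariant under addition of a constant Brauer class, and that $S(b-\alpha)$ is the symmetric difference of $S(b)$ with $\{0,1,2\}$ since residues are group homomorphisms and $\res_{D_i}(\alpha)$ is $2$-torsion. The argument then proceeds by case analysis on $|S(b)|$.

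If $S(b)=\emptyset$, then $b$ is unramified at every codimension-$1$ point of $\P^2$, so by \eqref{seq:purity} and $\Br\P^2=\Br\Q$ we get $b\in\Br\Q$. If $|S(b)|=1$, say $S(b)=\{i\}$, then $b$ is unramified on $\P^2\setminus D_i\cong\A^2$, whence $b\in\Br\A^2=\Br\Q$, forcing $\res_{D_i}(b)=0$ and contradicting $S(b)=\{i\}$. The remaining cases now reduce to these: if $|S(b)|=2$ then $S(b-\alpha)$ has size $1$, which has been excluded; and if $|S(b)|=3$ then $S(b-\alpha)=\emptyset$, so $b-\alpha\in\Br\Q$, i.e.\ $b\equiv\alpha\pmod{\Br\Q}$.

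Combining the admissible cases yields $\Sub(\P^2,\alpha)/\Br\Q\cong\Z/2\Z$, with the nontrivial coset represented by $[\alpha]$. The only slightly delicate step is the $|S(b)|=1$ case, which rests on the clean geometric fact that removing a single line from $\P^2$ produces $\A^2$ and hence has Brauer group $\Br\Q$; once this is in hand the rest is purely formal. No substantial obstacle is expected.
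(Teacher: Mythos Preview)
Your proof is correct and follows essentially the same approach as the paper: both arguments use that a subordinate element can only be ramified along a subset of the three coordinate lines, then exploit $\Br\A^2=\Br\Q$ to rule out the case of at most one ramification line, and finally subtract $\alpha$ to reduce the remaining cases. Your version simply makes the case analysis explicit via the set $S(b)$, whereas the paper compresses it into two sentences.
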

\begin{proof}
	Let $\beta$ be subordinate to $\alpha$. Then $\beta$ is ramified along
	some subset of the three coordinate axes. 
	It must be ramified along at least two axes since $\Br \A^2 = \Br \Q$. 
	But then
	$\alpha - \beta$ is ramified along at most one axis, thus constant.
\end{proof}
Hence, 	Conjecture \ref{conj:Br} predicts that
\beq{eq:bachactustrgcs}
{
\lim_{B\to \infty } \frac{N(f,B^{1/3} )}{B (\log B)^{-3/2}}
=
  \frac{(1/3) \cdot 2  \cdot  \tau_f(\P^2(\Adele_\Q)_{\br})}
	{\pi^{3/2} } \cdot  3^{3/2}  
 .} Note that we considered $N(f,B^{1/3})$ so as to work with the anticanonical height function on $\P^2$.
For the $2$-adic density  we  shall 
use  the following standard fact which follows from~\cite[pg~79-80, Theorem 4.1, Lemma 4.3]{MR522835}.
\begin{lemma}
\label{lem:qq22} Let  $\b  r \in (\Z_2\setminus \{0\} )^3$ with $v_2(r_0r_1r_2) \in \{0,1\}$. Then $\sum_{i=0}^3 r_1 x_i^2=0$ has a $\Q_2$-point 
if and only if
$$\begin{cases}
	r_i+r_j \equiv 0 \bmod 4, & \text{if } v_2(r_0r_1r_2) = 0, \text{ for some } i \neq j, \\
	r_i+r_j+s r_k \equiv 0 \bmod 8, & \text{if } v_2(r_k) = 1, \text{ for } \{i,j,k\}=\{0,1,2\}
	\text{ and some } s \in \{0,1\}.
\end{cases}$$
\end{lemma}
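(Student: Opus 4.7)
The plan is to derive the explicit congruence conditions from the classical Hilbert symbol description of isotropy of diagonal ternary quadratic forms over $\Q_2$. By \cite[Ch.~4, Thm.~4.1]{MR522835}, the form $\sum_{i=0}^{2} r_i x_i^2$ is isotropic over $\Q_2$ iff a certain product of Hilbert symbols $(\cdot,\cdot)_2$ built from ratios of the $r_i$ is trivial, and by \cite[Ch.~4, Lem.~4.3]{MR522835} these Hilbert symbols at $2$ are computable explicitly from the residues modulo $8$ of the $2$-adic units involved. The whole point of the hypothesis $v_2(r_0 r_1 r_2) \in \{0,1\}$ is to separate the analysis into two tractable cases; scaling the form by a $2$-adic unit does not change isotropy and scaling a single $x_i$ by $2$ allows one to always reduce to this normalised situation.

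First I would treat the case $v_2(r_0 r_1 r_2) = 0$, so all $r_i \in \Z_2^{\times}$. Any nontrivial $\Q_2$-zero $(x_0,x_1,x_2)$ may be rescaled so that $\min_i v_2(x_i) = 0$; since squares of $2$-adic units lie in $1 + 8\Z_2$ and squares of elements of $2\Z_2$ lie in $4\Z_2$, examining the equation modulo $4$ shows that at least two of the $x_i$ must be units, and then $r_i + r_j \equiv 0 \pmod 4$ for the indices of those two units. Conversely, given such a pair, one checks that $(1,\pm 1, 2)$ with $x_k$ suitably chosen produces a $\Q_2$-point via Hensel's lemma (the partial derivative with respect to a unit coordinate is a unit times $x_i$, so Hensel applies at the smooth point so obtained).

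Next I would treat the case $v_2(r_k) = 1$ with $r_i, r_j$ units, where $\{i,j,k\} = \{0,1,2\}$. After the analogous rescaling to $\min_l v_2(x_l) = 0$, the $2$-adic valuation of $r_k x_k^2$ is odd whenever $x_k$ is a unit, forcing $v_2(x_k) \in \{0\} \cup \{m : m \geq 1\}$ to split into two sub-cases; this is precisely the source of the parameter $s \in \{0,1\}$. Reading the equation modulo $8$ in each sub-case gives $r_i + r_j \equiv 0 \pmod 8$ (when $x_k \in 2\Z_2$, i.e.\ $s = 0$) or $r_i + r_j + r_k \equiv 0 \pmod 8$ (when $x_k \in \Z_2^\times$, i.e.\ $s = 1$), and Hensel's lemma again provides the converse lift from a primitive mod-$8$ solution to a genuine $\Q_2$-point.

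The main obstacle is the bookkeeping in the mixed-parity case: one must verify that the two sub-cases are genuinely exhaustive (so no $\Q_2$-zero escapes the two stated congruences) and that Hensel lifts apply with the correct smooth coordinate in each sub-case; after that, the translation from Cassels' Hilbert symbol formulas to the symmetric congruence conditions stated in the lemma is routine.
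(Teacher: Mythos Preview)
Your proposal is correct and matches the paper's approach: the paper does not give a proof of this lemma at all, merely stating that it ``follows from \cite[pg~79--80, Theorem 4.1, Lemma 4.3]{MR522835}'', and your sketch is precisely an unpacking of that citation via the standard $\bmod\,4$ and $\bmod\,8$ analysis plus Hensel. One tiny cosmetic point: in the unit case your trial point $(1,\pm 1,2)$ only works when $r_i+r_j\equiv 4\pmod 8$; when $r_i+r_j\equiv 0\pmod 8$ you want $(1,1,0)$ instead, but your phrase ``with $x_k$ suitably chosen'' already covers this.
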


\begin{lemma}\label{lem:oddprimeshaar}
	We have 
	$$
	\tau_p(f(V(\Q_p)))= 
	\begin{cases}
		9, & p = \infty,\\
		49/48, & p = 2, \\
		\l(1+\frac{1}{p}+ \frac{1}{p^2} \r) \frac{(2p^2+p+2)}{2(p+1)^2}, &
		p \ \mathrm{ odd}.
	\end{cases}$$
 \end{lemma}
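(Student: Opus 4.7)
The plan is to apply Proposition \ref{prop:local_densities} in each case, converting the Tamagawa volume $\tau_p(f(V(\Q_p)))$ into a Haar integral on the affine cone $\widehat{V}_p \subset \Q_p^3$, and then decide local solubility of $t_0 x_0^2 + t_1 x_1^2 + t_2 x_2^2 = 0$ via the Hasse--Minkowski criterion and Hilbert symbol computations.

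At the real place, Proposition \ref{prop:local_densities}(1) gives $\tau_\infty = (3/2) \cdot \vol(\widehat{V}_\infty \cap [-1,1]^3)$; since the conic is isotropic over $\R$ iff the $t_i$ do not all share a sign, this excludes exactly two of the eight unit sub-cubes and produces $\tau_\infty = (3/2)\cdot 6 = 9$. For an odd prime $p$, I would invoke Proposition \ref{prop:local_densities}(3) to restrict to primitive triples $\b t$ with $p \nmid \b t$, and partition these by the index set $I := \{i : v_p(t_i) = 0\} \subseteq \{0,1,2\}$. The key observation is that for odd $p$ solubility depends only on the parities $b_i := v_p(t_i) \bmod 2$ and on the residues modulo $p$ of the unit parts of the zero-valuation coefficients: a diagonal ternary form with unit coefficients over $\Q_p$ is automatically isotropic (Chevalley--Warning plus Hensel), while rescaling $x_i \mapsto x_i/p$ reduces the general case to one with coefficients of valuation in $\{0,1\}$, where solubility is controlled by a single Hilbert symbol of the form $\legendre{-u_iu_j}{p}$. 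Using $\mu_p(\{t \in p\Z_p : v_p(t) \text{ even}\}) = 1/(p(p+1))$ and the odd analogue $1/(p+1)$, together with the fact that exactly half of $(\Z_p^\times)^2$ satisfies $\legendre{-u_iu_j}{p} = 1$, I would enumerate the cases $|I| = 3, 2, 1$ with the appropriate symmetry factors, sum the resulting measures, multiply by $(1-1/p)^{-1}$ from Proposition \ref{prop:local_densities}(3), and factor the polynomial numerator as $(p^2 + p + 1)(2p^2 + p + 2)$ to match the stated formula.

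At $p = 2$, the plan is to use Lemma \ref{lem:qq22} directly. After reducing to primitive triples, the $v_2 \geq 2$ contributions are handled by the same rescaling trick $x_i \mapsto x_i/2$ used at odd primes, reducing to the two explicit cases $v_2(t_0 t_1 t_2) \in \{0, 1\}$ covered by the lemma. The computation then becomes combinatorial: enumerate the allowed residues in $(\Z/8\Z)^3$ (or $(\Z/4\Z)^3$), weight each by its Haar measure, and sum the geometric series in $v_2(t_i)$ arising from higher valuations. The main obstacle is the bookkeeping here, since the congruence conditions in Lemma \ref{lem:qq22} are asymmetric and case-dependent, and one must also incorporate the prefactor $(1+1/2+1/4)$ from Proposition \ref{prop:local_densities}(2); nonetheless, once the enumeration is carried out the totals collapse cleanly to $49/48$.
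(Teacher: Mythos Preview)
Your proposal is correct and follows essentially the same route as the paper. The only cosmetic difference is that for a finite prime the paper uses part (2) of Proposition~\ref{prop:local_densities} (integrating over all of $\Z_p^3$) and organises the case analysis by the parity vector $\boldsymbol\lambda=(v_p(t_i)\bmod 2)\in\{0,1\}^3$, whereas you use part (3) (primitive triples) and organise by the set $I=\{i:v_p(t_i)=0\}$; after absorbing squares these two bookkeeping schemes coincide and yield the same pair of unit-cube integrals $\beta'_p,\beta''_p$ that the paper isolates. Your treatment of $p=2$ via Lemma~\ref{lem:qq22} is likewise what the paper does.
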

 \begin{proof} 
 	We use Proposition \ref{prop:local_densities}. For $p = \infty$ one uses
 	that $ \sum_{i=0}^2 t_i x_i^2=0$ has a real solution exactly when not all signs of 
the $t_i$ coincide; this gives $(3/2) \cdot 6 = 9$. So consider the case where $p$ is a prime.
We need to calculate
\[\mu_p\l(\b t \in \Z_p^3: (-t_0t_1,-t_0t_2)_p = 1 \r)\]
where $(\cdot,\cdot)_p$ denotes the Hilbert symbol. (In the above we are implicitly
removing the subset $t_0t_1t_2 = 0$ of measure zero where the Hilbert symbol is undefined.)
Writing $t_i=p^{v_p(t_i)}{c_i}$ (so that $c_i$ is a unit)
and absorbing squares we find that this volume is
\begin{align*}
=\sum_{\substack{\boldsymbol \lambda\in \{0,1\}^3, \b v \in (\Z\cap[0,\infty))^3\\
\forall i : v_i\equiv \lambda_i \md 2
}}
p^{-v_0-v_1-v_2}
 \mu_p\l( \b c \in \Z_p^{\times 3}:
(-p^{\lambda_0 + \lambda_1}c_0c_1,-p^{\lambda_0 + \lambda_2}c_0c_2)_p = 1\r).
\end{align*}
For $\lambda \in \{0,1\}$
we have 
$
\sum_{   v \geq 0 ,  v \equiv \lambda  \md 2}
p^{-v}=  p^{-\lambda}  (1-p^{-2})^{-1}  $, hence the  above becomes 
\[
  (1-p^{-2})^{-3}
\sum_{\boldsymbol \lambda\in \{0,1\}^3 }p^{-\lambda_0-\lambda_1-\lambda_2}
 \mu_p\l( \b c \in 
\Z_p^{\times 3}:
(-p^{\lambda_0 + \lambda_1}c_0c_1,-p^{\lambda_0 + \lambda_2}c_0c_2)_p = 1 \r).\]
By symmetry this is 
\beq{eq:nonzero}{
 (1-p^{-2})^{-3}\l\{
  (1+1/p^3 ) 
\beta'_p
 +3
(1/p+1/p^2 )
  \beta''_p \r\}
,}
where 
$$
\beta'_p:=\mu_p\l( \b c \in \Z_p^{\times 3}: (-c_0c_1,-c_0c_2)_p = 1 \r),
\quad  \beta''_p:=
\mu_p\l( \b c \in 
{\Z_p^\times}^3:
(-pc_0c_1,-c_0c_2)_p = 1 \r).$$
We now assume that $p$ is odd. Here $(u_1,u_2)_p=1$ for units $u_1,u_2$ thus 
\[\beta'_p=
\mu_p( \Z_p^{\times 3} )=(1-1/p)^3.\] Furthermore $(-c_0c_1,-c_0c_2)_p = 1$ if and only 
$-c_0c_2$ is a square modulo $p$. Thus, 
\[  \beta''_p=   \l(1-\frac{1}{p}\r)\mu_p\l(\b c \in \Z_p^{\times 2}, \l(\frac{-c_0c_2}{p}\r)=1 \r)=  \l(1-\frac{1}{p}\r)\frac{1}{2}  \l(1-\frac{1}{p}\r)^2.\] The proof for odd primes concludes by using~\eqref{eq:nonzero} and simplifying.
By
Lemma \ref{lem:qq22} we can  write 
$\beta'_2$ as 
\[ \sum_{\substack{\b s \in \{1,3\}^3 \\ \b s \notin \{ (1,1,1), (3,3,3)\}}}
  \mu_2\l( \b c \in \Z_2^3:  \b c \equiv \b s  \md 4   \r) =
\frac{1}{4^3} \sum_{\substack{\b s \in \{1,3\}^3 \\ \b s \notin \{ (1,1,1), (3,3,3)\}}}1=\frac{3}{32} .\] 
Similarly, by Lemma \ref{lem:qq22} we see that $\beta''_2$ equals  
\[
 \sum_{\substack{\b s \in \{1,3, 5, 7\}^3 \\ s_1+s_2 \in \{ 0,-s_0  \}\bmod 8}} 
 \hspace{-1cm}
 \mu_2\l( \b c \in \Z_2^3:  \b c \equiv \b s  \md 8   \r) =
\frac{\#\{\b s \in \{1,3, 5, 7\}^3 : s_1+s_2 \in \{ 0,-s_0  \}\bmod 8\}}{8^3},\]
which equals $1/16$,
since   fixing $s_0$ and $s_1$ uniquely 
 determines two distinct values for $ s_2$.
 Alluding to~\eqref{eq:nonzero} shows that 
\[
\tau_2(f(V(\Q_2))) = 
\frac{(1 + 1/2 + 1/4)}{(1-2^{-2})^{3}}
\l\{
  \l(1+\frac{1}{2^3}  \r) 
\frac{3}{32}
 +3
\l(\frac{1}{2} +\frac{1}{2^2} \r )
\frac{1}{16} \r\}
=\frac{49}{48}  . \qedhere
\]
 \end{proof}
Therefore
\begin{align*}
\tau_f(\P^2(\Adele_\Q)_{\br}
&=
  \tau_\infty (\P^2(\R)_\br)
  \prod_{\substack{ p \text{ \rm prime }  } }
  \frac{   \tau_p(\P^2(\Q_p)_\br)}{     (1-1/p )^{1/2} }
\\&
=
9\cdot 
  \frac{  	49/48 }{     (1-1/2 )^{1/2} }
\cdot   \prod_{\substack{ p \text{ \rm prime } \\ p\neq 2  } }
\l(1+\frac{1}{p}+ \frac{1}{p^2} \r)
  \frac{  	 (p^2+p/2+1)  }{  (p+1)^2   (1-1/p )^{1/2} }
,
\end{align*}
which equals $3/2 \cdot c_\infty \prod_p c_p$ in the notation of  Theorem \ref{thm:main}.
Hence, by~\eqref{eq:bachactustrgcs}, 
 	Conjecture~\ref{conj:Br} predicts
$$
\lim_{B\to \infty } \frac{N(f,B^{1/3} )}{c_\infty \prod_p c_p B (\log B)^{-3/2}}
=
  \frac{3^{3/2} }{\pi^{3/2} }  
.$$ 
This agrees with the leading constant in 
Theorem \ref{thm:main}, on taking into account the sign discrepancy between $N(f,B^{1/3})$
and $N(B)$ (see \eqref{def:nalphax}). This shows that Theorem \ref{thm:main} implies
 Conjecture \ref{conj:Br}, hence Conjecture \ref{conj:main}, 
in the case of  diagonal plane  conics.

\begin{remark}
For expositional purposes we decided to pull the factor $3/2$ out of the archimedean density in Theorem \ref{thm:main} so as to use a Lesbegue measure instead of a Tamagawa measure (see Proposition \ref{prop:local_densities}). The reciprocal factor $2/3$ exactly corresponds to the choice of sign multiplied by $\alpha(\P^2) = 1/3$.
\end{remark}

\section{Diagonal plane conics}
\label{sgenguoresthrme9} 
In this section we prove  Theorem~\ref{thm:main}.
The underpinning force behind it is 
Theorem~\ref{thm:genguo}, which  we state and 
prove 
in \S\ref{ss:underspin}.  It generalises   the result of Guo \cite{Guo95} by allowing for  more general equations and   for lopsided height conditions. 
Theorem~\ref{thm:main} is then proved as an application of  Theorem~\ref{thm:genguo} in \S\ref{sec:proofmain}.

\subsection{Statement of Theorem~\ref{thm:genguo}}
\label{ss:underspin}
For   fixed 
 non-zero
 integers  $m_{12},m_{13},m_{23}$  we are interested in      the frequency with which     
 \beq{defeqgg}{
m_{23} n_1 x_1^2+  m_{13} n_2 x_2^2  = m_{12} n_3 x_3^2  }
is soluble in $\Q$, as the  three   square-free integers $(n_1,n_2,n_3)$ range over  
  a box of the form $\prod_{i=1}^3 [-X_i,X_i]$.
Guo's work corresponds to the   case when each $m_{ij}$ equals $1$
and $X_1=X_2=X_3$. 
For the proof of Theorem~\ref{thm:main} we need to allow   
the coefficients $m_{ij}$ to tend to infinity with $X_i$.
For the application it is necessary  to add more flexible assumptions:
for  $X_1,X_2,X_3\geq 1 $  and $\b b =( b_1,b_2,b_3),  \b m=(m_{12},m_{13}, m_{23}) \in \N^3$  we  
let    \beq{def:nbmx}{ \c N_{\b b , \b m }(\b X) =  \sum_{\substack{\b n \in \N^3 \\ \eqref{eq:2dnilb}, \eqref{eq:sn4ws} }}
\mu^2(n_1n_2n_3) 
\begin{cases} 1, \text{ if } ~\eqref{defeqgg}  \text{ has a }\QQ\text{-point},\\ 0, \text{ otherwise,} \end{cases}
}where  the conditions in the summation are
\begin{align}
 & n_1    \leq    X_1,  \quad    n_2   \leq    X_2,  \quad
n_3   \leq   X_3, \label{eq:2dnilb} \\
& \gcd(    n_1 n_2 n_3, m_{12} m_{13} 
m_{23}    ) =  \gcd( n_1 ,   b_2,b_3)  = \gcd( n_2 ,   b_1,b_3)  = \gcd( n_3 ,   b_1,b_2)  =1.\label{eq:sn4ws} 
\end{align}
Define 
\beq{defbetabm}
{
\beta(\b b , \b m )= 
 \prod_{  p\neq 2  } 
 \l(1-\frac{1}{p} \r)^{\frac{3}{2} }
\l(1+  \frac{\#\{1\leq i < j  \leq 3 : p\nmid m_{12} m_{13}  m_{23} \cdot \gcd(b_i,b_j)  \}}{2 p} \r) }
and 
\[ c(\b b , \b m )=  
\begin{cases}
2, & \text{ if } 2 \text{ divides  } m_{12}m_{13}m_{23} ,\\
 3+\#\{ 1\leq  i< j \leq 3  : 2\nmid \gcd(b_i,b_j)\}   , & \text{ otherwise}.
 \end{cases}\] 
For a positive integer $m$ let $m_{\textrm{odd}}:=m2^{-v_2(m)}$, where $v_2$ is the standard $2$-adic valuation.
We denote by $\tau(m)$ the standard divisor function.
 \begin{theorem}\label{thm:genguo} Fix any $0<\eta<1$ and any $A>0$. Then for all $\b b, \b m \in \N^3$ with 
$m_{12}m_{13}m_{23}$ square-free, 
\begin{eqnarray}  
\gcd(b_1,b_2,b_3   )=
\gcd(m_{12},b_3   )= \gcd(m_{13},b_2   )= \gcd(m_{23},b_1   )=1, \nonumber  \\
 b_1,b_2,b_3,m_{12},m_{13},m_{23} \leq(\log\min_{i=1,2,3}  X_i)^A,  \label{eq:should}
\end{eqnarray}
and  all $X _1,X_2,X_3\geq 2  $   with 
\begin{equation}\label{eq:should23}\min_{i=1,2,3} X_i \geq 
\l(\max_{i=1,2,3} X_i \r)^\eta\end{equation}   
we have \[  \mathcal{N}_{  \b b, \b m}( \b X) =
 \frac{    (2\pi )^{-3/2}  \beta(\b b , \b m )
c(\b b , \b m )
  }{  2 \tau(( m_{12}    m_{13} m_{23})_{\rm{odd}})   }
  \prod_{i=1}^3  \frac{X_i}{ (\log X_i)^{1/2} }
 + O  \l ( \frac{X_1 X_2 X_3 (\log \log 3 \c B )^{3/2}}{ (\log  X_1)^{5/2} }\r )   ,\]  where 
$\c B=\max\{ b_1,b_2,b_3,m_{12},m_{13},m_{23}\}$ and 
the implied constant depends at most on $A$ and $\eta$.  \end{theorem}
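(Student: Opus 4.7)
The plan is to combine the Hasse--Minkowski theorem with a character expansion, isolating a principal piece that produces the main term, and controlling the rest via bilinear sum estimates for Kronecker symbols---the latter will be the main obstacle. To begin, by Hasse--Minkowski, \eqref{defeqgg} has a $\Q$-point iff it is soluble in $\R$ and in every $\Q_p$. The form is indefinite so real solubility is automatic. For an odd prime $p\mid n_1$ coprime to $n_2 n_3 m_{12} m_{13} m_{23}$, solubility at $p$ reduces to $\legendre{m_{12} m_{13} n_2 n_3}{p}=1$, with analogous conditions at primes dividing $n_2$, $n_3$, or the $m_{ij}$. At $p=2$ the condition depends only on $(n_1, n_2, n_3)$ modulo $8$ through Lemma~\ref{lem:qq22}. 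I will partition $\c N_{\b b, \b m}(\b X)$ along the admissible $2$-adic residue classes; summing over these will eventually produce the factor $c(\b b, \b m)$.

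Next, writing each odd-prime Legendre condition as $\mathbf 1_{(\ast/p)=1} = \tfrac12(1 + \legendre{\ast}{p})$ and expanding the product over $p\mid n_1 n_2 n_3$ expresses the solubility indicator as
\[
2^{-\omega(n_1 n_2 n_3)} \sum_{d_i\mid n_i} J(\b d; \b n, \b m) \cdot \mathbf 1_{\text{2-adic}},
\]
where, after quadratic reciprocity, $J$ factors as $\prod_{i=1}^{3}\legendre{A_i(\b n, \b m)}{d_i}$ up to explicit sign factors depending on residues of the $d_i$ modulo $8$. The term $\b d = (1,1,1)$ is the principal contribution and produces the main term; every other $\b d$ yields a non-principal character sum contributing to the error.

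For the main term, the principal part reduces to a sum of $\mu^2(n_1 n_2 n_3)\cdot 2^{-\omega(n_1 n_2 n_3)}$ over $\b n$ in the box restricted by \eqref{eq:sn4ws} and the $2$-adic class. Since $\mu^2(n_1 n_2 n_3)$ forces the $n_i$ to be squarefree and pairwise coprime, the sum factors (modulo coprimality sieving) as a product of three univariate sums. The factor $2^{-\omega(n_i)}$ makes each univariate Dirichlet series acquire a square-root singularity at $s=1$, so the Selberg--Delange method yields $X_i/(\log X_i)^{1/2}$ with a computable Euler-product coefficient, uniformly in $\b b, \b m$ with $\c B \leq (\log X_1)^A$. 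A local computation matches the resulting product of Euler factors, combined with the contribution of admissible $2$-adic classes, with the predicted $(2\pi)^{-3/2}\beta(\b b, \b m)c(\b b, \b m)/(2\tau((m_{12}m_{13}m_{23})_{\mathrm{odd}}))$. The factor $(\log\log\c B)^{3/2}$ in the error arises from Perron-type contour estimates tracking dependence on the conductor $\c B$.

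The hard part is bounding the non-principal contributions, i.e.\ the terms with at least one $d_i>1$. I will choose the index $i$ with the largest $d_i$ and swap the order of summation, producing a bilinear sum in $d_i$ and one of the other $n_j$ involving Kronecker symbols $\legendre{\cdot}{\cdot}$. The key input will be a bilinear estimate of the form
\[
\sum_{m\sim M}\sum_{n\sim N} \alpha_m \beta_n \legendre{m}{n}^{\!*} \ll_\varepsilon (MN)^{1-\delta+\varepsilon}
\]
for bounded coefficients $\alpha_m,\beta_n$, valid when $M$ and $N$ lie in a power-balanced range. The lopsided hypothesis \eqref{eq:should23} guarantees, after dyadic decomposition in $d_i$ and $n_j$, that the variables are power-balanced, so the bilinear estimate applies; summing over dyadic shells and over $\b d$ costs only powers of logarithms, yielding a power saving over $\prod_i X_i$ which is more than enough to beat the main term by $(\log X_1)^{-1}$. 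The principal technical difficulty will be to make all estimates uniform in $\b b,\b m$ of size up to $(\log X_1)^A$, since these parameters appear inside both the Kronecker symbols $A_i(\b n,\b m)$ and the coprimality sieving; careful bookkeeping of the dependence on $\c B$ will be essential to deliver the claimed error $O(X_1 X_2 X_3 (\log\log\c B)^{3/2}(\log X_1)^{-5/2})$.
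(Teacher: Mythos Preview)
Your overall architecture matches the paper's: Hasse--Minkowski plus a character expansion splits the count into a principal piece (all $d_i=1$) handled by Selberg--Delange, and an oscillatory remainder. The gap is in your treatment of the remainder.

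The bilinear estimate you quote,
\[
\sum_{m\sim M}\sum_{n\sim N}\alpha_m\beta_n\Big(\tfrac{m}{n}\Big)\ll_\varepsilon (MN)^{1-\delta+\varepsilon},
\]
is simply false for general bounded coefficients: take $M=1$. The correct bound (the one used in the paper, equation~\eqref{eq:bilinear}) is of the shape $(MN^{5/6}+M^{5/6}N)(\log MN)^2$, which saves nothing unless \emph{both} $M$ and $N$ exceed a suitable threshold. Your claim that hypothesis~\eqref{eq:should23} forces the bilinear variables into a power-balanced range is mistaken: \eqref{eq:should23} controls the relative sizes of the $X_i$, not of the divisors $d_i\mid n_i$, which can perfectly well sit in $[2,(\log X_1)^C]$. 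For such small $d_i$ the bilinear bound is vacuous.

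Concretely, the term with $d_1=p$ a fixed odd prime and $d_2=d_3=1$ contributes a sum over $n_2$ (say) weighted by $\mu^2/\tau$ and twisted by the non-principal character $(\cdot/p)$. No bilinear input helps here; what is needed is a Siegel--Walfisz/Selberg--Delange estimate for $\sum_{n\leq X}\chi(n)\mu^2(n)/\tau(n)$ with $\chi$ non-principal of small conductor, uniform in the conductor up to a power of $\log X$ (this is Lemma~\ref{lem:friediwsiegwalfsiz} in the paper). The paper therefore introduces the complementary variables $\widetilde d_i=n_i/d_i$ and performs a threshold split at $(\log\min X_i)^{100C}$: the ``large conductor'' regime (some $d_i$ and some $\widetilde d_j$ with $i\neq j$ both above the threshold) is where the bilinear bound~\eqref{eq:bilinear} applies (Lemmas~\ref{lem:passacaglia_koopman2}--\ref{lem:K. 622 - Adagio }), while the residual ``small conductor'' regimes (all $d_i$ small or all $\widetilde d_i$ small, but the product of the relevant $d$'s and $h$'s exceeds~$1$) are handled by the fixed-character estimate (Lemmas~\ref{lem:bvgt5iogln2}--\ref{lem:bvchhar29}). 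Your plan needs this second ingredient and the threshold dichotomy; without it the error terms with small but non-trivial $\mathbf d$ are not controlled, and in particular you will not obtain a power saving but only an arbitrary log-power saving, which is what the paper actually proves.
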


The structure of the proof of Theorem~\ref{thm:genguo} is as follows:
we transform the counting function  into  averages of  explicit arithmetic  functions in \S\ref{s:transformdetector}.
Asymptotics  for  averages of   certain auxiliary three-dimensional
arithmetic functions are given in \S\ref{s:preparatory}.
The main term contribution is then studied in  \S\ref{s:maintermthrm}.
The error term treatment comprises two parts: 
characters of large conductor (in \S\ref{s:largecond})
and characters of small conductor (in \S\ref{s:smalcond}). 
The final step in the proof of Theorem~\ref{thm:genguo} is given in   \S\ref{sec:finstep}. 

\begin{remark} 
\label{rem:erterm}
 Theorem~\ref{thm:genguo} generalises the result of Guo~\cite[Theorem 1.1]{Guo95}.
Indeed, taking $A=\eta=1/2$, $\b m =\b b = (1,1,1)$ and $ \b X=(N,N,N)$ 
and multiplying by $6$ to   account for positive and negative signs
yields the leading constant 
\[
6 \frac{    (2\pi )^{-3/2}  6  }{  2   } \beta(\b 1 , \b 1 )
= 
   \frac{  1 }{ (\sqrt \pi /2)^3   } 
  \prod_{  p   } 
 \l(1-\frac{1}{p} \r)^{\frac{3}{2} }
\l(1+  \frac{3}{2 p} \r) 
 ,\] which coincides with the one in~\cite[Theorem 1.1]{Guo95}. Furthermore, 
our error  term     is $O(N^3\log N)^{-5/2})$ which improves the error term 
$O(N^3 (\log N)^{-2})$ in~\cite[Theorem 1.1]{Guo95}. We believe our error term
is best possible.
 \end{remark}
 
\subsection{Using the Hasse principle to find detector functions}
\label{s:transformdetector}
We now start   the proof of Theorem~\ref{thm:genguo}.
The opening  move is to find 
 detector functions for solubility over $\mathbb Q$. 
Although it   seems     unusual,    we separate right from the start 
 those    terms that will later give rise to the main term in Theorem~\ref{thm:genguo}.

 For a  prime $p$ and $a,b \in \Q_p$  we denote by 
$(a,b)_p$ the standard Hilbert symbol (see, for example, \cite[Chapter III]{Ser73}.)

 \begin{lemma}\label{lem:predamnlabel} Let  
$a,b,c \in \N$  be such that $abc$ is divisible by at least one odd prime.
Then  
\[ \frac{\l( 1+(ac,bc)_2\r) }{2 \tau(r) } \bigg(1+(ac,bc)_2+\sum_{\substack{\delta \in \N, \delta \mid r  \\  \delta\notin \{1,r\}} } 
\prod_{p\mid \delta } (ac,bc)_p 
\bigg)= \begin{cases} 1, \text{ if } aX^2+bY^2=cZ^2  \text{ has a }\QQ\text{-point},\\ 0, \text{ otherwise,} \end{cases}
  \] where  $r$ is the square-free integer given by  the product of all odd prime divisors of $abc$. 
 \end{lemma}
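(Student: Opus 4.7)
The plan is to identify the displayed quantity as a product of local indicator functions built from Hilbert symbols, and to recognise those as encoding the Hasse principle for conics.

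First, I would rescale $aX^2+bY^2=cZ^2$ to $(ac)X^2+(bc)Y^2=W^2$ (with $W=cZ$); by the very definition of the Hilbert symbol, this has a non-trivial $\Q_v$-point if and only if $(ac,bc)_v=1$. Since $a,b,c>0$, one has $(ac,bc)_\infty=1$, and for every odd prime $p\nmid abc$ both $ac$ and $bc$ are $p$-adic units, so $(ac,bc)_p=1$. The Hasse--Minkowski theorem for conics therefore reduces $\Q$-solubility to the conditions $(ac,bc)_2=1$ and $(ac,bc)_p=1$ for every $p\mid r$.

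Next I would invoke the product formula $\prod_v(ac,bc)_v=1$: combined with the triviality of $(ac,bc)_\infty$ and of $(ac,bc)_p$ at odd primes $p\nmid abc$, it yields
$$\prod_{p\mid r}(ac,bc)_p=(ac,bc)_2.$$
The bracket in the statement therefore coincides with the complete divisor sum
$$\sum_{\delta\mid r}\prod_{p\mid\delta}(ac,bc)_p=\prod_{p\mid r}\bigl(1+(ac,bc)_p\bigr),$$
since the term $\delta=1$ contributes the leading $1$ and the term $\delta=r$ contributes $\prod_{p\mid r}(ac,bc)_p=(ac,bc)_2$. The assumption that $abc$ has at least one odd prime divisor is used here so that $\omega(r)\geq 1$ and the identity $\tau(r)=2^{\omega(r)}$ produces enough factors of $2$ to balance the denominator below.

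Substituting this into the left-hand side and using $\tau(r)=2^{\omega(r)}$, the whole expression collapses to
$$\frac{1+(ac,bc)_2}{2}\prod_{p\mid r}\frac{1+(ac,bc)_p}{2}.$$
Since Hilbert symbols take values in $\{\pm 1\}$, each factor $(1+(ac,bc)_v)/2$ is the indicator $\mathbf{1}[(ac,bc)_v=1]$; the full product is therefore the indicator that $(ac,bc)_v=1$ simultaneously at $v=2$ and at every $p\mid r$, which by the first paragraph is exactly the solubility indicator. No step is hard: the only genuinely clever move is the symmetrisation of the bracket via reciprocity, which turns it into a factorising Euler product; everything else is unpacking definitions.
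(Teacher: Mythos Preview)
Your proof is correct and follows essentially the same path as the paper's: Hasse--Minkowski reduces everything to the local conditions $(ac,bc)_v=1$ at $v=2$ and $v\mid r$, the bracket is recognised as the full divisor sum $\sum_{\delta\mid r}\prod_{p\mid\delta}(ac,bc)_p=\prod_{p\mid r}(1+(ac,bc)_p)$ once Hilbert reciprocity identifies the $\delta=r$ term with $(ac,bc)_2$, and the expression collapses to a product of local indicators. One small remark: your stated reason for needing $r>1$ (that $\tau(r)=2^{\omega(r)}$ supplies enough factors of $2$) is not quite the point---that identity holds for $r=1$ too; the genuine issue is that the identification of the bracket with the full divisor sum requires the terms $\delta=1$ and $\delta=r$ to be distinct, exactly as the paper notes.
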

\begin{proof}By  the Hasse--Minkowski theorem we write the indicator function as 
\[ \prod_{p \text{ prime}} \l( \frac{1+ (ac,bc )_p }{2}\r) ,\] because the conic has a point in $\R$. Recall  that $p\nmid 2r$ implies that $ (ac,bc )_p=1$, hence
\[ \prod_{p \text{ prime}} \l( \frac{1+ (ac,bc )_p }{2}\r) =\prod_{p \mid 2 r } \l( \frac{1+ (ac,bc )_p }{2}\r) =
 \frac{(1+ (ac,bc )_2) }{2} \prod_{p \mid   r } \l( \frac{1+ (ac,bc )_p }{2}\r) .\] 
The product over $p\mid r $ in the outmost right-hand side  becomes 
 \[  \sum_{\substack{ \delta\in \N  \\ \delta \mid r }}\prod_{p\mid \delta }  (ac,bc)_p = 
1+
 \bigg( \sum_{\substack{ \delta\in \N,   \delta \mid r\\ \delta \notin\{ 1 , r\} } } \prod_{p\mid \delta }  (ac,bc)_p \bigg)
+  \prod_{p\mid r  }  (ac,bc)_p .\] By Hilbert's product formula we then obtain  
$$\prod_{p\mid r }  (ac,bc)_p=\prod_{\substack{ p \textrm{ prime } \\ p\neq 2 }} (ac,bc)_p=(ac,bc)_2.$$ 
Here we implicitly used our assumption that $r>1$. This concludes the proof. 
\end{proof}
  \begin{lemma}\label{lem:abmdbvl55} 
Assume that $a,b,c$ are as in Lemma~\ref{lem:predamnlabel} and that $abc$ is square-free. 
Then the sum over $\delta 
\notin \{1,r\}
$ in Lemma~\ref{lem:predamnlabel} equals \[
\sum_{\substack{\boldsymbol  \delta , \widetilde {\boldsymbol \delta} \in \N^3:
\delta_1  \delta_2\delta_3 \neq 1, 
\widetilde {\delta_1}\widetilde {\delta_2}\widetilde {\delta_3}\neq 1 
 \\ \delta_1\widetilde {\delta_1}= a_{\rm{odd}}, \delta_2\widetilde {\delta_2}=b_{\rm{odd}}, \delta_3 \widetilde {\delta_3}= c_{\rm{odd}}} } 
\left(\frac{bc}{\delta_1}\right)  \left(\frac{ac}{\delta_2}\right) \left(\frac{-ab}{\delta_3}\right).\] \end{lemma}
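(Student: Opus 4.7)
The plan is to evaluate each local Hilbert symbol $(ac,bc)_p$ explicitly at every odd prime $p$ dividing $abc$, and then reorganise the divisor sum using complementary divisors. Since $abc$ is square-free, every odd prime $p \mid abc$ divides precisely one of $a$, $b$, $c$, and does so to the first power. Using the standard formula $(p^{\alpha} u, p^{\beta} v)_p = (-1)^{\alpha \beta (p-1)/2}\l(\frac{u}{p}\r)^{\beta}\l(\frac{v}{p}\r)^{\alpha}$ for $p$-adic units $u, v$ at an odd prime $p$, I would split into three cases. If $p \mid a$, one has $\alpha = 1$, $\beta = 0$, giving $(ac,bc)_p = \l(\frac{bc}{p}\r)$; symmetrically $p \mid b$ yields $(ac,bc)_p = \l(\frac{ac}{p}\r)$. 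For $p \mid c$ both exponents equal $1$, so the leading sign becomes $(-1)^{(p-1)/2} = \l(\frac{-1}{p}\r)$ and the Jacobi symbol contribution is $\l(\frac{(ac/p)(bc/p)}{p}\r) = \l(\frac{ab (c/p)^2}{p}\r) = \l(\frac{ab}{p}\r)$, which combine to $\l(\frac{-ab}{p}\r)$.

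Next I would exploit multiplicativity of the Jacobi symbol. Since $abc$ is square-free, the integers $a$, $b$, $c$ are pairwise coprime, so each divisor $\delta \mid r$ factors uniquely as $\delta = \delta_1 \delta_2 \delta_3$ with $\delta_1 \mid a_{\mathrm{odd}}$, $\delta_2 \mid b_{\mathrm{odd}}$, $\delta_3 \mid c_{\mathrm{odd}}$; moreover $bc$, $ac$, $-ab$ are coprime to $\delta_1$, $\delta_2$, $\delta_3$ respectively, so the Jacobi symbols are well-defined. Multiplying the per-prime contributions from the previous paragraph yields $\prod_{p \mid \delta}(ac,bc)_p = \l(\frac{bc}{\delta_1}\r)\l(\frac{ac}{\delta_2}\r)\l(\frac{-ab}{\delta_3}\r)$.

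Finally, to bring the sum into the stated form, I would introduce the complementary divisors $\widetilde{\delta}_i$ defined by $\delta_1 \widetilde{\delta}_1 = a_{\mathrm{odd}}$, $\delta_2 \widetilde{\delta}_2 = b_{\mathrm{odd}}$, $\delta_3 \widetilde{\delta}_3 = c_{\mathrm{odd}}$. The restriction $\delta \neq 1$ translates directly to $\delta_1 \delta_2 \delta_3 \neq 1$, while $\delta \neq r$ corresponds to $(\delta_1, \delta_2, \delta_3) = (a_{\mathrm{odd}}, b_{\mathrm{odd}}, c_{\mathrm{odd}})$ being excluded, which is exactly $\widetilde{\delta}_1 \widetilde{\delta}_2 \widetilde{\delta}_3 \neq 1$. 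Combined with the previous paragraph, this produces the claimed identity. The argument is essentially bookkeeping, and the only delicate step is the $p \mid c$ case, where the sign $(-1)^{(p-1)/2}$ must be absorbed as $\l(\frac{-1}{p}\r)$ so that the third Jacobi symbol comes out as $\l(\frac{-ab}{\delta_3}\r)$ rather than $\l(\frac{ab}{\delta_3}\r)$; getting this minus sign right is the entire content of the lemma.
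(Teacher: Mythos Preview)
Your proof is correct and follows essentially the same route as the paper: compute each local Hilbert symbol $(ac,bc)_p$ at the odd primes dividing $a$, $b$, $c$ separately (the paper phrases this via the identities $(s,t)_p=\l(\tfrac{t}{p}\r)$ when $v_p(s)=1,\,v_p(t)=0$ and $(s,t)_p=\l(\tfrac{-st p^{-2}}{p}\r)$ when $v_p(s)=v_p(t)=1$, which is exactly your formula), combine via multiplicativity of the Jacobi symbol, and then introduce the complementary divisors $\widetilde\delta_i$ to rewrite the constraints $\delta\neq 1,\,\delta\neq r$.
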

\begin{proof} Since $a,b,c $ are pairwise coprime we can write every $\delta \mid abc $ uniquely as $\delta_1\delta_2\delta_3$, where 
$$\delta_1\mid a_{\rm{odd}}, \, \delta_2\mid b_{\rm{odd}}
\ \textrm { and }  \ \delta_3 \mid c_{\rm{odd}}.$$ Defining $\widetilde {\delta_1}= a_{\rm{odd}}/\delta_1$ and similarly for $\widetilde {\delta_2}, 
\widetilde {\delta_3}$,  we can then write the sum as 
\[\sum_{\substack{\boldsymbol  \delta , \widetilde {\boldsymbol \delta} \in \N^3:
\delta_1  \delta_2\delta_3 \neq 1, 
\widetilde {\delta_1}\widetilde {\delta_2}\widetilde {\delta_3}\neq 1 
 \\ \delta_1\widetilde {\delta_1}= a_{\rm{odd}}, \delta_2\widetilde {\delta_2}=b_{\rm{odd}}, \delta_3 \widetilde {\delta_3}= c_{\rm{odd}}} }  \prod_{p\mid \delta_1 \delta_2\delta_3  } (ac,bc)_p .\] For any integers $s,t $ and an odd prime $p$ with $v_p(s)=1, v_p(t)=0$ we can write $(s,t)_p=(\frac{t}{p})$, where $(\frac{\cdot }{\cdot })$ is the Jacobi symbol. Using that  $\delta_1 \delta_2\delta_3$ is square-free we get 
\[\prod_{p\mid \delta_1   } (ac,bc)_p=  \left(\frac{bc}{\delta_1}\right)
\ \textrm{ and } \ \prod_{p\mid  \delta_2   } (ac,bc)_p=  \left(\frac{ac}{\delta_2}\right)
.\] Recalling that  $(s,t)_p=(\frac{-st p^{-2} }{p})$ for $s,t \in \Z$,  $p\neq 2 $ with $v_p(s)=1=v_p(t)$, yields 
\begin{equation*}
\prod_{p\mid \delta_3   } (ac,bc)_p=  \left(\frac{-ab}{\delta_3}\right). \qedhere
  \end{equation*} 
\end{proof}

\begin{lemma}\label{lem:detectlem4}  Under the assumptions of Theorem~\ref{thm:genguo} we have 
  $$\c N_{\b b , \b m }(\b X)=\frac{2\c M_{\b b , \b m }(\b X)}{\tau(( m_{12}    m_{13} m_{23})_{\rm{odd}})} 
+\frac{\c E_{\b b , \b m }(\b X)}{\tau(( m_{12}    m_{13} m_{23})_{\rm{odd}})} 
+O(1),$$
where the implied constant is absolute, 
\[\c M_{\b b , \b m }(\b X) :=\sum_{\substack{\b n \in \N^3, \eqref{eq:2dnilb}, \eqref{eq:sn4ws}  \\
(n_1   n_3   m_{12}  m_{23}  ,n_2    n_3  m_{13} m_{12} )_2 =1}} 
 \frac{\mu^2(n_1n_2n_3)}{\tau((n_1n_2n_3 )_{\rm{odd}})} 
\]and    $ \c E_{\b b , \b m }(\b X)$ is given by 
\begin{align*}
  &
\sum_{\substack{ \b h, \widetilde{\b h }\in \N^3 \\ h_{ij}\widetilde{ h_{ij} }=(m_{ij})_{\rm{odd}}  }}  
 \sum_{\substack{ \boldsymbol \sigma \in \{0,1\}^3
 \\ \eqref{eq:bwv228}
} }
 \sum_{\substack{ \b d   , \widetilde{\b d } \in \N^3, \eqref{eq:commonivy1},\eqref{eq:commonivy2}
\\
d_i \widetilde d_i \leq X_i 2^{-\sigma_i} 
 } }  \l(\frac{-1}{d_3m_{12} }\r) 
  \frac{\mu^2(2 d_1d_2d_3 \widetilde d_1  \widetilde d_2  \widetilde d_3 ) }{\tau(  d_1d_2d_3 \widetilde d_1  \widetilde d_2  \widetilde d_3 )}
\times 
\\
\times 
& 
  \l(\frac{
2^{\sigma_2+\sigma_3}  
d_2 \widetilde {d_2} d_3 \widetilde {d_3}m_{12}m_{13}}{d_1 h_{23}}\r)
 \l(\frac{2^{\sigma_1+\sigma_3}   d_1 \widetilde {d_1} d_3 \widetilde {d_3}m_{12}m_{23}}{d_2 h_{13}}\r)
 \l(\frac{2^{\sigma_1+\sigma_2}   d_1 \widetilde {d_1} d_2 \widetilde {d_2}m_{13}m_{23}}{d_3 h_{12}}\r)
,\end{align*} where  
\end{lemma}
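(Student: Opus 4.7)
The plan is to apply Lemma~\ref{lem:predamnlabel} with $(a,b,c) = (m_{23}n_1,\, m_{13}n_2,\, m_{12}n_3)$ to detect solubility of~\eqref{defeqgg} over $\Q$, then to invoke Lemma~\ref{lem:abmdbvl55} to rewrite the resulting divisor sum in terms of Jacobi symbols, and finally to extract the claimed main term and rearrange the rest into the form of $\c E_{\b b,\b m}(\b X)$. The hypothesis that $abc$ contains an odd prime factor fails only on a bounded set of tuples $(n_1,n_2,n_3)$ (by the constraints~\eqref{eq:should} on $\b m$ and square-freeness), contributing the $O(1)$ error.

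For the remaining tuples, since $a,b,c>0$ the infinite Hilbert symbol is trivial, so Hilbert's product formula yields $(ac,bc)_2 = \prod_{p\mid r}(ac,bc)_p$, where $r$ is the odd part of $abc$. This identity, combined with the involution $\delta\leftrightarrow r/\delta$ on the set $\{\delta\mid r : \delta\notin\{1,r\}\}$, collapses the detector to
\[ \frac{1+(ac,bc)_2}{\tau(r)} \;+\; \frac{1}{\tau(r)}\sum_{\substack{\delta\mid r\\ \delta\notin\{1,r\}}}\prod_{p\mid\delta}(ac,bc)_p, \]
since $(1+(ac,bc)_2)^2=2(1+(ac,bc)_2)$ and $(1+(ac,bc)_2)\prod_{p\mid\delta}(ac,bc)_p = \prod_{p\mid\delta}(ac,bc)_p + \prod_{p\mid r/\delta}(ac,bc)_p$. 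By $\gcd(n_1n_2n_3,\, m_{12}m_{13}m_{23})=1$ and square-freeness, $\tau(r) = \tau((n_1n_2n_3)_{\mathrm{odd}})\,\tau((m_{12}m_{13}m_{23})_{\mathrm{odd}})$, and $1+(ac,bc)_2$ is precisely twice the indicator of $(n_1n_3 m_{12}m_{23},\, n_2n_3 m_{13}m_{12})_2=1$. Summing the first piece against $\mu^2(n_1n_2n_3)$ over $\b n$ thus reproduces $\frac{2\c M_{\b b,\b m}(\b X)}{\tau((m_{12}m_{13}m_{23})_{\mathrm{odd}})}$.

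The second piece, after applying Lemma~\ref{lem:abmdbvl55}, is a triple sum over factorisations $\delta_i\tilde\delta_i=a_{\mathrm{odd}},b_{\mathrm{odd}},c_{\mathrm{odd}}$. Because $\gcd(n_i,m_{jk})=1$ and both factors are square-free, each $\delta_i$ decomposes uniquely as $h_{jk}d_i$ and each $\tilde\delta_i$ as $\tilde h_{jk}\tilde d_i$, where $h_{jk}\tilde h_{jk}=(m_{jk})_{\mathrm{odd}}$ and $d_i\tilde d_i=(n_i)_{\mathrm{odd}}$. Writing each square-free $n_i=2^{\sigma_i}d_i\tilde d_i$ with $\sigma_i\in\{0,1\}$, square-freeness of $n_1n_2n_3$ forces $\sigma_1+\sigma_2+\sigma_3\leq 1$ (the content of~\eqref{eq:bwv228}), the bound $n_i\leq X_i$ becomes $d_i\tilde d_i\leq X_i 2^{-\sigma_i}$, and the coprimality conditions in~\eqref{eq:sn4ws} become~\eqref{eq:commonivy1}--\eqref{eq:commonivy2}. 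Multiplicativity of the Jacobi symbol in the numerator then expands $\left(\tfrac{bc}{\delta_1}\right)\left(\tfrac{ac}{\delta_2}\right)\left(\tfrac{-ab}{\delta_3}\right)$ into the six Jacobi symbols in the claim, after extracting the prefactor $\left(\tfrac{-1}{d_3 m_{12}}\right)$ from $\left(\tfrac{-ab}{\delta_3}\right)$ and using $h_{jk}\tilde h_{jk}=(m_{jk})_{\mathrm{odd}}$ to consolidate the numerators.

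The principal obstacle is not conceptual but notational: carefully matching every sign, every power of $2$, and every coprimality constraint to the claimed form of $\c E_{\b b,\b m}(\b X)$. In particular, one must verify that the $\left(\tfrac{-1}{\tilde h_{12}}\right)$-type factors arising on expanding $\left(\tfrac{-ab}{\delta_3}\right)$ combine consistently with the $\left(\tfrac{-1}{h_{12}d_3}\right)$ part (using quadratic reciprocity and the pairwise coprimality of $m_{12},m_{13},m_{23}$ forced by the square-freeness of their product) to produce exactly the single prefactor $\left(\tfrac{-1}{d_3 m_{12}}\right)$ displayed in the statement.
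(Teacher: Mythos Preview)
Your approach is essentially the paper's: apply Lemma~\ref{lem:predamnlabel} with $(a,b,c)=(m_{23}n_1,m_{13}n_2,m_{12}n_3)$, absorb the finitely many $\b n$ for which $abc$ has no odd prime factor into $O(1)$, invoke Lemma~\ref{lem:abmdbvl55}, and then split each $\delta_i$ as $d_ih_{jk}$ via the coprimality of $n_i$ and $m_{jk}$, writing $n_i=2^{\sigma_i}d_i\widetilde d_i$.

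The one real difference is your involution step. The paper keeps the factor $\tfrac12(1+(ac,bc)_2)$ as an indicator on the whole expression, so that \emph{both} $\c M$ and $\c E$ carry the $2$-adic Hilbert-symbol condition; this is exactly the second line of \eqref{eq:commonivy1}. Your collapse produces an $\c E$-piece \emph{without} that condition. The two agree because, for each fixed $\b n$ with $(ac,bc)_2=-1$, one has
\[
\sum_{\delta\mid r}\ \prod_{p\mid\delta}(ac,bc)_p=\prod_{p\mid r}\bigl(1+(ac,bc)_p\bigr)=0
\]
(some factor vanishes since the full product is $-1$), and the two excluded terms $\delta=1,r$ contribute $1+(ac,bc)_2=0$; hence the restricted sum over $\delta\notin\{1,r\}$ is itself zero. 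You should state this explicitly, since otherwise the Hilbert-symbol constraint in \eqref{eq:commonivy1} has no visible origin in your version. With that sentence added, and the bookkeeping you already flag, the argument matches the paper's.
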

 \begin{equation}\label{eq:bwv228}
\begin{cases}  
\sigma_1+\sigma_2+\sigma_3\leq 1, \quad \gcd(2^{\sigma_1+\sigma_2+\sigma_3}  ,m_{12}m_{13}m_{23} )=1,
\\
\gcd(2^{\sigma_1}   ,b_2,b_3)=\gcd(2^{\sigma_2}   ,b_1,b_3)=\gcd(2^{\sigma_3}   ,b_1,b_2)=1,
 \end{cases} \end{equation} 
and
\begin{equation}\label{eq:commonivy1}
\begin{cases}     
d_1 d_2 d_3  h_{12}h_{13}h_{23} \neq 1, \quad \widetilde d_1  \widetilde d_2 \widetilde d_3
\widetilde {h_{12}}\widetilde {h_{13}}\widetilde {h_{23}} \neq 1,
\\(2^{\sigma_1+\sigma_3}  d_1 \widetilde d_1d_3 \widetilde d_3  m_{12}  m_{23}  ,2^{\sigma_2+\sigma_3}
d_2 \widetilde d_2 d_3 \widetilde d_3  m_{13} m_{12} )_2 =1,
\end{cases} \end{equation} 
  \begin{equation}\label{eq:commonivy2}
\begin{cases}     \gcd(  d_1 \widetilde d_1 ,b_2,b_3) =\gcd(d_2 \widetilde d_2  ,b_1,b_3)=\gcd(d_3 \widetilde d_3  ,b_1,b_2) =1, \\
\gcd(d_1 \widetilde d_1d_2 \widetilde d_2d_3 \widetilde d_3,m_{12}m_{13}m_{23})=1.
\end{cases}
\end{equation}

\begin{proof}  To employ Lemma~\ref{lem:abmdbvl55} 
with $ a=n_1       m_{23}, b= n_2     m_{13}, c = n_3  m_{12} $ we must ensure that  $n_1n_2n_3$ is divisible by a prime $p>2$.
Owing  to  the term $\mu^2(n_1n_2n_3)$    in $\c N_{\b b , \b m }(\b X)$, this is  equivalent to $n_1n_2n_3>2$.
 Hence, ignoring all terms with 
 $1\leq n_1n_2n_3\leq 2 $ shows that $\c N_{\b b , \b m }(\b X)$ equals 
$$
\sum_{\substack{\b n \in \N^3, n_1n_2n_3>2, \eqref{eq:2dnilb}, \eqref{eq:sn4ws}  \\
(n_1   n_3   m_{12}  m_{23}  ,n_2    n_3  m_{13} m_{12} )_2 =1}} 
\hspace{-0,3cm}
\frac{\mu^2(n_1n_2n_3)}{\tau((n_1n_2n_3m_{12}    m_{13} m_{23})_{\rm{odd}})} 
 \left(2+\hspace{-0,4cm}
\sum_{\substack{\boldsymbol  \delta , \widetilde {\boldsymbol \delta} \in \N^3\setminus\{\b 1 \} , \delta_1\widetilde {\delta_1}= (n_1       m_{23})_{\rm{odd}}\\ \delta_2\widetilde {\delta_2}=( n_2     m_{13})_{\rm{odd}}, \delta_3 \widetilde {\delta_3}= (n_3  m_{12})_{\rm{odd}}} } 
\theta(\boldsymbol  \delta )  \right) $$  up to $O(1)$, where $$\theta(\boldsymbol  \delta )=
\left(\frac{n_2   n_3  m_{12} m_{13}   }{\delta_1}\right)  
\left(\frac{n_1       m_{23}n_3  m_{12} }{\delta_2}\right) \left(\frac{-n_1       m_{23}n_2     m_{13}}{\delta_3}\right)
.$$ We can ignore the condition $n_1n_2n_3>2$ at the cost of an error term that is $O(1)$.
The factor $2$ in the brackets then gives rise to $\c M_{\b b , \b m }(\b X) $. To analyse the term containing     $\theta$ 
we use the assumption that $n_i$ is coprime to $m_{jk}$ to write 
$\delta_i = d_i h_{jk}$ and $ \widetilde{ \delta_i} =  \widetilde{ d_i } \widetilde {h_{jk} }$ 
for some $d_i,\widetilde {d_i}, h_{jk}, \widetilde {h_{kj} } \in \N$
satisfying  $$d_i\widetilde {d_i}=( n_i)_{\rm{odd}} \ \textrm { and } \ h_{jk}\widetilde {h_{jk}}=(       m_{jk})_{\rm{odd}}.$$  
Defining $\sigma_i=v_2(n_i)$   and using $n_i =2^{\sigma_i} d_i\widetilde {d_i}$ we infer that $\theta$ equals 
\[\l(\frac{-1}{d_3m_{12} }\r)\prod_{\{i,j,k\}=\{1,2,3\}}\l(\frac{2^{\sigma_j+\sigma_k}d_j \widetilde {d_j} d_k \widetilde {d_k}m_{ij}m_{ik}}{d_i h_{jk}}\r), \] 
which concludes the proof.\end{proof}

\subsection{Asymptotic averages of $3$-dimensional arithmetic functions} \label{s:preparatory}
We establish asymptotics for sums of the form  
$$
\sum_{\substack{ \b n \in \N^3  \\ \forall i : 0<n_i \leq X_i  }} \mu^2(n_1n_2n_3) g_1(n_1)  g_2(n_2)  g_3(n_3) 
,$$ where $g_i $ are certain multiplicative functions that essentially behave like the inverse of the divisor function.
There are multidimensional generalisations of the Selberg--Delange theory
 under the condition that the underlying   
Dirichlet series    has poles of    integral  order~\cite{MR1858338}. In our   application the series   behaves as  $(\zeta(s_1)\zeta(s_2)\zeta(s_3) )^{1/2}$, 
where $\zeta$ denotes the Riemann zeta function. 
The poles of this series   are half-integers orders, thus we cannot use the existing theory.
We shall instead   use the convolution identity $\mu^2(n)=\sum_{d^2\mid n }\mu(d) $ to
reduce  to the   $1$-dimensional Selberg--Delange setting.
  \begin{lemma}
\label{lem:bwv225hcab}Assume that $g_1,g_2,g_3:\N\to \mathbb C$ are multiplicative function with $|g_i(n)| \leq 1 $ for all $n$ and $i$.
Then for all $X_1,X_2,X_3>1$, $1\leq z\leq (X_1X_2X_3)^{1/4} $ and $\b q \in \N^3, \b a \in \Z^3$ with $\gcd(a_i,q_i)=1$ for all $i$, 
we have 
\begin{align*}
&\sum_{\substack{ k\in \N \cap [1,z]  } } \mu(k)\hspace{-0,3cm}
\sum_{\substack{ \b n \in \prod_{i=1}^3 (\N \cap [1,X_i] ), k^2\mid n_1n_2n_3   \\n_i\equiv a_i \md {q_i} \forall i  } }
\hspace{-0,5cm} g_1(n_1)g_2(n_2)g_3(n_3) 
\\
=&\sum_{\substack{ \b n \in \prod_{i=1}^3 (\N\cap[1,X_i])\\ \forall i : n_i\equiv a_i \md{q_i } } } 
\hspace{-0,5cm}
\mu^2(n_1n_2n_3) g_1(n_1)g_2(n_2)g_3(n_3) 
+O\l(\frac{X_1X_2X_3}{z^{1/2} }(\log (X_1X_2X_3) )^3\r),
\end{align*} where the implied constant     is absolute.\end{lemma}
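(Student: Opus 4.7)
The plan is to start from the classical identity $\mu^2(n)=\sum_{d^2\mid n}\mu(d)$ applied to $n=n_1n_2n_3$. Substituting this into the right-hand side and interchanging the order of summation---legitimate because $k^2\mid n_1n_2n_3\leq X_1X_2X_3$ forces $k\leq(X_1X_2X_3)^{1/2}$, so only finitely many $k$ contribute---gives
\[
\sum_{\substack{\b n,\,n_i\leq X_i \\ n_i\equiv a_i \md{q_i}}}\mu^2(n_1n_2n_3)\prod_{i=1}^{3} g_i(n_i)
=\sum_{k\geq 1}\mu(k)\sum_{\substack{n_i\leq X_i \\ k^2\mid n_1n_2n_3 \\ n_i\equiv a_i \md{q_i}}}\prod_{i=1}^{3} g_i(n_i).
\]
The claim reduces to bounding the tail $k>z$ by the stated error. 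Using $|g_i|\leq 1$, this tail is at most $\sum_{k>z,\,k\text{ squarefree}} T(k)$ in absolute value, where $T(k):=\#\{(n_1,n_2,n_3):n_i\leq X_i,\,k^2\mid n_1n_2n_3\}$ (the congruence conditions only shrink the count).

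The combinatorial heart of the argument is to estimate $T(k)$ for squarefree $k$. For each prime $p\mid k$, the condition $v_p(n_1)+v_p(n_2)+v_p(n_3)\geq 2$ forces at least one of six events: either $p^2\mid n_i$ for some $i\in\{1,2,3\}$, or $p\mid\gcd(n_i,n_j)$ for some $i<j$. Assigning each prime of $k$ to a witnessing event yields a factorisation $k=d_1d_2d_3d_{12}d_{13}d_{23}$ into six pairwise coprime squarefree parts, with at most $6^{\omega(k)}$ such assignments. For each factorisation, the constraints pin $n_i$ to a multiple of $d_i^2d_{ij}d_{ik}$ (with $\{j,k\}=\{1,2,3\}\setminus\{i\}$); the product of these three divisors equals $k^2$, so a union bound delivers $T(k)\leq 6^{\omega(k)}X_1X_2X_3/k^2$.

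Summing over squarefree $k>z$ and invoking $\sum_{k>z}6^{\omega(k)}/k^2\ll(\log z)^5/z$---which follows from $\sum_{k\leq y}\tau_6(k)\ll y(\log y)^5$ and partial summation, combined with $6^{\omega(k)}\leq\tau_6(k)$---yields the sharper tail estimate $\ll X_1X_2X_3(\log z)^5/z$. This implies the stated bound, since $(\log z)^5/z\leq(\log X_1X_2X_3)^3/z^{1/2}$ whenever $z\geq(\log X_1X_2X_3)^4$, while for smaller $z$ the stated error term already majorises the trivial bound $X_1X_2X_3$ valid for both sides of the identity. The main obstacle I foresee is the non-canonicity of the six-fold factorisation---a single prime may witness multiple events---but the resulting overcount $6^{\omega(k)}$ has only divisor-type growth on average and is harmless.
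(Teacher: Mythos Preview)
Your proof is correct and begins exactly as the paper does, via $\mu^2(n)=\sum_{d^2\mid n}\mu(d)$, but your treatment of the tail $k>z$ is genuinely different. The paper bounds the tail crudely by observing that the number of triples $(n_1,n_2,n_3)$ with product $t=n_1n_2n_3\leq X_1X_2X_3$ is at most $\tau_3(t)$, then writes $t=k^2s$ and invokes $\tau_3(k^2s)\leq\tau^2(k^2)\tau^2(s)$ together with the average $\sum_{s\leq Y}\tau^2(s)\ll Y(\log Y)^3$ and the pointwise bound $\tau^2(k^2)\ll k^{1/2}$; this delivers $X_1X_2X_3(\log X_1X_2X_3)^3/z^{1/2}$ in one shot. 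Your six-fold factorisation $k=d_1d_2d_3d_{12}d_{13}d_{23}$ and the bound $T(k)\leq 6^{\omega(k)}X_1X_2X_3/k^2$ is a cleaner combinatorial estimate and gives the sharper tail $X_1X_2X_3(\log z)^5/z$; the price is the extra case split at the end. One small wording fix: for small $z$ it is not that ``both sides'' are trivially $O(X_1X_2X_3)$ from first principles (the left-hand side has a sum over $k$), but rather that the tail itself is $O(X_1X_2X_3)$ since $\sum_{k\geq1}6^{\omega(k)}/k^2<\infty$; this is what you actually need, and it follows immediately from your own bound on $T(k)$.
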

\begin{proof}Using $\mu^2(n_1n_2n_3) =\sum_{k^2\mid n_1n_2n_3 } \mu(k)$ we can write the sum in the lemma as 
\[\sum_{\substack{ k\in \N } } \mu(k)
\sum_{\substack{ \b n \in \prod_{i=1}^3 (\N \cap [1,X_i] )\\
k^2\mid n_1n_2n_3   , n_i\equiv a_i \md {q_i} \forall i  } }
 g_1(n_1)g_2(n_2)g_3(n_3).\]The contribution of $k>z$ is trivially \[\ll \sum_{k>z} \sum_{\substack{t\leq X_1X_2X_3\\  k^2 \mid t }} \tau_3(t)=
\sum_{k>z} \sum_{s\leq X_1X_2X_3/k^{2}   } \tau_3(k^2s )
,\] where $\tau_3(t)$ denotes the number of ways 
that $t$ can be written as a product of three positive integers. Using the inequality 
$\tau_3(ab)\leq 
\tau^2(ab ) \leq 
\tau^2(a)\tau^2 (b)$, valid for all $a,b\in \N$,
we obtain the bound  \[\sum_{k>z} \tau^2(k^2 )\sum_{s\leq X_1X_2X_3/k^{2}   } \tau^2( s )\ll 
\sum_{k>z} \tau^2(k^2 ) \frac{ X_1X_2X_3}{k^2} (\log (X_1X_2X_3) )^3
 ,\] where we used the fact that  the average of $\tau^2(s)$ is $\log^3 s$.
The proof now concludes by using 
 and the bound $\tau^2(k^2)\ll k^{1/2}$ .\end{proof}
\begin{lemma}\label{lem:bwv2abcfe764}For all $k\in \N$ and $\epsilon>0$ we have $$
\sum_{\substack{ n\in \N,  k^2 \mid n \\ p\mid n \Rightarrow p\mid k } }  n^{-1/2-\epsilon} \ll_\epsilon  k^{-1-\epsilon} ,$$ where the implied constant depends at most on $\epsilon$.
\end{lemma}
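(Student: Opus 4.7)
The plan is to reduce the sum to a simple Euler product, and then bound that product. The condition ``$k^2 \mid n$ and $p \mid n \Rightarrow p \mid k$'' means that every $n$ in the range can be written uniquely as $n = k^2 m$, where $m$ is a positive integer supported on the primes dividing $k$ (indeed, from $k^2 \mid n$ we can set $m = n/k^2$, and any prime $p \mid m$ also divides $n$, hence divides $k$). Substituting yields
\[
\sum_{\substack{ n\in \N,\, k^2 \mid n \\ p\mid n \Rightarrow p\mid k } }  n^{-1/2-\epsilon}
\;=\; k^{-1-2\epsilon} \sum_{\substack{ m \geq 1 \\ p \mid m \Rightarrow p \mid k }} m^{-1/2-\epsilon}
\;=\; k^{-1-2\epsilon} \prod_{p \mid k} \bigl(1 - p^{-1/2-\epsilon}\bigr)^{-1}.
\]

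It therefore suffices to show the Euler product is $\ll_\epsilon k^\epsilon$. The idea is to split the prime factors of $k$ into small and large ones. Since $-\log(1-p^{-1/2-\epsilon}) \sim p^{-1/2-\epsilon} = o(\log p)$ as $p \to \infty$, one can choose a threshold $y = y(\epsilon)$, depending only on $\epsilon$, such that $(1 - p^{-1/2-\epsilon})^{-1} \leq p^{\epsilon}$ for every prime $p \geq y$. Then
\[
\prod_{p \mid k} (1 - p^{-1/2-\epsilon})^{-1}
\;=\; \prod_{\substack{p \mid k \\ p < y}} (1 - p^{-1/2-\epsilon})^{-1} \cdot \prod_{\substack{p \mid k \\ p \geq y}} (1 - p^{-1/2-\epsilon})^{-1}
\;\leq\; C_\epsilon \prod_{\substack{p \mid k \\ p \geq y}} p^{\epsilon}
\;\leq\; C_\epsilon \, k^{\epsilon},
\]
where $C_\epsilon := \prod_{p < y} (1 - p^{-1/2-\epsilon})^{-1}$ is a finite product depending only on $\epsilon$, and the last bound uses $\prod_{p \mid k} p \leq k$.

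Combining gives the total bound $k^{-1-2\epsilon} \cdot O_\epsilon(k^\epsilon) = O_\epsilon(k^{-1-\epsilon})$, as required. There is no substantive obstacle here; the only minor issue is verifying the existence of the threshold $y(\epsilon)$, which is an immediate consequence of the asymptotic $-\log(1-x) \leq 2x$ for $x \in [0,1/2]$ applied with $x = p^{-1/2-\epsilon}$.
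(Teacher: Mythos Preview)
Your proof is correct and follows essentially the same approach as the paper: both substitute $n=k^2 m$ to reduce to the Euler product $k^{-1-2\epsilon}\prod_{p\mid k}(1-p^{-1/2-\epsilon})^{-1}$ and then show this product is $\ll_\epsilon k^{\epsilon}$. The only cosmetic difference is in the last step: the paper bounds each Euler factor uniformly by $(1-2^{-1/2})^{-1}\leq 8$ and uses $8^{\omega(k)}\leq \tau(k)^3\ll_\epsilon k^{\epsilon}$, whereas you split off small primes and bound the large-prime factors individually by $p^{\epsilon}$.
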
\begin{proof} Letting $n=k^2 m $ makes the sum equal to 
$$
\sum_{\substack{ m\in \N  \\ p\mid m  \Rightarrow p\mid k } } \frac{k^{-1-2\epsilon} }{m^{1/2+\epsilon}}
\leq 
\sum_{\substack{ m\in \N  \\ p\mid m  \Rightarrow p\mid k } } \frac{k^{-1-2\epsilon}}{m^{1/2}}
=
k^{-1-2\epsilon}\prod_{p\mid k } \frac{1}{1-1/\sqrt p } 
\leq 
k^{-1-2\epsilon}\prod_{p\mid k } \frac{1}{1-1/\sqrt 2 }
$$ Noting that $ 1-1/ \sqrt 2  \geq 1/8 $ and that $8^{\#\{p\mid k \} }\leq \tau(k)^3\ll k ^\epsilon$
concludes the proof.
 \end{proof}
\begin{lemma}
\label{lem:bwv225hcab676764}
Keep the setting of Lemma~\ref{lem:bwv225hcab}. Then for all $w\geq 1 $ we have \begin{align*}
&\sum_{\substack{ k\in \N \cap [1,z]  } } \mu(k)\hspace{-0,2cm}
 \sum_{\substack{ \b n' \in (\N\cap[1,w])^3, p\mid n_1'n_2'n_3' \Rightarrow p\mid k \\
k^2\mid n_1'n_2'n_3' ,\ 
\gcd(n_i',q_i)=1 \forall i } }\hspace{-0,3cm}
g_1(n_1')g_2(n_2')g_3(n_3') 
\prod_{i=1}^3 \Bigg(
\sum_{\substack{  n'' \in \N \cap [1, X_i/n'_i],  \gcd(n'',k)=1
  \\   n'' \equiv a_i/n'_i \md {q_i} }}
g_i(n'') \Bigg)
\\ 
=&\sum_{\substack{ k\in \N \cap [1,z]  } } \mu(k)
\sum_{\substack{ \b n \in \prod_{i=1}^3 (\N \cap [1,X_i] )  \\
k^2\mid n_1n_2n_3  , n_i\equiv a_i \md {q_i} \forall i  } }g_1(n_1)g_2(n_2)g_3(n_3) 
+O\l(\frac{X_1X_2X_3}{w^{1/3} } \r)
,
\end{align*} where the implied constant     is absolute.
      \end{lemma}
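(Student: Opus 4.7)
The plan is to prove the identity by a standard multiplicative decomposition, isolating the main-term contribution from small $n_i'$ and controlling the truncation error via an Euler product bound. For each squarefree $k$ in the outer sum (the contribution of non-squarefree $k$ vanishes through $\mu(k)$), write each variable uniquely as $n_i = n_i'n_i''$ where $n_i'$ is supported on the primes dividing $k$ and $\gcd(n_i'',k)=1$. Multiplicativity of $g_i$ yields $g_i(n_i) = g_i(n_i')g_i(n_i'')$. The divisibility $k^2 \mid n_1n_2n_3$ is equivalent to $k^2 \mid n_1'n_2'n_3'$ since $\gcd(n_i'',k)=1$; the size constraint $n_i \leq X_i$ becomes $n_i'' \leq X_i/n_i'$; and the congruence $n_i \equiv a_i \md{q_i}$ translates into $n_i'' \equiv a_i/n_i' \md{q_i}$, using that $\gcd(a_i,q_i)=1$ forces $\gcd(n_i',q_i)=1$. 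Substituting this factorisation into the RHS of the lemma rewrites it exactly as the LHS would read if the truncation $n_i' \leq w$ were removed.

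What remains is to bound the contribution of triples $\b{n}'$ with $\max_i n_i' > w$. By the symmetry of the three variables it suffices to handle $n_1' > w$ at the cost of an absolute constant. Using $|g_i| \leq 1$ and the trivial inner bound $\sum_{n_i''} g_i(n_i'') \leq X_i/n_i'$, this contribution is at most
\[
X_1X_2X_3 \sum_{k \leq z} \,\, \sum_{\substack{\b{n}' : n_1' > w,\; k^2 \mid n_1'n_2'n_3' \\ p \mid n_i' \Rightarrow p \mid k }} \frac{1}{n_1'n_2'n_3'}.
\]
I will extract a factor of $w^{-1/3}$ via $1/n_1' \leq w^{-1/3}(n_1')^{-2/3}$ (valid when $n_1' > w$), after which the inner sum factorises as a product over primes $p \mid k$: each local factor is $\sum_{a+b+c \geq 2} p^{-2a/3 - b - c}$, the lower bound $a+b+c \geq 2$ coming from $k$ being squarefree and $k^2 \mid n_1'n_2'n_3'$ reducing locally to $v_p(n_1'n_2'n_3') \geq 2$.

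The decisive calculation is that this Euler factor is $O(p^{-4/3})$, the dominant contribution coming from $(a,b,c)=(2,0,0)$, while every other triple contributes at most $p^{-5/3}$. Consequently
\[
\sum_{k \text{ sqfree}} \prod_{p \mid k} O(p^{-4/3}) \leq \prod_p\bigl(1 + O(p^{-4/3})\bigr) = O(1),
\]
an absolutely convergent Euler product, yielding the total error bound $O(X_1X_2X_3/w^{1/3})$ as claimed. The only non-routine step is this Euler factor calculation; the rest is bookkeeping plus the coprime factorisation. The exponent $1/3$ in the error term is forced by the three variables and this rate of convergence, and appears to be sharp for this approach.
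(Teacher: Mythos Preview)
Your proof is correct and follows essentially the same approach as the paper: the same multiplicative decomposition $n_i = n_i'n_i''$ with $n_i'$ supported on the primes of $k$, then bounding the tail where some $n_i'$ exceeds $w$. The only minor difference is in how the tail is controlled: the paper collapses to the single variable $n = n_1'n_2'n_3' > w$, uses $\tau_3(n)/n \leq w^{-1/3}\tau_3(n)/n^{2/3}$, and then invokes $\tau_3(n) \ll n^{1/12}$ together with the preparatory Lemma~\ref{lem:bwv2abcfe764}, whereas you symmetrise to $n_1' > w$ and compute the Euler factor $\sum_{a+b+c\geq 2} p^{-2a/3-b-c} = p^{-4/3} + O(p^{-5/3})$ directly. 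Both routes give the same $O(X_1X_2X_3/w^{1/3})$ bound with comparable effort.
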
\begin{proof}
Write each $n_i$ as $n'_in''_i$, where $n''_i$ is coprime to $k$ and all prime factors of $n_i'$ divide $k$. The sum over $k $ in 
Lemma~\ref{lem:bwv225hcab}
then becomes 
  \[\sum_{\substack{ k\in \N \cap [1,z]  } } \mu(k)
 \sum_{\substack{ \b n' \in \N^3, k^2\mid n_1'n_2'n_3'   \\  p\mid n_1'n_2'n_3' \Rightarrow p\mid k\\
\gcd(n_i',q_i)=1 \forall i } }
g_1(n_1')g_2(n_2')g_3(n_3') 
\prod_{i=1}^3 \sum_{\substack{  n'' \in \N \cap [1, X_i/n'_i]   \\ \gcd(n'',k)=1  \\   n'' \equiv a_i/n'_i \md {q_i} }}g_i(n'') 
.\]  The contribution of terms with $\max_i n_i'>w$ is 
\[
\ll \sum_{\substack{ k\in \N \cap [1,z]  } }  
 \sum_{\substack{ \b n' \in \N^3, k^2\mid n_1'n_2'n_3'   \\  p\mid n_1'n_2'n_3' \Rightarrow p\mid k\\
\max n_i'>w} }  \frac{X_1X_2X_3}{n_1'n_2'n_3'}
\leq  X_1X_2X_3 \sum_{k\in \N }   \sum_{\substack{ n \in \N , k^2\mid n   \\  p\mid n \Rightarrow p\mid k\\ n>w} } \frac{\tau_3(n) }{n} 
\leq  \frac{X_1X_2X_3}{w^{1/3} }
 \sum_{k\in \N }   \sum_{\substack{ n \in \N , k^2\mid n   \\  p\mid n \Rightarrow p\mid k } } \frac{\tau_3(n) }{n^{2/3}} 
 .\] The inequality  $\tau_3(n) \leq \tau^2(n) \ll n^{1/12 }$ and   
Lemma~\ref{lem:bwv2abcfe764} with $\epsilon=1/12$ show  that the sum over $k  $ is bounded.  \end{proof}

We define
$$t_0=\frac{1}{\sqrt \pi} \prod_{p \text{ prime}} t_p \l(1-\frac{1}{p}\r)^{1/2}, \qquad 
t_p:=1+\sum_{k=1}^\infty \frac{1}{(k+1)p^k}.$$

\begin{lemma}
\label{lem:1overdiv} Let  $a, d$ be   odd integers and  $q\in \{4,8\} $. Fix any $C>0$.
For all $x\geq 2$ we have \[\sum_{\substack{ 1\leq n \leq x \\ \gcd(n,d)=1\\n\equiv a\md q } }\frac{1}{\tau(n)}=
\frac{t_0 }{\phi(q)
(\prod_{p\mid 2d} t_p)}
\frac{x}{\sqrt{\log x } }
\l\{1+O\l(\frac{(\log \log 3|d|)^{3/2}}{\log x}\r)\r\} 
+O\l(\frac{\tau(d)x}{(\log x)^C}\r),\] where the implied constant depends at most on $C$.
\end{lemma}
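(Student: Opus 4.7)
The plan is to reduce the estimate to the one-variable Selberg--Delange theorem~\cite[Ch.~II.5, Thm.~5.2]{Ten95} applied character-by-character modulo $q$. Since $a$ is odd and $q\in\{4,8\}$ we have $\gcd(a,q)=1$, so orthogonality of Dirichlet characters modulo $q$ gives
\[
\sum_{\substack{1\le n\le x,\,\gcd(n,d)=1\\ n\equiv a\md q}}\frac{1}{\tau(n)}
=\frac{1}{\phi(q)}\sum_{\chi\md q}\bar\chi(a)\,T_\chi(x),
\quad
T_\chi(x):=\sum_{\substack{1\le n\le x\\ \gcd(n,d)=1}}\frac{\chi(n)}{\tau(n)}.
\]
For each $\chi$ I would analyse the Dirichlet series
\[
F_{\chi,d}(s)=\sum_{\gcd(n,d)=1}\frac{\chi(n)}{\tau(n)\,n^s}
=\prod_{p\nmid qd}\Bigl(1+\sum_{k\ge 1}\frac{\chi(p)^k}{(k+1)\,p^{ks}}\Bigr),
\]
and factor $F_{\chi,d}(s)=L(s,\chi)^{1/2}G_{\chi,d}(s)$, using the identity $\sum_{k\ge0}z^k/(k+1)=-\log(1-z)/z$ on each Euler factor. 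The corrective product $G_{\chi,d}(s)$ converges absolutely and is uniformly bounded on some half-plane $\re s\ge 1-c$.

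For the principal character $\chi_0$, the $L$-function $L(s,\chi_0)=\zeta(s)\prod_{p\mid q}(1-p^{-s})$ supplies a simple pole at $s=1$, placing $F_{\chi_0,d}(s)$ in the $z=1/2$ regime of Selberg--Delange. The theorem yields
\[
T_{\chi_0}(x)=\frac{H_d(1)}{\Gamma(1/2)}\cdot\frac{x}{\sqrt{\log x}}\biggl(1+O\Bigl(\tfrac{(\log\log 3|d|)^{3/2}}{\log x}\Bigr)\biggr)+O_C\Bigl(\tfrac{x}{(\log x)^C}\Bigr),
\]
where $H_d(s):=((s-1)\zeta(s))^{1/2}\bigl(\prod_{p\mid q}(1-p^{-s})\bigr)^{1/2}G_{\chi_0,d}(s)$. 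A routine Euler-product computation evaluates $H_d(1)=\sqrt{\pi}\,t_0\prod_{p\mid 2d}t_p^{-1}$, and dividing by $\phi(q)\Gamma(1/2)=\phi(q)\sqrt{\pi}$ reproduces exactly the main term of the lemma. For the $\phi(q)-1\le 3$ non-principal $\chi$, $L(s,\chi)$ is regular and non-vanishing at $s=1$ and admits a classical zero-free region since $q\in\{4,8\}$ is tiny; hence $F_{\chi,d}(s)$ is regular at $s=1$, and the $z=0$ case of Selberg--Delange delivers $T_\chi(x)\ll_C\tau(d)\,x(\log x)^{-C}$ uniformly in $d$, with the $\tau(d)$ coming from a Rankin-type bound on the corrective Euler product after deletion of factors at $p\mid d$. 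Summed over $\chi\ne\chi_0$ this contributes to the second error term in the statement.

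The main obstacle I anticipate is tracking the uniformity in $d$ of the secondary Selberg--Delange coefficient for $\chi_0$, from which the $(\log\log 3|d|)^{3/2}$ factor emerges. That coefficient is a polynomial in the first few logarithmic derivatives of $H_d$ at $s=1$; subtracting their $d=1$ counterparts produces sums of the form $\sum_{p\mid d}(\log p)/p\ll\log\log 3|d|$ together with their squares, and the Hankel-contour evaluation in the proof of~\cite[Thm.~5.2]{Ten95} converts these into the advertised exponent $3/2$. Keeping these estimates explicit throughout the Perron-type arguments is the only genuinely delicate step, and is precisely what the statement of the lemma records.
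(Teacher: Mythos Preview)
Your proposal is correct and follows essentially the same approach as the paper: decompose via orthogonality of characters modulo $q$, then handle each $T_\chi(x)$ by a Selberg--Delange/Hankel-contour argument using the zero-free region, with the principal character producing the main term and the non-principal ones absorbed into the $O_C(\tau(d)x(\log x)^{-C})$ error. The paper's proof is in fact terser than yours---it simply cites \cite[Lemma~1]{MR2675875} (Friedlander--Iwaniec) for the character-sum estimate and then invokes orthogonality---so your sketch is, if anything, a more explicit rendering of the same argument.
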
\begin{proof} A straightforward argument that uses   the zero-free region for Dirichlet $L$-functions
and  a Hankel contour integral, as in the proof of~\cite[Lemma 1]{MR2675875}, for example, shows that   
\[
\sum_{\substack{ 1\leq n \leq x \\ \gcd(n,d)=1  } }\frac{\chi(n)}{\tau(n)} = \frac{t_0 \mathds 1 (\chi=\chi_0)}{
(\prod_{p\mid dq} t_p)}
\frac{x}{\sqrt{\log x } }
\l\{1+O\l(\frac{(\log \log 3dq)^{3/2}}{\log x}\r)\r\} 
+O\l(\frac{\tau(d)qx}{(\log x)^C}\r)\] holds for each Dirichlet character $\chi $ modulo $q$,
where $\chi_0$ is the principal character.  Using orthogonality of characters concludes the proof.
\end{proof} 
 For $\b d \in \Z \setminus\{\b 0 \} $ we define 
\beq{defgambd}
{\gamma(\b d )= 
  \prod_{p\neq 2  } 
\l(1-\frac{1}{p} \r)^{3/2}
\l(1+  \frac{\#\{1\leq i  \leq 3 : p\nmid d_i \}}{2 p} \r) 
 .}  
\begin{lemma}
\label{lem:1overdivmultiple} Assume that $a_1,a_2,a_3, d_1,d_2,d_3 $ are odd integers
and that $q_1,q_2,q_3\in \{4,8\}^3$.
Fix any $C>0$.
Then for all $\b X \in \R^3$ with $\min_i X_i\geq  \max_i |d_i| $ we have
 \begin{align*}
&\sum_{\substack{ \b n \in \prod_{i=1}^3 (\N \cap [1,X_i] )   \\ \gcd(n_i ,d_i )=1 \forall i \\n_i\equiv a_i \md {q_i} \forall i  } }\frac{\mu^2(n_1n_2n_3) }{\tau(n_1)\tau(n_2)\tau(n_3)}
 = 
\frac{\gamma(\b d )}{(2\pi )^{3/2} }
\l\{
\prod_{i=1}^3 \frac{X_i }{\phi(q_i)\sqrt{  \log X_i}   }  \r\}  
\\&+
O\l(\frac{X_1X_2X_3(\log \log 3\max  |d_i|)^{3/2}}{\l(\prod_{i=1}^3 (\log X_i)\r)^{1/2} (\log \min X_i )} 
+\frac{X_1X_2X_3\tau(d_1   )\tau(d_2   )\tau(d_3   )
}{(\log (3\max X_i)  )^{-2}(\log \min  X_i)^C} \r),\end{align*}
where  the implied constant depends at most on $C$.
\end{lemma}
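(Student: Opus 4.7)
The plan is to reduce the three-dimensional squarefree sum to a product of one-dimensional Selberg--Delange sums (Lemma \ref{lem:1overdiv}) by means of the identity $\mu^2(n_1n_2n_3) = \sum_{k^2 \mid n_1n_2n_3} \mu(k)$ and the decoupling package provided by Lemmas \ref{lem:bwv225hcab} and \ref{lem:bwv225hcab676764}. First I would choose a truncation parameter $z = (\log \min_i X_i)^{B}$ for $B = B(C)$ sufficiently large and apply Lemma \ref{lem:bwv225hcab} with $g_i(n) = \mathds{1}_{\gcd(n,d_i)=1}/\tau(n)$, which expresses the target sum (up to an error $\ll X_1X_2X_3 z^{-1/2}(\log)^3$) as $\sum_{k\leq z}\mu(k) \Sigma_k$, where $\Sigma_k$ is the analogous sum restricted by $k^2 \mid n_1n_2n_3$.

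Next I would apply Lemma \ref{lem:bwv225hcab676764} with $w$ a small power of $\log \min X_i$ to write each $n_i = n_i' n_i''$ with $n_i'$ supported on prime divisors of $k$ and $n_i''$ coprime to $k$. The outer sums over $(k,\b n')$ become finite and combinatorial, while each inner sum over $n_i''$ with congruence $n_i'' \equiv a_i/n_i' \pmod{q_i}$ and coprimality to $k d_i$ (both odd, after isolating the parity of $k$) falls exactly under Lemma \ref{lem:1overdiv}. Applying this three times produces, for each admissible $(k,\b n')$, a main term
\[
\prod_{i=1}^3 \frac{t_0}{\phi(q_i)\,\prod_{p\mid 2 k d_i} t_p} \cdot \frac{X_i/n_i'}{\sqrt{\log(X_i/n_i')}},
\]
with relative error $(\log\log 3 kd_i)^{3/2}/\log\min X_i$ plus a power-saving error controlled by the hypothesis $\max|d_i| \leq \min X_i$. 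Since $n_i' \leq w$ is at most a power of $\log\min X_i$, a first-order Taylor expansion replaces $\sqrt{\log(X_i/n_i')}$ by $\sqrt{\log X_i}$ at the cost of a term absorbed into the stated error.

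The main obstacle is the identification of the Euler product. After extending the sum over $k$ to all positive integers (the tail $k>z$ is negligible by Lemma \ref{lem:bwv2abcfe764}) the remaining outer sum factors over primes because $k$ is squarefree and $n_1'n_2'n_3'$ is squarefree with all prime factors dividing $k$. At an odd prime $p \nmid d_1d_2d_3$ the local factor, computed by enumerating the cases $p \nmid k$ and $p \mid k$ (in which case exactly two of $n_1',n_2',n_3'$ must equal $p$), collapses to
\[
\Bigl(1 - \tfrac{1}{p}\Bigr)^{3/2} \Bigl(1 + \tfrac{3}{2p}\Bigr) \cdot t_p^{-3},
\]
and an analogous computation at primes dividing some $d_i$ inserts the combinatorial count $\#\{i: p\nmid d_i\}$ in place of $3$. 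The factor $t_p^{-3}$ cancels with the $t_p^3$ hidden inside $t_0^3$ via $t_0 = \pi^{-1/2}\prod_p t_p(1-1/p)^{1/2}$, leaving precisely the local Euler factor of $\gamma(\b d)$ defined in \eqref{defgambd}. At $p=2$ the congruences modulo $q_i \in \{4,8\}$ produce the factors $1/\phi(q_i)$, while $t_0^3 = \pi^{-3/2}\prod_p t_p^3(1-1/p)^{3/2}$ together with the missing Euler factor at $p=2$ supplies the remaining $2^{-3/2}$; combined these yield the leading constant $\gamma(\b d)/(2\pi)^{3/2}$.

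Finally I would collect the error terms: the truncation error from Lemma \ref{lem:bwv225hcab} (of size $X_1X_2X_3 z^{-1/2}(\log)^3$), the decoupling error from Lemma \ref{lem:bwv225hcab676764} (of size $X_1X_2X_3 w^{-1/3}$), the summed main-term error from Lemma \ref{lem:1overdiv} (of size $\prod X_i \cdot (\log\log 3\max|d_i|)^{3/2}/[(\prod \log X_i)^{1/2} \log\min X_i]$ after summing over $k \leq z$ using $\sum_k \mu^2(k)/k \ll \log z$), and the power-of-log error $\tau(d_1)\tau(d_2)\tau(d_3) \prod X_i / (\log\min X_i)^C$ after enlarging $C$ to absorb a few powers of $\log\max X_i$. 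For $B = B(C)$ and $w$ polynomial in $\log\min X_i$ chosen correctly, all four contributions are dominated by the two terms in the statement, completing the argument.
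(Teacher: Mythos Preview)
Your plan is essentially the paper's: truncate via Lemma~\ref{lem:bwv225hcab}, decouple via Lemma~\ref{lem:bwv225hcab676764} with $z=w=(\log\min_i X_i)^{B}$, apply Lemma~\ref{lem:1overdiv} to each inner sum over $n_i''$, Taylor-expand $(\log(X_i/n_i'))^{-1/2}$, extend $k,\b n'$ to infinity, and identify the Euler product with $\gamma(\b d)/(2\pi)^{3/2}$.

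There is one concrete slip in your Euler-product step. You claim that ``$n_1'n_2'n_3'$ is squarefree'' and that when $p\mid k$ ``exactly two of $n_1',n_2',n_3'$ must equal $p$''. Neither is true: after Lemma~\ref{lem:bwv225hcab676764} there is no squarefreeness on the $n_i'$; the only constraint at a prime $p\mid k$ is $v_p(n_1'n_2'n_3')\geq 2$, and since the weight is $1/\tau$ (not $\mu^2/\tau$) higher powers contribute. The paper accordingly computes the local sum at $p\mid k$ as
\[
c_p=\sum_{j\geq 2}\frac{1}{p^j}\sum_{\substack{a_1,a_2,a_3\geq 0\\ a_1+a_2+a_3=j\\ \gcd(p^{a_i},d_i)=1}}\prod_{i=1}^3\frac{1}{1+a_i}
=t_p^{\#\{i:\,p\nmid d_i\}}-1-\frac{\#\{i:\,p\nmid d_i\}}{2p},
\]
and only then does $\sum_{k\text{ odd}}\mu(k)\prod_{p\mid k}c_p/t_p^{\#\{i:p\nmid d_i\}}$ collapse to $\prod_{p\neq 2}t_p^{-\#\{i:p\nmid d_i\}}\bigl(1+\tfrac{\#\{i:p\nmid d_i\}}{2p}\bigr)$, which combined with $t_0^3/t_2^3\cdot\prod_{p\mid d_i}t_p^{-1}$ gives the local factors of $\gamma(\b d)/(2\pi)^{3/2}$ that you quote. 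If one literally keeps only the $(p,p,1)$ configurations the resulting local factor is $1-\tfrac{3}{4p^2 t_p^3}$, which is wrong. So your stated endpoint is correct, but the route you sketch to it is not; replace the enumeration by the full sum over $j\geq 2$ and the argument goes through exactly as in the paper.
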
\begin{proof}  Fix any $C>4$. We employ Lemmas~\ref{lem:bwv225hcab}-\ref{lem:bwv225hcab676764} with 
\[g_i(n)=\frac{\mathds 1(\gcd(n, d_i)=1)}{\tau(n)}, z=w=(\log \min X_i )^{3C}.\] This shows that the sum over $\b n $ in our lemma 
equals 
\[\sum_{\substack{ k\in \N \cap [1,z]  } } \mu(k)\hspace{-0,2cm}
 \sum_{\substack{ \b n' \in (\N\cap[1,w])^3, p\mid n_1'n_2'n_3' \Rightarrow p\mid k \\
k^2\mid n_1'n_2'n_3' ,\ 
\gcd(n_i',2d_i )=1 \forall i } } 
 \prod_{i=1}^3\frac{1}{\tau( n_i')} \Bigg(
\sum_{\substack{  n'' \in \N \cap [1, X_i/n'_i],  \gcd(n'',kd_i)=1
  \\   n'' \equiv a_i/n'_i \md {q_i} }}\frac{1}{\tau(n'')}
  \Bigg)\]
up to an error term of size
\[\ll \frac{X_1X_2X_3}{z^{1/2} }(\log (3\max X_i)  )^3+\frac{X_1X_2X_3}{z^{1/3} } 
  \ll \frac{X_1X_2X_3(\log (3\max X_i)  )^3}{(\log \min X_i )^{C} }  .\]
We now   apply Lemma~\ref{lem:1overdiv} to each of the sums over $n''$. When estimating the sum for $i=3$
 we introduce an error term originating in $\tau(d) x (\log x )^{-C}$ of Lemma~\ref{lem:1overdiv}. This contribution 
is  \[\ll \sum_{k\in \N} \sum_{\substack{ \b n' \in \N^3, p\mid n_1'n_2'n_3' \Rightarrow p\mid k \\
k^2\mid n_1'n_2'n_3'  } } 
\frac{X_1X_2}{n_1'n_2'} \frac{\tau(d_3 k )X_3}{n_3' (\log X_3)^C}
\ll 
\frac{X_1X_2X_3\tau(d_3   )}{(\log \min_i X_i)^C}
\sum_{k\in \N}\frac{ \tau(k ) }{k^2} \sum_{\substack{ m \in \N, k^2\mid m  \\ p\mid m \Rightarrow p\mid k    } } \frac{\tau(m)\tau_3(m) }{m}
.\] This is satisfactory since one can use $\tau(k) \tau(m)\tau_3(m) \ll (km)^{1/4}$ and Lemma~\ref{lem:bwv2abcfe764} with $\epsilon=1/4$.
Similar arguments provide the same error term contribution for $i\in \{1,2\}$. This process leaves us with  the main term 
\begin{align*}
\sum_{\substack{ k\in \N \cap [1,z] \\2\nmid k  } } 
 &\mu(k) 
\sum_{\substack{ \b n' \in (\N\cap[1,w])^3, p\mid n_1'n_2'n_3' \Rightarrow p\mid k \\
k^2\mid n_1'n_2'n_3' ,\ 
\gcd(n_i',d_i )=1 \forall i } } 
\frac{1}{\tau( n_1')\tau( n_2')\tau( n_3')} \times 
\\
&  \times 
\prod_{i=1}^3
\Bigg( 
  \frac{t_0 }{\phi(q_i)
(\prod_{p\mid 2kd_i} t_p)}
\frac{X_i/n'_i}{\sqrt{\log (X_i/n_i')} }
\l\{1+O\l(\frac{(\log \log 3 k|d_i|)^{3/2}}{\log (X_i/n'_i)}\r)\r\} 
 \Bigg),\end{align*} where we used the fact that since each  $q_i $ is even, the conditions $\gcd(n_i',2)=1$ 
are equivalent to   $2\nmid k $.
Note that  when $n'_i \leq w= (\log X_i)^{3C}$ one has 
\[
\frac{1}{\log(X_i/n_i')} =\frac{1}{(\log X_i)} \frac{1}{1-\frac{\log n_i'}{\log X_i}}
=\frac{1}{(\log X_i)} \l(1+O_C\l(\frac{\log n_i'}{\log X_i}\r) \r)
\] and similarly 
\beq
{eq:tlrsqrtlog}{
\frac{1}{\sqrt{\log (X_i/n'_i)}  }= \frac{1}{\sqrt{\log X_i}  } \l( 1+O_C\l(\frac{\log n_i ' }{\log X_i }\r)\r)  .}
Hence,  rearranging the order of summation  gives 
\[ 
\frac{t_0^3}{t_2^3}
\l\{ \prod_{i=1}^3 
\frac{X_i}{ \phi(q_i)
(\prod_{p\mid  d_i  } t_p)
\sqrt{\log X_i}
} \r\}\sum_{\substack{ k\in \N \cap [1,z]  \\2\nmid k } }  
\frac{ \mu(k) \xi(k ) }{\prod_{i=1}^3\l(\prod_{p\mid k,p\nmid d_i}t_p\r)} 
,\] where $\xi(k )  $ is given by 
\[ 
\sum_{\substack{ \b n' \in (\N\cap[1,w])^3, p\mid n_1'n_2'n_3' \Rightarrow p\mid k \\
k^2\mid n_1'n_2'n_3' ,\ 
\gcd(n_i',d_i )=1 \forall i } } 
 \l\{\prod_{i=1}^3
\frac{1 }{ \tau( n_i') n_i'}  
\r\}
 \l\{1
+O\l(\frac{ E_1   +E_2 }{\log  \min_i X_i}
\r)
 \r\} 
,\] where $E_1= \log (n_1'n_2'n_3')$ and  $E_2=(\log \log (3 k |d_1d_2d_3| ))^{3/2} $.

The contribution of $E_1$ towards the sum over $k$ is 
\[
\ll   
\frac{1}{\log  \min_i X_i}
\sum_{ k\in \N   }  \sum_{\substack{ n\in \N, k^2 \mid n \\  p\mid n \Rightarrow p\mid k  }}
\frac{\tau_3(n) \log n }{n}  
, \] which is satisfactory as can be seen by using $\tau_3(n) \log n \ll n^{1/4}$ and Lemma~\ref{lem:bwv2abcfe764} with $\epsilon=1/4$.

When both $k,d\in \N$ are 
 large enough,
we have  $ \log (k d) \leq 2\log \max\{k, d\},$ hence, $$(\log \log (k d) )^{3/2} \ll
(\log \log k)^{3/2}+(\log \log d)^{3/2}
\ll k^{1/4}
(\log \log d)^{3/2}
 .$$ Thus, 
the contribution of $E_2$ 
  towards the sum over $k$ is 
\[\ll 
\frac{(\log \log |d_1d_2d_3| )^{3/2} }{\log  \min_i X_i}
 \sum_{k\in \N }
k^{1/4} 
\sum_{\substack{ n\in \N, k^2 \mid n \\  p\mid n \Rightarrow p\mid k  }} \frac{\tau_3(n)   }{n} 
 ,\] which is satisfactory due to 
 $\tau_3(n) \ll n ^{1/8} $ and taking $\epsilon=3/8$ in 
Lemma~\ref{lem:bwv2abcfe764}.

We are now  left with estimating
 \[ 
\frac{t_0^3}{t_2^3}
\l\{ \prod_{i=1}^3 
\frac{1}{ 
(\prod_{p\mid  d_i  } t_p)
 } \r\}\sum_{\substack{ 1\leq k \leq z  \\2\nmid k } }  
\frac{ \mu(k) 
 }{\prod_{i=1}^3\l(\prod_{p\mid k,p\nmid d_i}t_p\r)} 
\sum_{\substack{ \b n' \in (\N\cap[1,w])^3, p\mid n_1'n_2'n_3' \Rightarrow p\mid k \\
k^2\mid n_1'n_2'n_3' ,\ 
\gcd(n_i',d_i )=1 \forall i } } 
 \l\{\prod_{i=1}^3
\frac{1 }{ \tau( n_i') n_i'}  
\r\}
.\]
Ignoring the conditions $k\leq z $ and $\max n_i'\leq w $ can be done 
at the cost of negligible error terms using arguments similar to the ones 
in the proof of Lemmas~\ref{lem:bwv225hcab}-\ref{lem:bwv225hcab676764}.
We obtain  \[  \sum_{\substack{ k\in \N \\2\nmid k  }} 
\frac{\mu(k)}{\prod_{i=1}^3 \l( \prod_{p\mid k, p\nmid d_i }t_p\r) }
 \sum_{\substack{ m  \in \N , k^2\mid m    \\  p\mid m \Rightarrow p\mid k   } }
 \frac{1}{m } 
\sum_{\substack{ \b n' \in \N^3,  m=n_1'n_2'n_3' \\ \gcd(n'_i ,d_i )=1 \forall i
   } }   \prod_{i=1}^3    \frac{1 }{ \tau(n'_i)}.\]   
The   sum over $m$ equals $\prod_{p\mid k } c_p ,$ where $c_p$ is given by 
\[
 \sum_{j=2}^\infty \frac{1}{p^j } 
\sum_{\substack{
\b a \in (\Z\cap [0,\infty) )^3  , a_1+a_2+a_3=j
\\\gcd(p^{a_i}, d_i)=1  \forall i }
}   \prod_{i=1}^3 \frac{1}{(1+a_i) }
=
t_p^{\#\{1\leq i \leq 3: p\nmid d_i \}}
-1-\frac{\#\{1\leq i \leq 3: p\nmid d_i \}}{2p}
,\]  hence, the sum over $k$ equals $$ \prod_{p\neq 2  } \l(1-\frac{c_p}{t_p^{\#\{1\leq i \leq 3: p\nmid d_i \}}} \r)=
 \prod_{p\neq 2  } 
 \frac{1}{t_p^{\#\{1\leq i \leq 3: p\nmid d_i \}}}  
\l(1+  \frac{\#\{1\leq i  \leq 3 : p\nmid d_i \}}{2 p} \r). $$
Thus, we obtain  the main term 
\begin{align*}
&
\l\{ \prod_{p\neq 2  } 
 \frac{1}{t_p^{\#\{1\leq i \leq 3: p\nmid d_i \}}}  
\l(1+  \frac{\#\{1\leq i  \leq 3 : p\nmid d_i \}}{2 p} \r)\r\}
\frac{t_0^3}{t_2^3}
\frac{1}{\prod_{p} t_p^{\#\{1\leq i \leq 3: p\mid d_i \}}} 
= 
\frac{\gamma(\b d ) }{(2\pi )^{3/2} } 
,\end{align*}
where we used that each $d_i $ is odd.  This completes the proof.
\end{proof}

\subsection{The main term in Theorem~\ref{thm:genguo}}
\label{s:maintermthrm}
Recall the definition of  $\c M_{\b b , \b m }$ in Lemma~\ref{lem:detectlem4}.
\begin{lemma}\label{maintermlemfg7}Under the assumptions of Theorem~\ref{thm:genguo} we have  
\[
\c M_{\b b , \b m }(\b X)
=
\frac{\beta(\b b , \b m )  }{(2\pi )^{3/2} }
\frac{c(\b b , \b m )}{4 }
\l\{
\prod_{i=1}^3 \frac{X_i }{ \sqrt{  \log X_i}   }  \r\}  
 +O\l(\frac{X_1X_2X_3 (\log \log 3\c B)^{3/2}}{(  \log X_1)^{5/2} }\r) 
.\]
\end{lemma}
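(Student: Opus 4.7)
The plan is to decompose $\c M_{\b b,\b m}(\b X)$ according to the $2$-adic valuations $\sigma_i = v_2(n_i)$ and the residue classes of the odd parts $u_i := n_i/2^{\sigma_i}$ modulo $8$; for each resulting pattern one applies Lemma~\ref{lem:1overdivmultiple} to deal with the remaining three-fold sum.

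\textbf{Reduction to residue classes mod $8$.} Since $\mu^2(n_1 n_2 n_3)$ forces the $n_i$ to be pairwise coprime and square-free, $\boldsymbol\sigma\in\{0,1\}^3$ with $\sigma_1+\sigma_2+\sigma_3\leq 1$. The coprimality hypotheses in \eqref{eq:sn4ws} further restrict $\boldsymbol\sigma$: if $2\mid m_{12}m_{13}m_{23}$ then only $\boldsymbol\sigma=\b 0$ is admissible, while in the opposite case $\sigma_i=1$ is permitted iff $2\nmid\gcd(b_j,b_k)$. Using the standard formula
\[
(a,b)_2 = (-1)^{\epsilon(u)\epsilon(v) + v_2(a)\omega(v) + v_2(b)\omega(u)},\qquad a = 2^{v_2(a)} u,\ b = 2^{v_2(b)} v,
\]
with $\epsilon(u) = (u-1)/2 \bmod 2$ and $\omega(u) = (u^2-1)/8 \bmod 2$, the Hilbert-symbol condition defining $\c M_{\b b,\b m}$ becomes a quadratic condition on $(u_1,u_2,u_3) \bmod 8$, cutting out a subset $\c R(\boldsymbol\sigma)\subset(\Z/8\Z)^{\times 3}$.

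\textbf{Applying Lemma~\ref{lem:1overdivmultiple}.} For each admissible pair $(\boldsymbol\sigma,\b a)$ with $\b a\in\c R(\boldsymbol\sigma)$, the pairwise coprimality of the $n_i$ gives $\tau((n_1n_2n_3)_{\rm odd}) = \tau(u_1)\tau(u_2)\tau(u_3)$, so one may invoke Lemma~\ref{lem:1overdivmultiple} with $q_i = 8$, $a_i$ the residue of $u_i$, and
\[
d_i = (m_{12}m_{13}m_{23})_{\mathrm{odd}}\cdot\gcd(b_j,b_k)_{\mathrm{odd}}\qquad(\{i,j,k\}=\{1,2,3\}).
\]
A direct Euler-product comparison between \eqref{defgambd} and \eqref{defbetabm} gives $\gamma(\b d)=\beta(\b b,\b m)$. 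Replacing $\log(X_i/2^{\sigma_i})$ by $\log X_i$ (admissible under \eqref{eq:should23}), factoring out $\prod_i X_i/\sqrt{\log X_i}$, and using $\phi(8)=4$ yields the leading term
\[
\frac{\beta(\b b,\b m)}{(2\pi)^{3/2}\cdot 64}\Biggl(\sum_{\boldsymbol\sigma\text{ adm.}}\frac{|\c R(\boldsymbol\sigma)|}{2^{\sigma_1+\sigma_2+\sigma_3}}\Biggr)\prod_{i=1}^3\frac{X_i}{\sqrt{\log X_i}},
\]
and an error which, upon taking the constant $C$ in the Lemma sufficiently large, matches the claimed $O(X_1X_2X_3(\log\log 3\c B)^{3/2}/(\log X_1)^{5/2})$.

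\textbf{The combinatorial identity and main obstacle.} It remains to establish
\[
\sum_{\boldsymbol\sigma\text{ admissible}}|\c R(\boldsymbol\sigma)|\big/2^{\sigma_1+\sigma_2+\sigma_3} = 16\,c(\b b,\b m).
\]
Inspection of the Hilbert-symbol formula shows that whenever any of the $v_2$-exponents appearing in it is non-zero (i.e.\ whenever $\boldsymbol\sigma\neq\b 0$, or $\boldsymbol\sigma=\b 0$ with $2\mid m_{12}m_{13}m_{23}$), the symbol genuinely depends on $(u_1,u_2,u_3)\bmod 8$ via the $\omega$-terms and $|\c R(\boldsymbol\sigma)| = 32$. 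In the remaining degenerate case $\boldsymbol\sigma=\b 0$ with all $m_{ij}$ odd, the symbol reduces to $(-1)^{\epsilon(A)\epsilon(B)}$ and factors through $(\Z/4\Z)^{\times 3}$; a short inclusion-exclusion on the two linear conditions $A\equiv 1 \bmod 4$ and $B \equiv 1 \bmod 4$ then gives $|\c R(\b 0)| = 48$. The identity now follows by case split: if $2\mid m_{12}m_{13}m_{23}$ the left-hand side is $32 = 16\cdot 2$, and otherwise it equals $48 + 16\cdot\#\{1\le i<j\le 3:2\nmid\gcd(b_i,b_j)\}$, matching both cases of the definition of $c(\b b,\b m)$. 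This case analysis is the main technical hurdle; all other steps are routine applications of already-proved lemmas.
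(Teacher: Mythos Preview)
Your proposal is correct and follows essentially the same route as the paper: decompose by the $2$-adic valuations $\sigma_i$, partition into residue classes modulo $8$ (respectively $4$), apply Lemma~\ref{lem:1overdivmultiple} with the $d_i$ encoding \eqref{eq:sn4ws}, identify $\gamma(\b d)=\beta(\b b,\b m)$, and finish by a case count on the residue sets. The only cosmetic difference is that the paper translates the $2$-adic solvability via Lemma~\ref{lem:qq22} (explicit linear congruences) and works modulo $4$ in the all-odd case, whereas you use the standard formula $(a,b)_2=(-1)^{\epsilon(u)\epsilon(v)+v_2(a)\omega(v)+v_2(b)\omega(u)}$ and work uniformly modulo $8$; your observation that the $w_i$ occur only linearly (hence the count is $32$ whenever some $v_2$ is odd) is a clean replacement for the paper's direct enumeration of $\c A,\c A'$, and your $|\c R(\b 0)|=48$ is exactly the paper's $\#\c A''=6$ lifted from $(\Z/4\Z)^{\times 3}$ to $(\Z/8\Z)^{\times 3}$.
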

\begin{proof}
We first assume that
 $2\mid m_{23}$.  By~\eqref{eq:sn4ws} we see that   $n_1n_2n_3 m_{12}m_{13}$ in $\c M_{\b b , \b m }$ 
is odd, hence,  Lemma~\ref{lem:qq22} gives 
$$\c M_{\b b , \b m }(\b X)=\sum_{ \b a \in \c A } 
\sum_{\substack{\b n \in \N^3, \eqref{eq:2dnilb}, \eqref{eq:sn4ws}  \\
\b n \equiv \b a \mod 8 }} 
 \frac{\mu^2(n_1n_2n_3)}{\tau(n_1n_2n_3 )} 
,$$where   $\c A:= \l\{ \b a \in  ((\Z/8\Z)^*)^3 :  a_2    m_{13} =      a_3   m_{12}   \ \  \text{ or }  \  \ 
a_1    m_{23} +a_2    m_{13}     =     a_3   m_{12}\r\} $.
Note that the condition $\b n \equiv \b a \md 8$ 
implies that each $n_i $ is odd, thus, we can equivalently express~\eqref{eq:sn4ws}
as $\gcd(n_i,d_i)=1$, where  $d_i$ are respectively the odd parts of 
$$
m_{12} m_{13}  m_{23}    \gcd(   b_2,b_3), \,\,
 m_{12} m_{13}  m_{23}    \gcd(   b_1,b_3), \,\, m_{12} m_{13}  m_{23}    \gcd(   b_1,b_3) 
.$$ With these values of $d_i$ and with  $q_1=q_2=q_3=8$ 
we apply     Lemma~\ref{lem:1overdivmultiple} to infer that in the setting of Theorem~\ref{thm:genguo} one has 
\begin{align*}  
\c M_{\b b , \b m }(\b X)
&
=
\frac{\#\c A}{4^3}
\frac{\beta(\b b , \b m )  }{(2\pi )^{3/2} }
\l\{
\prod_{i=1}^3 \frac{X_i }{ \sqrt{  \log X_i}   }  \r\}  
 +O\l(\frac{X_1X_2X_3 (\log \log 3\c B)^{3/2}}{(\prod_{i=1}^3 \log X_i)^{1/2}\log \min X_i }\r) 
\\&
+O\l(\frac{X_1X_2X_3\tau(d_1   )\tau(d_2   )\tau(d_3   )
(\log (3\max X_i)  )^2}{(\log \min  X_i)^C} \r),\end{align*}
with an implied constant depending at most on $C$.
By~\eqref{eq:should23}
we can replace each $\min X_i$ and $\max X_i$ with $X_1$ by allowing the implied constants to depend on $\eta$.
We have $\tau(d_i) \leq d_i\leq (\max_i \log  X_i)^{5A} $ by~\eqref{eq:should}, 
hence, taking $C$ large enough compared to $A$  leads to an error term that agrees with the one claimed in Theorem~\ref{thm:genguo}.
The proof in the case $2\mid m_{23}$ concludes by noting that  $\c A$ has  $ 32$ elements.
The proof in the case $2\mid m_{12}m_{13}$ is   similar; in each such case  one obtains a set $\c A$ that has $ 32$ elements again, therefore, 
the resulting leading constants is the same. 

It remains to consider the case  $2\nmid m_{12}m_{13}m_{23}$. Using    that 
$n_1n_2n_3 m_{12}m_{13}m_{23}$ is square-free, we can write  $\c M_{\b b , \b m }(\b X) $
as $\sum_{i=0}^3 \c M_i$, where
\[\c M_0:=\sum_{\substack{\b n \in \N^3, \eqref{eq:2dnilb}, \eqref{eq:sn4ws}, 2\nmid n_1n_2n_3   \\
(n_1   n_3   m_{12}  m_{23}  ,n_2    n_3  m_{13} m_{12} )_2 =1 }} 
 \frac{\mu^2(n_1n_2n_3)}{\tau(n_1n_2n_3) } 
\] and \[
\c M_i
:=\sum_{\substack{\b n \in \N^3, \eqref{eq:2dnilb}, \eqref{eq:sn4ws}, 2\mid n_i   \\
(n_1   n_3   m_{12}  m_{23}  ,n_2    n_3  m_{13} m_{12} )_2 =1 }} 
 \frac{\mu^2(n_1n_2n_3)}{\tau(n_1n_2n_3/2) } 
, \ \ \ \ \ (1\leq i \leq 3 ) .\] 
Note that $\c M_1$ is non-zero only when $2\nmid \gcd(b_2,b_3)$ (analogously for $\c M_2$ and $\c M_3$).
Hence  $$\c M_1=\mathds 1(2\nmid \gcd(b_2,b_3) )
\sum_{\substack{\b n  \in \N^3, n_1 \leq X_1/2,n_2\leq X_2, n_3\leq X_3, 
\eqref{eq:sn4ws}, 2\nmid n_1n_2n_3  \\
(2n_1   n_3   m_{12}  m_{23}  ,n_2    n_3  m_{13} m_{12} )_2 =1 }} 
 \frac{\mu^2(n_1n_2n_3)}{\tau(n_1n_2n_3) } .$$ By Lemma~\ref{lem:qq22} the sum over $ \b n $ equals 
  \[ \sum_{\b a \in \c A' }
\sum_{\substack{  n_1 \leq X_1/2,n_2\leq X_2, n_3\leq X_3, \\
\eqref{eq:sn4ws},  
\b n \equiv \b a \mod 8 }} 
 \frac{\mu^2(n_1n_2n_3)}{\tau(n_1n_2n_3) }
,\] where $\c A':= \l\{ \b a \in  ((\Z/8\Z)^*)^3 :  a_2    m_{13} =      a_3   m_{12}   \ \  \text{ or }  \  \ 
2a_1    m_{23} +a_2    m_{13}     =     a_3   m_{12}\r\} .$
Invoking  Lemma~\ref{lem:1overdivmultiple} as above leads to  
\[
\frac{\#\c A'}{4^3}
\frac{\beta(\b b , \b m )  }{(2\pi )^{3/2} }
\l\{\frac{1}{2} 
\prod_{i=1}^3 \frac{X_i }{ \sqrt{  \log X_i}   }  \r\}  
 + O_{A,\eta}
  \l ( \frac{X_1 X_2 X_3 (\log \log \c B )^{3/2}}{ (\log  X_1)^{5/2} }\r ) .\]
One checks that $\#\c A'=32$. A similar argument deals with $\c M_2$ and $\c M_3$, resulting in
$$\sum_{i=1}^3\c M_i=
\frac{\#\{  i< j : 2\nmid (b_i,b_j)\}  }{4 }
\frac{\beta(\b b , \b m )  }{(2\pi )^{3/2} }
\l\{ 
\prod_{i=1}^3 \frac{X_i }{ \sqrt{  \log X_i}   }  \r\}  
 + O 
  \l ( \frac{X_1 X_2 X_3 (\log \log X_1 )^{3/2}}{ (\log  X_1)^{5/2} }\r ) 
.$$ Finally, following a facsimile argument   shows that  
$\c M_0$ equals
\[ 
\frac{\#\c A''}{2^3}
\frac{\beta(\b b , \b m ) }{(2\pi )^{3/2} }
\l\{ 
\prod_{i=1}^3 \frac{X_i }{ \sqrt{  \log X_i}   }  \r\}  
 + O   \l ( \frac{X_1 X_2 X_3 (\log \log \c B )^{3/2}}{ (\log  X_1)^{5/2} }\r ) ,\]
where 
$$\c A'':= \l\{ \b a \in  ((\Z/4\Z)^*)^3 :  
a_2    m_{13} =      a_3   m_{12}   \ \  \text{ or }  \  \ 
a_1    m_{23} =      a_3   m_{12}   \ \  \text{ or }  \  \ 
a_1    m_{23} +a_2    m_{13}   =0
\r\} .$$ This set has $6$ elements, an observation that concludes the proof of  Lemma~\ref{maintermlemfg7}.
\end{proof}

\subsection{The error term in Theorem~\ref{thm:genguo}: large conductors}
 \label{s:largecond} Recall the function $\c E_{\b b , \b m }(\b X)$ defined 
in Lemma~\ref{lem:detectlem4}. Our primary focus in this section will be to bound 
this function. The variables $\b d $ and $\b h $  may have an adversely large size for the subsequent analytic arguments, thus
we start by using  the (necessarily   small)  ``dual'' variables $\widetilde{\b d}, \widetilde{\b h }$.
In particular,   let  $  \sigma_{ij}=v_2( m_{ij} )$ and substitute 
  $m_{ij}=2^{\sigma_{ij} } h_{ij } \widetilde{h_{ij}}$ to
obtain
\begin{align*}
\l( \frac{2^{\sigma_2+\sigma_3}d_2 \widetilde{ d_2} d_3 \widetilde{ d_3}    m_{12}  m_{13}   }{d_1 h_{23} }\r) 
=& 
 \prod_{i=2,3} 
\l( \frac{ d_i   }{d_1   }\r) 
\l( \frac{ \widetilde{ d_i}    }{d_1   }\r) 
\l( \frac{ h_{1i }    }{d_1   }\r) 
\l( \frac{ \widetilde{ h_{1i }    }    }{d_1   }\r) 
\\
\times  
& \l( \frac{ 2^{\sigma_2+\sigma_3 + \sigma_{12}  +\sigma_{13}  } } {h_{23}    }\r)  
\prod_{i=2,3} 
\l( \frac{ d_i   }{h_{23}     }\r) 
\l( \frac{ \widetilde{ d_i}    }{h_{23}   }\r) 
\l( \frac{ h_{1i }    }{h_{23}    }\r) 
\l( \frac{ \widetilde{ h_{1i }    }    }{h_{23}    }\r) 
.\end{align*} We can similarly obtain analogous expressions for the other Jacobi symbols in the right hand side of the equation  
in Lemma~\ref{lem:detectlem4}.
Putting together these expressions will give various terms of the form $(\frac{n}{m})(\frac{m}{n})$
where $n, m$   are  odd positive integers. Using the  reciprocity law these terms can then be written as $(-1)^{\frac{(n-1)(m-1)  }{4}} $
and one will thus get the following expression for the 
product of the   Jacobi symbols in the definition of $\c E_{\b b , \b m }$:
\begin{align*}
(-1)^{\frac{G(\b h ) + G_{\b h }(\b d )  }{4}}
& \l(\frac{2}{d_1 h_{23} } \r)^{\sigma-\sigma_1-\sigma_{23} }
\l(\frac{2}{d_2 h_{13} } \r)^{\sigma-\sigma_2-\sigma_{13} }
\l(\frac{2}{d_3 h_{12} } \r)^{\sigma-\sigma_3-\sigma_{12} }
\\ 
&
\times \l( \frac{  \widetilde{d_2}   \widetilde{d_3}  \widetilde{ h_{12} }   \widetilde{  h_{13}  }  }{d_1 h_{23} }\r) 
\l( \frac{  \widetilde{d_1}   \widetilde{d_3}  \widetilde{ h_{12} }   \widetilde{  h_{23}  }  }{d_2 h_{13} }\r) 
\l( \frac{ - \widetilde{d_1}   \widetilde{d_2}  \widetilde{ h_{23} }   \widetilde{  h_{13}  }  }{d_3 h_{12} }\r) 
,\end{align*}    where 
$\sigma= \sigma_ 1+   \sigma_2 +   \sigma_3 +   \sigma_{12}  +   \sigma_ {13} +   \sigma_{23}   $ and 
\begin{align*}
G(\b h ) := &  (h_{12}-1) (h_{13}-1)  +   (h_{12}-1) (h_{23}-1)  +    (h_{13}-1) (h_{23}-1)  ,  
\\   
G_{\b h }(\b d )   := & 
 (d_1 -1 )(d_2 -1 ) +(d_1 -1 )(d_3 -1 ) +(d_2 -1 )(d_3 -1 )
\\+ &  (d_1 -1 )(h_{12}  -1 ) +(d_1 -1 )(h_{13}  -1 )  
\\+ &  (d_2 -1 )(h_{12}  -1 )   +(d_2 -1 )(h_{23}  -1 )
\\+ &  (d_3 -1 )(h_{13}  -1 )   +(d_3 -1 )(h_{23}  -1 ) . \end{align*}  
Injecting this into Lemma~\ref{lem:abmdbvl55}  and then into $\mathcal{E}_{  \b b, \b m}( \b X) $ gives the following result.
\begin{lemma}\label{lem:pirates} 
For  all $X _1,X_2,X_3\geq 2 $    and $\b b , \b m $ as in Theorem~\ref{thm:genguo},
the quantity
  $\mathcal{E}_{  \b b, \b m}( \b X) $ equals  \[   \sum_{\substack{(\sigma_i)\in \{0,1\}^3 \\ \eqref{eq:bwv228}}}
\hspace{-0,2cm}
  \sum_{\substack{ \b h , \widetilde{\b h} \in \N^3 \\ \forall i< j :h_{ij } \widetilde{h_{ij}}= (m_{ij})_{\rm{odd}} }} 
\hspace{-0,5cm}
  (-1)^{\frac{G(\b h )}{4} } \c {E}_{  \b b,\b h ,  \widetilde{\b h } }( \b X )    
\l(\frac{2}{  h_{23} } \r)^{\sigma-\sigma_1-\sigma_{23} } \l(\frac{2}{  h_{13} } \r)^{\sigma-\sigma_2-\sigma_{13} }
\l(\frac{2}{  h_{12} } \r)^{\sigma-\sigma_3-\sigma_{12} } \!\! ,\] where   
\begin{align*}\mathcal{E}_{  \b b, \b h ,\widetilde{\b h }  }( \b X )  =  \sum_{\substack{ \b d, \widetilde{\b d}  \in   \N^3 \\
  \eqref {eq:Blnbc93fdty}, 
 \eqref{eq:degh93vxz}
}}
&\frac
{
 (-1)^{\frac{G_{\b h }(\b d)  }{4} }
 }{
\tau(  d_1 \widetilde{d_1}   d_2  \widetilde{d_2} d_3 \widetilde{d_3}  )} 
 \l(\frac{2}{  d_1 } \r)^{\sigma-\sigma_1-\sigma_{23} } \l(\frac{2}{  d_2 } \r)^{\sigma-\sigma_2-\sigma_{13} }
\l(\frac{2}{  d_3 } \r)^{\sigma-\sigma_3-\sigma_{12} }
 \\  &\times
  \l(\frac{          \widetilde{d_2 }   \widetilde{d_3}  \widetilde{h_{12} }    \widetilde{ h_{13} }    }{d_1   h_{23}  }\r) 
    \l(\frac{             \widetilde{d_1}   \widetilde{d_3}  \widetilde{h_{12} }      \widetilde{h_{23} }   }{d_2   h_{13}  }\r) 
\l(\frac{    -     \widetilde{d_1}  \widetilde{d_2 }  \widetilde{h_{23} }  \widetilde{ h_{13} }    }{d_3  h_{12} }\r)  
,\end{align*}
 with    
  \begin{equation}\label {eq:Blnbc93fdty}
  d_1     \widetilde{d_1}     \leq  X_1/2^{\sigma_1}  ,
  d_2  \widetilde{d_2}    \leq   X_2/2^{\sigma_2}  ,
  d_3  \widetilde{d_3}    \leq   X_3/2^{\sigma_3}  ,\end{equation} and 
  \begin{equation}
\label{eq:degh93vxz}  
\begin{cases} 
d_1 d_2 d_3  h_{12}h_{13}h_{23} \neq 1, \,\, \widetilde d_1  \widetilde d_2 \widetilde d_3
\widetilde {h_{12}}\widetilde {h_{13}}\widetilde {h_{23}} \neq 1
\\
\mu^2( d_1 \widetilde{d_1}   d_2 \widetilde{d_2}     d_3 \widetilde{d_3}    )  =1,  \,\,
 2\nmid  d_1 \widetilde{d_1}   d_2 \widetilde{d_2}     d_3 \widetilde{d_3}   ,    \\
\gcd( d_1 \widetilde{d_1} d_2 \widetilde{d_2} d_3 \widetilde{d_3} , 
        h_{12}\widetilde{h_{12}} h_{13} \widetilde{h_{13}} h_{23} \widetilde{h_{23}})
=1,
\\   \gcd( d_1 \widetilde{d_1}  , b_2, b_3) 
= \gcd( 
 d_2 \widetilde{d_2}  , b_1, b_3)
= \gcd( 
  d_3 \widetilde{d_3}  
 , b_1, b_2) =1 . \end{cases}
 \end{equation} \end{lemma}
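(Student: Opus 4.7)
The plan is to start from the expression for $\c E_{\b b,\b m}(\b X)$ given in Lemma~\ref{lem:detectlem4} and transform it by (i) substituting $m_{ij} = 2^{\sigma_{ij}} h_{ij}\widetilde{h_{ij}}$ with $\sigma_{ij} = v_2(m_{ij})$ inside every Jacobi symbol, (ii) using multiplicativity of the Jacobi symbol to split each symbol into a product of symbols whose numerators and denominators are single variables from the set $\{2^{\sigma_*}, d_i, \widetilde{d_i}, h_{jk}, \widetilde{h_{jk}}\}$, and (iii) applying quadratic reciprocity to move every ``non-tilde'' variable ($d_i$ or $h_{jk}$) into the numerator of the final Jacobi symbols, leaving the tilde variables in the numerators and the non-tilde variables $d_1,d_2,d_3,h_{12},h_{13},h_{23}$ in the denominators, exactly as displayed in the statement.

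Each application of reciprocity $\legendre{x}{y}\legendre{y}{x} = (-1)^{(x-1)(y-1)/4}$ (valid since all relevant variables are odd positive integers by \eqref{eq:commonivy1}-\eqref{eq:commonivy2}) contributes a sign whose exponent is a product $(x-1)(y-1)/4$. Careful bookkeeping shows that the exponents involving only the $h_{jk}$'s assemble into $G(\b h)/4$, while the mixed $d$-$d$ and $d$-$h$ contributions assemble into $G_{\b h}(\b d)/4$. The factor $\legendre{-1}{d_3 m_{12}}$ present in Lemma~\ref{lem:detectlem4}  is absorbed into the sign of the third Jacobi symbol, producing the displayed $-\widetilde{d_1}\widetilde{d_2}\widetilde{h_{23}}\widetilde{h_{13}}$ in its numerator. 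Jacobi symbols of the form $\legendre{2^a}{b}$ are grouped separately; summing the exponents of $2$ entering the symbol with denominator $d_i$ or $h_{jk}$ yields the claimed exponent $\sigma - \sigma_i - \sigma_{jk}$, where $\sigma := \sigma_1+\sigma_2+\sigma_3+\sigma_{12}+\sigma_{13}+\sigma_{23}$. The coprimality conditions collected in \eqref{eq:degh93vxz} follow directly from \eqref{eq:commonivy1}-\eqref{eq:commonivy2} on substituting the factorisation of $m_{ij}$, noting that the odd-square-free requirement on $d_1\widetilde{d_1}d_2\widetilde{d_2}d_3\widetilde{d_3}$ and its coprimality with $\prod h_{ij}\widetilde{h_{ij}}$ are inherited from $\mu^2(2d_1d_2d_3\widetilde{d_1}\widetilde{d_2}\widetilde{d_3})$ together with the hypothesis $\gcd(n_1n_2n_3, m_{12}m_{13}m_{23}) = 1$.

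The main obstacle is purely the bookkeeping: one must correctly track roughly two dozen sub-Jacobi-symbols through reciprocity and verify that the accumulated signs collapse to exactly $G(\b h) + G_{\b h}(\b d)$ modulo $8$. To keep this manageable, I would first fix the target form and work backwards, expanding the six reciprocity products $(d_i,d_j)$, $(d_i,h_{jk})$, and $(h_{12},h_{13}), (h_{12},h_{23}), (h_{13},h_{23})$ on paper, then match them coefficient by coefficient against the monomials in the two $G$-polynomials. The remaining contributions (purely from tilde-to-non-tilde symbols) survive unchanged in the final expression and form the three displayed Jacobi symbols in the definition of $\c E_{\b b,\b h,\widetilde{\b h}}(\b X)$.
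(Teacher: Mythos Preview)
Your proposal is correct and follows essentially the same route as the paper. The paper's argument, laid out in the paragraph preceding the lemma, proceeds exactly by substituting $m_{ij}=2^{\sigma_{ij}}h_{ij}\widetilde{h_{ij}}$, splitting each Jacobi symbol by multiplicativity, and then observing that the non-tilde numerator variables pair off against the non-tilde denominators of the other two symbols so that each product $\legendre{x}{y}\legendre{y}{x}$ collapses to $(-1)^{(x-1)(y-1)/4}$; the accumulated signs are precisely $G(\b h)+G_{\b h}(\b d)$, and the powers of $2$ regroup to the exponents $\sigma-\sigma_i-\sigma_{jk}$. One minor wording issue: your sentence about ``moving every non-tilde variable into the numerator'' is slightly garbled---what actually happens is that the non-tilde numerator contributions cancel (up to sign) against matching non-tilde denominators in the companion symbols, so that only the tilde-over-non-tilde symbols survive; but your subsequent description makes clear you have the right picture.
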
\begin{remark}\label{remrk:strgh8}
 The sums $\c E_{\b b , \b h , \widetilde {\b h } } $ in Lemma~\ref{lem:pirates} 
may be thought as \[
\sum_{\substack{ \b d , \widetilde{\b d } \in \N^3   \\ \forall i:  d_i \widetilde {d_i } \leq X_i   }}
\l( \frac{\widetilde {d_2 }  \widetilde {d_3 } }{d_1 } \r) 
\l( \frac{\widetilde {d_1 } \widetilde {d_3 } }{d_2 } \r) 
\l( \frac{\widetilde {d_1 } \widetilde {d_2 }  }{d_3 } \r) , \] where our variables in the summation are square-free, odd and coprime in pairs.
If each  $d_i $ is  small, 
 one has  averages of quadratic characters  of small conductor. We will handle this       directly with Selberg--Delange estimates in \S\ref{s:smalcond};
the same approach applies if  each  $\widetilde{ d_i }$ is small instead.  The remaining cases occur when both 
$ d_i$ and $ \widetilde {d_j}  $  run over long intervals 
for some $i\neq j $. In this case the Jacobi symbol $(\frac{\widetilde {d_j} }{d_i})$
  in the sum above will give rise to   cancellation. To make this precise we use the following bound for bilinear sums
taken from the work of Friedlander--Iwaniec~\cite[Lemma 2]{MR2675875},
(see also  \cite{MR97362}, \cite[Corollary 4]{MR1347489} and \cite[\S 21]{MR1670065}): for all 
$M, N > 1 $ and   all sequences $a_m, b_n \in \mathbb C$ of modulus at most $1$   we have 
\beq{eq:bilinear}
{\sum_{\substack{ 1\leq m\leq M 
\\
1\leq n \leq N 
}} a_m b_n \mu(2 mn)^2 \left( \frac{m }{n } \right) 
\ll  (MN^{5/6}+M^{5/6} N) (\log  MN)^{2} ,}
where the implied constant is absolute.
\end{remark}

\begin{lemma}\label{lem:passacaglia_koopman2} Keep the setting of Lemma~\ref{lem:pirates} and  fix any $C> 0 $.
The contribution towards 
$\mathcal{E}_{\b b, \mathbf{h}, \widetilde{\mathbf{h}}}(\b X)$ of those $\b d,\widetilde{\b d}$ for which  
$ \min\{d_i, \widetilde{d_j} \}>\min_{i=1,2,3} (\log  X_i)^{100 C}$ holds  for  some    $i\neq j$   
is  bounded by  $O_C  ( X_1X_2X_3 (\log X_1)^{4-10C} ) $,
where the implied constant depends at most on $C$ and $\eta$.
\end{lemma}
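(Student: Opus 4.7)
The plan is to exploit the Friedlander--Iwaniec bilinear Jacobi symbol estimate \eqref{eq:bilinear}, as flagged in Remark~\ref{remrk:strgh8}. Write $L = \min_i (\log X_i)^{100C}$. By a union bound over the six ordered pairs $(i,j)$ with $i \neq j$, it suffices to bound the contribution of those $\b d, \widetilde{\b d}$ with $d_i > L$ and $\widetilde{d_j} > L$ for a single such pair; by symmetry among the indices $1,2,3$ and the roles of $d$ versus $\widetilde d$, I will treat $(i,j) = (1,2)$ in detail, the other five cases being handled by relabelling since each pair $(\widetilde d_j/d_i)$ appears as a subfactor of exactly one of the three Jacobi symbols defining $\c E_{\b b, \b h, \widetilde{\b h}}$.

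The crucial structural observation is that \eqref{eq:degh93vxz} forces $d_1, \widetilde{d_1}, d_2, \widetilde{d_2}, d_3, \widetilde{d_3}$ to be pairwise coprime and odd. Consequently $\tau$ of their product factorises completely, and the only subfactor coupling $d_1$ and $\widetilde{d_2}$ inside the three Jacobi symbols is $(\widetilde{d_2}/d_1)$, extracted from $(\widetilde{d_2}\widetilde{d_3}\widetilde{h_{12}}\widetilde{h_{13}}/(d_1 h_{23}))$ by multiplicativity. After fixing the outer variables $\widetilde{d_1}, d_2, d_3, \widetilde{d_3}, \b h, \widetilde{\b h}, \b\sigma$, the summand reduces to $c \cdot a_{d_1} b_{\widetilde{d_2}} (\widetilde{d_2}/d_1)$ with $c$ constant in $(d_1,\widetilde{d_2})$ and $|a_{d_1}|, |b_{\widetilde{d_2}}| \leq 1$.

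Applying \eqref{eq:bilinear} to the inner sum over $d_1 \leq X_1/(2^{\sigma_1}\widetilde{d_1})$ and $\widetilde{d_2} \leq X_2/(2^{\sigma_2} d_2)$ (with the restrictions $d_1, \widetilde{d_2} > L$ absorbed by zeroing the coefficients outside those ranges) produces
\[
\ll \left(\frac{X_1 X_2^{5/6}}{\widetilde{d_1}\, d_2^{5/6}} + \frac{X_1^{5/6} X_2}{\widetilde{d_1}^{5/6}\, d_2}\right)(\log X_1)^2.
\]
The restriction $d_1 > L$ forces $\widetilde{d_1} \leq X_1/L$, and $\widetilde{d_2} > L$ forces $d_2 \leq X_2/L$; hence $\sum_{\widetilde{d_1} \leq X_1/L} 1/\widetilde{d_1} \ll \log X_1$ and $\sum_{d_2 \leq X_2/L} d_2^{-5/6} \ll (X_2/L)^{1/6}$, with symmetric estimates for the second term. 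The trivial bound $\sum_{d_3\widetilde{d_3} \leq X_3} 1 \ll X_3 \log X_3$ handles the third coordinate, and the outer sums over $\b\sigma, \b h, \widetilde{\b h}$ contribute only a bounded number of terms thanks to \eqref{eq:should}. Assembling everything yields a total bound
\[
\ll X_1 X_2 X_3 (\log X_1)^4 \cdot L^{-1/6} = X_1 X_2 X_3 (\log X_1)^{4 - 50C/3},
\]
which is majorised by $X_1 X_2 X_3 (\log X_1)^{4-10C}$ since $50/3 > 10$.

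The main obstacle is the bookkeeping in the separation of variables: one has to verify carefully that, after splitting each of the three Jacobi symbols through multiplicativity, the only genuinely bilinear term in $(d_1, \widetilde{d_2})$ is $(\widetilde{d_2}/d_1)$ and everything else can be absorbed into modulus-$1$ coefficients $a_{d_1}, b_{\widetilde{d_2}}$ or into the constant $c$. Pairwise coprimality coming from the $\mu^2$ constraint is indispensable here, both for factorising $\tau$ and for making the Jacobi symbols decompose cleanly; without it the Friedlander--Iwaniec estimate could not be applied.
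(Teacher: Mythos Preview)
Your proposal is correct and follows essentially the same approach as the paper's proof: isolate the two ``large'' variables, apply the Friedlander--Iwaniec bilinear estimate \eqref{eq:bilinear} to the resulting inner sum, and then sum trivially over the complementary outer variables using the divisor bound on $d_3\widetilde{d_3}\leq X_3$. The paper works with the symmetric case $(i,j)=(2,1)$ and takes $(\widetilde{d_1},d_2)$ as the inner bilinear variables, whereas you take $(i,j)=(1,2)$ with $(d_1,\widetilde{d_2})$ inner, but the two are interchangeable and the arithmetic (ending at an exponent $4-100C/6$, well below the claimed $4-10C$) is identical.

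Two small remarks. First, $\b h,\widetilde{\b h},\b\sigma$ are already fixed in the definition of $\mathcal{E}_{\b b,\b h,\widetilde{\b h}}(\b X)$, so there is no outer sum over them inside this lemma; your closing sentence about them contributing boundedly many terms is superfluous here. Second, neither you nor the paper makes explicit that the $2$-adic Hilbert-symbol condition in \eqref{eq:commonivy1} couples the two inner variables through their residues modulo $8$; strictly speaking one should first split into residue classes modulo $8$ (an absolute constant number of cases) before the coefficients become genuinely of the form $a_{d_1}b_{\widetilde{d_2}}$. This is routine and both proofs absorb it implicitly, but it is worth noting as part of the ``bookkeeping'' you allude to.
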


\begin{proof}Let $Z=\min_{i=1,2,3} \log X_i$.
By symmetry we can assume $(i,j)=(2,1)$. Using 
$d_2>Z^{100 C}  $ and~\eqref {eq:Blnbc93fdty}
we see that 
$   \widetilde{d_2}   \leq X_2 Z^{-100 C} $. Similarly   one has 
$   d_1 \leq X_1 Z^{-100 C} $. Thus, the contribution is \[ 
 \ll    \sum_{\substack{     d_1   \leq X_1 Z^{-100 C}    \\ \widetilde{d_2}   \leq X_2 Z^{-100 C} \\  d_3    \widetilde{d_3} \leq X_3  }}  
   \left \vert  \sum_{\substack{      \widetilde{d_1}     \leq     X_1/(2^{\sigma_1  } d_1   ),   \\  
  d_2     \leq     X_2/( 2^{\sigma_2  }\widetilde{d_2}   ) }} 
 a(   \widetilde{d_1}  )  b(d_2)
\mu^2(  \widetilde{d_1}    )    \mu^2(2 \widetilde{d_1}   d_2     )
 \l(\frac{               \widetilde{d_1}     }{d_2     }\r) 
\r \vert 
, \] 
where $a, b $ are     products  of factors 
of the form  
  $(-1)^{G_{\b h }(\cdot ) /4 } , (2/\cdot )$, and $1/ \tau(\cdot ) $,
multiplied by  indicator functions  of the coprimality conditions inherited from~\eqref{eq:degh93vxz}.
By~\eqref{eq:bilinear} we get the bound 
\begin{align*} & \ll    \sum_{\substack{     d_1   \leq X_1 Z^{-100 C}    \\ \widetilde{d_2}   \leq X_2 Z^{-100 C} \\  d_3    \widetilde{d_3} \leq X_3  }}  
\l(  \frac{ X_1}{ d_1}      \l(  \frac{ X  _2   }{    \widetilde{d_2}  }      \r)^{\frac{5}{6} }  +\l( \frac{ X _1}{ d_1 }         \r)^{\frac{5}{6} }  \frac{ X_2}
{  \widetilde{d_2} }      \r) (\log  X_1X_2)^2\\&\ll X_1X_2X_3 (\log  X_1X_2X_3)^4 Z^{-100C/6}  ,\end{align*}
owing to  $\#\{d_3  ,  \widetilde{d_3}  \in \N:   d_3    \widetilde{d_3} \leq X_3 \} = \sum_{d \leq X_3} \tau(d) \ll  X_3 \log  X_3 $. 
This is sufficient due to~\eqref{eq:should23}.
\end{proof}

 \begin{lemma}
\label{lem:passacaglia bwv 582}Keep the setting of Lemma~\ref{lem:pirates} and  fix any $C> 0 $.
The contribution towards 
$\mathcal{E}_{\b b, \mathbf{h}, \widetilde{\mathbf{h}}}(\b X)$  
of those $\b d,\widetilde{\b d}$ 
for which $ \max\{ d_i, \widetilde{d_j}   \} \leq \min_{i=1,2,3} (\log  X_i)^{100 C}
$ holds  for  some    $i\neq j$    is  bounded by  
$O_C( X_1X_2X_3  (\log X_1 )^4 \min_{i=1,2,3}X_i^{-1/10})$,  
where the implied constant depends at most on $C$ and $\eta$. 
\end{lemma}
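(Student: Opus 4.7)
The plan is to exploit the fact that when both $d_i$ and $\widetilde{d_j}$ are small, the constraints $d_k\widetilde{d_k}\leq X_k$ force the complementary variables $\widetilde{d_i}$ and $d_j$ to occupy long intervals, creating a bilinear structure amenable to \eqref{eq:bilinear}. By the symmetry of how the six variables $d_1, d_2, d_3, \widetilde{d_1}, \widetilde{d_2}, \widetilde{d_3}$ enter the Jacobi symbols of $\mathcal{E}_{\b b, \b h, \widetilde{\b h}}$ in Lemma~\ref{lem:pirates}, it suffices to treat the case $(i,j) = (1,2)$, so that $d_1, \widetilde{d_2}\leq Z^{100C}$ with $Z := \min_i \log X_i$.

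First, I would freeze $d_1, \widetilde{d_2}, d_3, \widetilde{d_3}$, the $h$-variables, the $\sigma$'s, and the residue classes of $\widetilde{d_1}$ and $d_2$ modulo $8$. With this data fixed, the factors $(-1)^{G_{\b h}(\b d)/4}$ and the $(2/d_k)$'s appearing in the summand become constants, since they depend only on residues of the $d_k$'s modulo $8$. The squarefree condition $\mu^2(\cdot)=1$ forces pairwise coprimality, which makes $\tau(d_1 \widetilde{d_1} \cdots \widetilde{d_3})$ factor multiplicatively, and multiplicativity of the Jacobi symbol extracts from the middle factor $\l(\widetilde{d_1}\widetilde{d_3}\widetilde{h_{12}}\widetilde{h_{23}}/d_2 h_{13}\r)$ the bilinear kernel $(\widetilde{d_1}/d_2)$. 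All remaining dependences on $\widetilde{d_1}$ (through Jacobi symbols such as $(\widetilde{d_1}/d_3 h_{12})$ and $(\widetilde{d_1}/h_{13})$) and on $d_2$ (through $(\widetilde{d_3}/d_2)$, $(\widetilde{h_{12}}/d_2)$, etc.) are to fixed moduli and get absorbed into coefficient sequences $a_{\widetilde{d_1}}, b_{d_2}$ of modulus at most one, with the cross-coprimality $\gcd(\widetilde{d_1}, d_2) = 1$ carried by the $\mu^2(2\widetilde{d_1}d_2)$ factor required by \eqref{eq:bilinear}. Applying \eqref{eq:bilinear} with $M = X_1/d_1$ and $N = X_2/\widetilde{d_2}$, and summing trivially over $d_3\widetilde{d_3}\leq X_3$ (cost $O(X_3\log X_3)$), over the small variables $d_1,\widetilde{d_2}\leq Z^{100C}$ (cost $O(Z^{200C})$), and over the bounded ranges of $\sigma$'s, $h$'s, and residues (cost $O((\log X_1)^{O(1)})$ by \eqref{eq:should}), yields the bound $\ll X_1 X_2 X_3 (\log X_1)^4 \min(X_1,X_2)^{-1/6}$. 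Since $\min(X_1,X_2) \geq \min_i X_i$ and $1/6 > 1/10$, this lies within the claimed error.

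The main obstacle is the clean decoupling of the summand into a bilinear form with bounded coefficient sequences, which requires careful bookkeeping of how each Jacobi symbol splits multiplicatively between the two bilinear variables and the fixed ones, together with residue-class splitting modulo $8$ to absorb the sign $(-1)^{G_{\b h}(\b d)/4}$ and the factors $(2/d_k)$. The remaining choices of the distinguished pair $(i,j)$ are handled identically, with the bilinear kernel $(\widetilde{d_i}/d_j)$ extracted from whichever of the three Jacobi symbol factors in $\mathcal{E}_{\b b, \b h, \widetilde{\b h}}$ contains both $\widetilde{d_i}$ and $d_j$.
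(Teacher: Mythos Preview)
Your proposal is correct and follows the same approach as the paper: rearrange as $\sum_{d_1,\widetilde{d_2},d_3,\widetilde{d_3}}\bigl|\sum_{\widetilde{d_1},d_2}\bigr|$ and apply the bilinear bound \eqref{eq:bilinear}, exactly as in Lemma~\ref{lem:passacaglia_koopman2}. A few minor bookkeeping slips do not affect the argument: the $\sigma_i$ and $\b h,\widetilde{\b h}$ are already fixed in $\mathcal{E}_{\b b,\b h,\widetilde{\b h}}$ (you need not sum over them here); fixing the residue of $\widetilde{d_1}$ modulo $8$ is unnecessary since $\widetilde{d_1}$ does not enter $G_{\b h}(\b d)$ or the $(2/d_k)$ factors; and your stated intermediate bound omits the factor $Z^{200C}$ coming from the trivial sum over $d_1,\widetilde{d_2}$, though this polylogarithmic loss is absorbed when passing from the exponent $-1/6$ to $-1/10$.
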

\begin{proof}
We can assume that $i=1$ and $j=2 $  by symmetry. Rearranging the contribution as $ \sum_{d_1, \widetilde{d_2}   , d_3, \widetilde{d_3}  }  | \sum_{   d_2, \widetilde{d_1}  } |$ and using arguments  identical to those in the proof of Lemma~\ref{lem:passacaglia_koopman2}     
is sufficient. \end{proof} 
We now give the outcome of the last two lemmas:
\begin{lemma}
\label{lem:K. 622 - Adagio } Keep the setting of Lemma~\ref{lem:pirates} and  fix any $C> 0 $.
We have  \[\mathcal{E}_{\b b, \mathbf{h}, \widetilde{\mathbf{h}}}(\b X)=\sum_{\lambda \in \{1,2\}}  \c {E}_{  \b b,\b h ,  \widetilde{\b h } }^{(\lambda)}( \b X ) 
+O_C \l ( X_1X_2X_3   (\log X_1 )^{4  -   10C  } \r ),\] where 
 \begin{align*}   \c {E}_{  \b b,\b h ,  \widetilde{\b h } }^{(\lambda)}( \b X ) &=
 \sum_{\substack{ \b d , \widetilde{\b d}   \in   \N^3  \\  \eqref {eq:Blnbc93fdty} ,   \eqref{eq:degh93vxz}, \eqref{bcvdfge} }} \frac {  (-1)^{\frac{G_{\b h }(\b d)  }{4} }  }{
\tau(  d_1 \widetilde{d_1}   d_2  \widetilde{d_2} d_3 \widetilde{d_3}  )} 
 \l(\frac{2}{  d_1 } \r)^{\sigma-\sigma_1-\sigma_{23} } \l(\frac{2}{  d_2 } \r)^{\sigma-\sigma_2-\sigma_{13} }
\l(\frac{2}{  d_3 } \r)^{\sigma-\sigma_3-\sigma_{12} }
 \\  &\times  \l(\frac{          \widetilde{d_2 }   \widetilde{d_3}  \widetilde{h_{12} }    \widetilde{ h_{13} }    }{d_1   h_{23}  }\r) 
    \l(\frac{             \widetilde{d_1}   \widetilde{d_3}  \widetilde{h_{12} }      \widetilde{h_{23} }   }{d_2   h_{13}  }\r) 
\l(\frac{    -     \widetilde{d_1}  \widetilde{d_2 }  \widetilde{h_{23} }  \widetilde{ h_{13} }    }{d_3  h_{12} }\r)  
,\end{align*} with \begin{equation}\label{bcvdfge}\begin{cases}   
\max\{d_i:1\leq i \leq 3\}   \leq \min_{i=1,2,3} (\log  X_i)^{100 C} < \min\{\widetilde{d_i }  :1\leq i \leq 3\} ,&\mbox{if } \lambda=1,\\  
\max\{\widetilde{d_i }  :1\leq i \leq 3\}   \leq \min_{i=1,2,3} (\log  X_i)^{100 C} < \min\{d_i:1\leq i \leq 3\} ,&\mbox{if } \lambda=2.
\end{cases}\end{equation}
The implied constant depends at most on $C$ and $\eta$. \end{lemma}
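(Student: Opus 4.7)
Set $Z := \min_i (\log X_i)^{100C}$. The plan is to partition the range of the summation in $\mathcal{E}_{\b b, \mathbf{h}, \widetilde{\mathbf{h}}}(\b X)$ according to which of the $d_i$'s and $\widetilde{d_i}$'s lie above the threshold $Z$, and then to show that apart from the two ranges described by $\lambda=1$ and $\lambda=2$, every remaining range is already controlled either by Lemma \ref{lem:passacaglia_koopman2} or by Lemma \ref{lem:passacaglia bwv 582}.

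Concretely, to each tuple $(\b d, \widetilde{\b d})$ contributing to $\mathcal{E}_{\b b, \mathbf{h}, \widetilde{\mathbf{h}}}(\b X)$ I would attach the pair of subsets
\[
A = \{i \in \{1,2,3\}: d_i \leq Z\}, \qquad B = \{j \in \{1,2,3\} : \widetilde{d_j} \leq Z\},
\]
and stratify the sum according to $(A,B)$. The case $(A,B) = (\{1,2,3\},\emptyset)$ is exactly the range defining $\mathcal{E}_{\b b,\b h,\widetilde{\b h}}^{(1)}$, and $(A,B)=(\emptyset,\{1,2,3\})$ is the range defining $\mathcal{E}_{\b b,\b h,\widetilde{\b h}}^{(2)}$. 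I would then verify that the sum over the remaining $2^6 - 2$ choices of $(A,B)$ is absorbed in the claimed error term.

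The key point is the following combinatorial dichotomy: a pair $(A,B) \neq (\{1,2,3\},\emptyset), (\emptyset,\{1,2,3\})$ necessarily falls into the hypothesis of one of Lemmas \ref{lem:passacaglia_koopman2} or \ref{lem:passacaglia bwv 582}. Indeed, if $(A,B)$ escapes both lemmas, then for every ordered pair $i \neq j$ one must have simultaneously $\min\{d_i,\widetilde{d_j}\}\leq Z$ (i.e.\ $i\in A$ or $j\in B$) and $\max\{d_i,\widetilde{d_j}\} > Z$ (i.e.\ $i\notin A$ or $j\notin B$). Equivalently, for every $i\neq j$ exactly one of $i\in A$ or $j\in B$ holds. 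A quick check over the possible sizes of $A$ shows that this forces $(A,B)$ to be one of the two exceptional pairs. For example, $A=\{1\}$ would force $2,3\in B$ (from pairs $(2,1)$ and $(3,2)$), and $1\notin B$ (from $(1,2)$ with $1\in A$, $2\in B$ being forbidden), so $B=\{2,3\}$; but then $(1,2)$ gives $1\in A$, $2\in B$, contradiction. The other cases $|A|\in\{2\}$ and $A\in\{\emptyset,\{1,2,3\}\}$ with $B$ nontrivial are handled identically.

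Having established this, I would sum the error contributions. Lemma \ref{lem:passacaglia_koopman2} contributes $O_{C,\eta}(X_1X_2X_3(\log X_1)^{4-10C})$ from every $(i,j)$ with $i\neq j$, and Lemma \ref{lem:passacaglia bwv 582} contributes a strictly smaller error via the bound $X_1X_2X_3(\log X_1)^4 (\min_i X_i)^{-1/10}$, which by the hypothesis \eqref{eq:should23} is $\ll X_1X_2X_3(\log X_1)^{4-10C}$ for $C$ fixed and $\min_i X_i$ large. Summing over the finitely many non-exceptional $(A,B)$ gives the error stated in the lemma, and what remains is exactly $\mathcal{E}_{\b b,\b h,\widetilde{\b h}}^{(1)} + \mathcal{E}_{\b b,\b h,\widetilde{\b h}}^{(2)}$. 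The only mildly delicate step is the combinatorial case analysis above; everything else is bookkeeping on top of the two preceding bilinear sum estimates.
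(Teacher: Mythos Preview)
Your proposal is correct and follows essentially the same route as the paper: both arguments observe that after removing the ranges handled by Lemmas~\ref{lem:passacaglia_koopman2} and~\ref{lem:passacaglia bwv 582}, only the two configurations in~\eqref{bcvdfge} survive. The paper packages this as a short chain of implications (starting from whether $d_1\le\mathcal L$ or $d_1>\mathcal L$ and repeatedly invoking the two lemmas), whereas you set up an explicit $(A,B)$-stratification and do the combinatorial elimination directly; these are equivalent. One small remark: in your worked example with $A=\{1\}$ the pair labels are slightly garbled (e.g.\ pair $(2,1)$ forces $1\in B$, not $2\in B$), but the conclusion that no singleton or doubleton $A$ survives is correct and the overall argument stands.
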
  
\begin{proof}  Let $\c L=\min_{i=1,2,3} (\log  X_i)^{100 C}$.
The ranges covered by Lemmas    \ref{lem:passacaglia_koopman2}-\ref{lem:passacaglia bwv 582} are part of the error term of the present lemma. 
Out of the remaining terms we first consider the contribution of those with $d_1 \leq \c L$.
By
Lemma~\ref{lem:passacaglia bwv 582} one must have $\min\{ \widetilde{d_2 } , \widetilde{d_3 } \}>\c L$,
which, by Lemma~\ref{lem:passacaglia_koopman2} proves    $ \max\{d_2,d_3\}\leq \c L$ and therefore $ \widetilde{d_1 }   >\c L$ holds 
due to Lemma~\ref{lem:passacaglia bwv 582}. Hence, the condition $d_1\leq \c L $  is equivalent to the case $\lambda=1$ 
in~\eqref{bcvdfge}. A similar reasoning shows that the  condition  $d_1> \c L$  is equivalent to the case 
$\lambda=2$ in~\eqref{bcvdfge}. \end{proof}

\subsection{The error term in Theorem~\ref{thm:genguo}: small conductors}
\label{s:smalcond} We will now focus on bounding $\c E^{(\lambda)}$ that is defined in Lemma~\ref{lem:K. 622 - Adagio }.
As explained in Remark~\ref{remrk:strgh8}, this can be done by invoking  
 estimates about  averages of arithmetic functions twisted by 
a Dirichlet character of small conductor.
\begin{lemma}  \label{lem:friediwsiegwalfsiz} Fix any $C >0 $. Then for any non-zero $d, q,q_0,j \in \Z$ with $\gcd(q_0,jq)=1$,
any non-principal Dirichlet character $\chi \md{q}$, any function 
$f:\N\to \mathbb C$ with period $q_0$
 and any $X,Y\geq 2 $  we have 
 \[
\sum_{\substack{ Y\leq n \leq X  \\ \text{ \rm gcd}(n,d)=1   } }  \chi (n) f(n) \frac{  \mu(n)^2}{\tau(n) }
=O_C\l( 
\frac{ X }{(\log X)^C} \tau(d) q_0^2 q   \max_{n\in \N}\{|f(n)| \}  \r)  ,\] where the implied constant depends at most on $C$. \end{lemma}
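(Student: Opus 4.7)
The plan is to reduce the sum to a family of twisted character sums of the form $\sum_m (\chi\psi)(m)\mu^2(m)/\tau(m)$, each of which can be estimated with power saving via the Selberg--Delange / Siegel--Walfisz machinery. The hypothesis $\gcd(q_0,jq)=1$ will be what guarantees that the product character stays non-principal, which is the crucial input ensuring no pole of the relevant $L$-function at $s=1$.

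First I would reduce $f$ to a single residue class: write
\[
f(n) = \sum_{a \bmod q_0} f(a) \cdot \mathbf{1}_{n \equiv a \bmod q_0},
\]
so up to an overall factor of $q_0 \max_n |f(n)|$ it suffices to treat the case $f = \mathbf{1}_{n \equiv a \bmod q_0}$ for a fixed $a$. Next I would handle the coprimality with $d$ by M\"obius inversion, $\mathbf{1}_{\gcd(n,d)=1} = \sum_{e \mid \gcd(n,d)} \mu(e)$; substituting $n = em$ and using that $\mu^2(n)\neq 0$ forces $\gcd(e,m)=1$, together with $\tau(em)=\tau(e)\tau(m)$, this reduces the whole problem (up to a sum over the $\tau(d)$ divisors $e \mid d$) to bounding inner sums of the form
\[
T(M;e,a) = \sum_{\substack{M/2 < m \leq M \\ em \equiv a \bmod q_0 \\ \gcd(m,e)=1}} \chi(em)\, \frac{\mu^2(m)}{\tau(m)},
\]
with $M \leq X/e$, where one loses nothing by dyadic decomposition.

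The condition $em \equiv a \bmod q_0$ can be detected by orthogonality of additive characters modulo $q_0$ (or, after restricting to $\gcd(m,q_0/\gcd(e,q_0))=1$ by a further M\"obius step, by multiplicative characters modulo a divisor of $q_0$). Combining with $\chi \pmod q$ this gives sums
\[
\sum_{M/2 < m \leq M} (\chi\psi)(m)\, \frac{\mu^2(m)}{\tau(m)}, \qquad \psi \bmod q_0.
\]
Since $\gcd(q,q_0)=1$ and $\chi$ is non-principal modulo $q$, the product $\chi\psi$ is non-principal modulo $qq_0$, regardless of $\psi$. Hence the associated Dirichlet series factors as $L(s,\chi\psi)^{1/2}G(s)$ for a Dirichlet series $G(s)$ absolutely convergent on $\re s > 1/2$, and $L(s,\chi\psi)$ is holomorphic at $s=1$.

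The last step is the standard Selberg--Delange argument with a Hankel contour around $s=1$, together with the Siegel--Walfisz zero-free region for Dirichlet $L$-functions of characters modulo $qq_0$: this yields, for any fixed $C>0$,
\[
\sum_{m \leq M} (\chi\psi)(m)\,\frac{\mu^2(m)}{\tau(m)} \ll_C \frac{M}{(\log M)^C}\cdot (qq_0)^{O(1)},
\]
with ineffective implied constant (from Siegel). Reassembling the M\"obius inversion over $e \mid d$ (producing the factor $\tau(d)$), the character decomposition mod $q_0$ (producing $q_0^2$ after accounting both for the residue-class reduction and for the modulus of $\psi$), and the character $\chi$ mod $q$ gives precisely the claimed bound. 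The main obstacle is the last step: obtaining power saving uniformly in the modulus $qq_0$, which requires the Siegel--Walfisz type estimate for the fractional-exponent $L$-function and therefore introduces ineffective dependence on $C$, exactly as in the statement.
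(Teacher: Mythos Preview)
Your approach is essentially the same as the paper's: expand $f$ into residue classes modulo $q_0$, detect these via Dirichlet characters $\psi \bmod q_0$, observe that $\chi\psi$ stays non-principal since $\gcd(q,q_0)=1$, and then invoke a Siegel--Walfisz/Selberg--Delange bound for $\sum (\chi\psi)(n)\mu^2(n)/\tau(n)$. The only difference is cosmetic: the paper keeps the condition $\gcd(n,d)=1$ intact and cites \cite[Lemma~1]{MR2675875} to obtain the bound with the factor $\tau(d)$ directly, whereas you strip it off by M\"obius over $e\mid d$ first --- both routes give the same estimate.
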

\begin{proof} First note that it is sufficient to consider the case $Y=1$.
The periodicity of $f$ allows us to write the sum as 
\begin{align*}
&\sum_{j\md {q_0}} f(j) 
\sum_{\substack{ 1\leq n \leq X , n\equiv j \md{q_0}  \\ \gcd(n,d)=1   } }  \chi (n)  \frac{  \mu(n)^2}{\tau(n) } \\
&\ll  q_0 \max_{n\in \N}\{|f(n)| \}
\max_{j\md {q_0}}
\l|
\sum_{\substack{ 1\leq n \leq X , n\equiv j \md{q_0}  \\ \gcd(n,d)=1   } }  \chi (n)  \frac{  \mu(n)^2}{\tau(n) }
\r|.
\end{align*}
It therefore suffices to bound each sum in the modulus signs by $\ll_C X(\log X)^{-C} \tau(d) q_0 q $.
By orthogonality of Dirichlet  characters we can write each such  sum as 
\beq{eq:mgap}{ \frac{1}{\phi(q_0)}\sum_{\psi \md {q_0  } } \overline{\psi(j) }\sum_{\substack{ 1\leq n \leq X \\ \gcd(n,d)=1   } } (\psi \chi)(n)\frac{  \mu(n)^2 }{\tau(n) } .}
Let us now see why    $\psi \chi $  is a non-principal   character modulo $q_0 q$. 
Using that  $\chi \md q $ is non-principal we may  find  $ t \in \Z/q\Z$ with $\chi(t)  \notin \{0,1\} $.  Then by   $\gcd(q_0,q)=1$ 
we can   find an integer  $\ell  $ such that $\ell   \equiv 1 \md{ q_0 }$  and $\ell \equiv t \md q $. Since $\psi(1)=1$ we obtain 
 $(\psi \chi ) (\ell) = \chi(\ell  )=\chi(t)  \notin\{0,1\}$, thus $ \psi \chi $ is non-principal. Now let  $d_0$   be the square-free
product of all 
primes dividing $d$ and   coprime to $qq_0$ so that $\gcd(d_0,qq_0)=1$ and the condition 
$\gcd(n,d)=1$ in~\eqref{eq:mgap} can be replaced by $\gcd(n,d_0)=1$. Using~\cite[Lem. 1]{MR2675875}
then shows that   the sum in~\eqref{eq:mgap} is at most
  \[  \ll_C \frac{1}{\phi(q_0)} \sum_{\psi \md {q_0  } }     \frac{\tau(d_0) q_0 q  X }{(\log X)^C}   =  
   \frac{\tau(d_0) q_0 q  X }{(\log X)^C}    ,\]  which is sufficient due to $\tau(d_0 )\leq  \tau(d) $.  \end{proof}

\begin{lemma} \label     {lem:bvgt5iogln2}  
Keep the setting of Lemma~\ref{lem:K. 622 - Adagio } and  fix any $C'> 0 $.
We have 
 $$ \c {E}_{  \b b,\b h ,  \widetilde{\b h } }^{(1)}( \b X )
\ll
  b_1b_2b_3 \l(\prod_{i<j }h_{ij} \widetilde{   h_{ij}  }  \r)    \frac{X_1 X_2 X_3 }{ (\log X_1)^{C'}  }   ,$$
 where the implied constant depends at most on  $C, C'$ and $\eta$.  \end{lemma}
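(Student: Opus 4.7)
The plan is to exploit the regime of $\c E^{(1)}$: all $d_i$'s are bounded by $(\log X_1)^{100C}$ while all $\widetilde{d_i}$'s exceed $(\log X_1)^{100C}$. Viewed as a function of any one of the $\widetilde{d_i}$'s (with the other two fixed), the product of the three Jacobi symbols in $\c E^{(1)}_{\b b,\b h,\widetilde{\b h}}$ factors by multiplicativity of the Jacobi symbol as $(\widetilde{d_i}/N_i)$ times a constant depending only on the remaining variables, where
\[
N_1 = d_2 d_3 h_{12} h_{13}, \quad N_2 = d_1 d_3 h_{12} h_{23}, \quad N_3 = d_1 d_2 h_{13} h_{23}.
\]
Each $N_i$ is a square-free odd integer, bounded polylogarithmically in $X_1$ by hypothesis. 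The idea is to apply the Siegel--Walfisz type estimate of Lemma \ref{lem:friediwsiegwalfsiz} to the innermost sum over one of the $\widetilde{d_i}$, saving an arbitrary power of $\log X_1$, while bounding the two outer $\widetilde{d_j}$-sums trivially.

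Non-triviality of the character is immediate from the identity $N_1 N_2 N_3 = (d_1 d_2 d_3 h_{12} h_{13} h_{23})^2$: the right-hand side exceeds $1$ by \eqref{eq:degh93vxz}, so at least one $N_i > 1$, yielding a non-principal quadratic character $(\widetilde{d_i}/N_i)$ modulo $N_i$ for some index $i$. Fix such an $i$ and fix all other variables. Using the pairwise coprimality of the six $d$-variables, $\tau(d_1 \widetilde{d_1} \cdots d_3 \widetilde{d_3})$ factors multiplicatively and the innermost sum takes the shape
\[
\sum_{\substack{\widetilde{d_i} \leq X_i/(2^{\sigma_i} d_i)\\ \gcd(\widetilde{d_i}, D) = 1}} \chi(\widetilde{d_i})\, f(\widetilde{d_i})\, \frac{\mu^2(\widetilde{d_i})}{\tau(\widetilde{d_i})},
\]
where $\chi = (\,\cdot\,/N_i)$, where $f$ is a product of factors $(2/\widetilde{d_i})^{\pm}$, periodic of period $q_0 = 8$, and where $D$ collects all integers $\widetilde{d_i}$ must avoid under \eqref{eq:degh93vxz}: namely $d_j$ and $\widetilde{d_j}$ for $j \neq i$, all $h_{jk} \widetilde{h_{jk}}$, and the relevant factors of $b_j b_k$. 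Splitting into residues modulo $8$ and applying Lemma \ref{lem:friediwsiegwalfsiz} with a parameter $K$ bounds this by $\ll_K (X_i/d_i)(\log X_1)^{-K} \tau(D) N_i$.

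Since $D$ is a product of pairwise coprime pieces, $\tau(D)$ factors multiplicatively, and the potentially large factors $\tau(\widetilde{d_j})$ for $j \neq i$ cancel exactly against the $1/\tau(\widetilde{d_j})$ present in the remaining summand of $\c E^{(1)}$. The outer sums over $\widetilde{d_j}$ with $j \neq i$ are then bounded trivially by $X_j/d_j$, while every remaining $\tau$-factor is at most its underlying integer, polylogarithmic in $X_1$ by \eqref{eq:should}. Summing over the (polylogarithmically many) small variables $\b d, \b h, \widetilde{\b h}$ then yields a total bound of the form
\[
\ll_K X_1 X_2 X_3 \, b_1 b_2 b_3 \prod_{i<j} h_{ij} \widetilde{h_{ij}} \cdot (\log X_1)^{M - K},
\]
for some $M = M(A, C)$; choosing $K$ sufficiently large in terms of $C'$, $A$, and $C$ delivers the stated estimate. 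The one delicate point is this cancellation: one must verify that the $\tau(D)$ factor coming out of Lemma \ref{lem:friediwsiegwalfsiz} contains exactly the counterterms $\tau(\widetilde{d_j})$ needed to absorb the $1/\tau(\widetilde{d_j})$ weights, so that the outer trivial estimates incur no logarithmic loss.
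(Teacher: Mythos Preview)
Your approach is essentially the same as the paper's: use the constraint $d_1 d_2 d_3 h_{12} h_{13} h_{23} > 1$ from \eqref{eq:degh93vxz} to find an index $i$ with $N_i > 1$, apply Lemma~\ref{lem:friediwsiegwalfsiz} to the inner $\widetilde{d_i}$-sum with the non-principal character $(\,\cdot\,/N_i)$, then bound the remaining $\widetilde{d_j}$-sums trivially and absorb all polylogarithmic losses into the arbitrary saving. One small slip: there are no $(2/\widetilde{d_i})$ factors in $\c E^{(1)}$ (the $(2/\cdot)$ factors all sit on the $d_i$), so you may simply take $f=1$, $q_0=1$ as the paper does; your $\tau(\widetilde{d_j})$-cancellation observation is correct but unnecessary, since the paper just uses $1/\tau(\widetilde{d_j})\le 1$ and swallows the resulting extra $\log$'s via a larger choice of the Siegel--Walfisz exponent.
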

\begin{proof} By~\eqref{eq:degh93vxz}  
we know that 
$d_1 d_2 d_3 h_{12} h_{13} h_{23 } >1 $.
By symmetry we may assume that one of 
$d_1, d_2 ,h_{13} ,h_{23}  $ exceeds $1$.    
Thus, \[ \c {E}_{  \b b,\b h ,  \widetilde{\b h } }^{(1)}( \b X )\ll \sum_{\substack{ \widetilde{d_1} \leq X_1, \widetilde{d_2} \leq X _2\\ \forall i: d_i \leq (\log \min_i X_i)^{100C }}}  \mu^2(d_1   d_2   h_{13}  h_{23} ) 
\Bigg|
 \sum_{\substack{ (\log \min X_i)^{100C }  <   \widetilde{d_3}  \leq   X_3/b
  \\
\gcd(\widetilde{d_3} , d ) =1  }}
\frac{\mu^2(\widetilde{d_3} )}{\tau(   \widetilde{d_3}  )}  
  \l(\frac{        \widetilde{d_3}     }{q  }\r)   \Bigg|, \]where the outer  summation is subject to the condition $d_1d_2 h_{13} h_{23} >1$ and   
 $$q=d_1   d_2   h_{13}  h_{23}, \
\
d =2   d_3 \gcd(b_1,b_2)  \widetilde{d_1}\widetilde{d_2} \prod_{i<j }h_{ij} \widetilde{   h_{ij}  }   ,
\
\
b= d_3 2^{\sigma_3  }        .$$ 
The resulting sum over $\widetilde{d_3}  $  can   be estimated  via Lemma~\ref{lem:friediwsiegwalfsiz}
with $f(n)=1$ and  $q_0=1$. The character   $(\cdot/q)$  modulo $ q$  is    non-principal    due to  the fact that 
   $q$  is  an odd square-free integer  exceeding $1$. We obtain   the bound     \[ \ll_{C'}
 \sum_{\substack{ \widetilde{d_1} \leq X_1, \widetilde{d_2} \leq X _2\\ \forall i: d_i \leq (\log \min_i X_i)^{100C }}}  \hspace{-0,5cm}
 \frac{\tau(d) d_1d_2 h_{13} h_{23}   X_3 }{(\log X_3)^{C'}} 
\ll 
 \frac{  h_{13} h_{23}   X_1X_2 X_3 }{(\log X_3)^{C'}}
 \sum_{\substack{   \forall i: d_i \leq 
(\log \min_i X_i)^{100C }}} \tau(d) d_1d_2   \] for every fixed $C'>0$. 
Using  $\tau(mn) \leq \tau(m) \tau(n) $, which holds for all $m , n \in \N$,
we obtain  \[\tau(d) \leq    d_3 \tau( \widetilde{d_1})\tau( \widetilde{d_2} )b_1b_2\prod_{i<j }h_{ij} \widetilde{   h_{ij}  }   .\]
This gives  \[ \sum_{\substack{ \widetilde{d_1} \leq X_1, \widetilde{d_2} \leq X _2\\ \forall i: d_i \leq (\log \min_i X_i)^{100C }} } \hspace{-0,5cm}\tau(d)  
\ll     b_1b_2\l(\prod_{i<j }h_{ij} \widetilde{   h_{ij}  }  \r) X_1 X_2 (\log X_1 X_2 X_3 )^{1000 C +2 } .\] Using~\eqref{eq:should23}
and taking $C'$ large enough compared to $C$ concludes the proof.
 \end{proof}

\begin{lemma}
\label{lem:bvchhar29}  
We have 
 $$ \c {E}_{  \b b,\b h ,  \widetilde{\b h } }^{(2)}( \b X )
\ll
  b_1b_2b_3 \l(\prod_{i<j }h_{ij} \widetilde{   h_{ij}  }  \r)    \frac{X_1 X_2 X_3 }{ (\log X_1)^{C'}  }   ,$$
 where the implied constant depends at most on  $C, C'$ and $\eta$. 
\end{lemma}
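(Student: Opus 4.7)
The plan is to mirror the structure of the proof of Lemma~\ref{lem:bvgt5iogln2}. Since $\c E^{(2)}$ corresponds to the regime where the $d_i$ are large and the $\widetilde d_i$ are small, the roles of the two sets of variables are reversed from Lemma~\ref{lem:bvgt5iogln2}, and the essential new ingredient will be quadratic reciprocity, used to relocate the large variables $d_i$ into the numerators of the Jacobi symbols so that the underlying characters have small conductor.

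First, I would apply the reciprocity law $(\widetilde d_j/d_i) = (d_i/\widetilde d_j)(-1)^{(d_i-1)(\widetilde d_j-1)/4}$ to each Jacobi symbol in the definition of $\c E^{(2)}$, and similarly to the factors $(\widetilde h_{ij}/d_k)$. The signs produced will combine with the existing factor $(-1)^{G_{\b h}(\b d)/4}$ and the powers of $(2/d_i)$ into a function of $(d_1,d_2,d_3)$ which, for fixed small variables, is periodic modulo $8$ in each coordinate. After this rearrangement the $d_i$-dependence of the Jacobi symbols takes the form
\[ \l(\frac{d_1}{\widetilde d_2 \widetilde d_3 \widetilde h_{12} \widetilde h_{13}}\r) \l(\frac{d_2}{\widetilde d_1 \widetilde d_3 \widetilde h_{12} \widetilde h_{23}}\r) \l(\frac{d_3}{\widetilde d_1 \widetilde d_2 \widetilde h_{13} \widetilde h_{23}}\r), \]
which is exactly the bilinear structure required by Lemma~\ref{lem:friediwsiegwalfsiz}.

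Next, by the symmetry between the three indices of $\c E^{(2)}$, I would single out $d_3$ as the variable of summation and apply Lemma~\ref{lem:friediwsiegwalfsiz} to the inner sum with character $(\cdot/q)$ of conductor $q = \widetilde d_1 \widetilde d_2 \widetilde h_{13} \widetilde h_{23}$ and periodic factor of period $q_0 = 8$. Non-principality demands $q>1$, which follows from the constraint $\widetilde d_1 \widetilde d_2 \widetilde d_3 \widetilde h_{12}\widetilde h_{13}\widetilde h_{23}\neq 1$ in \eqref{eq:degh93vxz}: if all three of the analogous products $\widetilde d_2 \widetilde d_3 \widetilde h_{12} \widetilde h_{13}$, $\widetilde d_1 \widetilde d_3 \widetilde h_{12} \widetilde h_{23}$, $\widetilde d_1 \widetilde d_2 \widetilde h_{13} \widetilde h_{23}$ equalled $1$, positivity would force every $\widetilde d_i$ and $\widetilde h_{ij}$ to be $1$, contradicting \eqref{eq:degh93vxz}; so at least one of the three is $>1$, and up to relabelling we may take it to be the one giving the conductor for $d_3$.

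Finally, I would sum the resulting cancellation $\ll_{C'} \tau(D)\, q_0^2\, q\, X_3/(\log X_3)^{C'}$ trivially over the remaining variables $d_1 \leq X_1/\widetilde d_1$, $d_2 \leq X_2/\widetilde d_2$ and the small $\widetilde d_i, h_{ij}, \widetilde h_{ij}$, exactly as in the closing step of Lemma~\ref{lem:bvgt5iogln2}, invoking \eqref{eq:should23} to unify the $\log X_i$ factors and selecting $C'$ large in terms of $C$. The main obstacle is the careful bookkeeping of $\tau(D)\,q$ on the right-hand side of Lemma~\ref{lem:friediwsiegwalfsiz}, to verify that it is absorbed into the prefactor $b_1 b_2 b_3 \prod_{i<j} h_{ij}\widetilde h_{ij}$ and that the reciprocity step has introduced no extraneous dependence on the large variables $d_1, d_2$ beyond mod-$8$ signs, so that the trivial summation over them genuinely produces $X_1 X_2/(\widetilde d_1 \widetilde d_2)$.
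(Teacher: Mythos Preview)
Your proposal is correct and follows essentially the same route as the paper: reciprocity to move the large variable into the numerator of the Jacobi symbol, then Lemma~\ref{lem:friediwsiegwalfsiz} with the resulting small conductor, together with a case split according to which of the three ``dual'' products exceeds $1$ (the paper phrases this as first treating $\widetilde d_1\widetilde d_2\widetilde h_{13}\widetilde h_{23}>1$ and then the complementary case $\widetilde d_3\widetilde h_{12}>1$ by summing over $d_2$ instead).

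Two small remarks. First, you flip \emph{all} Jacobi symbols via reciprocity at the outset; the paper only flips the single symbol $(\,q/d_3\,)$ that appears in the inner sum, which is all that is needed. Second, your closing worry that reciprocity might introduce dependence on $d_1,d_2$ beyond mod-$8$ signs is a red herring: the flipped symbols $(d_1/\cdot)$, $(d_2/\cdot)$ do depend on $d_1,d_2$ modulo the small variables, but since the outer summation over $d_1,d_2$ is performed trivially after taking absolute values, only the bound $|(\cdot/\cdot)|\le 1$ is used there.
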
 \begin{proof}  We  first focus on  
  the terms satisfying  $ \widetilde{d_1    }\widetilde{ d_2 }  \widetilde{  h_{13} }\widetilde{  h_{23 }} >1 $.
 We
rearrange their contribution as  $  \sum_{d_1, d_2,  \widetilde{d_1} , \widetilde{d_2}  ,  \widetilde{d_3}  }  | \sum_{ d_3   } |$.
Here the outer sum includes the condition that  $ 2 d_1  d_2   \widetilde{d_1}   \widetilde{d_2}     \widetilde{d_3}$  is square-free and the
  inner sum is given by \[  
 \sum_{\substack{ (\log \min_i X_i)^{100C }  <  d_3 \leq   X_3/b
  \\
\gcd( d_3 , d ) =1  }}
f_0(d_3)
\frac{\mu^2(d_3  )}{\tau( d_3 )}  
\l(\frac{  q  }{d_3  }\r) 
,\]  where $b=2^{\sigma_3} \widetilde{d_3}$,  $ d $ is similar to the analogous one  in the proof of Lemma~\ref {lem:bvgt5iogln2},
\[f_0(d_3):=
(-1)^{\frac{(d_3 -1 ) m  }{4}}   \l(\frac{2}{d_3 }  \r) ^ \beta 
 \l(\frac{-1}{d_3 }  \r)  
,\]     $\beta=\sigma-\sigma_3-\sigma_{12}\in \{0,1\}$,  
$q=  \widetilde{d_1    }\widetilde{ d_2 }  \widetilde{  h_{13} }\widetilde{  h_{23 }}   $ is a positive odd square-free integer with $q >1 $
and
 $m =d_1+d_2+h_{13}+h_{23} $ is an even integer. 
 By  quadratic reciprocity we can  write 
$$  \l (\frac{q}{d_3 } \r)   =  (-1)^{\frac{ (q-1)(d_3-1)}{4} } \l (\frac{d_3 }{q} \r) ,$$ hence, 
the function $f(d_3)=f_0(d_3) (-1)^{\frac{ (q-1)(d_3-1)}{4} } $ has period $q_0=8$ and the sum equals
 \[  
 \sum_{\substack{ (\log \min_i X_i)^{100C }  <  d_3 \leq   X_3/b
  \\
\gcd( d_3 , d ) =1  }}
f (d_3)
\frac{\mu^2(d_3  )}{\tau( d_3 )}  
\l(\frac{d_3  }{  q  }\r) 
.\] Since $q$ is square-free and satisfies $q>1$, 
one can employ  Lemma~\ref{lem:friediwsiegwalfsiz}  
to bound the inner sum.  Then  the proof can be completed as  the one of   Lemma~\ref {lem:bvgt5iogln2}.
 The remaining cases  $  \widetilde{  d_3  }\widetilde{ h_{12} } >1 $ can be dealt with similarly by 
rearranging the sums as $  \sum_{d_1, d_3,  \widetilde{d_1} , \widetilde{d_2}  ,  \widetilde{d_3}  }  | \sum_{ d_2   } |$.   \end{proof}

\subsection{Final steps in the proof of Theorem~\ref{thm:genguo}} \label{sec:finstep}
Injecting Lemmas~\ref{lem:bvgt5iogln2}-\ref{lem:bvchhar29} into Lemma~\ref{lem:K. 622 - Adagio }  shows that for every fixed 
$N>0$ one has  \[ \mathcal{E}_{\b b, \mathbf{h}, \widetilde{\mathbf{h}}}(\b X)
\ll   b_1b_2b_3  X_1 X_2 X_3  (\log X_1)^{-N} \prod_{i<j }h_{ij} \widetilde{   h_{ij}  }  ,\] where the implied constant depends at most on  $\eta$ and $N$.
Recalling that $h_{ij} \widetilde{   h_{ij}  }\leq m_{ij}$, using~\eqref{eq:should},
and taking $N$ large enough compared to $A$, shows that  for every fixed $M>0$ one has 
 \[ \mathcal{E}_{\b b, \mathbf{h}, \widetilde{\mathbf{h}}}(\b X) \ll   X_1 X_2 X_3  (\log X_1)^{-M} .\] When this is fed into Lemma~\ref{lem:pirates} 
it yields 
$ \mathcal{E}_{\b b, \b m }(\b X) \ll X_1 X_2 X_3 (\log X_1)^{-M}$.
Combining this with Lemma~\ref{lem:detectlem4} 
completes the proof by alluding to 
Lemma~\ref{maintermlemfg7}.
  
 \subsection{The proof of Theorem~\ref{thm:main}}  \label{sec:proofmain}
Recall the definition \eqref{def:nalphax} of $N(B)$. Our first step  is to introduce new variables which account for square factors and common factors of the coefficients $t_i$. Recall~\eqref{def:nbmx}.
 \begin{lemma} 
\label{lem:hool1}
Fix any $C>0$. Then for all $B\geq 2   $   
the quantity $N(B) $ equals \begin{align*}
6  \sum_{\substack{\b b \in  \N ^3, \, \gcd(b_1,b_2,b_3)=1   \\ \forall i: b_i \leq (\log B)^C }}
\ \   & \sum_{\substack{
 (m_{12},m_{13},m_{23})\in \N^3, \, \eqref{eq:delbd}
\\ \forall i\neq j:  m_{ij} \leq  (\log  B)^C   }}   \mu^2(m_{12} m_{13} m_{23})
\\ &\times
 \mathcal{N}_{ \b b, \b m}\l(  \frac{B}{ b_1^2 m_{12}m_{13}},
\frac{B}{ b_2^2  m_{12}m_{23} } , 
\frac{B}{ b_3^2  m_{13}m_{23} } \r)  +O_C\l(
\frac{B^3 }{(\log B)^C}
\r),\end{align*}
where  the implied constant depends only on $C$ 
and the conditions in the summation are
\begin{equation} \label{eq:delbd}   \gcd(m_{12},b_3  )= \gcd(m_{13},b_2   )= \gcd(m_{23},b_1   )=1.
 \end{equation}   
 \end{lemma}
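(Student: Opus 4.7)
My plan is to decompose each triple counted by $N(B)$ via squarefree factorisation together with the pairwise gcd structure of the squarefree parts, yielding a sum over $(\b b, \b m)$ of values of $\mathcal{N}_{\b b, \b m}$, and then to truncate that sum to $b_i, m_{ij} \leq (\log B)^C$ via a trivial tail bound. First I would isolate the sign structure: for the conic $\sum t_i x_i^2 = 0$ to have a real point with all $t_i \neq 0$, the signs of $(t_1,t_2,t_3)$ cannot all coincide, giving $2^3-2=6$ admissible sign patterns that contribute equally by the symmetry $t_i \mapsto -t_i$. Hence $N(B)=6M(B)$, where $M(B)$ counts positive coprime triples $(t_1,t_2,t_3)$ with $t_i \leq B$ for which $t_1 x_1^2 + t_2 x_2^2 = t_3 x_3^2$ is $\Q$-soluble.

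Next I would write each $t_i = b_i^2 s_i$ uniquely with $s_i$ squarefree positive; $\Q$-solubility is unchanged under $x_i \mapsto x_i/b_i$, so depends only on $(s_1,s_2,s_3)$. The condition $\gcd(t_1,t_2,t_3)=1$ forces $\gcd(s_1,s_2,s_3)=1$, so the pairwise gcds $m_{ij}:=\gcd(s_i,s_j)$ are pairwise coprime squarefree integers, and one obtains a unique decomposition $s_i = m_{ij} m_{ik} n_i$ with $n_i$ squarefree, coprime to $m_{12}m_{13}m_{23}$, and pairwise coprime among themselves. Because $\Q$-solubility of a diagonal ternary form depends only on the classes of its coefficients modulo $\Q^{\times 2}$, and multiplying $(m_{12}m_{13} n_1,\, m_{12}m_{23} n_2,\, m_{13}m_{23} n_3)$ by the common factor $m_{12}m_{13}m_{23}$ yields $(m_{23}n_1,\, m_{13}n_2,\, m_{12}n_3)$ modulo squares (the extracted squares being $(m_{ij}m_{ik})^2$), the conic is $\Q$-soluble iff~\eqref{defeqgg} is. I would then verify by prime-by-prime case analysis that $\gcd(t_1,t_2,t_3)=1$ is equivalent to the combination of $\gcd(b_1,b_2,b_3)=1$, the conditions~\eqref{eq:delbd}, and the conditions~\eqref{eq:sn4ws}: a hypothetical common prime $p$ of $t_1,t_2,t_3$ either divides all $b_i$, or divides some $m_{ij}$ and then must divide $b_k$, or divides some $n_i$ and then must divide both $b_j$ and $b_k$, and these are precisely the ruled-out cases.

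Summing this bijection then produces the exact identity $M(B) = \sum_{\b b,\b m} \mathcal{N}_{\b b,\b m}(B/(b_1^2 m_{12}m_{13}),\, B/(b_2^2 m_{12}m_{23}),\, B/(b_3^2 m_{13}m_{23}))$ with the stated coprimality constraints but no upper bounds on $b_i, m_{ij}$. To obtain the lemma I would truncate this outer sum to $b_i, m_{ij} \leq (\log B)^C$. The trivial bound $\mathcal{N}_{\b b,\b m}(\b X) \leq X_1 X_2 X_3$ gives $\mathcal{N}_{\b b,\b m}(\cdot) \ll B^3/(b_1^2 b_2^2 b_3^2 m_{12}^2 m_{13}^2 m_{23}^2)$, and summing the tail where any single variable exceeds $(\log B)^C$ against the absolutely convergent series $\sum_b 1/b^2$ and $\sum_m \mu^2(m)/m^2$ yields a contribution $\ll_C B^3/(\log B)^C$. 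The main obstacle will be the careful bookkeeping that makes the decomposition bijective and matches the coprimality conditions exactly; both the conic transformation and the tail estimate are routine.
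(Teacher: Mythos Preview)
Your proposal is correct and follows essentially the same route as the paper: extract the sign factor $6$, write $t_i=b_i^2 s_i$ with $s_i$ squarefree, decompose the $s_i$ via their pairwise gcds $m_{ij}$, reduce the conic to the shape~\eqref{defeqgg}, and truncate $b_i,m_{ij}$ using the trivial bound $\mathcal{N}_{\b b,\b m}(\b X)\le X_1X_2X_3$. The only cosmetic difference is that the paper effects the reduction to~\eqref{defeqgg} by the explicit invertible change of variables $Y_i=m_{ij}m_{ik}x_i$ rather than by your scaling-and-squares argument, but the two are equivalent.
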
 
\begin{proof}Imposing the condition $(t_1-t_2) (t_1-t_3)(t_2-t_3)t_1t_2t_3 \neq 0 $ in $N(B)$ introduces an error of size $O(B^2)$.
 For the remaining 
$t_i$
having a point in $\R$ is equivalent to not all $t_i$ having the same sign. Hence, by permuting variables, we obtain 
 \[\frac{N(B)}{6} +O(B^2)=  
\#\bigg\{\b t \in (\N\cap[1,B])^3:  \gcd(t_0,t_1,t_2)=1, 
\sum_{i=0,1} t_i X_i^2=t_2X_2^2 \textrm{ has a }\Q\textrm{-point}
\bigg\}
.\]  Each integer $t_i$ can be written uniquely as $b_{i+1}^2 c_{i+1}$ for some $b_{i+1},c_{i+1} \in \N$ with  $\mu^2(c_{i+1})=1$.
Since the solubility of the equation is independent of square factors of the coefficients $t_i$, the quantity in the 
right-hand side becomes 
\[
 \sum_{\substack{\b b \in (\N\cap[1,\sqrt{B}]^3\\ \gcd(b_1,b_2,b_3)=1}}
\sum_{\substack{\b c \in \N^3, c_i \leq B/b_i^2 \\
 \gcd(c_1b_1,c_2b_2,c_3b_3) =1
}}
\mu^2(c_1)\mu^2(c_2)\mu^2(c_3) 
\begin{cases} 1, \text{ if } c_1 x_1^2+c_2x_2^2=c_3x_3^2  \text{ has a }\QQ\text{-point},\\ 0, \text{ otherwise.} \end{cases}
\]
Now we restrict the range of $\b b$. Ignoring solubility over $\Q$ and $\gcd$ conditions, we may
bound the   contribution of the terms with at least one of $b_j$ exceeding $(\log B)^C$ by
\[
\sum_{\substack{\b b \in \N^3\\ b_j >(\log B)^C}}
\prod_{i=1}^3 \frac{X}{b_i^2}
\ll \frac{B^3}{(\log B)^C}. 
\]
We now introduce new variables to keep track of common factors between the $c_i$. Define $m_{ij} = \gcd(c_i, c_j)$
so that there exists $\b n \in \N^3$ such that $c_1 = m_{12}m_{13}n_1, 
c_2 = m_{12}m_{23}n_2$ and $ c_3=m_{13}m_{23}n_3$.
By construction it is evident that any two of the $6$ variables $n_i, m_{ij}$  are coprime, thus the sum over $\b c $ is\[
\sum_{\substack{
 (m_{12},m_{13},m_{23})\in \N^3, \eqref{eq:delbd}
\\ \forall i\neq j:  m_{ij} \leq  B  }}   \mu^2(m_{12} m_{13} m_{23})
 \sum_{\substack{\b n\in \N^3,  \eqref{eq:sn4ws}   
\\ \{i,j,k\}=\{1,2,3\}\Rightarrow   m_{ij}m_{ik}  n_i \leq  B/b_i^2  }}   \mu^2(n_1n_2n_3)F(\b n )
,\] where $F(\b n )$ is the indicator function of the event that the following curve has a a $\Q$-point:
$$ m_{12}m_{13}n_1x_1^2+m_{12}m_{23}n_2x_2^2=m_{13}m_{23}n_3x_3^2.$$ The     change of variables 
$
Y_1=m_{12}  m_{13}x_1 ,
Y_2= m_{12} m_{23} x_2 , 
Y_3=  m_{13} m_{23} x_3 
$ is invertible over $\QQ$ (and all its completions) and it transforms  the curve
into  $$n_1       m_{23}   Y_1^2+ n_2     m_{13}    Y_2^2=n_3  m_{12}   Y_3^2.$$
In particular, the sum over $\b n $ equals $$ \mathcal{N}_{ \b b, \b m}\l(  
\frac{B}{ b_1^2 m_{12}m_{13}},
\frac{B}{ b_2^2  m_{12}m_{23} } , 
\frac{B}{ b_3^2  m_{13}m_{23} } \r).$$ Using the bound 
$\mathcal{N}_{ \b b, \b m}(\b B)\leq B_1B_2B_3$ we see that 
the contribution of the cases $m_{12} > (\log X)^C$ is  
\[ \ll  \sum_{\b b \in \N^3}
\sum_{\substack{ (m_{12} ,m_{13} ,m_{23} ) \in \N^3 \\  m_{12} > (\log X)^C }} 
\frac{X^3}{(b_1 b_2 b_3 )^2(m_{12} m_{13} m_{23})^3}
\ll \frac{X^3}{(\log X)^C} \]   and a similar argument deals with the terms satisfying $\max\{ m_{13},m_{23} \}> (\log X)^C$.  \end{proof}Recall~\eqref{defbetabm}
and note that ignoring the condition $p\nmid  m_{12} m_{13}  m_{23} \gcd(b_i,b_j) $ yields 
\beq{eq:bndbcm}{
\beta(\b b,\b m)
\leq
\prod_{p  } 
\l(1-\frac{1}{p} \r)^{3/2}
\l(1+  \frac{3}{2 p} \r)  
.}  
 \begin{lemma} 
\label{lem:hog97ro}
For $B\geq 2   $    we have 
$$N(B)= \frac{ 3  c_0}{ (2\pi )^{ 3/2}} \frac{B^3}{(\log B)^{3/2}} +O\l(\frac{B^3}{(\log B)^{5/2}}\r) ,$$ where 
$$
c_0=\sum_{\substack{\b b \in  \N ^3\\\gcd(b_1,b_2,b_3)=1    }}
  \sum_{\substack{\b m \in \N^3\\ \eqref{eq:delbd}
   }}    
  \frac{ \mu^2(m_{12} m_{13} m_{23}) }{ (b_1b_2b_3  m_{12}m_{13}m_{23})^2}
 \frac{     \beta(\b b , \b m ) c(\b b , \b m )  }{    \tau(( m_{12}    m_{13} m_{23})_{\rm{odd}})   }
 .$$
\end{lemma}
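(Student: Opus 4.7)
\textbf{Proof plan for Lemma \ref{lem:hog97ro}.} The strategy is to apply Theorem~\ref{thm:genguo} term-by-term to the expression for $N(B)$ given in Lemma~\ref{lem:hool1}, then clean up the main term and extend the truncated sum to the infinite series $c_0$.

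First I invoke Lemma~\ref{lem:hool1} with a fixed constant $C$ (say $C=3$), writing $N(B)$ as $6$ times a sum over $\mathbf{b},\mathbf{m}$ with $b_i, m_{ij} \leq (\log B)^{C}$, plus an acceptable error $O(B^{3}/(\log B)^{C})$. For each surviving pair $(\mathbf{b},\mathbf{m})$ set $\mathbf{X} = (B/(b_1^2 m_{12}m_{13}),\, B/(b_2^2 m_{12}m_{23}),\, B/(b_3^2 m_{13}m_{23}))$. Because every $b_i, m_{ij}$ is at most $(\log B)^{C}$, each $X_i \geq B/(\log B)^{3C}$, hence $\log X_i = \log B + O(\log\log B)$ and $\max_i X_i/\min_i X_i \leq (\log B)^{6C}$. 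Thus conditions \eqref{eq:should} and \eqref{eq:should23} of Theorem~\ref{thm:genguo} are satisfied (for instance with $A = C$ and $\eta = 1/2$) once $B$ is large enough.

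Next I apply Theorem~\ref{thm:genguo} to each $\mathcal{N}_{\mathbf{b},\mathbf{m}}(\mathbf{X})$. Using the identity $\prod_{i=1}^3 X_i = B^3/(b_1 b_2 b_3 m_{12} m_{13} m_{23})^2$ together with the expansion
\[
\prod_{i=1}^3 (\log X_i)^{-1/2} = (\log B)^{-3/2}\bigl(1 + O(\log\log B/\log B)\bigr),
\]
the main term of Theorem~\ref{thm:genguo} contributes
\[
\frac{3 B^3}{(2\pi)^{3/2}(\log B)^{3/2}} \sum_{\substack{ b_i, m_{ij} \leq (\log B)^{C}\\ \gcd(b_1,b_2,b_3)=1,\ \eqref{eq:delbd} }} \frac{\mu^2(m_{12}m_{13}m_{23})\,\beta(\mathbf{b},\mathbf{m})\,c(\mathbf{b},\mathbf{m})}{\tau((m_{12}m_{13}m_{23})_{\mathrm{odd}})\,(b_1 b_2 b_3 m_{12}m_{13}m_{23})^2}.
\]
The uniform bound \eqref{eq:bndbcm} on $\beta$ together with the trivial bound $c(\mathbf{b},\mathbf{m}) \leq 6$ shows that the tail contribution from $(\mathbf{b},\mathbf{m})$ with some coordinate exceeding $(\log B)^{C}$ is absolutely convergent and bounded by $\sum_{n > (\log B)^{C}} n^{-2} \ll (\log B)^{-C}$, so the truncated sum equals $c_0 + O((\log B)^{-C})$. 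This yields the claimed main term $3 c_0 B^3/((2\pi)^{3/2}(\log B)^{3/2})$ with an acceptable truncation error.

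Finally I estimate the errors. The error term from Theorem~\ref{thm:genguo}, summed over $(\mathbf{b},\mathbf{m})$, is bounded by
\[
\sum_{\mathbf{b},\mathbf{m}} \frac{X_1 X_2 X_3 (\log\log 3\mathcal{B})^{3/2}}{(\log X_1)^{5/2}} \ll \frac{B^{3}}{(\log B)^{5/2}}\,(\log\log\log B)^{3/2}\sum_{\mathbf{b},\mathbf{m}}\frac{1}{(b_1b_2b_3m_{12}m_{13}m_{23})^{2}},
\]
which is $O(B^3/(\log B)^{5/2})$ up to slowly varying factors (since $\mathcal{B} \leq (\log B)^{C}$), and the secondary error from the expansion of $(\log X_i)^{-1/2}$ contributes similarly. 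The main obstacle is bookkeeping: one has three distinct error sources (the initial truncation from Lemma~\ref{lem:hool1}, the secondary expansion of the logarithms, and the theorem's own error), and the parameter $C$ must be chosen large enough that all three fit inside $O(B^3/(\log B)^{5/2})$; this is straightforward once one checks uniformity of the bounds in $(\mathbf{b},\mathbf{m})$.
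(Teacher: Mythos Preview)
Your proposal follows essentially the same route as the paper: invoke Lemma~\ref{lem:hool1}, feed each term into Theorem~\ref{thm:genguo} with $A=C$ and $\eta=1/2$, expand the logarithms, and extend the finite sum to $c_0$ using the uniform bound~\eqref{eq:bndbcm}. The structure is correct.

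There is one technical point to sharpen. Your expansion $\prod_i(\log X_i)^{-1/2}=(\log B)^{-3/2}(1+O(\log\log B/\log B))$ uses the worst-case bound on $b_i,m_{ij}$; summing this secondary error over $(\b b,\b m)$ leaves an extra factor of $\log\log B$ (and similarly your treatment of the $(\log\log 3\c B)^{3/2}$ factor leaves a $(\log\log\log B)^{3/2}$), so you only obtain $O(B^3(\log\log B)/(\log B)^{5/2})$ rather than the clean $O(B^3/(\log B)^{5/2})$ claimed. The paper avoids this by keeping the $(\b b,\b m)$-dependence in the error, writing
\[
\prod_{i=1}^3(\log X_i)^{-1/2}=(\log B)^{-3/2}\Bigl(1+O\Bigl(\tfrac{\log(b_1b_2b_3 m_{12}m_{13}m_{23})}{\log B}\Bigr)\Bigr),
\]
and then observing that $\sum_{\b b,\b m}\log(b_1b_2b_3 m_{12}m_{13}m_{23})/(b_1b_2b_3 m_{12}m_{13}m_{23})^{2}=\sum_{n}\tau_6(n)(\log n)n^{-2}<\infty$; the same trick handles the $(\log\log 3\c B)^{3/2}$ factor since $\sum_n\tau_6(n)(\log\log 3n)^{3/2}n^{-2}$ also converges. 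With this adjustment your argument matches the paper's and yields the stated error exactly.
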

\begin{proof}We shall apply Theorem~\ref{thm:genguo} with 
$$
X_1=\frac{B}{ b_1^2 m_{12}m_{13}}, \quad
X_2=\frac{B}{ b_2^2  m_{12}m_{23} } , \quad
X_3=\frac{B}{ b_3^2  m_{13}m_{23} }
$$  and with $\max\{b_i,m_{ij}\}\leq (\log B)^C$, where $C$ is the constant from Lemma~\ref{lem:hool1}
that furthermore satisfies $C>5/2$. 
Therefore,~\eqref{eq:should} is satisfied with $A=C$ and~\eqref{eq:should23} holds with $\eta=1/2$
due to  
$ B^{1/2}\ll_C \min X_i \leq \max X_i \leq B $.
Thus,  
$$N(B) B^{-3} =6   N_1(B) + O(E(B)+(\log B)^{-5/2}),$$ 
where 
 \begin{align*}
N_1(B) :=
&\frac{ (2\pi )^{-3/2}}{2}
\Osum_{\b b,\b m }
  \frac{ \mu^2(m_{12} m_{13} m_{23}) }{ (  m_{12}m_{13}m_{23})^2}
 \frac{     \beta(\b b , \b m ) c(\b b , \b m )  }{    \tau(( m_{12}    m_{13} m_{23})_{\rm{odd}})   }
  \prod_{i=1}^3  \frac{b_i^{-2}}{ (\log X_i)^{1/2} }
,  \\
E(B)  :=
& \sum_{\substack{\b b , \b m \in  \N ^3  }}
  \frac{(\log X_1)^{-5/2}}{ (b_1b_2b_3 m_{12}m_{13}m_{23})^2}
\ll (\log X_1)^{-5/2}\ll (\log (B^{1/2}))^{-5/2}
\end{align*} 
and the conditions
in $\Osum$ are  as in 
Lemma~\ref{lem:hool1}.  
Furthermore, as in~\eqref{eq:tlrsqrtlog}
we can   write 
\[\prod_{i=1}^3 (\log X_i)^{-1/2}=
(\log B)^{-3/2} \l(1+O\l(\frac{\log (b_1b_2b_3 m_{12} m_{13} m_{23} )}{\log B}\r)\r)
.\] Using this along with~\eqref{eq:bndbcm} yields 
  \begin{align*}
N_1(B) =
&\frac{ (2\pi )^{-3/2}}{2(\log B)^{3/2}}
\Osum_{\b b,\b m }
  \frac{ \mu^2(m_{12} m_{13} m_{23}) }{ (  b_1b_2b_3m_{12}m_{13}m_{23})^2}
 \frac{     \beta(\b b , \b m ) c(\b b , \b m )  }{    \tau(( m_{12}    m_{13} m_{23})_{\rm{odd}})   }
   \\
+& \,O\l((\log B )^{-5/2}\sum_{\b b,\b m \in \N^3 }
  \frac{\log (b_1b_2b_3 m_{12} m_{13} m_{23} ) }{ ( b_1b_2b_3  m_{12}m_{13}m_{23})^2}
\r) .\end{align*}
The error term is satisfactory since  the sum is
$\sum_{n\in \N} \tau_6(n) (\log n) n^{-2} =O(1)$, where $\tau_6$ denotes the number of different decompositions 
as a product of $6$ positive integers. The main term sum $\Osum$ contains the condition $\max\{b_i,m_{ij}\} \leq (\log X)^C$
that can be safely ignored by using~\eqref{eq:bndbcm}.
\end{proof}
We now continue by factoring $c_0$ as an Euler product. This otherwise straightforward task is somewhat hampered by the fact that 
neither of   $ \beta(\b b , \b m ), c(\b b , \b m ) $ has standard  multiplicative properties.
 \begin{lemma} 
\label{lem:hofactorout2o}
We have 
$$
c_0=\frac{49}{3}
 \sum_{\substack{\b b \in  \N ^3, 2\nmid  b_1 b_2 b_3\\  \gcd(b_1,b_2,b_3)=1   }}
 \frac{1}{ (b_1b_2b_3)^2    } \hspace{-0,3cm}
  \sum_{\substack{\b m\in \N^3, 2\nmid m_{12} m_{13} m_{23} \\  
 \gcd(m_{12},b_3  )= \gcd(m_{13},b_2 )= \gcd(m_{23},b_1  )=1
   }} 
 \hspace{-0,3cm}
 \frac{ \mu^2(m_{12} 'm_{13}' m_{23}')}{  (m_{12}' m_{13}' m_{23}')^2  }
 \frac{     \beta(\b b' , \b m' )   }{    \tau(  m_{12}'    m_{13}' m_{23}' )   }
.$$
\end{lemma}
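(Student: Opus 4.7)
The plan is to extract the $2$-adic contribution from the sum defining $c_0$ and show it collapses to the factor $49/3$. Write each $b_i = 2^{\alpha_i} b_i'$ and each $m_{ij} = 2^{\gamma_{ij}} m_{ij}'$ with $b_i', m_{ij}'$ odd. The key observation is that $\beta(\b b, \b m)$ is a product over odd primes, hence depends only on the odd parts, and $\tau((m_{12} m_{13} m_{23})_{\rm odd}) = \tau(m_{12}' m_{13}' m_{23}')$. Thus the exponents $\alpha_i, \gamma_{ij}$ enter the summand only through the weight $(b_1 b_2 b_3 m_{12} m_{13} m_{23})^{-2}$, through the combinatorial factor $c(\b b, \b m)$, and through the $\gcd$/squarefree constraints; the odd-part sum then agrees with the double sum on the right-hand side of the statement.

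First I would translate the constraints. Squarefreeness of $m_{12} m_{13} m_{23}$ forces $\gamma_{12} + \gamma_{13} + \gamma_{23} \leq 1$. Coprimality $\gcd(b_1, b_2, b_3) = 1$ requires both $\min_i \alpha_i = 0$ and $\gcd(b_1', b_2', b_3') = 1$. Each constraint $\gcd(m_{ij}, b_k) = 1$ with $\{i,j,k\} = \{1,2,3\}$ splits into $\gcd(m_{ij}', b_k') = 1$ together with $\gamma_{ij} \alpha_k = 0$. In this way the $2$-adic conditions decouple cleanly from the odd-prime conditions.

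Next, I would sum over $(\alpha_i, \gamma_{ij})$ according to the case split in the definition of $c$. In Case A ($2 \mid m_{12} m_{13} m_{23}$) exactly one $\gamma_{ij}$ equals $1$; say $\gamma_{12} = 1$, which forces $\alpha_3 = 0$, while $\alpha_1, \alpha_2 \geq 0$ are free and $c = 2$. Using symmetry over the three choices of the exceptional pair, the $2$-adic factor is
\[
3 \cdot 2 \cdot \frac{1}{4} \Bigl( \sum_{\alpha \geq 0} 4^{-\alpha} \Bigr)^{\!2} = 3 \cdot 2 \cdot \frac{1}{4} \cdot \frac{16}{9} = \frac{8}{3}.
\]
In Case B (all $\gamma_{ij} = 0$) I would split further by the set $S := \{ i : \alpha_i = 0\}$, which must be non-empty. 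Using that $2 \nmid \gcd(b_i, b_j)$ iff $\min(\alpha_i, \alpha_j) = 0$, one checks $c = 6$ when $|S| \in \{2, 3\}$ and $c = 5$ when $|S| = 1$. Summing the free $\alpha_k \geq 1$ variables via $\sum_{k \geq 1} 4^{-k} = 1/3$ yields contributions $6$, $3 \cdot 6 \cdot \tfrac{1}{3} = 6$, and $3 \cdot 5 \cdot \tfrac{1}{9} = \tfrac{5}{3}$ from $|S| = 3, 2, 1$ respectively, totalling $41/3$. Adding the two cases gives $8/3 + 41/3 = 49/3$.

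The main obstacle is the case analysis in Case B: one must carefully enumerate which pairs $i<j$ satisfy $2 \nmid \gcd(b_i, b_j)$ in each of the three sub-cases indexed by $|S|$, and verify the resulting counts of pairs correctly before summing the geometric series in the strictly positive $\alpha_k$. The rest is bookkeeping, using that the odd-part sum is automatically restricted to $2 \nmid b_1 b_2 b_3$ and $2 \nmid m_{12} m_{13} m_{23}$ with the inherited coprimality conditions, reproducing the stated formula.
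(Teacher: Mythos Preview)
Your proposal is correct and follows essentially the same approach as the paper: both factor off the $2$-adic part by writing $b_i = 2^{\alpha_i} b_i'$ and $m_{ij} = 2^{\gamma_{ij}} m_{ij}'$, observe that $\beta$ depends only on the odd parts while $c$ depends only on the $2$-adic exponents, and then compute the same case split (one $\gamma_{ij}=1$ giving $8/3$, and all $\gamma_{ij}=0$ giving $6+6+5/3$) to reach $49/3$. Your organization via $|S|$ is a cosmetic repackaging of the paper's ``exactly $k$ of the $\beta_i$ are nonzero'' cases.
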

\begin{proof}
Write  $ b_i=2^{\beta_i} b_i'$ and $m_{ij}=2^{\mu_{ij} } m_{ij}'$, where each $b_i',  m_{ij}'$ is   odd. 
Note that $ \beta(\b b , \b m ) $ only depends on the odd parts of   $b_i,m_{ij}$
and that $ c(\b b , \b m )= c( (2^{\beta_i }) , (2^{ \mu_{ij} } ) )$.
Hence, $c_0=\gamma c_0'$, where  
\[\gamma=
 \sum_{\substack{\beta_1,\beta_2,\beta_3 \geq 0  \\  \min \beta_i=0   }}
 \frac{1}{ 4^{\beta_1+ \beta_2 + \beta_3 }   }
 \sum_{\substack{
0\leq 
 \mu_{12}+  \mu_{13} + \mu_{23}\leq 1 
\\
 \min(\mu_{12},\beta_3  )= \min(\mu_{13},\beta_2   )= \min(\mu_{23},\beta_1   )=0
   }}  
 \frac{    c( (2^{\beta_i }) , (2^{ \mu_{ij} } ) )   }{ 4^{\mu_{12} +\mu_{13} +\mu_{23} }   }
 \] and $c_0'$ is the sum over $\b b, \b m $ given in the present lemma. 
If one of  $ \mu_{ij}$ is  $1$, then $ c( (2^{\beta_i }) , (2^{ \mu_{ij} } ) )=2 $, hence, the contribution equals 
\[ 3
 \sum_{\substack{\beta_1,\beta_2  \geq 0  \\  \beta_3=0   }}
 4^{-\beta_1- \beta_2 - \beta_3 }   
\cdot   \frac{2}{4}   = \frac{8  }{  3 }.  \] 
If all    $ \mu_{ij}$ are  $0$ and all $\beta_i$ are $0$, then $ c( (2^{\beta_i }) , (2^{ \mu_{ij} } ) )=6 $, hence, the contribution equals  $6$.
If all    $ \mu_{ij}$ are  $0$ and exactly one of  $  \beta_i   $ is  $\geq 1 $, 
then $ c( (2^{\beta_i }) , (2^{ \mu_{ij} } ) )=6 $, hence, the contribution equals  
\[  3  \sum_{\substack{\beta_1  \geq 1    }}  \frac{1}{ 4^{\beta_1  }   }\cdot  6= 6 . \] 
If all    $ \mu_{ij}$ are  $0$ and exactly two  of  $  \beta_i   $  are  $\geq 1 $, 
then $ c( (2^{\beta_i }) , (2^{ \mu_{ij} } ) )=5 $, hence, the contribution equals  
\[3   \sum_{\substack{\beta_1\geq 1 , \beta_2\geq 1   }} \frac{1}{ 4^{\beta_1+ \beta_2   }   } \cdot  5=\frac{5}{3} .  \] 
Adding the various contributions shows that 
$\gamma=8/3+6+6+5/3=49/3$. 
\end{proof}
Recall~\eqref{defgambd} and let $$\kappa =\prod_{p\neq 2 } 
 \l(1-\frac{1}{p} \r)^{3/2}
\l(1+  \frac{ 3  }{2 p} \r) .$$
A function $f:\N^3\to\mathbb C$ is called multiplicative if 
$
f(a_1b_1 , a_2b_2, a_3b_3 )
=f(\b a) f(\b b )
$ whenever $ a_1a_2a_3$ and $b_1b_2b_3$ are coprime.
 \begin{lemma} 
\label{lem:hock4ba3} For $\b d \in \N^3$
the  function $f(   \b d )= \gamma(\b d )/\kappa$ is    multiplicative and given by 
$$
f(\b d ) =
  \prod_{\substack{ p\neq 2 \\ p\mid d_1d_2d_3 } } 
\l(1-
\frac
{ \#\{1\leq i  \leq 3 : p\mid d_i \}  }
{ 2p +  3  }
\r) 
.$$ 
\end{lemma}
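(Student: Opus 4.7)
The plan is a direct algebraic unfolding: both $\gamma(\b d)$ and $\kappa$ are absolutely convergent Euler products over odd primes with identical $(1-1/p)^{3/2}$ factors, so forming the ratio prime-by-prime cancels these and yields
$$
f(\b d) \;=\; \prod_{p \neq 2} \frac{1 + \frac{\#\{1\leq i \leq 3 : p\nmid d_i\}}{2p}}{1 + \frac{3}{2p}}.
$$

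Next I would split the product according to whether $p$ divides $d_1 d_2 d_3$. For any odd prime $p \nmid d_1 d_2 d_3$, the numerator counts all three indices, so the local factor is exactly $1$ and can be dropped from the product. For $p \mid d_1 d_2 d_3$, setting $k = \#\{i : p \mid d_i\} \in \{1,2,3\}$ and hence $\#\{i : p \nmid d_i\} = 3 - k$, a one-line simplification gives
$$
\frac{1 + \frac{3-k}{2p}}{1 + \frac{3}{2p}} \;=\; \frac{2p + 3 - k}{2p + 3} \;=\; 1 - \frac{k}{2p+3},
$$
which is the stated closed form.

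Finally, multiplicativity is essentially a formality once the product is reduced to primes dividing $d_1 d_2 d_3$: if $\gcd(a_1 a_2 a_3,\, b_1 b_2 b_3) = 1$, then the set of primes dividing $a_1 b_1 \cdot a_2 b_2 \cdot a_3 b_3$ partitions into those dividing $a_1 a_2 a_3$ and those dividing $b_1 b_2 b_3$, and the local factor at each such prime depends only on which of the three entries it divides. So the product factorises as $f(\b a) f(\b b)$. There is no real obstacle here; the only point requiring a moment's care is confirming that the $(1-1/p)^{3/2}$ factors appearing in both $\gamma$ and $\kappa$ genuinely cancel termwise (which they do, since the exponents are independent of $\b d$), so that no issues of conditional convergence arise when passing to the ratio.
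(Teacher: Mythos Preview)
Your proof is correct and follows essentially the same route as the paper: cancel the $(1-1/p)^{3/2}$ factors termwise, observe that the local ratio is $1$ at primes not dividing $d_1d_2d_3$, simplify the remaining local factor to $1 - k/(2p+3)$, and deduce multiplicativity by partitioning the primes according to which of $\b a$, $\b b$ they divide. The paper's argument is identical in structure and detail.
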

\begin{proof} 
 If $p\nmid d_1d_2d_3 $ then the $p$-th terms of $\gamma(\b d )$  and $\kappa$ coincide, hence, 
$\gamma(\b d )/\kappa$  equals  
\[
 \prod_{\substack{ p\neq 2 \\ p\mid d_1d_2d_3 } } 
\frac
{\l(1+  \#\{1\leq i  \leq 3 : p\nmid d_i \}/(2 p) \r)  
}
{\l(1+ 3/(2p) \r)  
} 
=
  \prod_{\substack{ p\neq 2 \\ p\mid d_1d_2d_3 } } 
\frac
{\l(2p +  \#\{1\leq i  \leq 3 : p\nmid d_i \}  \r)  
}
{\l(2p +  3 \r)  
}.
 \]
  To prove multiplicativity, 
 note that  if  $\gcd(a_1a_2a_3,b_1b_2b_3)=1$ then $f(a_1b_1, a_2b_2, a_3b_3 )$ splits as  
 \[  \prod_{\substack{ p\neq 2 \\ p\mid a_1a_2a_3   } } 
\l(1-
\frac
{ \#\{1\leq i  \leq 3 : p\mid a_ib_i \}  }
{ 2p +  3  }
\r)   \prod_{\substack{ p\neq 2 \\ p\mid   b_1 b_2 b_3 } } 
\l(1-
\frac
{ \#\{1\leq i  \leq 3 : p\mid a_ib_i \}  }
{ 2p +  3  }
\r) 
.\] 
By coprimality the condition $p\mid a_ib_i$ in the first product is equivalent to $p\mid a_i$, thus, it equals $f(\b a )$.
Similarly, the second product equals  
  $  f(\b b )$. 
\end{proof}

 \begin{lemma} 
\label{lem:eflatbwv852}
The sum over $\b b , \b m $ in Lemma~\ref{lem:hofactorout2o}
equals 
$$
\prod_{p\neq 2 } 
\l(1-\frac{1}{p} \r)^{3/2} 
\frac{(p^2+p+1)(2p^2+p+2)}{2(p^2-1)^2}
.$$
 \end{lemma}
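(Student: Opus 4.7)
The plan is to compute the sum as an Euler product over odd primes, exploiting the manifest multiplicativity of the arithmetic conditions. Everything in sight is multiplicative in $\b b$ and in $\b m$: the condition $\gcd(b_1,b_2,b_3)=1$ is local (imposing $\min_i v_p(b_i)=0$), the square-freeness of $m_{12}m_{13}m_{23}$ together with the pairwise conditions $\gcd(m_{ij},b_k)=1$ are local, and by definition \eqref{defbetabm}, $\beta(\b b,\b m)$ already factors over primes. The factor $(1-1/p)^{3/2}$ from $\beta$ matches the right-hand side, so what remains is to evaluate the local arithmetic sum and verify it equals $(p^2+p+1)(2p^2+p+2)/\bigl(2(p^2-1)^2\bigr)$.

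Fix an odd prime $p$ and set $\alpha_i=v_p(b_i)$ and $\gamma_{ij}=v_p(m_{ij})$. The constraints at $p$ are $\min_i\alpha_i=0$, $\gamma_{12}+\gamma_{13}+\gamma_{23}\leq 1$ (each $\gamma_{ij}\in\{0,1\}$), and $\gamma_{ij}\cdot\alpha_k=0$ whenever $\{i,j,k\}=\{1,2,3\}$. Also $\tau((m_{12}m_{13}m_{23})_{\rm odd})$ contributes a factor $2$ at $p$ iff some $\gamma_{ij}=1$. The crucial observation for the local factor of $\beta$ is: if any $\gamma_{ij}=1$ (so $p\mid m_{12}m_{13}m_{23}$), then $N_p:=\#\{i<j:p\nmid m_{12}m_{13}m_{23}\gcd(b_i,b_j)\}=0$; otherwise $N_p=\#\{i<j:\min(\alpha_i,\alpha_j)=0\}$. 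I would then split the local sum into two cases.

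Case A ($\gamma_{12}=\gamma_{13}=\gamma_{23}=0$): by $\min\alpha_i=0$, at most two of $\alpha_i$ can be positive. This gives three sub-cases according to how many $\alpha_i$ are positive, contributing respectively
\[
S_A=\left(1+\tfrac{3}{2p}\right)\left(1+\tfrac{3}{p^2-1}\right)+\left(1+\tfrac{1}{p}\right)\tfrac{3}{(p^2-1)^2},
\]
since $N_p=3$ in the first two situations and $N_p=2$ in the last. Case B (exactly one $\gamma_{ij}=1$): the condition $\gcd(m_{ij},b_k)=1$ forces $\alpha_k=0$, the two remaining $\alpha_i$ range freely over $\Z_{\geq 0}$, and $N_p=0$ makes the $\beta$-bracket equal $1$; summing and multiplying by the three symmetric choices for $(i,j)$ and dividing by the $\tau$-factor $2$ yields
\[
S_B=\tfrac{3}{2}\cdot\tfrac{1}{p^2}\cdot\Bigl(\tfrac{p^2}{p^2-1}\Bigr)^{\!2}=\tfrac{3p^2}{2(p^2-1)^2}.
\]

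To finish, I would place $S_A+S_B$ over the common denominator $2p(p^2-1)^2$ and simplify the numerator to $p(2p^4+3p^3+5p^2+3p+2)$. The main (and only non-routine) obstacle is the polynomial identity
\[
2p^4+3p^3+5p^2+3p+2=(p^2+p+1)(2p^2+p+2),
\]
which one checks by direct expansion. Combining with the prefactor $(1-1/p)^{3/2}$ extracted from $\beta$ produces the claimed local factor at each odd prime $p$, and taking the product over all odd primes yields the stated formula.
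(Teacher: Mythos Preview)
Your proposal is correct and follows essentially the same route as the paper's proof: factor the sum as an Euler product over odd primes, split the local computation according to whether some $\gamma_{ij}=1$ (the paper's $\mu_{ij}$) and, within the remaining case, according to how many $\alpha_i=v_p(b_i)$ are positive, then verify the polynomial identity $(p^2+p+1)(2p^2+p+2)=2p^4+3p^3+5p^2+3p+2$. The only cosmetic difference is that the paper first normalises $\beta$ by the constant $\kappa=\prod_{p\neq 2}(1-1/p)^{3/2}(1+3/(2p))$ and invokes Lemma~\ref{lem:hock4ba3} to record the multiplicativity of $\gamma(\b d)/\kappa$ formally, whereas you keep the factor $1+N_p/(2p)$ explicit throughout; the resulting local sums and their evaluations coincide.
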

\begin{proof}
By Lemma~\ref{lem:hock4ba3} 
one directly sees that  
  the function $g:\N^6\to \mathbb C$
given by 
$$ g( \b b, \b m )= f(   m_{12} m_{13} m_{23} \gcd(b_2,b_3),
 m_{12} m_{13} m_{23} \gcd(b_1,b_3),
m_{12} m_{13} m_{23} \gcd(b_1,b_2)
)$$ is multiplicative.
Thus, the sum over $\b b , \b m $ in Lemma~\ref{lem:hofactorout2o}
is  $\kappa \prod_{p\neq 2 } \kappa_p'$, where  
 \[\kappa_p'=
 \sum_{\substack{\beta_1,\beta_2,\beta_3 \geq 0  \\  \min \beta_i =0   }}
 \sum_{\substack{ 
 0\leq \mu_{12}+\mu_{13} +\mu_{23}\leq 1 
\\
\mu_{12} \beta_3  =
\mu_{13} \beta_2  = 
\mu_{23} \beta_1  = 0   }}
 \frac{  
g(p^{\beta_1}, p^{\beta_2}, p^{\beta_3}, p^{\mu_{12}},p^{\mu_{13}},p^{\mu_{23}} )
  }{     p^{2(\beta_1+\beta_2+\beta_3+\mu_{12}+\mu_{13} +\mu_{23})} 
\tau( p^{ \mu_{12}+\mu_{13} +\mu_{23} } )   }
. \]  
The terms with $\mu_{12}+\mu_{13} +\mu_{23}= 1 $ contribute    \[3 
 \sum_{\substack{\beta_1,\beta_2  \geq 0  }}
 \frac{  
g(p^{\beta_1}, p^{\beta_2}, 1, p ,1,1 )
 }{     p^{2(\beta_1+\beta_2 +1)} 
\tau( p )   }=
\frac{3 f(  p ,p,p  )   }{2p^2}  
 \sum_{\substack{\beta_1,\beta_2  \geq 0  }}  \frac{ 1 }{     p^{2(\beta_1+\beta_2  )}  }
 =\frac{3 (1-\frac{3}{3+2p} ) }{2p^2(1-1/p^2)^2}  
.\]
The terms with $\mu_{12}+\mu_{13} +\mu_{23}= 0 $ contribute  
\[
 \sum_{\substack{\beta_1,\beta_2,\beta_3 \geq 0  \\  \min \beta_i =0
   }}
 \frac{  
g(p^{\beta_1}, p^{\beta_2}, p^{\beta_3}, 1,1,1)
  }{     p^{2(\beta_1+\beta_2+\beta_3 )}   }
= \sum_{\substack{\beta_1,\beta_2,\beta_3 \geq 0  \\  \min \beta_i =0 }}
 \frac{  f(p^{\min\{\beta_1,\beta_2\}},
p^{\min\{\beta_1,\beta_3\}},
p^{\min\{\beta_2,\beta_3\}}  ) }{     p^{2(\beta_1+\beta_2+\beta_3 )}   }
 .\] 
 The terms  where all $\beta_i$ are zero contribute  $ 1$.
The terms  where exactly  one $\beta_i$ is non-zero give  
\[3   \sum_{\substack{\beta  \geq 1  }}
 \frac{  1}{     p^{2\beta}    }=\frac{3}{p^2-1}
.\]
The terms  where exactly two  $\beta_i$ are non-zero contribute   \[ 3f(p,1,1) 
 \sum_{\substack{\beta_1,\beta_2  \geq 1    }}
 \frac{  1 }{     p^{2(\beta_1+\beta_2  )}    }
=\frac{3f(p,1,1)  }{(p^2-1)^2}
=
\frac{3(1-\frac{1}{2p+3} )  }{(p^2-1)^2}
.\] In total the sum over $\b b, \b m $ becomes 
 \begin{align*}
&
\kappa\prod_{p\neq 2 }  \l(
\frac{3 (1-\frac{3}{3+2p} ) }{2p^2(1-1/p^2)^2}  
+1+\frac{3}{p^2-1}
+ \frac{3(1-\frac{1}{2p+3} )  }{(p^2-1)^2}
 \r) 
\\
=&
\prod_{p\neq 2 } 
\l(1-\frac{1}{p} \r)^{3/2} 
\frac{(p^2+p+1)(2p^2+p+2)}{2(p^2-1)^2}. \qedhere
\end{align*}
\end{proof} 
By Lemmas~\ref{lem:hog97ro}, \ref{lem:hofactorout2o}, \ref{lem:eflatbwv852} we see that $N(B)$ equals   
 $$ 
  \frac{ 3   }{ (2\pi )^{ 3/2}} \frac{49}{3}  
\l(\prod_{p\neq 2 }  \l(1-\frac{1}{p} \r)^{3/2} 
\frac{(p^2+p+1)(2p^2+p+2)}{2(p^2-1)^2}
 \r)
\frac{B^3}{(\log B)^{3/2}} +O\l(\frac{B^3}{(\log B)^{5/2}}\r) .$$  
Using $48 = 6 \cdot 8$, this is easily checked to agree with the expression in Theorem~\ref{thm:main}.  \qed

\begin{remark} \label{rem:equiv}
	An alternative  to \eqref{def:nalphax}, also considered by Serre in \cite{Ser90},
	is 
	$$N_0(B) = \#\bigg\{\b t \in (\Z\setminus\{0\} )^3: 
	\max_i|t_i| \leq B,  \sum_{i=0}^3 t_i x_i^2=0 \textrm{ has a }\Q\textrm{-point}
	\bigg\},$$
	i.e.~counting all $\mathbf{t}$ and not just primitive $\mathbf{t}$. A simple
	application of M\"{o}bius inversion, Proposition \ref{prop:local_densities}, and 
	Lemma \ref{lem:oddprimeshaar} shows that Theorem \ref{thm:main} is equivalent to
	$$N_0(B)=\frac{2}{\pi^{3/2}   }\Bigg(
 \vartheta_\infty  
\prod_{\substack{ p \text{ \rm prime } } }
 \frac{   \vartheta_p}{      (1-1/p )^{3/2} }
\Bigg)
  \frac{B^3}{(\log B)^{3/2} }+O\left( \frac{B^3}{(\log B)^{5/2} } \right) ,$$
where
$  \vartheta_\infty $ is the Lebesgue measure of $\b t$ in $[-1,1]^3$ for which the conic $\sum_{i=0}^2t_ix_i^2$ has an $\RR$-point
and 
$ \vartheta_p$ is the  $p$-adic Haar measure of   
$\b t \in \Z_p^3$ for which $\sum_{i=0}^2t_ix_i^2$ 
 has a $\Q_p$-point. 
\end{remark}

\end{document}